\title{Coherence-free Entrywise Estimation of Eigenvectors in Low-rank Signal-plus-noise Matrix Models}
\author{%
  Hao Yan \\
  Department of Statistics\\
  University of Wisconsin--Madison\\
  Madison, WI 53706 \\
  United States of America\\
  \texttt{hyan84@wisc.edu}
  \And
  Keith Levin
    \\
  Department of Statistics\\
  University of Wisconsin--Madison\\
  Madison, WI 53706 \\
  United States of America\\
  \texttt{kdlevin@wisc.edu} \\
}
\begin{document}
\newtheorem{theorem}{Theorem}
\newtheorem{lemma}{Lemma}
\newtheorem{proposition}{Proposition}
\newtheorem{observation}{Observation}
\newtheorem{remark}{Remark}
\newtheorem{example}{Example}
\newtheorem{definition}{Definition}
\newtheorem{corollary}{Corollary}
\newtheorem{assumption}{Assumption}

\renewcommand{\algorithmicrequire}{\textbf{Input:}}
\renewcommand{\algorithmicensure}{\textbf{Output:}}

\renewcommand{\AA}[1]{A^{(#1)}}
\newcommand{\HH}[1]{H^{(#1)}}
\newcommand{\nub}[1]{(\nu_{#1} + b_{#1}^2)}

\newcommand{\ssigma}[1]{\rho^{(#1)}}
\newcommand{\mxbernsymbol}{\eta}

\renewcommand{\Pr}{\mathbb{P}}
\newcommand{\E}{\mathbb{E}}
\newcommand{\N}{\mathbb{N}}
\newcommand{\R}{\mathbb{R}}
\newcommand{\Z}{\mathbb{Z}}

\newcommand{\bbB}{\mathbb{B}}
\newcommand{\bbD}{\mathbb{D}}
\newcommand{\bbG}{\mathbb{G}}
\newcommand{\bbK}{\mathbb{K}}
\newcommand{\bbL}{\mathbb{L}}
\newcommand{\bbO}{\mathbb{O}}
\newcommand{\bbP}{\mathbb{P}}
\newcommand{\bbR}{\mathbb{R}}
\newcommand{\bbT}{\mathbb{T}}
\newcommand{\bbU}{\mathbb{U}}

\newcommand{\Rnonneg}{\R_{\ge 0}}

\newcommand{\calA}{\mathcal{A}}
\newcommand{\calB}{\mathcal{B}}
\newcommand{\calD}{\mathcal{D}}
\newcommand{\calE}{\mathcal{E}}
\newcommand{\calH}{\mathcal{H}}
\newcommand{\calJ}{\mathcal{J}}
\newcommand{\calL}{\mathcal{L}}
\newcommand{\calN}{\mathcal{N}}
\newcommand{\calP}{\mathcal{P}}
\newcommand{\calR}{\mathcal{R}}
\newcommand{\calS}{\mathcal{S}}
\newcommand{\calT}{\mathcal{T}}
\newcommand{\calX}{\mathcal{X}}
\newcommand{\calM}{\mathcal{M}}

\newcommand{\scrC}{\mathscr{C}}

\newcommand{\Ahat}{\hat{A}}
\newcommand{\Ihat}{{\hat{I}}}
\newcommand{\Jhat}{{\hat{J}}}
\newcommand{\Ohat}{\hat{O}}
\newcommand{\Phat}{\hat{P}}
\newcommand{\Rhat}{\hat{R}}
\newcommand{\Xhat}{\hat{X}}
\newcommand{\dhat}{\hat{d}}
\newcommand{\uhat}{\widehat{u}}
\newcommand{\vhat}{\widehat{v}}
\newcommand{\what}{\widehat{w}}
\newcommand{\rhohat}{\hat{\rho}}
\newcommand{\sigmahat}{\hat{\sigma}}
\newcommand{\tauhat}{\hat{\tau}}
\newcommand{\nuhat}{\hat{\nu}}
\newcommand{\thetahat}{\widehat{\theta}}
\newcommand{\chat}{\hat{c}}

\newcommand{\ccheck}{\check{c}}
\newcommand{\Xcheck}{\check{X}}
\newcommand{\Aml}{\mathring{A}}
\newcommand{\Vml}{\mathring{V}}
\newcommand{\Xml}{\mathring{X}}
\newcommand{\wml}{\mathring{w}}

\newcommand{\Abar}{\bar{A}}
\newcommand{\Pbar}{\bar{P}}
\newcommand{\Xbar}{\bar{X}}
\newcommand{\cbar}{\bar{c}}
\newcommand{\nubar}{\bar{\nu}}
\newcommand{\bbar}{\bar{b}}
\newcommand{\vbar}{\bar{v}}

\newcommand{\Atilde}{\tilde{A}}
\newcommand{\UAtilde}{U_{\Atilde}}
\newcommand{\SAtilde}{S_{\Atilde}}
\newcommand{\Otilde}{\tilde{O}}
\newcommand{\Ptilde}{\tilde{P}}
\newcommand{\Xtilde}{\tilde{X}}
\newcommand{\dtilde}{\tilde{d}}
\newcommand{\wtilde}{\tilde{w}}
\newcommand{\rhotilde}{\tilde{\rho}}
\newcommand{\calDtilde}{\tilde{\calD}}
\newcommand{\calXtilde}{\tilde{\calX}}

\newcommand{\dH}{d_{\text{H}}}
\newcommand{\dHtilde}{\dtilde_{\text{H}}}

\newcommand{\bB}{\mathbf{B}}
\newcommand{\bZ}{\mathbf{Z}}

\newcommand{\Xstar}{X^*}
\newcommand{\bstar}{b^*}
\newcommand{\lambdastar}{\lambda^\star}
\newcommand{\nustar}{\nu^*}
\newcommand{\nmin}{n_{\min}}

\newcommand{\wopt}{\wml}

\newcommand{\SA}{S_A}
\newcommand{\SM}{S_M}
\newcommand{\SP}{S_P}
\newcommand{\UP}{U_P}
\newcommand{\UA}{U_A}
\newcommand{\UM}{U_M}

\newcommand{\RDPG}{\operatorname{RDPG}}
\newcommand{\GRDPG}{\operatorname{GRDPG}}
\newcommand{\ASE}{\operatorname{ASE}}
\newcommand{\KL}{\operatorname{KL}}
\newcommand{\SNR}{\operatorname{SNR}}
\newcommand{\VAR}{\operatorname{Var}}
\newcommand{\COV}{\operatorname{Cov}}
\newcommand{\wtd}{\operatorname{wtd}}
\newcommand{\unif}{\operatorname{unif}}
\newcommand{\Bernoulli}{\operatorname{Bern}}
\newcommand{\supp}{\operatorname{supp}}
\newcommand{\rank}{\operatorname{rank}}
\newcommand{\diag}{\operatorname{diag}}
\newcommand{\tr}{\operatorname{tr}}
\newcommand{\vech}{\operatorname{vech}}
\newcommand{\opnorm}{\operatorname{op}}
\newcommand{\frobnorm}{\mathrm{F}}

\newcommand{\dtildetti}{\dtilde_{\tti}}

\newcommand{\Phatwtd}{\Ptilde}
\newcommand{\Phatunif}{\Pbar}

\newcommand{\onevec}{\vec{\mathbf{1}}}
\newcommand{\ivec}{\vec{i}}
\newcommand{\jvec}{\vec{j}}
\newcommand{\indicator}{\mathbb{I}}
\newcommand{\indic}{\indicator}

\newcommand{\marginal}[1]{\marginpar{\raggedright\scriptsize #1}}
\newcommand{\as}{\text{~~~a.s.}}
\newcommand{\whp}{\text{~~~w.h.p.}}
\newcommand{\inlaw}{\xrightarrow{\calL}}
\newcommand{\inprob}{\xrightarrow{P}}
\newcommand{\iid}{\stackrel{\text{i.i.d.}}{\sim}}
\newcommand{\eqdist}{\stackrel{d}{=}}
\newcommand{\Perm}{\Pi}


\newcommand{\vone}{\boldsymbol{1}}
\newcommand{\mA}{\boldsymbol{A}}
\newcommand{\mB}{\boldsymbol{B}}
\newcommand{\mC}{\boldsymbol{C}}
\newcommand{\mD}{\boldsymbol{D}}
\newcommand{\mE}{\boldsymbol{E}}
\newcommand{\mF}{\boldsymbol{F}}
\newcommand{\mG}{\boldsymbol{G}}
\newcommand{\mH}{\boldsymbol{H}}
\newcommand{\mI}{\boldsymbol{I}}
\newcommand{\mZero}{\boldsymbol{0}}
\newcommand{\mJ}{\boldsymbol{J}}
\newcommand{\mK}{\boldsymbol{K}}
\newcommand{\mM}{\boldsymbol{M}}
\newcommand{\mO}{\boldsymbol{O}}
\newcommand{\mP}{\boldsymbol{P}}
\newcommand{\mQ}{\boldsymbol{Q}}
\newcommand{\mR}{\boldsymbol{R}}
\newcommand{\mS}{\boldsymbol{S}}
\newcommand{\mU}{\boldsymbol{U}}
\newcommand{\mV}{\boldsymbol{V}}
\newcommand{\mVhat}{\widehat{\mV}}
\newcommand{\mW}{\boldsymbol{W}}
\newcommand{\mX}{\boldsymbol{X}}
\newcommand{\mY}{\boldsymbol{Y}}
\newcommand{\mZ}{\boldsymbol{Z}}
\newcommand{\mLambda}{\boldsymbol{\Lambda}}
\newcommand{\mLambdastar}{{\mLambda^\star}}
\newcommand{\mTheta}{\boldsymbol{\Theta}}
\newcommand{\mGamma}{\boldsymbol{\Gamma}}
\newcommand{\mPi}{\boldsymbol{\Pi}}
\newcommand{\mXhat}{\hat{\boldsymbol{X}}}
\newcommand{\mBstar}{\mB^\star}
\newcommand{\Bstar}{B^\star}
\newcommand{\mMstar}{\mM^\star}
\newcommand{\Mstar}{M^\star}
\newcommand{\mYt}{\widetilde{\mY}}
\newcommand{\Yt}{\widetilde{Y}}
\newcommand{\Vt}{\widetilde{V}}
\newcommand{\Ut}{\widetilde{U}}
\newcommand{\mWt}{\widetilde{\mW}}
\newcommand{\Wt}{\widetilde{W}}

\newcommand{\vx}{\boldsymbol{x}}
\newcommand{\vs}{\boldsymbol{s}}
\newcommand{\sstar}{s^\star}
\newcommand{\vd}{\boldsymbol{d}}
\newcommand{\vy}{\boldsymbol{y}}
\newcommand{\ve}{\boldsymbol{e}}
\newcommand{\vu}{\boldsymbol{u}}
\newcommand{\vustar}{\boldsymbol{u}^\star}
\newcommand{\vutilde}{\widetilde{\vu}}
\newcommand{\ustar}{u^\star}
\newcommand{\vuhat}{\boldsymbol{\widehat{u}}}
\newcommand{\vv}{\boldsymbol{v}}
\newcommand{\vo}{\boldsymbol{0}}
\newcommand{\vvhat}{\boldsymbol{\widehat{v}}}
\newcommand{\vw}{\boldsymbol{w}}
\newcommand{\vbt}{\widetilde{\vb}}
\newcommand{\lambdat}{\widetilde{\lambda}}
\newcommand{\lambdahat}{\widehat{\lambda}}
\newcommand{\lambdahatc}{\lambdahat_{\operatorname{c}}}
\newcommand{\va}{\boldsymbol{a}}
\newcommand{\vz}{\boldsymbol{z}}
\newcommand{\vm}{\boldsymbol{m}}
\newcommand{\valpha}{\boldsymbol{\alpha}}
\newcommand{\alphahat}{\widehat{\alpha}}
\newcommand{\vtheta}{\boldsymbol{\theta}}
\newcommand{\vsigma}{\boldsymbol{\sigma}}
\newcommand{\vgamma}{\boldsymbol{\gamma}}
\newcommand{\vbeta}{\boldsymbol{\beta}}
\newcommand{\veta}{\boldsymbol{\eta}}
\newcommand{\vlambda}{\boldsymbol{\lambda}}
\newcommand{\mUt}{\widetilde{\mU}}
\newcommand{\mVt}{\widetilde{\mV}}
\newcommand{\mSig}{\boldsymbol{\Sigma}}
\newcommand{\mDelta}{\boldsymbol{\Delta}}
\newcommand{\tmSig}{\tilde{\boldsymbol{\Sigma}}}
\newcommand{\tmLambda}{\tilde{\boldsymbol{\Lambda}}}
\newcommand{\tildV}{\tilde{V}}
\newcommand{\Opq}{\mathcal{O}_{p,q}}
\newcommand{\Od}{\mathbb{O}_d}
\newcommand{\bbS}{\mathbb{S}}
\newcommand{\tB}{\mathcal{B}}
\newcommand{\bM}{\mathbb{M}}
\newcommand{\bU}{\mathbb{U}}
\newcommand{\bV}{\mathbb{V}}
\newcommand{\bbV}{\bV}
\newcommand{\vb}{\boldsymbol{b}}
\newcommand{\vh}{\boldsymbol{h}}
\newcommand{\vg}{\boldsymbol{g}}
\newcommand{\vxi}{\boldsymbol{\xi}}
\newcommand{\vzeta}{\boldsymbol{\zeta}}
\newcommand{\vdelta}{\boldsymbol{\delta}}
\newcommand{\vmu}{\boldsymbol{\mu}}
\newcommand{\I}{{I_\star}}
\newcommand{\mXi}{\boldsymbol{\Xi}}
\newcommand{\mUstar}{{\mU^\star}}
\newcommand{\Ustar}{{U^\star}}
\newcommand{\mUhat}{\widehat{\mU}}
\newcommand{\mPhi}{\boldsymbol{\Phi}}
\newcommand{\nuW}{\nu_{W}}

\newcommand{\gsup}{S_{\mathcal{G}}}
\newcommand{\cG}{\mathcal{G}}
\newcommand{\zvec}{\mathbf{0}}
\newcommand{\veps}{\boldsymbol{\varepsilon}}
\newcommand{\vepsilon}{\boldsymbol{\epsilon}}
\newcommand{\vDelta}{\boldsymbol{\Delta}}
\newcommand{\sgn}[1]{\operatorname{sgn}\left(#1\right)}
\newcommand{\tti}[1]{\|#1\|_{2,\infty}}
\newcommand{\bora}{\hat{\vbeta}^{o}}
\newcommand{\thora}{\hat{\vtheta}^{o}}
\newcommand{\vora}{\hat{\vv}^{o}}
\newcommand{\gevent}{\mathbb{G}(\lambda_n)}
\newcommand{\bbM}{\mathbb{M}}
\newcommand{\bbC}{\mathbb{C}}
\newcommand{\conev}{\bbC_{\vv^*}(\bbM, \bbM^{\perp})}

\newcommand{\Shat}{\widehat{S}}
\newcommand{\calG}{\mathcal{G}}

\newcommand{\wDelta}{\widetilde{\mDelta}}
\newcommand{\wGamma}{\widetilde{\mGamma}}
\newcommand{\wW}{\widetilde{\mW}}

\newcommand{\inner}[1]{\langle #1 \rangle}
\newcommand{\sI}{\mathcal{I}}

\newcommand{\mWl}{\mW^{(\ell)}}
\newcommand{\mMl}{\mM^{(\ell)}}
\newcommand{\mHl}{\mH^{(\ell)}}
\newcommand{\mUl}{\mU^{(\ell)}}
\newcommand{\mEl}{\mE^{(\ell)}}
\newcommand{\lDelta}{\mDelta^{(\ell)}}
\newcommand{\lLambda}{\mLambda^{(\ell)}}
\newcommand{\llambda}{\lambda^{(\ell)}}
\newcommand{\mZhat}{\widehat{\mZ}}
\newcommand{\mZtilde}{\widetilde{\mZ}}
\newcommand{\mZstar}{\mZ^\star}
\newcommand{\mPsi}{\boldsymbol{\Psi}}

\newcommand{\rmF}{\mathrm{F}}

\newcommand{\sighat}{\widehat{\sigma}}

\newcommand{\mMhat}{\widehat{\mM}}
\newcommand{\Mhat}{\widehat{M}}

\newcommand{\dF}{d_{\frobnorm}}
\newcommand{\dtti}{d_{2,\infty}}
\newcommand{\dop}{d_{\mathrm{op}}}

\newcommand{\Hess}{\operatorname{Hess}}
\newcommand{\grad}{\operatorname{grad}}
\newcommand{\baf}{\bar{f}}

\newcommand{\vertiii}[1]{{\left\vert\kern-0.25ex\left\vert\kern-0.25ex\left\vert #1 
    \right\vert\kern-0.25ex\right\vert\kern-0.25ex\right\vert}}

\newcommand{\Iscr}{\mathscr{I}}

\newcommand{\calU}{\mathcal{U}}

\maketitle

\begin{abstract}
  Spectral methods are widely used to estimate eigenvectors of a low-rank signal matrix subject to noise.
  These methods use the leading eigenspace of an observed matrix to estimate this low-rank signal.
  Typically, the entrywise estimation error of these methods depends on the coherence of the low-rank signal matrix with respect to the standard basis.
  In this work, we present a novel method for eigenvector estimation that avoids this dependence on coherence.
  Assuming a rank-one signal matrix, under mild technical conditions, the entrywise estimation error of our method provably has no dependence on the coherence under Gaussian noise (i.e., in the spiked Wigner model), and achieves the optimal estimation rate up to logarithmic factors.
  Simulations demonstrate that our method performs well under non-Gaussian noise and that an extension of our method to the case of a rank-$r$ signal matrix has little to no dependence on the coherence.
  In addition, we derive new metric entropy bounds for rank-$r$ singular subspaces under $\ell_{2,\infty}$ distance, which may be of independent interest.
  We use these new bounds to improve the best known lower bound for rank-$r$ eigenspace estimation under $\ell_{2,\infty}$ distance. 
\end{abstract}

\section{Introduction}
\label{sec:intro}

Spectral methods are extensively used in contemporary data science and engineering \cite{chen2021spectral}.
The fundamental idea underlying these methods is that the eigenspace or singular subspace of an observed matrix reflects important structure present in the data from which it is derived.
Spectral methods have been deployed successfully in a variety of tasks, including low-rank matrix denoising \citep{donoho2014minimax, bao2021singular}, factor analysis \citep{cai2013sparse, fan2021robust, agterberg2022entrywise, zhang2022heteroskedastic, bao2022statistical, zhou2023deflated}, community detection \citep{rohe2011spectral, sussman2013consistent, emmanuel2018community,lei2022bias, rubin2022statistical}, pairwise ranking \citep{negahban2012iterative, chen2022partial} and matrix completion \citep{recht2011simpler, sourav2015matrix, athey2021matrix}. 
The widespread use of spectral methods has driven extensive research into the theoretical properties of eigenspaces and singular subspaces,
yielding 
normal approximation results \cite{bao2021singular, bao2022statistical, fan2022asymptotic, xia2021normal, agterberg2022entrywise-singular} 
as well as perturbation bounds \cite{yu2014useful, cai2018rate-optimal, cape2019two-to-infinity, o2023matrices}. 
For a more comprehensive recent review of spectral methods, see \cite{chen2021spectral}. 

\subsection{Eigenspace estimation in low-rank matrix models}
\label{subsec:intro:eigenspace}

Consider an unknown symmetric matrix $\mMstar = \mUstar \mLambdastar {\mUstar}^\top \in \R^{n\times n}$, where $\mUstar \in \R^{n\times r}$ has orthonormal columns and $\mLambdastar \in \R^{r\times r}$ is diagonal, containing the nonzero eigenvalues of $\mMstar$ ordered so that $|\lambdastar_1| \ge |\lambdastar_2| \ge \cdots \ge |\lambdastar_r| > 0$.
Our goal is to estimate $\mUstar$ from a noisy observation 
\vspace{-0.1mm}
\begin{equation} \label{eq:signal-plus-noise}
    \mY = \mMstar + \mW \in \R^{n \times n},
    \vspace{-0.2mm}
\end{equation}
where $\mW = [W_{ij}]_{1\leq i, j\leq n}$ is a symmetric random noise matrix with mean zero.
We restrict our attention here to the symmetric case for the sake of simplicity, but we expect that our results can be extended to the asymmetric case using standard dilation arguments \cite{Tropp2015}.

Throughout this paper, we assume that the entries of $\mW$ are subgaussian.
\vspace{-0.1mm}
\begin{assumption}\label{assump:W-Gauss}
    The entries of ~$\mW$ on and above the diagonal are independent and symmetric about zero with common variance $\sigma^2$ and common subgaussian parameter $\nuW$.
\end{assumption}
\vspace{-1.5mm}
We remind the reader that the subgaussian parameter $\nuW$ serves as a ``proxy'' for the variance.
Indeed, in the Gaussian case, we have $\sigma^2 = \nuW$, while $\sigma^2 \le \nuW$ more generally \cite{vershynin2018HDP,wainwright2019high}.

Spectral methods often estimate $\mUstar$ directly using the $r$ leading eigenvectors $\mU$ of $\mY$.
As a result, the entrywise and row-wise behavior of $\mU$ has attracted considerable attention
\citep{fan2018ell_, cape2019two-to-infinity, lei2019unified, abbe2020entrywise, agterberg2022estimating, bhardwaj2024matrix}. 
Given an estimator $\mUhat \in \R^{n\times r}$, the estimation error is measured in terms of the $\ell_{2, \infty}$ distance 
\vspace{-0.1mm}
\begin{equation}\label{eq:dtti}
    \dtti(\mUhat , \mUstar) = \min_{\mGamma \in \bbO_r} \|\mUstar - \mUhat \mGamma\|_{2, \infty},
\end{equation}
where the presence of $\mGamma$ is to resolve rotational non-identifiability.
In the rank-one case, this reduces to the $\ell_{\infty}$ distance,
 \vspace{-0.1mm}
\begin{equation}\label{eq:dinfty}
    d_\infty(\vuhat , \vustar) = \min \left\{\|\vustar - \vuhat\|_{\infty}, \|\vustar + \vuhat\|_{\infty}\right\}
\end{equation}
for $\vustar \in \R^n$ and a given estimator $\vuhat \in \R^n$. 
Estimation error bounds in $\ell_\infty$ or $\ell_{2,\infty}$ distance
typically rely on the incoherence parameter $\mu$ of $\mUstar$, defined as
\vspace{-0.1mm}
\begin{equation*}
    \mu = \frac{ n }{ r } \left\|\mUstar\right\|_{2,\infty}^2 \in [1,n/r].
\end{equation*}
In the rank-one case with Gaussian noise, if the leading eigenvalue $\lambdastar$ of $\mMstar$ satisfies $|\lambdastar| = \Omega(\sigma\sqrt{n})$, Theorem 4.1 of \cite{chen2021spectral} shows that with probability at least $1 - O(n^{-8})$, the leading eigenvector $\vu$ of $\mY$ satisfies
\vspace{-0.1mm}
\begin{equation} \label{eq:rank-one-spectral-upper}
    d_{\infty}(\vu, \vustar) \lesssim \frac{\sigma \left(\sqrt{\log n} + \sqrt{n}\|\vustar\|_{\infty}\right)}{|\lambdastar|} = \frac{\sigma \sqrt{\log n} + \sigma\sqrt{\mu} }{|\lambdastar|}.
\end{equation}
When $\sigma \sqrt{n} \lesssim |\lambdastar| \lesssim \sigma \sqrt{n \log n}$, a regime of most interest (no ploynomial-time algorithm is known when $|\lambdastar| \ll \sigma \sqrt{n}$ \citep{ahmed2020fundamental}, and estimation is easy when $|\lambdastar| \gg \sigma \sqrt{n \log n}$), we show in Lemma~\ref{lem:tight} that Equation~\eqref{eq:rank-one-spectral-upper} is not improvable up to log-factors , as a result of the Baik-Ben Arous-P\'{e}ch\'{e} 
(BBP) phase transition \cite{baik2005bbp} (see \cite{Florent2011eigenvalues,haddadi2021eigenvectors,feng2022unifying} for BBP-style phase transitions in the setting of this paper). 
\begin{lemma}\label{lem:tight}
Under Equation~\eqref{eq:signal-plus-noise} with Gaussian noise, let $\mMstar\! = \!\lambdastar \vustar {\vustar}^\top$.
If both limits $\lim_{n \to \infty} \lambdastar/(\sigma\sqrt{n}) > \!1$ and $\lim_{n\to\infty} \mu/n$ exist,
then for any $\mu \! \in \! [1,n]$, there exists $\vustar \in \bbS^{n-1}$ such that almost surely,
\vspace{-1.1mm}
\begin{equation} \label{eq:uhat-tight}
    \liminf_{n\to\infty} d_\infty(\vu, \vustar) \geq \lim_{n \to \infty} \frac{\sigma^2 \sqrt{n \mu}}{2\sqrt{2} |\lambdastar|^2}.
\end{equation}
    \vspace{-4mm}
\end{lemma}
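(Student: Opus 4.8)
\emph{Proof proposal.} The plan is to pit the spectral estimator $\vu$ against a signal vector $\vustar$ that is as ``peaked'' as the prescribed incoherence allows, and then to combine an elementary $\ell_2$-to-$\ell_\infty$ averaging inequality with the known behaviour of the leading eigenvector of a spiked Wigner matrix above the BBP threshold.

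First I would fix the adversarial signal. Given $\mu\in[1,n]$, set $s:=\lceil n/\mu\rceil$; since $\mu\le n$ one has $1\le s(\mu/n)$ and $\mu/n\le1$, so there is a unit vector $\vustar\in\bbS^{n-1}$ supported on a fixed index set $S$ of size $s$ with $\|\vustar\|_\infty=\sqrt{\mu/n}$, whose incoherence is exactly $n\|\vustar\|_\infty^2=\mu$; moreover $n/\mu\ge1$ forces $s\le 2n/\mu$, and this rounding is what will cost the factor $1/(2\sqrt2)$. Next I would establish, \emph{deterministically} for this $\vustar$, the bound
\begin{equation*}
  d_\infty(\vu,\vustar)\;\ge\;\frac{1-|\langle\vu,\vustar\rangle|}{\sqrt s}\;\ge\;\bigl(1-|\langle\vu,\vustar\rangle|\bigr)\sqrt{\frac{\mu}{2n}}.
\end{equation*}
To see the first inequality, write $\vu=\langle\vu,\vustar\rangle\vustar+\boldsymbol{r}$ with $\boldsymbol{r}\perp\vustar$, so $(\vu-\vustar)_i=(\langle\vu,\vustar\rangle-1)u^\star_i+r_i$; summing over $i\in S$ and using $\supp(\vustar)\subseteq S$, $\|\vustar\|_2=1$ and $\sum_{i\in S}u^\star_i r_i=\langle\vustar,\boldsymbol{r}\rangle=0$ gives $\sum_{i\in S}(\vu-\vustar)_i^2=(1-\langle\vu,\vustar\rangle)^2+\sum_{i\in S}r_i^2\ge(1-\langle\vu,\vustar\rangle)^2$, whence $\|\vu-\vustar\|_\infty\ge|1-\langle\vu,\vustar\rangle|/\sqrt s$; the same argument applied to $-\vu$ controls $\|\vu+\vustar\|_\infty$, and minimising over the sign yields the display.

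Then I would insert the spectral input. Dividing \eqref{eq:signal-plus-noise} by $\sigma\sqrt n$ realises it as the standard spiked Wigner model $\mY/(\sigma\sqrt n)=\theta_n\vustar{\vustar}^\top+\mW/(\sigma\sqrt n)$, where $\mW/(\sigma\sqrt n)$ has limiting spectral distribution the semicircle on $[-2,2]$ and $\theta_n:=\lambdastar/(\sigma\sqrt n)\to\theta_0>1$. Above the BBP threshold \cite{baik2005bbp,Florent2011eigenvalues,haddadi2021eigenvectors,feng2022unifying}, the leading eigenvector obeys, almost surely, $|\langle\vu,\vustar\rangle|^2\to1-\theta_0^{-2}=1-\lim_n n\sigma^2/|\lambdastar|^2$. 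Writing $x_n:=n\sigma^2/|\lambdastar|^2\in(0,1)$ and using $1-\sqrt{1-x}\ge x/2$ on $[0,1]$, the quantity $1-|\langle\vu,\vustar\rangle|$ converges a.s.\ to $1-\sqrt{1-\lim_n x_n}\ge\tfrac12\lim_n x_n$, while $\sqrt{\mu/(2n)}$ converges because $\lim_n\mu/n$ exists; plugging these into the deterministic bound and using that $\lim_n x_n\sqrt{\mu/n}$ exists then gives
\begin{equation*}
  \liminf_{n\to\infty}d_\infty(\vu,\vustar)\;\ge\;\lim_{n\to\infty}\frac{x_n}{2}\sqrt{\frac{\mu}{2n}}\;=\;\lim_{n\to\infty}\frac{\sigma^2\sqrt{n\mu}}{2\sqrt2\,|\lambdastar|^2},
\end{equation*}
which is \eqref{eq:uhat-tight} (when $\lim_n\mu/n=0$ the right side is $0$ and nothing is to prove).

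The only step requiring genuine care is the BBP input: I need almost sure convergence of $|\langle\vu,\vustar\rangle|^2$ in the supercritical regime for a \emph{sequence} of rank-one spikes with $\theta_n\to\theta_0>1$ and a varying eigenvector $\vustar_n$. Classical statements fix the spike, so I would appeal to the universality / isotropic-local-law theory for deformed Wigner matrices to justify both the limit and its almost sure mode of convergence; the averaging argument itself is routine.
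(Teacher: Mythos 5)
Your proposal is correct and follows essentially the same route as the paper: the same sparse adversarial construction with support size $s=\lceil n/\mu\rceil$, the same $\ell_2$-to-$\ell_\infty$ averaging over the support (costing the $1/\sqrt{2}$ from $s\le 2n/\mu$), the same BBP input $\langle\vu,\vustar\rangle^2\to 1-\theta^{-2}$, and the same elementary bound $1-\sqrt{1-x}\ge x/2$. The only cosmetic difference is that you obtain $\sum_{i\in S}(u_i-u^\star_i)^2\ge(1-\langle\vu,\vustar\rangle)^2$ via the orthogonal decomposition $\vu=\langle\vu,\vustar\rangle\vustar+\boldsymbol{r}$, whereas the paper gets the same bound by Cauchy--Schwarz using that $\vustar$ is constant on its support; your closing caveat about almost-sure BBP convergence for a sequence of varying spikes applies equally to the paper's own citation-based treatment.
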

Equation~\eqref{eq:uhat-tight} shows that the spectral estimate $\vu$ has an intrinsic dependence on $\mu$, with especially bad performance when $\mu$ is large and $|\lambdastar| \!= \!\Theta(\sigma \sqrt{n \log n})$.
In this large-$\mu$ regime, beyond low-rankedness, $\mMstar$ exhibits additional structure (e.g., sparsity) that is not fully utilized by the spectral estimator.
This suggests that the dependence on $\mu$ in Equations~\eqref{eq:rank-one-spectral-upper} and~\eqref{eq:uhat-tight} is a shortcoming of the spectral estimator.
In Algorithm~\ref{alg:rank:one:simple}, we present a new estimator designed to remove this dependence on $\mu$.
Theorem~\ref{thm:positive:rank-one:upper} shows that up to log-factors, it matches the minimax lower bound discussed below (see Equation~\eqref{eq:trivial:lower:II} in Section~\ref{subsec:lower-bounds}).
Experiments in Section~\ref{sec:numeric} further support our theoretical results.

In the rank-$r$ case with Gaussian noise, Theorem 4.2 in \cite{chen2021spectral} shows that when $|\lambdastar_r| \gtrsim \sigma \sqrt{n \log n}$,
\vspace{-0.5mm}
\begin{equation} \label{eq:rank-r-spectral-upper}
    \dtti(\mU, \mUstar)
    \lesssim \frac{\sigma  \left(\kappa \sqrt{\mu r}+\sqrt{r \log n}\right)}{|\lambdastar_r|}
\end{equation}
with probability at least $1 - O(n^{-8})$,
where $\kappa = |\lambdastar_1|/|\lambdastar_r|$ is the condition number of $\mMstar$.
Here again, the estimation error depends on $\mu$, and we conjecture that this dependence is also sub-optimal.
Algorithm~\ref{alg:rank:r:simple} extends our rank-$1$ estimation algorithm to this more general rank-$r$ case.
Experiments in Section~\ref{sec:numeric} show that Algorithm~\ref{alg:rank:r:simple} outperforms the na\"{i}ve spectral method in the general rank-$r$ case, with little to no sensitivity to the coherence $\mu$.
We note in passing that the dependence on $\kappa$ in Equation~\eqref{eq:rank-r-spectral-upper} can likely be removed \cite{agterberg2022entrywise,zhou2023deflated,wang2024analysis}, though we do not pursue this here.

\vspace{-0.75mm}
\subsection{Minimax lower bounds for subspace estimation}
\label{subsec:lower-bounds}
Minimax lower bounds have been established for a variety of subspace estimation problems, including sparse PCA~\cite{cai2013sparse, vu2013minimax}, matrix denoising~\cite{cai2018rate-optimal}, structural matrix estimation~\cite{cai2021optimal}, network estimation ~\citep{chao2021minimax, zhou2021rate-optimal} and estimating linear functions of eigenvectors \citep{li2021minimax, cheng2021tackling}.
Most of these studies focus on minimax lower bounds under the Frobenius or operator norm, derived using the packing numbers of Grassmann manifolds \cite{pajor1998metric,bendokat2024grassmann}.
In the matrix denoising literature, it is well-known that for $r \geq 1$, 
\vspace{-0.5mm}
\begin{equation} \label{eq:op-norm-minimax}
    \min_{\mGamma \in \bbO_r} \left\|\widehat{\mU} \mGamma -\mUstar\right\|_{\frobnorm} \gtrsim
    \min\left\{\frac{ \sigma \sqrt{n r}}{|\lambdastar_r|} , \sqrt{r} \right\}
\end{equation}
holds in a minimax sense (see Theorem 3 in \cite{cheng2021tackling} or Theorem 4 in \cite{zhou2021rate-optimal}). 
Far fewer papers have considered lower bounds under $\dtti$ \cite{cai2021subspace, agterberg2022estimating}, and these results are derived
via the trivial lower bound 
\begin{equation} \label{eq:trivial:lower}
d_{2,\infty}(\mUhat, \mUstar) \geq \frac{1}{\sqrt{n}} \min_{\mGamma \in \bbO_r} \left\|\mUhat \mGamma -\mUstar\right\|_{\frobnorm},
\end{equation}
which holds for any $\mUhat, \mUstar \in \R^{n\times r}$.
Applying the lower bounds in Equations~\eqref{eq:op-norm-minimax} and~\eqref{eq:trivial:lower}, we have
\begin{equation} \label{eq:trivial:lower:II}
\sup_{\mUstar \in \R^{n\times r}: {\mUstar}^\top \mUstar = \mI_r} d_{2,\infty}(\mUhat, \mUstar) \gtrsim \min\left\{ \frac{\sigma\sqrt{r}}{|\lambdastar_r|}, \sqrt{\frac{r}{n}} \right\}
\end{equation}
holds for any estimator $\mUhat \in \R^{n\times r}$.
When $\mUstar$ is incoherent, meaning that $\mu = O(1)$, this lower bound is achieved by the spectral estimation rate in Equation~\eqref{eq:rank-r-spectral-upper} when $|\lambdastar_r| = \Omega( \sigma\sqrt{n\log n} )$, and is achieved trivially by $\mUhat = \mZero_{n,r}$ when $|\lambdastar_r| = o( \sigma \sqrt{n} )$.
On the other hand, when $\mUstar$ is coherent, in the sense that $\mu = \omega(1)$, and $|\lambdastar_r| = \Omega( \sigma\sqrt{n\log n} )$, our discussion above in Section~\eqref{subsec:intro:eigenspace} (including our new results in Theorem~\ref{thm:positive:rank-one:upper}) suggests that the rate in Equation~\eqref{eq:trivial:lower:II} can be achieved up to log-factors.

This leaves open the question of the minimax rate when $\mu = \omega(1)$ and $|\lambdastar_r| = o( \sigma \sqrt{n} )$.
In this case, the lower bound in Equation~\eqref{eq:trivial:lower:II} cannot exceed $\sqrt{r/n}$.
This seems suboptimal, as we expect some dependence on $\|\mUstar\|_{2,\infty}$ (for example, consider the extreme case when $\lambdastar$ is very near zero).
This suboptimality arises from the na\"{i}ve lower bound in Equation~\eqref{eq:trivial:lower}.
In Theorem~\ref{thm:minimax}, we improve this lower bound, removing the $\sqrt{r/n}$ dependence in Equation~\eqref{eq:trivial:lower:II}.
This improved lower bound makes use of novel metric entropy bounds for singular subspaces, which may be of independent interest.

\subsection{Notation and roadmap}

We use $C$ to denote a constant whose precise values may change from line to line. 
For a positive integer $n$, we write $[n] = \{1,2,\dots,n\}$.
$|\calA|$ denotes the cardinality of a set $\calA$.
For real numbers $a$ and $b$, we write $a \vee b=\max \{a, b\}$ and $a \wedge b=\min \{a, b\}$.
For a vector $\vv=\left(v_1,v_2, \dots, v_n\right)^\top \in \R^n$, we use the norms $\|\vv\|_2=\sqrt{\sum_{i=1}^n v_i^2}$ and $\|\vv\|_{\infty}=\max_i\left|v_i\right|$.
We let $\ve_i \in \R^{n}, i \in [n]$ denote the standard basis vectors of $\R^{n}$.
$\bbS^{n-1} = \left\{ \vu \in \R^n: \|\vu\|_2 = 1 \right\}$ denotes the unit sphere.
For a matrix $\mM \in \R^{n\times n}$, $\mM_{i,\cdot}$ denotes its $i$-th row as a row vector, 
$\|\mM\|$ denotes its operator norm 
and $\|\mM\|_{2, \infty} = \max_{i\in[n]}\|\mM_{i,\cdot}\|_2$ indicates the maximum row-wise $\ell_2$ norm. 
$\mI_{n} \in \R^n$ denotes the $n$-by-$n$ identity matrix.
$\bbO_r$ denotes the $r$-dimensional orthogonal group.
We use both standard Landau notation and asymptotic notation: for positive functions $f(n)$ and $g(n)$, we write $f(n) \gg g(n)$, $f(n) = \omega(g(n))$ or $g(n) = o( f(n) )$ if $f(n)/g(n) \rightarrow \infty$ as $n \rightarrow \infty$.
We write $f(n) \gtrsim g(n)$, $f(n) = \Omega( g(n) )$ or $g(n) = O( f(n) )$ if for some constant $C > 0$, we have $f(n)/g(n) \ge C$ for all sufficiently large $n$.
We write $f(n) = \Theta( g(n) )$ if both $f(n) = O(g(n))$ and $g(n) = O(f(n))$.

The remainder of the paper is organized as follows.
In Section~\ref{sec:rank-1}, we study the eigenspace estimation problem for rank-one matrices and propose a new algorithm that achieves the minimax optimal error rate up to logarithmic factors in the growth regime where $|\lambdastar| = \Omega( \sigma \sqrt{n \log n } )$ (Theorem~\ref{thm:positive:rank-one:upper}).
In Section~\ref{sec:rank-r}, we extend this algorithm to rank-$r$ eigenspace estimation.
In Section~\ref{sec:metric-entropy}, we present theoretical results for the metric entropy of subspaces under $d_{\infty}$ and $\dtti$ and improve Equation~\eqref{eq:trivial:lower:II} under the growth regime where $|\lambdastar| = O( \sigma \sqrt{n} )$ (Theorem~\ref{thm:minimax}).
Numerical results are provided in Section \ref{sec:numeric}. We conclude in Section~\ref{sec:discussion} with a discussion of the limitations of our study and directions for future work. Detailed proofs of all lemmas and theorems can be found in the appendix.

\section{Rank-one matrix eigenspace estimation}
\label{sec:rank-1}
In this section, we study the model in Equation~\eqref{eq:signal-plus-noise} when $\mMstar$ is rank-one with eigendecomposition $\mMstar \!=\! \lambdastar \vustar {\vustar}^\top$, where $\lambdastar\! \in\! \R$ and $\vustar \in \bbS^{n-1}$.
As discussed in Section \ref{sec:intro}, spectral methods may have sub-optimal dependence on $\mu$ compared to the lower bound in Equation~\eqref{eq:trivial:lower:II}. 
We show that a better estimator is possible by working with a carefully selected subset of entries of $\mY$. 
We start with a key observation in Lemma~\ref{lem:positive}, which states that any unit vector contains a subset of large entries, and this subset has a sufficiently large cardinality.
This subset is the key to our new estimator.

\begin{lemma}\label{lem:positive} 
    Let $\calA \subset [0,1]$ be the set \vspace{-0.6mm}
    \begin{equation}\label{eq:Acal}
        \calA = \left\{\log^{-\frac{1}{2}} n, \ldots, \log^{-\frac{\lceil L \rceil - 1}{2}} n, \log^{-\frac{L}{2}} n\right\},
    \end{equation}
    where $L$ is given by \vspace{-1mm}
    \begin{equation} \label{eq:L-define}
        L = \frac{\log(2n)}{\log \log n}.
    \end{equation}
    For $n$ sufficiently large, for every $\vv \in \bbS^{n-1}$, there exists $\alpha_0 \in \calA$ such that
    \begin{equation} \label{eq:u-cardinal-bound}
        \frac{1}{\alpha_0^2} \geq |\{i : |v_i| \geq \alpha_0\}| > \frac{1}{\alpha_0^2 \log^2 n}. 
    \end{equation}
\end{lemma}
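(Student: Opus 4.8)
The plan is to reduce the statement to a counting estimate for the function $N(\alpha) := |\{i : |v_i| \ge \alpha\}|$ associated to $\vv \in \bbS^{n-1}$. The upper bound in Equation~\eqref{eq:u-cardinal-bound} requires no choice of $\alpha_0$ at all: since $1 = \|\vv\|_2^2 \ge \sum_{i:\,|v_i|\ge\alpha} v_i^2 \ge \alpha^2 N(\alpha)$, we have $N(\alpha)\le 1/\alpha^2$ for every $\alpha>0$. So the real content is to produce $\alpha_0\in\calA$ with $N(\alpha_0) > 1/(\alpha_0^2\log^2 n)$, and I would do this by contradiction: assume $N(\alpha)\le 1/(\alpha^2\log^2 n)$ simultaneously for all $\alpha\in\calA$ and derive $\|\vv\|_2^2 < 1$.

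Before the main estimate I would record the elementary structure of $\calA$ from Equations~\eqref{eq:Acal}--\eqref{eq:L-define}. Listing its elements in decreasing order as $\beta_1>\dots>\beta_m$, one has $\beta_k=\log^{-k/2}n$ for $1\le k\le\lceil L\rceil-1$, $\beta_m=\log^{-L/2}n$, and $m=|\calA|=\lceil L\rceil$. From the definition of $L$, $\log\beta_m=-\tfrac{L}{2}\log\log n=-\tfrac12\log(2n)$, so $\beta_m=(2n)^{-1/2}$; also $\beta_1=(\log n)^{-1/2}$. Finally, for every consecutive pair the ratio of squares obeys $\beta_k^2/\beta_{k+1}^2\le\log n$: it equals $\log n$ for $1\le k\le m-2$ and equals $\log^{\,L-(\lceil L\rceil-1)}n\le\log n$ for $k=m-1$, since $L-(\lceil L\rceil-1)\in(0,1]$.

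The main step is then a water-filling decomposition of $\|\vv\|_2^2$ along the thresholds $\beta_1,\dots,\beta_m$. Coordinates with $|v_i|\ge\beta_1$ contribute $0$: under the contradiction hypothesis $N(\beta_1)\le 1/(\beta_1^2\log^2 n)=1/\log n<1$ for large $n$, and being a nonnegative integer it is $0$. For $1\le k\le m-1$, the coordinates with $\beta_{k+1}\le|v_i|<\beta_k$ number at most $N(\beta_{k+1})$ and each contributes at most $\beta_k^2$, so this shell contributes at most $N(\beta_{k+1})\beta_k^2\le \beta_k^2/(\beta_{k+1}^2\log^2 n)\le 1/\log n$. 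Coordinates with $|v_i|<\beta_m$ number at most $n$ and each contributes at most $\beta_m^2=1/(2n)$, for a total of at most $1/2$. Adding up,
\[
1=\|\vv\|_2^2 \;\le\; 0 + (m-1)\cdot\frac{1}{\log n} + \frac12 ,
\]
so $m-1\ge\tfrac12\log n$. But $m-1=\lceil L\rceil-1\le L=\log(2n)/\log\log n=o(\log n)$, so $m-1<\tfrac12\log n$ for $n$ sufficiently large — a contradiction. Hence some $\alpha_0\in\calA$ satisfies $N(\alpha_0)>1/(\alpha_0^2\log^2 n)$, and combined with the universal bound $N(\alpha_0)\le 1/\alpha_0^2$ this is exactly Equation~\eqref{eq:u-cardinal-bound}.

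I expect the only real obstacle to be the bookkeeping that keeps $|\calA|$ of order $\log(2n)/\log\log n$ rather than $\log n$: the argument needs the shells to contribute strictly less than $1/2$ in total, which is why $\calA$ is taken to be a geometric grid with ratio $\sqrt{\log n}$ (a finer grid, e.g.\ with ratio $2$, would produce $\Theta(\log n)$ shells and the bound would fail), and one must also handle the slightly irregular final gap between $\log^{-(\lceil L\rceil-1)/2}n$ and $\log^{-L/2}n$ when $L\notin\Z$. Everything else is the routine two-line Chebyshev estimate and arithmetic.
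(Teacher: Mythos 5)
Your proposal is correct and uses essentially the same argument as the paper: the same shell decomposition along the grid $\calA$, the same observation that coordinates below $\log^{-L/2}n=1/\sqrt{2n}$ contribute at most $1/2$ to $\|\vv\|_2^2$, and the same fact that $|\calA|=\lceil L\rceil = o(\log n)$ shells cannot account for the remaining mass. The only difference is organizational — you argue by contradiction from the negation of the conclusion, whereas the paper directly pigeonholes to find the shell of maximal weighted cardinality.
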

To motivate Algorithm~\ref{alg:rank:one:simple}, suppose $\vustar$ is entrywise positive.
For $\alpha_0 \in \calA$, denote the set in Equation~\eqref{eq:u-cardinal-bound} by $I_{\alpha_0}$.
By Equation~\eqref{eq:u-cardinal-bound}, the sum of entries $\Mstar_{ij}$ with $i,j \in I_{\alpha_0}$ grows as $\Omega(|\lambdastar||I_{\alpha_0}| \log^{-2} n)$, while the sum of the corresponding entries of $\mW$ grows as $O(\sigma|I_{\alpha_0}|\sqrt{\log n})$.
That is, when $|\lambdastar|$ is sufficiently large, the signal contained in the entries $i,j \in I_{\alpha_0}$ dominates the noise. 
If we knew $I_{\alpha_0}$, utilizing the entries in $I_{\alpha_0}$ would reduce the estimation error incurred by small entries of $\vustar$. 
In practice, we do not know $I_{\alpha_0}$ and must estimate such a subset.
To ensure that this is possible, we impose a technical assumption on $\vustar$.
We discuss this assumption below in Remark \ref{rem:u:gap}.

\begin{assumption}\label{assump:u:gap}
    There exists an $\alpha_0 \in \calA$ satisfying Equation~\eqref{eq:u-cardinal-bound} and a constant $0 < \epsilon_0 \leq 1$ such that for all sufficiently large $n$,
    \vspace{-0.65mm}
    \begin{equation*}
        \{ i: |\ustar_i| \in [ (1-\epsilon_0)\alpha_0, (1+\epsilon_0)\alpha_0] \} = \emptyset.
    \end{equation*} 
\end{assumption}

We pause to give a few examples to illustrate Assumption~\ref{assump:u:gap}.
First, consider $\vustar = c_1\ve_1 + c_2 n^{-1/2} \vone_n$, with $c_1, c_2 = \Theta(1)$ chosen so that $\|\vustar\|_2 = 1$. 
We note that $c_1,c_2$ both depend on $n$, but are bounded away from zero as $n$ grows, and one can verify that Assumption~\ref{assump:u:gap} holds with $\alpha_0 = \log^{-1/2} n$ and $\epsilon_0 = 1/2$. 
As another example, consider $\vustar = n^{-1/2} \vone_n \in \R^n$.
One may verify that taking $\alpha_0 = (\log n)^{-L/2} = 1/\sqrt{2n}$ and $\epsilon_0 = 0.4$ satisfies the conditions in Assumption~\ref{assump:u:gap}. 

As an example of a setting that violates Assumption~\ref{assump:u:gap}, consider $\vustar$ obtained by renormalizing a vector of i.i.d.~Gaussians.
This results in $\vustar$ being Haar-distributed on $\bbS^{n-1}$ and Assumption~\ref{assump:u:gap} is violated with high probability.
To see this, note that $\vustar \approx \vg/\sqrt{n}$ where $\vg \sim N(0, \mI_n)$ (see Theorem 3.4.6 in \cite{vershynin2018HDP}).
Since with high probability $\|\vg\|_{\infty} = O(\sqrt{\log n})$, Equation~\eqref{eq:u-cardinal-bound} holds only when $\alpha_0 \approx 1/\sqrt{n}$.
For Assumption~\ref{assump:u:gap} to hold, there must be a gap of $\Theta(1)$ between the entries of $\vg$, which fails with high probability. 

It is tempting to conclude from the counter-example just given that renormalizing a vector of i.i.d.~entries must necessarily result in a $\vustar$ that violates Assumption 2, but this is not always the case.
If the entrywise distribution has suitable structure, $\vustar$ may still obey Assumption 2.
As an illustration, suppose that $\vustar$ is obtained by renormalizing a vector $\vg=(g_1,g_2,\dots,g_n)^\top \in \R^n$ with i.i.d.~entries from a distribution with variance $1$, so that $\vustar \approx \vg/\sqrt{n}$. 
If the $g_i$ are drawn by taking $g_i=a$ with probability $p$ and $g_i=b$ with probability $1-p$, then each entry of $\vustar$ is either approximately $ap/\sqrt{n}$ or approximately $b(1-p)/\sqrt{n}$.
Choosing $a,b$ and $p$ appropriately, we can ensure a gap between the entries of $\vustar$ of size $O(n^{-1/2})$, and we can take $\alpha_0=(\log n)^{-L}\approx n^{-1/2}$.

Our new estimator of $\vustar$ is a refinement based on the leading eigenvector and eigenvalue of $\mY$.
For the spectral estimator to provide a useful initialization, we 
make Assumption~\ref{assump:lambdastar} on $\lambdastar$.
\begin{assumption} \label{assump:lambdastar}
    There exists a constant $C_1 > 2400/\epsilon_0$, where $\epsilon_0$ is as in Assumption~\ref{assump:u:gap}, such that the leading eigenvalue $\lambdastar$ of $\mMstar$ satisfies
    \vspace{-0.6mm}
    \begin{equation}
        |\lambdastar| \geq C_1 \sqrt{\nuW n\log n}.
    \end{equation}
\end{assumption}
\begin{remark}\label{rem:lambdastar}
    The dependence on $\epsilon_0$ in Assumption~\ref{assump:lambdastar} is for technical reasons discussed in Remark~\ref{rem:u:gap}.
    In our proofs, we do not optimize the dependence on $C_1$ and assume that $C_1 > 2400/\epsilon_0$.
    As demonstrated in our experiments in Section~\ref{sec:numeric},
    $|\lambdastar| \geq \sqrt{\nuW n\log n}$
    appears sufficient in practice.
    When
    $|\lambdastar| \leq \sqrt{\nuW n}$,
    the spectral estimator fails to provide any useful initial estimate and it is believed that no polynomial-time algorithm can succeed.
    We provide more discussion on this matter in Remark~\ref{rem:small-lambda}. 
\end{remark}

The final ingredient required for Algorithm~\ref{alg:rank:one:simple} is a leading eigenvalue estimate $\lambdahat$ that recovers the true signal eigenvalue $\lambdastar$ suitably well.
\begin{assumption}\label{assump:lambdahat}
Under Assumption \ref{assump:lambdastar}, $\lambdahat$ is such that with probability at least $1 - O(n^{-8}\log n)$,  
    \begin{equation*}
        \left|\lambdahat - \lambdastar \right|
        \leq C_2 \sqrt{ \nuW } \log^{5/2} n.
    \end{equation*}
\end{assumption}
Assumption~\ref{assump:lambdahat} seems stringent at first.
The top eigenvalue $\lambda$ of $\mY$ achieves only a 
$O(\sqrt{\nuW n})$ error rate (see Lemma 2.2 and Equation (3.12) in \cite{chen2021spectral}).
This is because $\lambda \!=\! \lambdastar\! +\! n\sigma^2\!/\!\lambdastar\!\! + \!O(\sqrt{\nuW \! \log n})$~(see \cite{peng2012eigenvalues} or Theorem 2.3 in \cite{capitaine2009largest}; see also \cite{chen2021asymmetry,silverstein1994spectral,bryc2020singular}).
Luckily, in our setting, the bias-corrected estimate
\begin{equation} \label{eq:biased-correct}
    \lambdahatc = \frac{1}{2} \left(\lambda + \sqrt{\lambda^2 - 4n\sigma^2}\right),
\end{equation}
{\em does} satisfy Assumption~\ref{assump:lambdahat} \cite{chen2021asymmetry}.
Another estimator, which falls naturally out of Algorithm~\ref{alg:rank:one:simple}, also satisfies Assumption~\ref{assump:lambdahat}.
We find that it performs similarly to the debiased estimator $\lambdahatc$ empirically, and so we do not explore it here.
Theoretical results for this estimator are in the appendix.

With the above assumptions in hand, we propose a new method given in Algorithm~\ref{alg:rank:one:simple}. We note that the main computational bottleneck of Algorithm~\ref{alg:rank:one:simple} is to find the leading eigenvector $\vu$ of $\mY$, and thus the runtime is essentially the same as for standard spectral methods (see, e.g., \cite{GolubVanLoad2013}).

\begin{algorithm}[H]
\caption{Coherence-free eigenvector estimation algorithm}\label{alg:rank:one:simple}
\begin{algorithmic}[1]
\Require Observed matrix $\mY \in \R^{n\times n}$; leading eigenvalue estimate $\lambdahat$; parameter $\beta > 0$. 
\Ensure $\vuhat \in \R^n$
\State \label{step:topeig} 
If $\lambdahat < 0$, set $\mY = -\mY$.
Obtain the top eigenvector $\vu \in \bbS^{n-1}$ of $\mY$.
\State \label{step:pickalpha0}
Pick any $\alphahat \in \calA$ 
such that the set $\Ihat = \left\{i : |u_i| \geq \alphahat \right\}$ satisfies
\begin{equation}\label{eq:alpha0}
\vspace{-1mm}
    | \Ihat | \geq \frac{1}{\alphahat^2 \log^2 n},~~~\text{ and }
\end{equation}
\begin{equation} \label{eq:gap}
    \left\{i : (1-\beta)\alphahat < |u_i| < (1+\beta)\alphahat \right\} = \emptyset, 
\end{equation}
\State \label{step:setQ} Let $\mQ \in \R^{n\times n}$ be diagonal with $Q_{kk} = \begin{cases} \sgn{u_k} &\mbox{ if } k \in \Ihat \\
            1 &\mbox{ if } k \in \Ihat^c \end{cases}$~
            and let $\mYt = \mQ \mY \mQ$.
\State \label{step:vhat} Set $\Shat = \sqrt{ \sum_{j,k\in\Ihat} \Yt_{jk} }$
and let $\vhat_j  = \left( \sum_{k \in \Ihat} \Yt_{j k} \right) \Big/ \left( \Shat \sqrt{\lambdahat} \right)$ for $j \in [n]$.

\State \label{step:final} For each $j \in [n]$, set
$\uhat_j = u_j$ if $|u_j| \le \left(\sigma / \lambdahat \right) \log n$, and $\uhat_j = Q_{jj} \vhat_j$ otherwise.
\end{algorithmic}
\end{algorithm}

\begin{remark} \label{rem:u:gap}
    In our proofs, we set $\beta = \epsilon_0/2$. Equation~\eqref{eq:gap}, Assumption~\ref{assump:u:gap} and the $\epsilon_0$-dependence in Assumption~\ref{assump:lambdastar} are technical requirements to ensure that with high probability, $\Ihat$ is one of a few deterministic sets, avoiding the complicated dependence between $\Ihat$ and $\mW$.
    Empirically,
    Algorithm~\ref{alg:rank:one:simple} works well even without these technical conditions.
    We conjecture that Assumption~\ref{assump:u:gap} as well as the $\epsilon_0$-dependence in Assumption~\ref{assump:lambdastar} can be removed.
    See Section~\ref{sec:numeric} for further discussion.
\end{remark}

As alluded to above, the intuition behind Algorithm~\ref{alg:rank:one:simple} is that we aim to concentrate our efforts on estimating the large entries of $\vustar$.
Consider an entry of $\mY$ given by $\lambdastar \ustar_i \ustar_j + W_{ij}$.    Intuitively, locations corresponding to small entries of $\vustar$ produce small $\ustar_i \ustar_j$.
These entries of $\mY$ have a small signal to noise ratio compared to those arising from products of large entries of $\vustar$.
If we knew the locations of the large entries of $\vustar$, we could use them to obtain more accurate estimates of $\vustar$.
Essentially, both Algorithm~\ref{alg:rank:one:simple} above and Algorithm~\ref{alg:rank:r:simple} presented below consist of two parts: finding the large locations, and using those locations to improve our initial spectral estimate of $\vustar$.

Algorithm \ref{alg:rank:one:simple} assumes that the entrywise variance $\sigma^2$ of $\mW$ is known. 
Of course, in practice, this is not the case, and we must estimate $\sigma^2$.
There are several well-established methods for this estimation task.
For example, when $\mW$ is asymmetric, \cite{gavish2014optimal} introduces an estimator based on the median singular value of $\mY$.
In our case, it suffices to estimate $\sigma^2$ using a simple plug-in estimator 
\begin{equation}\label{eq:sigma-plug-in}
    \sighat^2 = \frac{2}{n(n+1)}\sum_{1\leq i\leq j\leq n} \left(Y_{ij} - \Mhat_{ij}\right)^2,
\end{equation}
where $\mMhat = \lambda \vu \vu^\top \in \R^{n \times n}$.
In general, if $\mMstar$ has rank $r$, then we set $\mMhat = \mU \mLambda \mU^\top$,
where $\mLambda$ is the leading $r$ eigenvalues of $\mY$ (sorted by non-increasing magnitude) and $\mU \in \R^{n\times r}$ contains the corresponding $r$ leading orthonormal eigenvectors as its columns.
Lemma~\ref{lem:sighat} controls the estimation error of the plug-in estimator $\sighat^2$ for a general rank-$r$ signal matrix. 
\begin{lemma} \label{lem:sighat}
    Under the model given in Equation~\eqref{eq:signal-plus-noise}, let $\mMstar = \mUstar \mLambdastar \mUstar^\top$ be a rank-$r$ matrix with $r \geq 1$, where $\mLambdastar = \diag{(\lambdastar_1, \lambdastar_2, \dots, \lambdastar_r)}$ such that $|\lambdastar_1| \geq \dots \ge |\lambdastar_r|$.
    Suppose that Assumption~\ref{assump:W-Gauss} holds and that
    $|\lambdastar_r| \geq 20\sqrt{\nuW n}$,
    then the estimator $\sighat^2$ given in Equation~\eqref{eq:sigma-plug-in} is such that with probability at least $1 - O(n^{-8})$, 
    \begin{equation}\label{eq:sig-est-err}
        \left|\sighat^2 - \sigma^2\right|
        \leq \frac{400\nuW r}{n} + \frac{4 \nuW \sqrt{\log n}}{c n}+\frac{200 \nuW r \sqrt{\log n}}{n^{3 / 2}}
    \end{equation}
    where $c>0$ is a universal constant. 
\end{lemma}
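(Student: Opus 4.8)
The plan is to reduce $\sighat^2$ to a Frobenius-norm quantity, split it into a ``pure noise'' part that concentrates around $\sigma^2$ and a ``signal estimation'' part controlled by a deterministic perturbation argument, and then collect terms. Writing $\mE = \mY - \mMhat$ (which is symmetric), the identity $2\sum_{1\le i\le j\le n}E_{ij}^2 = \|\mE\|_{\frobnorm}^2 + \sum_{i=1}^n E_{ii}^2$ gives
\begin{equation*}
    \sighat^2 = \frac{1}{n(n+1)}\left(\|\mY - \mMhat\|_{\frobnorm}^2 + \sum_{i=1}^n (Y_{ii} - \Mhat_{ii})^2\right).
\end{equation*}
Substituting $\mY - \mMhat = (\mMstar - \mMhat) + \mW$ and expanding the square,
\begin{equation*}
    \|\mY - \mMhat\|_{\frobnorm}^2 = \|\mW\|_{\frobnorm}^2 + 2\langle \mMstar - \mMhat, \mW\rangle + \|\mMstar - \mMhat\|_{\frobnorm}^2,
\end{equation*}
and likewise $(Y_{ii}-\Mhat_{ii})^2 = W_{ii}^2 + 2(\mMstar - \mMhat)_{ii}W_{ii} + (\mMstar-\mMhat)_{ii}^2$. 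Since $\E\|\mW\|_{\frobnorm}^2 = n^2\sigma^2$, $\E\sum_i W_{ii}^2 = n\sigma^2$, and $\tfrac{1}{n(n+1)}(n^2\sigma^2 + n\sigma^2) = \sigma^2$ exactly, it suffices to bound each of the remaining terms after dividing by $n(n+1)$.

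For the pure-noise fluctuations, Assumption~\ref{assump:W-Gauss} makes $\{W_{ij}^2 : 1\le i\le j\le n\}$ independent with subexponential norm $O(\nuW)$, so Bernstein's inequality \cite{vershynin2018HDP} gives, with probability $1 - O(n^{-8})$,
\begin{equation*}
    \bigl|\,\|\mW\|_{\frobnorm}^2 - n^2\sigma^2\,\bigr| = O\!\left(\nuW\bigl(n\sqrt{\log n} + \log n\bigr)\right), \qquad \Bigl|\,\textstyle\sum_{i=1}^n W_{ii}^2 - n\sigma^2\,\Bigr| = O\!\left(\nuW\sqrt{n\log n}\right).
\end{equation*}
After division by $n(n+1)$ these produce the $\nuW\sqrt{\log n}/n$ and $\nuW\sqrt{\log n}/n^{3/2}$ contributions to Equation~\eqref{eq:sig-est-err}.

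The remaining terms all involve $\mMstar - \mMhat$, and here the delicate point is that $\mMhat$ depends on $\mW$, so $\langle \mMstar - \mMhat, \mW\rangle$ is not a mean-zero linear functional of $\mW$ and must be controlled deterministically in terms of $\|\mW\|$. Since $\mMhat$ is the best rank-$r$ approximation of $\mY$ (Eckart--Young--Mirsky) and $\mMstar$ has rank $r$, Weyl's inequality gives $\|\mY - \mMhat\| = |\lambda_{r+1}(\mY)| \le \|\mW\|$, so $\mMstar - \mMhat$ has rank at most $2r$ and
\begin{equation*}
    \|\mMstar - \mMhat\|_{\frobnorm} \le \sqrt{2r}\,\|\mMstar - \mMhat\| \le \sqrt{2r}\bigl(\|\mY - \mMhat\| + \|\mW\|\bigr) \le 2\sqrt{2r}\,\|\mW\|.
\end{equation*}
A standard operator-norm bound for subgaussian symmetric matrices \cite{vershynin2018HDP,Tropp2015} gives $\|\mW\| \le C\sqrt{\nuW n}$ with probability $1 - O(n^{-8})$ (the hypothesis $|\lambdastar_r| \ge 20\sqrt{\nuW n}$ places $\mMhat$ in the regime where these estimates are meaningful). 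Hence $\|\mMstar - \mMhat\|_{\frobnorm}^2 = O(r\nuW n)$; by trace duality $|\langle \mMstar - \mMhat, \mW\rangle| \le \|\mMstar - \mMhat\|_{*}\|\mW\| \le \sqrt{2r}\,\|\mMstar - \mMhat\|_{\frobnorm}\,\|\mW\| = O(r\nuW n)$; and the diagonal terms $\sum_i(\mMstar-\mMhat)_{ii}W_{ii}$ and $\sum_i(\mMstar-\mMhat)_{ii}^2$ are handled by Cauchy--Schwarz against $\sum_i W_{ii}^2 = O(n\nuW)$ and by $\|\mMstar-\mMhat\|_{\frobnorm}^2$, respectively. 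Dividing by $n(n+1)$ gives the $\nuW r/n$ contribution.

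A union bound over the $O(1)$ high-probability events above and a final summation of the pieces yield Equation~\eqref{eq:sig-est-err}. The only step that needs real care is the bound on $\langle \mMstar - \mMhat, \mW\rangle$: the dependence between $\mMhat$ and $\mW$ rules out a Gaussian-concentration argument and forces the nuclear-norm/operator-norm route, and keeping $\mMstar - \mMhat$ intact---rather than expanding into eigenvalue differences $\lambda_i(\mY)^2 - (\lambdastar_i)^2$---is exactly what avoids a spurious dependence of the bound on $|\lambdastar_1|$ or the condition number $\kappa$.
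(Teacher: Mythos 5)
Your proposal is correct and follows essentially the same route as the paper's proof: decompose the residual into a signal-error term, a pure-noise term, and a cross term; bound $\|\mMstar-\mMhat\|_{\frobnorm}^2$ by rank times $\|\mW\|^2$; control the cross term via nuclear-norm/operator-norm duality using $\rank(\mMstar-\mMhat)\le 2r$; and apply subexponential concentration to $\sum_{i\le j}W_{ij}^2$. The only cosmetic differences are your rewriting of the upper-triangular sum via the full Frobenius norm and your use of Cauchy--Schwarz (rather than $\max_i|W_{ii}|$ times the nuclear norm) for the diagonal cross term.
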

\begin{remark}\label{rem:sighat}
    We use the plug-in estimator $\sighat$ in the debiased estimator $\lambdahatc$ in Equation~\eqref{eq:biased-correct} and to construct $\vuhat$ in Step~\ref{step:final} of Algorithm~\ref{alg:rank:one:simple}. Lemma~\ref{lem:sighat} shows that this only introduces an extra log-factor to the error bound, which does not affect the estimation error rate of either $\lambdastar$ or $\vustar$. 
\end{remark}

Our main result, Theorem \ref{thm:positive:rank-one:upper}, controls the estimation error of Algorithm~\ref{alg:rank:one:simple}, as measured under $\ell_\infty$.
\begin{theorem}\label{thm:positive:rank-one:upper}
    Under the model in Equation~\eqref{eq:signal-plus-noise}, suppose that 
    Assumptions~\ref{assump:W-Gauss},~\ref{assump:u:gap},~\ref{assump:lambdastar} and~\ref{assump:lambdahat} hold.
    Then for $n$ sufficiently large, the estimate $\vuhat \in \R^{n}$ produced by Algorithm~\ref{alg:rank:one:simple} satisfies
    \begin{equation*}
        d_{\infty}\left(\vuhat, \vustar\right)
        \leq \frac{C\sqrt{\nuW}(\log n)^{5/2}}{|\lambdastar|}
    \end{equation*}
    with probability at least $1 - O(n^{-8}\log n)$, where $C > 0$ is a universal constant.
\end{theorem}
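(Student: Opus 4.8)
The plan is to run Algorithm~\ref{alg:rank:one:simple} forward on a high‑probability event $\calE$, reducing everything to the behaviour of the spectral pair $(\vu,\lambdahat)$. Since Step~\ref{step:topeig} flips $\mY$ when $\lambdahat<0$ and $d_\infty$ ignores the sign of $\vustar$, I would assume $\lambdastar>0$ and orient $\vustar$ so that $\langle\vu,\vustar\rangle\ge0$. On $\calE$ I would collect: (a)~$\|\mW\|\lesssim\sqrt{\nuW n}$; (b)~the standard leave‑one‑out per‑entry bound for the rank‑one spiked model, $|u_i-\ustar_i|\lesssim \frac{\sqrt{\nuW\log n}}{\lambdastar}+\frac{\nuW n}{\lambdastar^2}|\ustar_i|$ (plus lower‑order terms) for every $i\in[n]$ (cf.~\cite{chen2021spectral,abbe2020entrywise}), which under Assumption~\ref{assump:lambdastar} simplifies to $|u_i-\ustar_i|\lesssim \frac{1}{C_1\sqrt n}+\frac{|\ustar_i|}{C_1^2\log n}$; (c)~$|\lambdahat-\lambdastar|\le C_2\sqrt{\nuW}(\log n)^{5/2}$ from Assumption~\ref{assump:lambdahat}; and (d)~the linear‑in‑$\mW$ tail bounds invoked below. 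By Assumptions~\ref{assump:W-Gauss}--\ref{assump:lambdahat}, $\Pr(\calE^c)=O(n^{-8}\log n)$.

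The heart of the proof, and what I expect to be the main obstacle, is to pin $\Ihat$ down as a \emph{deterministic} set, so as to break the dependence between $\Ihat$ and $\mW$. I would show that on $\calE$, whenever $\alpha\in\calA$ is a legal choice in Step~\ref{step:pickalpha0} (i.e.\ satisfies \eqref{eq:alpha0} and \eqref{eq:gap}), the random set $\{i:|u_i|\ge\alpha\}$ equals the deterministic set $J_\alpha:=\{i:|\ustar_i|\ge\alpha\}$ and $\sgn{u_i}=\sgn{\ustar_i}$ for $i\in J_\alpha$. The point: for coordinates with $|\ustar_i|$ comparable to $\alpha$, bound (b) gives $|u_i-\ustar_i|\lesssim \frac{1}{C_1\sqrt n}+\frac{\alpha}{C_1^2\log n}$, and since every $\alpha\in\calA$ satisfies $\alpha\ge(\log n)^{-L/2}=(2n)^{-1/2}$ while $C_1>2400/\epsilon_0$, this is $<\frac{\epsilon_0}{4}\alpha$ for $n$ large; combined with \eqref{eq:gap} — which forces $\vustar$ itself to have no coordinate in a relative‑width‑$\Theta(\epsilon_0)$ window about $\alpha$ — no coordinate of $\vu$ can straddle the threshold $\alpha$, so the level set is deterministic (coordinates far from $\alpha$ move by a small multiple of themselves and stay on the correct side). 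The same estimate shows the $\alpha_0$ of Assumption~\ref{assump:u:gap} is itself a legal choice, so Step~\ref{step:pickalpha0} succeeds; and since $|\calA|\asymp\log n/\log\log n$, with probability $1-O(n^{-8}\log n)$ the set $\Ihat$ is one of only $O(\log n/\log\log n)$ deterministic sets, over which a union bound is harmless.

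Fix such a deterministic $\Ihat=J_{\alphahat}$ and put $s:=\sum_{i\in\Ihat}|\ustar_i|$; from \eqref{eq:alpha0} and $|\ustar_i|\ge\alphahat$ on $\Ihat$ we get $s^2\ge|\Ihat|/\log^2 n$ and $s\ge\sqrt{|\Ihat|}/\log n$. Since $\mQ$ corrects signs on $\Ihat$, $\Yt_{jk}=Q_{jj}Q_{kk}(\lambdastar\ustar_j\ustar_k+W_{jk})$, so $\sum_{j,k\in\Ihat}\Yt_{jk}=\lambdastar s^2+\zeta$ and $\sum_{k\in\Ihat}\Yt_{jk}=Q_{jj}(\lambdastar\ustar_j s+\xi_j)$, where $\zeta=\sum_{j,k\in\Ihat}Q_{jj}Q_{kk}W_{jk}$ and $\xi_j=\sum_{k\in\Ihat}Q_{kk}W_{jk}$ are \emph{linear} forms in the independent subgaussian entries of $\mW$ with $\{-1,0,1\}$ coefficients; hence, with probability $1-O(n^{-9})$, $|\zeta|\lesssim|\Ihat|\sqrt{\nuW\log n}$ and $\max_j|\xi_j|\lesssim\sqrt{|\Ihat|\nuW\log n}$. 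Then $\sum_{j,k\in\Ihat}\Yt_{jk}\ge\lambdastar s^2-|\zeta|>0$ (so $\Shat$ is well defined) and $\Shat\sqrt{\lambdahat}=s\lambdastar(1+\delta)$ with
\[
|\delta|\;\lesssim\;\frac{|\zeta|}{\lambdastar s^2}+\frac{|\lambdahat-\lambdastar|}{\lambdastar}\;\lesssim\;\frac{\sqrt{\nuW}\,(\log n)^{5/2}}{\lambdastar}\;\longrightarrow\;0 .
\]
For $j$ in the $\vhat$‑branch of Step~\ref{step:final} this gives $\uhat_j=\frac{\lambdastar\ustar_j s+\xi_j}{\Shat\sqrt{\lambdahat}}=\frac{\ustar_j}{1+\delta}+\frac{\xi_j}{s\lambdastar(1+\delta)}$, whence, using $\|\vustar\|_\infty\le1$ and $s\ge\sqrt{|\Ihat|}/\log n$,
\[
|\uhat_j-\ustar_j|\;\lesssim\;\|\vustar\|_\infty|\delta|+\frac{\max_j|\xi_j|}{s\lambdastar}\;\lesssim\;\frac{\sqrt{\nuW}\,(\log n)^{5/2}}{\lambdastar}.
\]
For $j$ in the other branch, $|u_j|\le(\sigma/\lambdahat)\log n$; plugging this into (b) and solving for $|\ustar_j|$ yields $|\ustar_j|\lesssim\sqrt{\nuW}\log n/\lambdastar$, so (b) again gives $|\uhat_j-\ustar_j|=|u_j-\ustar_j|\lesssim\sqrt{\nuW}(\log n)^{5/2}/\lambdastar$. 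Maximizing over $j\in[n]$ gives $d_\infty(\vuhat,\vustar)\lesssim\sqrt{\nuW}(\log n)^{5/2}/\lambdastar$. Finally, Lemma~\ref{lem:sighat} and Remark~\ref{rem:sighat} show that replacing $\sigma^2$ by $\sighat^2$ (in $\lambdahatc$ and in Step~\ref{step:final}) changes these bounds only by constants, and the union bound over $\calE^c$, the $O(\log n/\log\log n)$ candidate sets, and the $n$ indices $j$ yields the stated probability $1-O(n^{-8}\log n)$.
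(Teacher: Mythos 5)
Your proposal is correct and follows essentially the same route as the paper's proof: a leave-one-out entrywise bound to pin $\Ihat$ down as one of $O(\log n)$ deterministic level sets of $\vustar$ (with correct signs), concentration of the linear forms $\zeta$ and $\xi_j$ to control $\Shat$ and the numerator of $\vhat_j$, and a separate argument for the small-entry branch of Step~\ref{step:final}. The only differences are cosmetic (you fold the $\Shat$ and $\lambdahat$ errors into a single relative factor $1+\delta$, and your quoted leave-one-out bound has $\nuW n/\lambdastar^2$ where the paper uses $\sqrt{\nuW n}/|\lambdastar|$ as the coefficient of $|\ustar_i|$, which does not affect the threshold argument).
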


\begin{remark}\label{rem:thm:uhat}
    Under Gaussian noise, in the regime $|\lambdastar| = \Omega(\sigma\sqrt{n \log n})$, our upper bound is minimax rate-optimal up to log-factors compared to the lower bound in Equation~\eqref{eq:trivial:lower:II}. 
    In particular, the rate obtained in Theorem~\ref{thm:positive:rank-one:upper} does not depend on the coherence parameter $\mu$. 
    A more careful analysis might be able to remove some of the log-factors in Theorem~\ref{thm:positive:rank-one:upper}, but we leave this matter for future work.
\end{remark}

\section{Rank-$r$ matrix eigenspace estimation}
\label{sec:rank-r}

To handle the more general case in which the signal matrix $\mMstar$ is rank $r$, we propose Algorithm~\ref{alg:rank:r:simple}, which yields an estimate of $\mUstar$.
This is achieved by estimating the $r$ leading eigenvectors separately, then combining them into an estimate $\mUhat \in \R^{n \times r}$.
We explore the empirical performance of Algorithm~\ref{alg:rank:r:simple} via simulation in Section~\ref{subsec:rank-r-sim} and leave its theoretical analysis to  future work.
Algorithm~\ref{alg:rank:r:simple} requires the observed matrix $\mY$ and an estimate of the $k$-th leading eigenvalue $\lambdastar_k$ of $\mMstar$ as input.
Similar to the rank-one case, the top-$r$ leading eigenvalues $\lambda_1, \lambda_2, \dots, \lambda_r$ of $\mY$ are biased \cite{peng2012eigenvalues}.
We again use a debiased estimator $\lambdahat_{k,c}$ for $k \in [r]$ as input to Algorithm~\ref{alg:rank:r:simple}, given by
\begin{equation} \label{eq:debiased-k-lambdahat}
    \lambdahat_{k,c} = \frac{1}{2}\left(\lambda_k + \sqrt{\lambda_k^2 - 4n\sigma^2}\right).
\end{equation}

Algorithm~\ref{alg:rank:r:simple} is a natural extension of Algorithm~\ref{alg:rank:one:simple}. Essentially, it converts the problem of estimating $\vustar_k$ into an eigenvector estimation problem under a rank-one signal-plus-noise model given by
\begin{equation*}
    \mY = \lambdastar_k \vustar_k \vu^{\star\top}_k + (\mMstar_{-k} + \mW) = \lambdastar_k \vustar_k \vu^{\star\top}_k + \left(\mMstar - \lambdastar_k \vustar_k \vu^{\star\top}_k + \mW\right),
\end{equation*}
where $\mMstar_{-k}=\mMstar - \lambdastar_k \vustar_k \vu^{\star\top}_k$. 
There are two differences between Algorithms
~\ref{alg:rank:one:simple}
and
~\ref{alg:rank:r:simple}. 
First, we remove Equation~\eqref{eq:gap} from Algorithm~\ref{alg:rank:one:simple}, as this is mainly a technical requirement (see Remark~\ref{rem:u:gap}).
More importantly, in Algorithm~\ref{alg:rank:r:simple}, we conjugate $\mY$ by a random orthogonal matrix $\mH \in \bbO_{n}$.
The $(r\!-\!1)$ leading eigenvectors of $\mH\mMstar_{-k}\mH^\top$ form a random subspace of $\R^{n \times (r-1)}$,
which allows us to treat $\mH\mMstar_{-k}\mH^\top$ as a noise matrix.
By way of illustration, consider the rank-$2$ case with $\mMstar_{-1} = \lambdastar_2 \vustar_2 \vu_2^{\star\top}$.
$\mH \vustar_2$ behaves similarly to a random vector drawn uniformly from $\bbS^{n-1}$.
Therefore, one would expect $\mH\vustar_2 \vu_2^{\star\top}\mH$ to behave similarly to a noise matrix $n^{-1} \vg \vg^\top$, where $\vg \sim N(0, \mI_n)$ (see Chapter 3 of \cite{vershynin2018HDP}).
As in the discussion after Lemma~\ref{lem:positive}, we can find a set of indices $I_{\alpha_0}$ such that the signal in the corresponding entries of $\mH \vustar_1$ dominates the noise.

\begin{algorithm}
\caption{Coherent optimal eigenvector estimation algorithm}\label{alg:rank:r:simple}
\begin{algorithmic}[1]
\Require Observed matrix $\mY \in \R^{n\times n}$; $k$-th leading eigenvalue estimate $\lambdahat_k$. If $\lambdahat_k\!\! < \!0$, set $\mY\!\! = \!-\mY$.
\Ensure $\vuhat_k \in \R^n$
\State 
Obtain the top-$r$ eigenvectors $\mUt$ of $\mH \mY \mH^\top\!$, where $\mH \!\in \! \bbO_n$ is Haar-distributed.
\State \label{step:i:rank-r} Set $\mQ = \diag\left(\sgn{\mUt_{\cdot,k}}\right)$ and set $\mYt = \mQ \mH \mY \mH^\top \mQ$. 
\State \label{step:r:pick-alpha} Pick an $\alpha_0 \in \calA$ such that for $\Ihat := \left\{i : |\Ut_{i,k}| \geq \alpha_0\right\}$, $|\Ihat| \geq 1/\alpha_0^2 \log^2 n.$ 
\State Set $\Shat=\left(\sum_{j, \ell \in \hat{I}} \Yt_{j \ell}\right)^{1 / 2}$ and set $\vhat_j=\sum_{\ell \in \Ihat} \Yt_{j \ell} \Big/ \left( \Shat \sqrt{\lambdahat_k} \right)$ for $j \in [n]$.
\State Let $\mU = \mH^\top \mUt$.~
    For $j \in [n]$, set
    $\uhat_{k,j} = \begin{cases}  U_{k,j} &\mbox{ if } |U_{k,j}| \le \left(\sigma \big/ |\lambdahat_k| \right) \log n \\
            \left(\mH^\top \mQ \vvhat \right)_{j} &\mbox{ otherwise.}
            \end{cases}$
\end{algorithmic}
\end{algorithm}

As mentioned above, our experiments in Section~\ref{subsec:rank-r-sim} indicate that Algorithm~\ref{alg:rank:r:simple} performs well.
A proof of its performance, however, is more complicated than Theorem~\ref{thm:positive:rank-one:upper}.
The main difficulty arises from the fact that in Algorithm~\ref{alg:rank:r:simple}, we conjugate by a random orthogonal transformation.
This ensures that when considering the large entries of one signal eigenvector, the other signal eigenvectors are ignorable.
Unfortunately, this random orthogonal transformation breaks Assumption~\ref{assump:u:gap} and introduces complicated dependency structure, requiring a more careful analysis that we leave for future work.

\section{Estimation lower bounds under $\ell_{2,\infty}$ distance}
\label{sec:metric-entropy}

Theorem~\ref{thm:positive:rank-one:upper} demonstrates that for a rank-one signal, dependence of the estimation rate on $\mu$ can be removed when $|\lambdastar| = \Omega( \sigma \sqrt{n \log n})$.
In this regime, our result matches the lower bound in Equation~\eqref{eq:trivial:lower:II} up to log-factors, making this the minimax lower bound under Assumption~\ref{assump:u:gap}.
As discussed in Remark~\ref{rem:u:gap}, we expect Assumption~\ref{assump:u:gap} can be removed via a more careful analysis, rendering the lower bound in Equation~\eqref{eq:trivial:lower:II} optimal under $|\lambdastar| = \Omega( \sigma \sqrt{n \log n})$.
On the other hand, as discussed in Section~\ref{subsec:lower-bounds}, the lower bound in Equation~\eqref{eq:trivial:lower:II} is sub-optimal in the regime where $|\lambdastar| = O( \sigma \sqrt{n})$.
In what follows, we aim to improve upon Equation~\eqref{eq:trivial:lower:II} by deriving metric entropy bounds \cite{vershynin2018HDP} for rank-$r$ singular subspaces under the $\ell_{2,\infty}$ distance when $|\lambdastar|=O(\sigma\sqrt{n})$.

Recall that for a semi-metric $\rho$ defined on a set $\bbK$, we may define the $\delta$-packing number $\calM(\bbK, \rho, \delta)$ of $\bbK$ under $\rho$ (see Chapter 15 in \cite{wainwright2019high}).
The packing $\delta$-entropy, $\log \calM(\bbK, \rho, \delta)$, captures the complexity of the space $\bbK$, and a lower bound on the $\delta$-entropy can be translated into a lower bound on the minimax estimation error rate.
For a given $r \in [n]$ and $1 \leq \mu \leq n / r$, we consider the parameter set
\begin{equation} \label{eq:bbK}
\bbK(n, r, \sqrt{\mu r / n}) =\left\{\mU \in \R^{n\times r}: \mU^\top \mU = \mI_r, \; \|\mU\|_{2, \infty} \leq \sqrt{\frac{r \mu}{n}}\right\}.
\end{equation}
Below, we 
write $\bbK_{r,\mu}$ for $\bbK(n, r, \sqrt{\mu r / n})$. 
Lemma~\ref{lem:packing-lower} lower bounds
$\log \calM(\bbK_{r,\mu}, \dtti, \delta)$.
We focus on the range $r/n \lesssim \delta^2 \lesssim \mu r/n$, as 
the lower bound in Equation~\eqref{eq:trivial:lower:II} shows that $\delta^2 \ll r/n$ is not achievable when $|\lambdastar| = O(\sigma\sqrt{n})$
and $\delta^2 \gg \mu r/n$ is achieved by the trivial all-zeros estimate.

\begin{lemma} \label{lem:packing-lower}
Suppose that $n/ \mu \geq  \max\{4, r\}$ and $\mu \geq 12 \log(12 n)$, and let $\delta > 0$ be such that
\begin{equation}\label{eq:delta-assumption}
    \frac{c_0^2 r}{8 e^2 n} \leq \delta^2 \leq \frac{c_0^2 \mu r}{96 e^2 n \log \left(12n / \mu\right)},
\end{equation}
where $c_0>0$ is a universal constant. Then when $n$ is sufficiently large,
\begin{equation}  \label{eq:metric-lower}
    \log \calM\left(\bbK_{r,\mu}, \dtti, \delta\right) \gtrsim \frac{ r^2}{\delta^2}
\end{equation}
\end{lemma}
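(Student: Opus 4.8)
The plan is to exhibit a large explicit family of subspaces lying in $\bbK_{r,\mu}$ that are pairwise $\delta$-separated under $\dtti$, and then pass from this to a packing lower bound via the standard bound relating packing numbers to the maximal cardinality of the intersection of the family with a single $\dtti$-ball.

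First I would build the family. Set $k=\lfloor c_1 r/\delta^2\rfloor$ for a small absolute constant $c_1$; the left-hand inequality of \eqref{eq:delta-assumption} guarantees $k\le n$. Reserve coordinates $\{k+1,\dots,n\}$ for a fixed ``filler'' frame $\mF\in\R^{(n-k)\times r}$ with $\mF^\top\mF=\mI_r$ and $\|\mF\|_{2,\infty}\le\sqrt{2r/n}$ (such a frame exists since $n-k\ge n/2$; e.g.\ scaled rows of a discrete Fourier matrix). For a sign pattern $\mS\in\{-1,1\}^{k\times r}$, put $\tau=\delta/\sqrt r$ and
\begin{equation*}
  \mU(\mS)=\begin{pmatrix}\tau\mS\\[2pt]\mF(\mI_r-\tau^2\mS^\top\mS)^{1/2}\end{pmatrix}.
\end{equation*}
Restricting to the ``well-conditioned'' patterns $\calS^\ast=\{\mS:\|\mS\|_{\opnorm}\le C_0\sqrt k\}$ — which contain at least half of the $2^{kr}$ sign patterns, by a standard operator-norm tail bound — one checks that $\tau^2\mS^\top\mS\preceq\mI_r$ (so the square root is defined), that $\mU(\mS)^\top\mU(\mS)=\mI_r$, and that each row of $\mU(\mS)$ has $\ell_2$-norm at most $\max\{\delta,\sqrt{2r/n}\}\le\sqrt{r\mu/n}$, using the right-hand inequality of \eqref{eq:delta-assumption} together with $\mu\ge 12\log(12n)$; hence $\mU(\mS)\in\bbK_{r,\mu}$ for every $\mS\in\calS^\ast$.

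Now recall that if $\calS_{\mathrm{sep}}\subseteq\calS^\ast$ is a maximal subset such that $\dtti(\mU(\mS),\mU(\mS'))\ge\delta$ for all distinct members, then $\{\mU(\mS):\mS\in\calS_{\mathrm{sep}}\}$ is a $\delta$-packing of $\bbK_{r,\mu}$, and $|\calS_{\mathrm{sep}}|\ge|\calS^\ast|/M$ where $M=\max_{\mS'\in\calS^\ast}|\{\mS\in\calS^\ast:\dtti(\mU(\mS),\mU(\mS'))<\delta\}|$. It therefore suffices to prove $M\le 2^{kr(1-c_2)}$ for an absolute $c_2>0$, which gives $\log\calM(\bbK_{r,\mu},\dtti,\delta)\ge c_2 kr-\log 2\gtrsim r^2/\delta^2$. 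Bounding $M$ is the heart of the matter, and the obstacle is the orthogonal ambiguity in $\dtti$: membership in the ball means $\|\mU(\mS)-\mU(\mS')\mGamma\|_{2,\infty}<\delta$ for some $\mGamma$ in the continuum $\bbO_r$. I would handle this by discretization: take an $\eta$-net $\calN$ of $\bbO_r$ in operator norm with $|\calN|\le(3/\eta)^{r^2}$. The key point is that every candidate has $\|\mU(\mS')\|_{2,\infty}\le C\delta$ for an absolute $C$ (this is exactly why the construction uses subspaces whose $\ell_{2,\infty}$-norm matches the target scale $\delta$ rather than the full coherence budget), so $\eta$ may be taken to be a fixed constant, whence $|\calN|\le e^{O(r^2)}=e^{o(kr)}$, and ball-membership forces $\|\mU(\mS)-\mU(\mS')\mGamma_0\|_{2,\infty}<\tfrac54\delta$ for some $\mGamma_0\in\calN$. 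Restricting to the top $k$ rows yields, for each $l\le k$, $\|\mS_{l,\cdot}-\mS'_{l,\cdot}\mGamma_0\|_2<\tfrac54\sqrt r$, equivalently $\langle\mS_{l,\cdot},\mS'_{l,\cdot}\mGamma_0\rangle>\tfrac{7}{32}r$ since both rows have norm $\sqrt r$. For fixed $\mGamma_0$, Hoeffding's inequality shows that at most $2^r e^{-cr}$ of the $2^r$ sign vectors in $\{-1,1\}^r$ have inner product exceeding $\tfrac{7}{32}r$ with a fixed vector of norm $\sqrt r$; since the $k$ rows vary independently, the number of $\mS$ compatible with a given $\mGamma_0$ is at most $(2^r e^{-cr})^k=2^{kr}e^{-ckr}$. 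Summing over $\calN$ gives $M\le|\calN|\cdot 2^{kr}e^{-ckr}\le 2^{kr}e^{-ckr/2}$ whenever $\delta^2$ is below an absolute constant — which \eqref{eq:delta-assumption} guarantees since its right side is at most $c_0^2/(96e^2\log 12)$ — yielding $c_2=c/(2\log 2)$ and finishing the proof.

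The step I expect to be hardest is the bound on $M$, and it is precisely there that the hypotheses earn their keep: the perturbation scale $\tau=\delta/\sqrt r$ and support size $k\asymp r/\delta^2$ must be chosen so that simultaneously (i) the candidates genuinely lie in $\bbK_{r,\mu}$ (needs $\delta^2\lesssim\mu r/n$ and $k\le n$), (ii) their $\ell_{2,\infty}$-norm is small enough that a constant-resolution net of $\bbO_r$ suffices, and (iii) the per-row sign counting dominates $|\calN|$ (needs $\delta^2$ below an absolute constant); the explicit constants $8e^2$, $96e^2$, $12$ and the conditions $n/\mu\ge\max\{4,r\}$, $\mu\ge 12\log(12n)$ in the statement are the bookkeeping that makes all of these hold at once. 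A secondary technical point is verifying that the filler block contributes negligibly (size $\ll\delta$) to the row-wise comparison, which again follows from the lower bound on $\delta^2$ in \eqref{eq:delta-assumption}.
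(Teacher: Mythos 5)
Your route is genuinely different from the paper's. The paper's proof also restricts to a row-sparse slice (supported on the first $s \asymp r/\delta^2$ coordinates, just as your $k$), but from there it proceeds non-constructively: it bounds $\dtti \ge s^{-1/2}\dF$ on that slice, shows via a Haar-measure computation (Lemma~\ref{lem:haar-lower-bound}) that the coherence constraint removes at most half of the Haar mass of $\bbV_{s,r}$, and then imports Pajor's Grassmannian covering bound through Lemmas~\ref{lem:haar-packing} and~\ref{lem:stiefel-covering} to get $\calM \gtrsim (c_0\sqrt{r}/(4\sqrt{s}\delta))^{r(s-r)} \ge \exp(rs/4)$. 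Your explicit sign-pattern family with a flat filler frame, a constant-resolution net of $\bbO_r$, and per-row Hoeffding counting replaces all of the Haar/Pajor machinery with an elementary probabilistic argument; that is a real alternative and arguably more self-contained. The structural steps all check out: orthonormality of $\mU(\mS)$, the coherence bound on its rows (using both sides of~\eqref{eq:delta-assumption}), the greedy packing-from-counting reduction, and the observation that $\|\mU(\mS')\|_{2,\infty} \lesssim \delta$ (via the lower bound in~\eqref{eq:delta-assumption}) is what lets the net of $\bbO_r$ have constant resolution.

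The weak point is exactly the one you defer to "bookkeeping," and as written it does not close. Your argument needs $\log|\calN| \le c\,kr/2$, i.e.\ $\delta^2 \le c_1 c/(2\log(3/\eta))$, where $c_1 \le c_0^2/(16e^2)$ is forced by $k \le n/2$ (via the left side of~\eqref{eq:delta-assumption}), $\eta \asymp c_0/e$ is forced by the net step, and $c = 49/2048$ is the Hoeffding exponent at threshold $\tfrac{7}{32}r$. Multiplying these out, the required threshold on $\delta^2$ is roughly $c_0^2/(6000\,e^2\log(48e/c_0))$, whereas the hypothesis only guarantees $\delta^2 \le c_0^2/(96e^2\log 48)$ when $n/\mu = 4$ and $r$ is small; the constants in~\eqref{eq:delta-assumption} are calibrated to the paper's Pajor-based proof (that is where $c_0$ and $8e^2$, $96e^2$ come from), not to yours, so you cannot assume they make your thresholds hold "at once." The fix is available but should be stated: prove separation at an inflated scale $A\delta$ for a suitable absolute constant $A$ (taking $\tau = A\delta/\sqrt{r}$, still within the coherence budget by the right side of~\eqref{eq:delta-assumption}) and use the monotonicity $\calM(\bbK_{r,\mu},\dtti,\delta) \ge \calM(\bbK_{r,\mu},\dtti,A\delta) \gtrsim r^2/(A^2\delta^2) \gtrsim r^2/\delta^2$; this pushes the required inner product to $(1-O(A^{-2}))r$ and the Hoeffding exponent toward $r/2$, which comfortably dominates the net. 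Without this (or an equivalent retuning), the final inequality $M \le 2^{kr}e^{-ckr/2}$ is not justified over the full parameter range permitted by the lemma.
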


\begin{remark}
    Lemma~\ref{lem:packing-lower} applies when $\log n \lesssim \mu \lesssim n/r$.
    A more careful analysis might relax the lower bound, but when $\mu \lesssim \log n$, any $\mU \in \bbK_{r,\mu}$ is nearly incoherent and Equation~\eqref{eq:trivial:lower:II} is nearly optimal.
    An upper bound matching Lemma~\ref{lem:packing-lower} up to log-factors can be found in the appendix.
\end{remark}

Lemma~\ref{lem:packing-lower} implies an improved lower bound compared to Equation~\eqref{eq:trivial:lower:II}.
Consider the parameter space
\begin{equation*}
    \Omega(\lambdastar,\mu, r) = \left\{ (\mLambdastar, \mUstar) : \mLambdastar = \lambdastar \mI_r, \mUstar \in \bbK_{r, \mu} \right\}.
\end{equation*}
Using the Yang-Barron method \cite{yang1999information}, we obtain a lower bound for eigenspace estimation in the rank-$r$ signal-plus-noise model for $|\lambdastar| = O(\sigma \sqrt{n})$.
A detailed proof can be found in the appendix.

\begin{theorem}\label{thm:minimax}
    Under Assumption~\ref{assump:W-Gauss} and the conditions of Lemma~\ref{lem:packing-lower}, 
    for any $0 < \lambdastar \leq (6 \sqrt{C_0})^{-1} \sigma\sqrt{n}$, where $C_0>0$ is a universal constant related to covering numbers of Grassmann manifolds,
    there is a universal constant $c > 0$ such that for all sufficiently large $n$,
    \vspace{-2mm}
    \begin{equation*}
        \inf_{\mUhat \in \R^{n\times r}} \sup_{(\mLambdastar, \mUstar) \in \Omega(\lambdastar, \mu, r)} \E_{\mLambdastar, \mUstar}\dtti\left(\mUhat, \mUstar\right)
        \geq c \left(\frac{\sigma\sqrt{r}}{\lambdastar} \wedge \sqrt{\frac{ \mu r}{n\log (n / \mu)}}\right).
    \end{equation*}
\end{theorem}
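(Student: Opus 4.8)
The plan is to deduce the bound from the metric-entropy estimate of Lemma~\ref{lem:packing-lower} via a Fano-type argument, following the Yang--Barron method \cite{yang1999information}. Since i.i.d.\ $N(0,\sigma^2)$ noise on and above the diagonal is a valid instance of Assumption~\ref{assump:W-Gauss} (with $\nuW = \sigma^2$), it suffices to prove the bound for Gaussian noise. For $\mUstar_i \in \bbK_{r,\mu}$ put $\mMstar_i = \lambdastar\mUstar_i\mUstar_i^\top$ and let $\Pr_i$ be the law of $\mY = \mMstar_i + \mW$. Accounting for the fact that $\mY$ is symmetric, so only the $n(n+1)/2$ on-and-above-diagonal entries are free, a direct computation gives
\begin{equation*}
    \KL(\Pr_i\,\|\,\Pr_j) \;\le\; \frac{1}{2\sigma^2}\bigl\|\mMstar_i - \mMstar_j\bigr\|_\frobnorm^2 \;=\; \frac{(\lambdastar)^2}{2\sigma^2}\bigl\|\mUstar_i\mUstar_i^\top - \mUstar_j\mUstar_j^\top\bigr\|_\frobnorm^2 \;\le\; \frac{2(\lambdastar)^2 r}{\sigma^2},
\end{equation*}
the last bound being the crude diameter estimate $\|\mUstar_i\mUstar_i^\top - \mUstar_j\mUstar_j^\top\|_\frobnorm \le \|\mUstar_i\mUstar_i^\top\|_\frobnorm + \|\mUstar_j\mUstar_j^\top\|_\frobnorm = 2\sqrt r$, valid since each $\mUstar_i\mUstar_i^\top$ is a rank-$r$ projection.

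Next I would fix the packing radius $\delta^\star = c_3\bigl(\tfrac{\sigma\sqrt r}{\lambdastar}\wedge\sqrt{\tfrac{\mu r}{n\log(n/\mu)}}\bigr)$ for a small universal constant $c_3$. The hypotheses $\lambdastar \le (6\sqrt{C_0})^{-1}\sigma\sqrt n$ and $\mu \ge 12\log(12n)$ force both terms of the minimum to be of order $\sqrt{r/n}$ or larger, so that $\delta^\star$ lies in the admissible window~\eqref{eq:delta-assumption} of Lemma~\ref{lem:packing-lower}; this is precisely where the constant $C_0$, tied to covering numbers of Grassmann manifolds and hence to the left endpoint $c_0^2 r/(8 e^2 n)$ of that window, enters. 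Lemma~\ref{lem:packing-lower} then furnishes a $\delta^\star$-packing $\mUstar_1,\dots,\mUstar_N$ of $(\bbK_{r,\mu},\dtti)$ with $\log N \ge c_1 r^2/(\delta^\star)^2$. Combining with the KL bound, $\tfrac1{N^2}\sum_{i,j}\KL(\Pr_i\|\Pr_j) \le \tfrac{2(\lambdastar)^2 r}{\sigma^2} \le \tfrac{c_1 r^2}{8(\delta^\star)^2} \le \tfrac18\log N$, where the middle step uses $(\delta^\star)^2 \le c_3^2\sigma^2 r/(\lambdastar)^2$ together with $c_3 \le \sqrt{c_1}/4$. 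Since $\log N\to\infty$, the generalized Fano inequality produces an index $i$ with $\Pr_i\bigl(\dtti(\mUhat,\mUstar_i) \ge \delta^\star/2\bigr) \ge \tfrac12$ for $n$ large, and Markov's inequality upgrades this to $\E_{\mUstar_i}\dtti(\mUhat,\mUstar_i) \ge \delta^\star/4$. As each $\mUstar_i\in\bbK_{r,\mu}$, i.e.\ $(\lambdastar\mI_r,\mUstar_i)\in\Omega(\lambdastar,\mu,r)$, taking the supremum over the parameter space finishes the proof with $c = c_3/4$.

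The step I expect to be delicate is the calibration of $\delta^\star$: one must check that the target rate $\tfrac{\sigma\sqrt r}{\lambdastar}\wedge\sqrt{\tfrac{\mu r}{n\log(n/\mu)}}$ is simultaneously large enough to clear the left endpoint of the window~\eqref{eq:delta-assumption} — which is exactly what forces $\lambdastar = O(\sigma\sqrt n)$ with the sharp constant $(6\sqrt{C_0})^{-1}$ and $\mu\gtrsim\log n$ — and, after the constant $c_3$, small enough that the Fano KL budget $\tfrac18\log N$ is respected; producing a single $c_3$ that works for both, and pinning down the symmetric-Gaussian KL constant, is the only real bookkeeping required, the remainder being routine once Lemma~\ref{lem:packing-lower} is granted. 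A more robust but equivalent route is the full Yang--Barron form: bound the covering number of $\bbK_{r,\mu}$ under the divergence pseudo-metric (which satisfies $\KL(\Pr_{\mU}\|\Pr_{\mU'}) \asymp (\lambdastar/\sigma)^2\|\mU\mU^\top - \mU'\mU'^\top\|_\frobnorm^2$) via covering numbers of Grassmann manifolds, pigeonhole the $\dtti$-packing into one small ball of that pseudo-metric, and apply Fano to that subfamily; optimizing the ball radius recovers the same $\delta^\star$, the optimal radius being the full diameter, which is exactly the crude bound used above.
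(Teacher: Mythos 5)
Your proposal is correct in substance and follows the same skeleton as the paper's argument — Fano applied to a packing supplied by Lemma~\ref{lem:packing-lower}, with the Gaussian KL computation $\KL(\Pr_i\|\Pr_j)\le\frac{(\lambdastar)^2}{2\sigma^2}\|\mUstar_i\mUstar_i^{\top}-\mUstar_j\mUstar_j^{\top}\|_{\frobnorm}^2$ matching the paper's. The one genuine difference is how the mutual information is controlled: the paper runs the full Yang--Barron step, covering the model class in square-root KL via the Grassmannian/Stiefel covering bound (which is where $C_0$ enters and why it appears in the hypothesis on $\lambdastar$), arriving at $I(Z;J)\lesssim C_0^2(\lambdastar)^2 r/\sigma^2$; you instead use the crude KL-diameter bound $2(\lambdastar)^2 r/\sigma^2$. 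Since the parameter class here has KL diameter of the same order as the Yang--Barron infimum, your shortcut is legitimate and in fact slightly sharper in the constant; your closing remark that the optimal Yang--Barron radius is the full diameter is exactly right, and the $C_0$-dependence in the threshold on $\lambdastar$ then becomes an artifact one could replace by a different universal constant.

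The only place your write-up is not literally executable is the single-constant calibration $\delta^{\star}=c_3\bigl(\tfrac{\sigma\sqrt r}{\lambdastar}\wedge\sqrt{\tfrac{\mu r}{n\log(n/\mu)}}\bigr)$. The KL budget forces $c_3^2\lesssim c_0^2/(512e^2)$, while clearing the left endpoint of the window~\eqref{eq:delta-assumption} through the second branch — using only the guaranteed $\mu/\log(n/\mu)\ge 12$ — forces $c_3^2\gtrsim c_0^2/(96e^2)$; these are incompatible. The fix, which is what the paper does, is to give the two branches of the minimum separate constants and to normalize the coherence branch by $\log(12n/\mu)$ so that it sits exactly at the upper endpoint of~\eqref{eq:delta-assumption} (its lower endpoint then follows from $\mu\ge 12\log(12n)$, and the $\lambdastar$-branch's lower endpoint from $\lambdastar\le(6\sqrt{C_0})^{-1}\sigma\sqrt n$); the discrepancy between $\log(12n/\mu)$ and $\log(n/\mu)$ is absorbed into the final universal constant $c$. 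Since you explicitly flag this calibration as the delicate step rather than asserting it goes through unexamined, I regard this as a presentational imprecision rather than a gap: the argument closes once the two branches are tuned separately.
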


\begin{remark} \label{rem:small-lambda}
    Theorem~\ref{thm:minimax} removes the upper limit $\sqrt{r/n}$ from Equation~\eqref{eq:trivial:lower:II}, suggesting that $\mu$ only comes to bear when $\lambdastar \leq \sigma\sqrt{n}$.
    This regime is not well studied, as the BBP transition~\cite{baik2005bbp} implies that spectral methods fail, but other algorithms might achieve our lower bound. 
    For example, signal detection is possible if structure is present \cite{ahmed2020fundamental}. 
    Unfortunately, any such algorithm is likely to be computationally expensive, given the general belief that no polynomial-time algorithm can succeed when $\lambdastar \leq \sigma\sqrt{n}$ \cite{krzakala2016mutual, banks2018information}.
    We leave further exploration of this small-$\lambdastar$ regime to future work. 
\end{remark}

\begin{remark} \label{rem:condnumber}
The parameter space $\Omega(\lambdastar, \mu, r)$ considered in Theorem~\ref{thm:minimax} contains only signal matrices with condition number $\kappa = 1$.
In recent work, the authors have established lower bounds akin to Theorem~\ref{thm:minimax} that show the role of condition number.
A full accounting of the interplay between condition number and coherence is a promising area for future work.
\end{remark}

\section{Numerical experiments}
\label{sec:numeric}

We turn to a brief experimental exploration of our theoretical results.
All experiments were run in a distributed environment on commodity hardware
without GPUs.
In total, the experiments reported below used 3425 compute-hours.
Mean memory usage was 3.5 GB, with a maximum of 11 GB.

\subsection{Simulations for rank-one eigenspace estimation}
\label{subsec:rank-one-sim}
We begin with the rank-one setting considered in Algorithm~\ref{alg:rank:one:simple}, in which we observe
\begin{equation*}
    \mY = \mMstar + \mW = \lambdastar \vustar {\vustar}^\top + \mW,
\end{equation*}
and wish to recover $\vustar \in \bbS^{n-1}$.
We take $\lambdastar = \sqrt{n\log n}$ in all experiments, matching the rate in Remark~\ref{rem:lambdastar}.
We consider three distributions for the entries of $\mW$:
Gaussian, Laplacian and Rademacher, all scaled to have variance $\sigma^2 = 1$. 
We consider two approaches to generating $\vustar$.
In either case, we set a random entry of $\vustar$ to be $a \in \{0.3, 0.55, 0.8\}$, then generate the remaining entries by either
\begin{enumerate}
    \vspace{-2mm}
    \item \label{item:haar} drawing uniformly from $\sqrt{1-a^2} \bbS^{n-2}$, or
    \vspace{-1mm}
    \item \label{item:bern} drawing uniformly from $\{\pm 1\}^{n-1}$ then normalizing these to have $\ell_2$ norm $\sqrt{1-a^2}$.
    \vspace{-1.5mm}
\end{enumerate}
In both cases, $\|\vustar\|_{\infty} \!\!=\!\! a$ and $\mu \!=\! a^2 n$ with high probability.
We take $a \!\!=\!\! \Theta(1)$, since in finite samples, $Cn^{-1/2} \log n$ (which is nearly incoherent) is hard to discern from a constant (e.g., when $n \!=\! 20000$, $4n^{-1/2} \!\log n \approx 0.28$, nearly matching $a\! =\! 0.3$).
Having generated $\mY = \mMstar + \mW$, we estimate $\vustar$ using both the spectral estimate $\vu$ and Algorithm~\ref{alg:rank:one:simple} and measure their estimation error under $d_\infty$.
We report the mean of 20 independent trials for each combination of problem size $n$, magnitude $a$ and methods for generating $\vustar$ and $\mW$.
We vary $n$ from $100$ to $15100$ in increments of $1000$.

When running Algorithm~\ref{alg:rank:one:simple}, we use the debiased estimate $\lambdahatc$ from Equation~\eqref{eq:biased-correct}.
This requires an estimate of $\sigma$, for which we use the plug-in estimator in Equation~\eqref{eq:sigma-plug-in}.
We set $\beta = 0$, eliminating Equation~\eqref{eq:gap}.
Similar to Algorithm~\ref{alg:rank:r:simple}, we conjugate $\mY$ by a random orthogonal matrix $\mH \in \bbO_n$.
We expect the top eigenvector $\vutilde$ of $\mH \mY \mH^\top$ to be approximately uniformly distributed on $\bbS^{n-1}$, as it is close to $\mH \vustar$ (see Theorem 2.1 in \cite{o2016eigenvectors}).
Thus, the median of the absolute values of $\vutilde$ should be $\Theta(n^{-1/2})$.
Instead of selecting $\alpha_0$ according to Equation~\eqref{eq:alpha0}, we set $\alpha_0$ to be this median.
The requirement in Equation~\eqref{eq:alpha0} is then fulfilled, since there are roughly $n/2$ entries larger than the median. 
After obtaining $\vuhat$ from Algorithm~\ref{alg:rank:one:simple}, we return $\mH^\top \vuhat$ as our estimate of $\vustar$.
We note that this random rotation further serves to illustrate that Assumption~\ref{assump:u:gap} is merely a technical requirement:
after a random rotation, with high probability, $\mH \vustar$ does not satisfy Assumption~\ref{assump:u:gap}. 

Figure~\ref{fig:rank-one:eigvec} compares the accuracy in estimating $\vustar$ using the leading eigenvector of $\mY$ (blue) and Algorithm~\ref{alg:rank:one:simple} (orange) under the three noise settings and two generating procedures for $\vustar$. 
Shaded bands indicate $95\%$ bootstrap confidence intervals (CIs).
Across settings, Algorithm \ref{alg:rank:one:simple} recovers $\vustar$ with a much smaller estimation error under $d_\infty$ compared to the na\"{i}ve spectral estimate, especially when $\|\vustar\|_{\infty}$ (i.e., the coherence $\mu$) is large.
The spectral method degrades noticeably as coherence increases, while Algorithm~\ref{alg:rank:one:simple} has far less dependence on $\mu$.
Indeed, under Gaussian noise (the first column of Figure~\ref{fig:rank-one:eigvec}), it has no visible dependence on $\mu$.
Under Rademacher noise (middle column of Figure~\ref{fig:rank-one:eigvec}), the dependence of Algorithm~\ref{alg:rank:one:simple} on $\mu$ appears slightly reversed from that of the spectral estimator.
Under Laplacian noise (right column of Figure~\ref{fig:rank-one:eigvec}), there seems to be a slight dependence on $\mu$.
Further examination in the appendix suggests that this is due to estimating the entries {\em other} than the largest element of $\vustar$, and is likely asymptotically smaller than the rate in Theorem~\ref{thm:positive:rank-one:upper}.

\begin{figure}
    \centering
    \includegraphics[width=\linewidth]{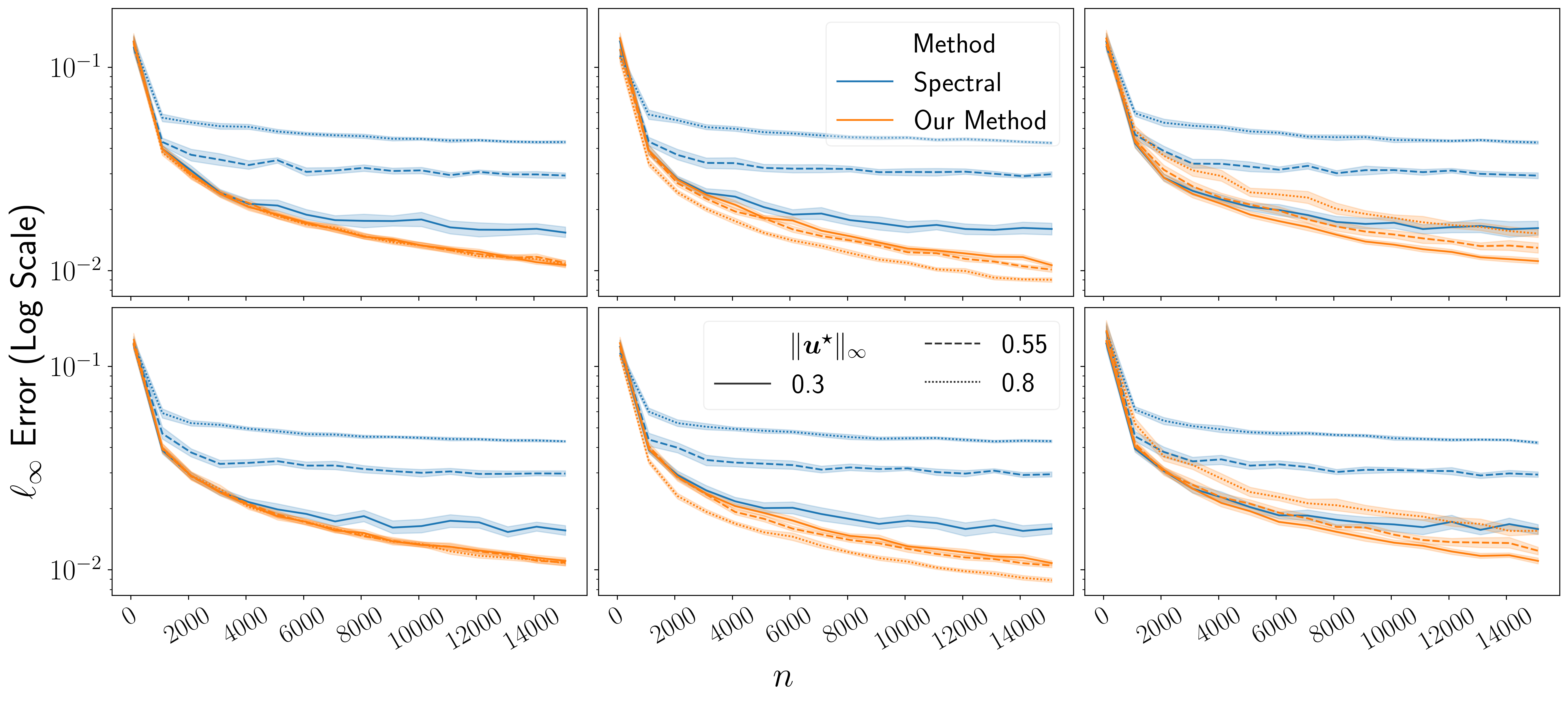}
    \caption{Estimation error measured in $d_{\infty}$ as a function of dimension $n$, by the leading eigenvector (blue) and Algorithm~\ref{alg:rank:one:simple} (orange),
    for $\|\vustar\|_{\infty}$ equal to $0.8, 0.55$ and $0.3$ (dotted, dashed and solid lines, respectively).
    We consider $\vustar$ generated from the Bernoulli (top row) and Haar (bottom row) schemes,
    and we consider Gaussian (left), Rademacher (middle) and Laplacian (right) noise.
    }
    \label{fig:rank-one:eigvec}
\end{figure}

\subsection{Simulations for rank-$r$ eigenvector estimation}
\label{subsec:rank-r-sim}
For the rank-$r$ setting, we have a signal matrix with eigenvalues $|\lambdastar_1| \geq \ldots \geq |\lambdastar_r|$. We observe
\begin{equation*}
    \mY = \mMstar + \mW = \mUstar \mLambdastar \mUstar + \mW,
\end{equation*}
where $\mLambdastar = \diag(\lambdastar_1, \dots, \lambdastar_r)$, and we wish to recover $\mUstar \in \R^{n \times r}$.
We take $\sigma = 1$ and $\lambdastar_r = \sqrt{n \log n}$.
Recovering each column of $\mUstar$ separately requires an eigengap, defined for $k \in [r]$ as $\Delta_k := |\lambdastar_k - \lambdastar_{k+1}|$ and $\Delta_0 = \infty$.
Typically, the estimation error of the $k$-th eigenvector has a $O(\min\{\Delta_{k}, \Delta_{k-1}\}^{-1})$ dependence on the eigengaps \cite{wang2024analysis}.
Here, we set  $\Delta_{k} = 0.5\sqrt{n \log n}$, so $\lambdastar_k = 0.5(r-k+2)\sqrt{n \log n}$ for all $k \in [r]$. 
As in the rank-one case, we generate entries of $\mW$ from  Gaussian, Laplacian and Rademacher distributions, all scaled to have unit variance.
The true eigenvectors $\mUstar$ are generated by repeating the following procedure for $k \in [r]$:
\begin{enumerate}
    \vspace{-2mm}
    \item Randomly select an element of $\vv \in \R^n$ and set it to be $a \in \{0.3, 0.55, 0.8\}$.
    \vspace{-1mm}
    \item Generate the rest of $\vv$ by drawing uniformly from $\sqrt{1-a^2} \bbS^{n-2}$.
    \vspace{-1mm}
    \item Set $\vustar_k = \left(\mI_n - \mUstar_{\cdot, 1:(k-1)}  \mUstar_{\cdot, 1:(k-1)}^\top\right) \vv$, normalize $\vustar_k$ to have unit $\ell_2$ norm, and set $\mUstar_{\cdot, k} = \vustar_k$. If $k = 1$, then we take $\mUstar_{\cdot, 1:(k-1)}  \mUstar_{\cdot, 1:(k-1)}^\top$ to be the zero matrix. 
\end{enumerate}
 \vspace{-1.75mm}
Under this procedure, 
each $\vustar_k$ has coherence approximately $a^2 n$.
Since the large entries of the $\vustar_k$ are unlikely to appear in the same rows, $\mUstar$ also has coherence $a^2 n$.
We take $r=2$ here.

Having generated $\mY \!\!=\! \mMstar \!\!+ \!\mW$, we obtain estimates via the na\"{i}ve spectral method and Algorithm~\ref{alg:rank:r:simple}.
and measure their estimation error under $d_\infty$.
We vary $n$ from $100$ to $15100$ in increments of $1000$ and report the mean of $20$ independent trials for each combination of $n$, $a$ and noise distribution.
The results are summarized in Figure~\ref{fig:rank-2:eigvec}, showing the error for $\vustar_k$, $k \!\in\! \{1,2\}$ using the spectral estimate (blue/purple) and Algorithm~\ref{alg:rank:r:simple} (orange/red), under Gaussian (left), Rademacher (middle) and Laplacian (right) noise.
Shaded bands indicate $95\%$ bootstrap CIs.
In all settings, Algorithm~\ref{alg:rank:r:simple} improves markedly on the spectral estimator.
Algorithm~\ref{alg:rank:r:simple} shows no visible dependence on $\mu$ under Gaussian noise.
Under Rademacher noise, its $\mu$-dependence is the reverse of the spectral estimator.
Under Laplacian noise, it shows slight dependence on $\mu$, which we again expect to be asymptotically smaller than the rate in Theorem~\ref{thm:positive:rank-one:upper}.
In the appendix, we consider $r\!=\!3$ and measure error under $\dtti$, and we again find that Algorithm~\ref{alg:rank:r:simple} outperforms spectral estimators and
is far less sensitive to $\mu$.

\begin{figure}
    \centering
    \includegraphics[width=\linewidth]{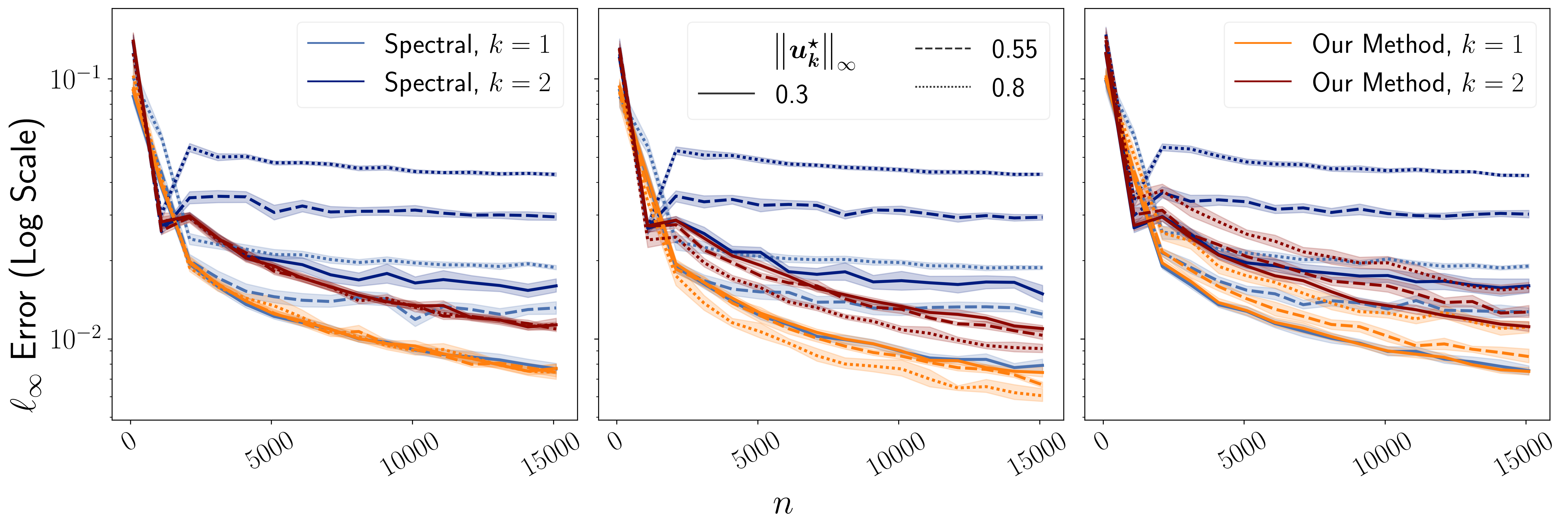}
    \caption{ Estimation error under $d_{\infty}$ as a function of size $n$, by the $k$-th eigenvector (blue/purple) and the estimator in Algorithm~\ref{alg:rank:r:simple} (orange/red) for $\|\vustar\|_{\infty}$ equal to $0.8$ (dotted lines), $0.55$ (dashed lines) or $0.3$ (solid lines) with Gaussian (left), Rademacher (center) or Laplacian (right) noise.
    }
    \label{fig:rank-2:eigvec}
\end{figure}

\subsection{Comparison with other methods} \label{subsec:compare:AMP}

To the best of our knowledge, we are the first paper to consider the task of non-spectral entrywise eigenvector estimation.
The nearest obvious competing method might be based on approximate message passing (AMP; see \cite{feng2022unifying} for an overview).
Such a comparison is included in the appendix, where our experiments indicate that while AMP performs well in recovering the signal eigenvectors as measured by $\ell_2$ error, our method as specified in Algorithms~\ref{alg:rank:one:simple} and~\ref{alg:rank:r:simple} perform better under entrywise and $\ell_{2,\infty}$ error.
Experimental details and further discussion can be found in the appendix.

\section{Discussion, limitations and conclusion}
\label{sec:discussion}

We have presented new methods for eigenvector estimation in signal-plus-noise matrix models and new lower bounds for estimation rates in these models. 
The entrywise estimation error of our method has no dependence on the coherence $\mu$ 
for rank-one signal matrices, and achieves the optimal estimation rate up to log-factors.
Simulations show that our method tolerates non-Gaussian noise and its extension to rank-$r$ signal matrices has little dependence on $\mu$.
One limitation of our method is that it assumes homoscedastic noise.
Future work will aim to relax this assumption and the technical condition in Assumption~\ref{assump:u:gap}.
In the rank-$r$ case, Algorithm~\ref{alg:rank:r:simple} estimates each eigenvector separately, requiring an eigengap.
Future work will avoid this by simultaneously or iteratively estimating multiple eigenvectors.
We note in closing that inequitable social impacts from abuse or misuse of models and methods are common, but we see no particular such impacts in the present work.

\appendix

\begin{ack} 
The authors gratefully acknowledge the support of the United States National Science Foundation via grants NSF DMS 2052918 and NSF DMS 2023239.
KL was additionally supported by the University of Wisconsin--Madison Office of the Vice Chancellor for Research and Graduate
Education with funding from the Wisconsin Alumni Research Foundation.
\end{ack}


{\small 
\bibliographystyle{abbrvnat}
\bibliography{bib}
}


\clearpage 
\appendix


\section{Proof of Lemma~\ref{lem:tight}}
\begin{proof}
    Let $\theta = \lim_{n \to \infty} (\sigma\sqrt{n})^{-1} \lambdastar$.  
    For any $\vustar \in \bbS^{n-1}$, by the Baik-Ben Arous-P\'{e}ch\'{e} (BBP) phase transition (\cite{baik2005bbp}; see also Theorem 2 in \cite{haddadi2021eigenvectors}), the top eigenvector $\vu$ of $\mY$ obeys
    \begin{equation}\label{eq:bbp-innerp}
        \inner{\vu, \vustar}^2 \xrightarrow{\text { a.s. }} 
        \begin{cases}1-\theta^{-2} & \text { if } \theta>1 \\
        0 & \text { if } \theta \leqslant 1\end{cases}.
    \end{equation}
    Let $s = \lceil n/\mu \rceil$, and consider 
    \begin{equation}\label{eq:ustar-construct}
        \vustar = \left( \frac{1}{\sqrt{s}} \vone_s^\top, \vo_{n-s}^\top \right)^\top \in \R^n.
    \end{equation}
    Without loss of generality, we assume that $d_{\infty}(\vu, \vustar) = \|\vu - \vustar\|_{\infty}$, since
    otherwise we can repeat the following argument with $-\vu$ instead of $\vu$.
    We have 
    \begin{equation}\label{eq:dinfty:lower}
        d_{\infty}(\vu, \vustar)
        \geq \max_{i \in [s]} \left|u_i - \ustar_i\right|
        \geq \frac{1}{\sqrt{s}} \left(\sum_{i=1}^s \left|u_i - \ustar_i\right|^2\right)^{1/2}.
    \end{equation}
    Expanding the term inside the square root,
    \begin{equation}\label{eq:diff-expand}
        \sum_{i=1}^s \left|u_i - \ustar_i\right|^2
        = \sum_{i=1}^s u_i^2 + \sum_{i=1}^s u^{\star 2}_i - 2 \sum_{i=1}^s \ustar_i u_i
        = \sum_{i=1}^s u_i^2 + 1 - 2 \inner{\vustar, \vu},
    \end{equation}
    where the last equality follows from the construction of $\vustar$ in Equation~\eqref{eq:ustar-construct}.
    By the Cauchy-Schwarz inequality, we have
    \begin{equation} \label{eq:cs-inequality}
        \sum_{i=1}^s u_i^2
        \geq \frac{1}{s}\left( \sum_{i=1}^s u_i \right)^2
        = \left( \sum_{i=1}^s \frac{1}{\sqrt{s}} u_i \right)^2
        = \inner{\vustar, \vu}^2,
    \end{equation}
    where the last equality follows from the construction of $\vustar$ in Equation~\eqref{eq:ustar-construct}.
    Plugging Equation~\eqref{eq:cs-inequality} into Equation~\eqref{eq:diff-expand}, we obtain
    \begin{equation*}
        \sum_{i=1}^s \left|u_i - \ustar_i\right|^2
        \geq \inner{\vustar, \vu}^2 + 1 - 2 \inner{\vustar, \vu}
        = \big(1 - \inner{\vustar, \vu}\big)^2. 
    \end{equation*}
    When $\theta > 1$, applying Equation~\eqref{eq:bbp-innerp} yields that 
    \begin{equation} \label{eq:liminf}
        \liminf_{n \to \infty} \sum_{i=1}^s \left|u_i - \ustar_i\right|^2 \geq \liminf_{n \to \infty} \left(1 - \inner{\vustar, \vu}\right)^2 = \left(1 - \sqrt{1 - \frac{1}{\theta^2}}\right)^2
    \end{equation}
    holds almost surely.
    Again using the fact that $\theta > 1$, we have 
    \begin{equation}\label{eq:liminf-lower}
        1 - \sqrt{1 - \frac{1}{\theta^2}} = \left(1 + \sqrt{1 - \frac{1}{\theta^2}}\right)^{-1} \left[1 - \left(1 - \frac{1}{\theta^2}\right)\right] \geq \frac{1}{2\theta^2}. 
    \end{equation}
    Combining Equations~\eqref{eq:dinfty:lower}, \eqref{eq:liminf} and \eqref{eq:liminf-lower}, it holds almost surely that 
    \begin{equation*}
        \liminf_{n\to \infty} d_{\infty}(\vu, \vustar) \geq \liminf_{n \to \infty} \frac{1}{\sqrt{s}} \left(\left|u_i - \ustar_i\right|^2 \right)^{1/2} \geq \lim_{n\to\infty} \frac{1}{2\theta^2} \cdot \frac{1}{\sqrt{1 +  n / \mu}},
    \end{equation*}
    where the last inequality follows from $s = \lceil n / \mu\rceil$. Since $\mu \le n$, it follows that
    \begin{equation*}
        \liminf_{n\to \infty} d_{\infty}(\vu, \vustar) \geq \lim_{n\to\infty} \frac{1}{2\theta^2} \cdot \sqrt{\frac{\mu}{\mu + n}} \geq \lim_{n\to\infty} \frac{1}{2\sqrt{2}~\theta^2} \cdot \sqrt{\frac{\mu}{n}}
    \end{equation*}
    almost surely.
    By the definition of $\theta$, we have
    \begin{equation*}
        \liminf_{n\to \infty} d_{\infty}(\vu, \vustar) \geq \lim_{n \to \infty} \frac{\sigma^2 n}{2\sqrt{2} |\lambdastar|^2} \sqrt{\frac{\mu}{n}} = \lim_{n \to \infty} \frac{\sigma^2 \sqrt{n \mu}}{2\sqrt{2} |\lambdastar|^2},
    \end{equation*}
    completing the proof.
\end{proof}

\section{Proof of Lemma~\ref{lem:positive}}
\begin{proof}
    The upper bound  
    \begin{equation*}
        \left|\left\{ i: |v_i| \geq \alpha_0 \right\}\right| \leq \frac{1}{\alpha_0^2}
    \end{equation*}
    follows trivially from the fact that $\|\vv\|_2 = 1$.
    
    To prove the lower bound, we consider the disjoint intervals
    \begin{equation} \label{eq:Iscr-define}
        \Iscr_\ell = \begin{cases}
            \left[\log^{-1/2} n, 1\right] & \mbox{ if } \ell=0 \vspace{1mm}\\
            \left[\log^{-(\ell+1)/2} n, \log^{-\ell/2} n\right) & \mbox{ if } \ell \in \{ 1, 2, \ldots, \lceil L\rceil - 2 \}\\
            \left[\log^{-L/2} n, \log^{-(\lceil L\rceil - 1)/2} n\right) & \mbox{ if } \ell = \lceil L\rceil - 1.
        \end{cases}
    \end{equation}
    Recall the definition of $L$ from Equation~\eqref{eq:L-define},
    \begin{equation*}
        L = \frac{\log(2n)}{\log \log n}.
    \end{equation*}
    Rearranging terms, we have
    \begin{equation*}
        -\frac{L}{2} \log \log n = -\frac{1}{2}\log(2n). 
    \end{equation*}
    Exponentiating both sides of the above display,
    \begin{equation*}
        \log^{-L/2} n = \frac{1}{\sqrt{2n}}.
    \end{equation*}
    Noting that by Equation~\eqref{eq:Iscr-define},
    \begin{equation*}
        \bigcup_{\ell=0}^{\lceil L\rceil-1} \Iscr_\ell = \left[\log^{-L/2} n, 1\right] = \left[\frac{1}{\sqrt{2n}}, 1\right],
    \end{equation*}
    for any $i \in [n]$ such that $|v_i| \notin \cup_{\ell=0}^{\lceil L\rceil-1} \Iscr_\ell$, we have
    \begin{equation*}
        |v_i| < \log^{-L/2} n = \frac{1}{\sqrt{2n}},
    \end{equation*}
    and therefore, summing over all $i \in [n]$ such that $|v_i| \notin \bigcup_{\ell=0}^{\lceil L\rceil-1} \Iscr_\ell$ yields that
    \begin{equation} \label{eq:notin-upper}
         \sum_{i: |v_i| \notin \bigcup_{\ell=0}^{\lceil L\rceil-1} \Iscr_\ell} v_i^2 < \sum_{i: |v_i| \notin \bigcup_{\ell=0}^{\lceil L\rceil-1} \Iscr_\ell} \frac{1}{2n} \leq n\cdot \frac{1}{2n} = \frac{1}{2}.
    \end{equation}
    Let $x_\ell = |\{i: |v_i| \in \Iscr_\ell\}|$ for all $\ell \in \{0, 1, \ldots, \lceil L\rceil-1\}$.
    We have 
    \begin{equation*}
        \sum_{i: |v_i| \in \Iscr_\ell} v_i^2
        \leq x_\ell \max_{i: |v_i|  \in \Iscr_\ell} v_i^2
        \leq \frac{x_\ell }{\log^{\ell} n},
    \end{equation*}
    where the last inequality follows from the definition of $\Iscr_\ell$ in Equation~\eqref{eq:Iscr-define}.
    Summing over all $\ell \in \{0, 1, \ldots, \lceil L\rceil-1\}$ on both sides , we have
    \begin{equation*}
        \sum_{\ell=0}^{\lceil L\rceil-1} \frac{x_\ell}{\log^{\ell} n}  \geq \sum_{i: |v_i|  \in \bigcup_{\ell=0}^{\lceil L\rceil-1} \Iscr_\ell} v_i^2 = 1 - \sum_{i: |v_i| \notin \bigcup_{\ell=0}^{\lceil L\rceil-1} \Iscr_\ell} v_i^2 > \frac{1}{2},
    \end{equation*}
    where the last inequality follows from Equation~\eqref{eq:notin-upper}.
    The above further implies that
    \begin{equation*}
        \lceil L\rceil \cdot \max_{\ell\in\{0, 1, \cdots, \lceil L\rceil-1\}} \left\{\frac{x_\ell}{\log^{\ell} n}\right\} \geq \sum_{\ell=0}^{\lceil L\rceil-1} \frac{x_\ell}{\log^{\ell} n} > \frac{1}{2}.
    \end{equation*}
    Hence, rearranging, there exists an $\ell_0 \in \{0, 1, \cdots, \lceil L\rceil-1\}$ such that
    \begin{equation*}
        x_{\ell_0} > \frac{\log^{\ell_0} n}{2\lceil L\rceil} > \log^{\ell_0-1} n = \frac{\log^{\ell_0 + 1} n}{\log^2 n},
    \end{equation*}
    where the second inequality holds for suitably large $n$ by the definition of $L$ given in Equation~\eqref{eq:L-define}.
    
    Finally, note that for all $i \in [n]$ such that $|v_i| \in \Iscr_{\ell_0}$, we have $|v_i| \geq \log^{-(\ell_0+1)/2}  n$ by the definition of $\Iscr_{\ell_0}$ in Equation~\eqref{eq:Iscr-define}.
    Taking 
    \begin{equation*}
        \alpha_0 = \begin{cases}
            \log^{-(\ell_0+1)/2}  n & \mbox{ if } \ell_0 \in \{0,1, \cdots,\lceil L\rceil-2\}\\
            \log^{-L/2}  n & \mbox{ if } \ell_0 = \lceil L\rceil-1
        \end{cases}
    \end{equation*}
    yields that 
    \begin{equation*}
        \left|\{i: |v_i| \geq \alpha_0\}\right| \geq x_{\ell_0} > \frac{1}{\alpha_0^2 \log^2 n}.
    \end{equation*}
    We complete the proof by noting that $\alpha_0$ is in $\calA$, where $\calA$ is defined in Equation~\eqref{eq:Acal}.  
\end{proof}

\section{Proof of Lemma~\ref{lem:sighat}}
\begin{proof}
    Expanding the right hand side of Equation~\eqref{eq:sigma-plug-in}, we have 
    \begin{equation*}    \begin{aligned}
        \sighat^2 &= \frac{2}{n(n+1)} \sum_{1\leq i\leq j\leq n} \left(\Mstar_{ij} - \Mhat_{ij} + W_{ij}\right)^2 \\
        &= \frac{2}{n(n+1)}\sum_{1\leq i\leq j\leq n}  \left(\Mstar_{ij} - \Mhat_{ij}\right)^2 + \frac{2}{n(n+1)}\sum_{1\leq i\leq j\leq n} W_{ij}^2 \\
        &~~~~~~~~~+ \frac{4}{n(n+1)}\sum_{1\leq i\leq j\leq n} W_{ij} \left(\Mstar_{ij} - \Mhat_{ij}\right).
    \end{aligned}    \end{equation*}
    Subtracting $\sigma^2$ on both sides and applying the triangle inequality, we have
    \begin{equation} \label{eq:sig-diff}
    \begin{aligned}
        \left|\sighat^2 - \sigma^2\right| &\leq \frac{2}{n(n+1)}\sum_{1\leq i\leq j\leq n}  \left(\Mstar_{ij} - \Mhat_{ij}\right)^2 + \left|\frac{2}{n(n+1)}\sum_{1\leq i\leq j\leq n} W_{ij}^2 - \sigma^2\right| \\
        &~~~~~~~~~+ \left|\frac{4}{n(n+1)}\sum_{1\leq i\leq j\leq n} W_{ij} \left(\Mstar_{ij} - \Mhat_{ij}\right)\right|\\
        &\leq \frac{2}{n(n+1)} \left\|\mMstar - \mMhat\right\|^2_{\frobnorm} + \left|\frac{2}{n(n+1)}\sum_{1\leq i\leq j\leq n} W_{ij}^2 - \sigma^2\right|\\
        &~~~~~~~~~+ \left|\frac{2}{n(n+1)} \tr \mW \left(\mMstar - \mMhat \right) \right| + \frac{2}{n(n+1)}\left|\sum_{i=1}^n W_{ii}\left(\Mstar_{ii} - \Mhat_{ii}\right)\right|.
    \end{aligned}
    \end{equation}
    We proceed to bound each term in the last inequality of Equation~\eqref{eq:sig-diff} separately. 

    For the first term in Equation~\eqref{eq:sig-diff}, by an argument analogous to that of Equation (3.15) in \cite{chen2021spectral},
    \begin{equation*}
    \frac{2}{n(n+1)} \left\|\mMhat - \mMstar\right\|^2_{\frobnorm}
    \le \frac{8r}{n(n+1)} \left\| \mW \right\|^2.
    \end{equation*}
    By standard matrix concentration inequalities \cite{Tropp2015,vershynin2018HDP}, with probability at least $1-O(n^{-8})$,
    \begin{equation} \label{eq:mW:spectral}
    \left\| \mW \right\| \le 5 \sqrt{\nuW n \log n}.
    \end{equation}
    Combining the above two displays, we obtain an analogue of Equation (3.16) in \cite{chen2021spectral}, namely that with probability at least $1 - O(n^{-8})$, 
    \begin{equation}\label{eq:term-1}
        \frac{2}{n(n+1)} \left\|\mMhat - \mMstar\right\|^2_{\frobnorm}
        \leq \frac{200\nuW r}{n+1}.
    \end{equation}
    

For the second term in Equation~\eqref{eq:sig-diff}, applying standard concentation inequalities for subexponential random variables (see, e.g., Chapter 2 in \cite{vershynin2018HDP}), 
\begin{equation*}
\Pr\left[ \left| \sum_{1\leq i\leq j\leq n}
		\left( W_{ij}^2 - \sigma^2 \right) \right| \ge t \right]
\le 2 \exp\left\{ -c^2\min\left\{ \frac{2t^2}{n(n+1)\nuW^2}, 
			\frac{ t }{ \nuW } \right\} \right\},
\end{equation*}
where $c>0$ is an absolute constant.
Taking $t = 2 \nuW \sqrt{ n(n+1) \log n} / c$ and dividing through by $n(n+1)/2$, it holds with probability at least $1-O(n^{-8})$ that
\begin{equation}\label{eq:term-2}
\left| \frac{2}{n(n+1)} \sum_{1\leq i\leq j\leq n} W_{ij}^2
	- \sigma^2 \right| 
\le
\frac{ 4 \nuW \sqrt{\log n} }{ c n }.
\end{equation}

For the third term in Equation~\eqref{eq:sig-diff}, using the matrix H\"older inequality (see Corollary IV.2.6 in \cite{bhatia2013matrix}), we have 
    \begin{equation*}     \begin{aligned}
        \left|\tr \mW \left(\mMstar - \mMhat\right) \right|
        &\leq \|\mW\| \|\mMstar - \mMhat\|_{*},
    \end{aligned} \end{equation*}
where $\| \cdot \|_{*}$ denotes the nuclear norm, defined to be the sum of the singular values.
Using the fact that $\mMstar - \mMhat$ has at most rank $2r$, we have
    \begin{equation*}
        \left|\tr \mW \left(\mMstar - \mMhat\right) \right| \leq 2r \|\mW\| \|\mMstar - \mMhat\|.
    \end{equation*}
    Applying Equation~\eqref{eq:mW:spectral} and following an argument analogous to that in Equations (3.12) and (3.15) in \cite{chen2021spectral}, we have that with probability at least $1 - O(n^{-8})$,
    \begin{equation*}
        \|\mW\| \leq 5 \sqrt{\nuW n} \; \text{ and } \; \|\mMstar - \mMhat\| \leq 10\sqrt{\nuW n}.
    \end{equation*}
    It follows that with probability at least $1 - O(n^{-8})$,
    \begin{equation}\label{eq:term-3}
        \frac{2}{n(n+1)}\left|\tr \mW \left(\mMstar - \mMhat\right) \right|
        \le \frac{200\nuW r}{n+1}.
    \end{equation}

    For the fourth term in Equation~\eqref{eq:sig-diff}, we have
    \begin{equation*}
        \left|\sum_{i=1}^n W_{ii} \left(\Mstar_{ii} - \Mhat_{ii}\right)\right|
        \leq \max_{i\in[n]} \left\{|W_{ii}|\right\} \|\mMhat - \mMstar\|_{*}.
    \end{equation*}
    \begin{equation}\label{eq:term-4}
        \frac{2}{n(n+1)}\left|\sum_{i=1}^n W_{ii} \left(\Mstar_{ii} - \Mhat_{ii}\right)\right|
        \leq \frac{200\nuW r\sqrt{\log n}}{n^{3/2}}.
    \end{equation}

    Applying Equations~\eqref{eq:term-1},~\eqref{eq:term-2},~\eqref{eq:term-3} and \eqref{eq:term-4} to Equation~\eqref{eq:sig-diff}, with probability at least $1 - O(n^{-8})$,
    \begin{equation*}
        |\sighat^2 - \sigma^2|
        \leq \frac{400\nuW r}{n} + \frac{4 \nuW \sqrt{\log n}}{c n} + \frac{200 \nuW r \sqrt{\log n}}{n^{3/2}},
    \end{equation*}
    as we set out to show.
\end{proof}

\section{Proof of Theorem~\ref{thm:positive:rank-one:upper}}

Without loss of generality, we derive our results under the assumption that 
\begin{equation*}
    d_{\infty}\left(\vu, \vustar\right) = \|\vu - \vustar\|_{\infty}.
\end{equation*}
Otherwise, we can replace $\vu$ with $-\vu$ and obtain the same results. We remind the reader that we use $c$ and $C$ to denote constants with respect to $n$, whose precise value might change from line to line.
To prove Theorem~\ref{thm:positive:rank-one:upper}, we first state and prove a few technical lemmas.

\subsection{Technical lemmas}

\begin{lemma}\label{lem:refined:loo}
If
$|\lambdastar| \geq 80\sqrt{\nuW n}$,
then under Assumption~\ref{assump:W-Gauss}, it holds with probability at least $1-O(n^{-8})$ that for all $\ell \in [n]$,
\begin{equation} \label{eq:refined:loo-bound}
    \min\left\{\left|u_\ell - \ustar_\ell\right|, \left|u_\ell + \ustar_\ell\right| \right\}
    \leq \frac{80\sqrt{\nuW \log n} + 120\sqrt{\nuW n}  |\ustar_\ell|}{|\lambdastar|}. 
\end{equation}
\end{lemma}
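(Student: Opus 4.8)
## Proof Proposal for Lemma~\ref{lem:refined:loo}

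\textbf{Overall approach.} The plan is to use a leave-one-out style argument together with the eigenvector perturbation expansion for the leading eigenvector. The key object is the entrywise relation satisfied by $\vu$. Since $\mY \vu = \lambda \vu$ where $\lambda$ is the leading eigenvalue of $\mY$, looking at the $\ell$-th coordinate gives $\lambda u_\ell = \sum_k Y_{\ell k} u_k = \lambdastar \ustar_\ell \langle \vustar, \vu \rangle + (\mW \vu)_\ell$. Rearranging, $u_\ell = \lambda^{-1}\big( \lambdastar \ustar_\ell \langle \vustar, \vu\rangle + (\mW\vu)_\ell \big)$. The strategy is to (i) show $\lambda \approx \lambdastar$ and $\langle \vustar, \vu\rangle \approx \pm 1$ up to controllable errors, so that the first term is close to $\pm \ustar_\ell$, and (ii) bound $(\mW\vu)_\ell$ by splitting off the dependence of $\vu$ on the $\ell$-th row/column of $\mW$ via a leave-one-out sequence $\vu^{(\ell)}$, the leading eigenvector of the matrix $\mY^{(\ell)}$ obtained by zeroing out the $\ell$-th row and column of $\mW$.

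\textbf{Key steps in order.} First, I would invoke standard spectral-norm control: under Assumption~\ref{assump:W-Gauss}, $\|\mW\| \le 5\sqrt{\nuW n\log n}$ (or $\le 5\sqrt{\nuW n}$ in the regime of interest) with probability $1 - O(n^{-8})$, as already used in the proof of Lemma~\ref{lem:sighat} (Equation~\eqref{eq:mW:spectral}). Combined with $|\lambdastar| \ge 80\sqrt{\nuW n}$ and Weyl's inequality, this gives $|\lambda| \ge |\lambdastar|/2$ and $|\lambda - \lambdastar| \le \|\mW\|$. Second, I would control the eigenvector alignment: by Davis--Kahan (or the Wedin-type bound), $\min\{\|\vu - \vustar\|_2, \|\vu+\vustar\|_2\} \lesssim \|\mW\|/|\lambdastar|$, hence $|\langle \vustar,\vu\rangle| \ge 1 - C\nuW n/|\lambdastar|^2$. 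WLOG taking $\langle \vustar, \vu \rangle > 0$, this yields $|\lambdastar \ustar_\ell \langle \vustar, \vu\rangle / \lambda - \ustar_\ell| \lesssim |\ustar_\ell| \cdot \sqrt{\nuW n}/|\lambdastar|$, which accounts for the $\sqrt{\nuW n}|\ustar_\ell|/|\lambdastar|$ term in Equation~\eqref{eq:refined:loo-bound}. Third — the crux — I would bound $|(\mW\vu)_\ell|/|\lambda|$. Write $(\mW\vu)_\ell = \sum_k W_{\ell k} u_k$. Replace $\vu$ by the leave-one-out eigenvector $\vu^{(\ell)}$, which is independent of $\{W_{\ell k}\}_k$: then $\sum_k W_{\ell k} u^{(\ell)}_k$ is a sum of independent (conditionally on $\vu^{(\ell)}$) mean-zero subgaussian terms with variance proxy $\nuW \|\vu^{(\ell)}\|_2^2 = \nuW$, so it is $O(\sqrt{\nuW \log n})$ with probability $1 - O(n^{-9})$, and a union bound over $\ell \in [n]$ gives $1 - O(n^{-8})$. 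The correction term $\sum_k W_{\ell k}(u_k - u^{(\ell)}_k)$ is handled by Cauchy--Schwarz: $\le \|\mW_{\ell,\cdot}\|_2 \, \|\vu - \vu^{(\ell)}\|_2 \lesssim \sqrt{\nuW n}\cdot \|\vu - \vu^{(\ell)}\|_2$, and the leave-one-out perturbation $\|\vu - \vu^{(\ell)}\|_2$ is bounded (via the standard eigenvector-perturbation argument for rank-one deflation, cf.\ the $\ell_{2,\infty}$ eigenvector analysis in \cite{abbe2020entrywise,chen2021spectral}) by something like $(\|\mW_{\ell,\cdot}\|_2 \cdot d_\infty(\vu,\vustar) + |W_{\ell\ell}|\,|u_\ell|)/|\lambdastar|$, which after absorbing constants contributes lower-order terms dominated by the right-hand side of Equation~\eqref{eq:refined:loo-bound}. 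Combining all three pieces and tracking constants (being generous, as the lemma statement is) yields the claimed bound $\lesssim (\sqrt{\nuW\log n} + \sqrt{\nuW n}\,|\ustar_\ell|)/|\lambdastar|$ for every $\ell$ simultaneously.

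\textbf{Main obstacle.} The delicate step is making the leave-one-out decoupling rigorous while keeping the explicit constants ($80$, $120$) claimed in the statement. Specifically, $\|\vu - \vu^{(\ell)}\|_2$ must be bounded in a way that does not circularly reintroduce $|(\mW\vu)_\ell|$, and one must be careful that the sign-alignment choice (which $\pm\vu$ to use) is consistent across all $\ell$ and compatible with the leave-one-out matrices. The standard fix is a fixed-point / self-bounding argument: let $E = \max_\ell \min\{|u_\ell - \ustar_\ell|, |u_\ell+\ustar_\ell|\}$, derive an inequality of the form $E \le (\text{target RHS}) + (\text{small factor})\cdot E$ using the above decomposition, and solve for $E$. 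The "small factor" is $O(\sqrt{\nuW n}/|\lambdastar|) = O(1/80) < 1$ precisely because of the hypothesis $|\lambdastar| \ge 80\sqrt{\nuW n}$, which is why that constant appears. Getting this self-bounding inequality with clean constants, rather than just an order-of-magnitude statement, is the part that requires the most care.
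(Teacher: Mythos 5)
Your proposal is correct and follows essentially the same route as the paper: the paper's proof is a (sub-Gaussian) adaptation of Theorem~4.1 of \cite{chen2021spectral}, which is exactly the eigenvector-equation plus leave-one-out decoupling plus self-bounding argument you describe, with the hypothesis $|\lambdastar|\ge 80\sqrt{\nuW n}$ used precisely to absorb the $O(\sqrt{\nuW n}/|\lambdastar|)\cdot E$ term when solving the self-bounding inequality. The only cosmetic difference is that the paper cites the intermediate bounds (their Equations (4.15) and (4.20)) rather than rederiving the eigenvector-equation expansion from scratch.
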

\begin{proof}
    We largely follow the proof of Theorem 4.1 in \cite{chen2021spectral}, albeit with a slightly more careful analysis.
    In particular, we note that the proof of Theorem 4.1 in \cite{chen2021spectral} is written for the case where $\mW$ has Gaussian entries.
    It is straightforward to extend this argument to subgaussian entries by applying standard subgaussian concentration inequalities and truncation arguments \cite{vershynin2018HDP,wainwright2019high}, in essence replacing all appearances of $\sigma^2$ in their results with the subgaussian parameter (i.e., ``variance proxy'') $\nuW$.
    Here, we only sketch out a few key points and refer the reader to Section 4.1.4 of \cite{chen2021spectral} for a full argument.
    For each $\ell \in [n]$, we construct a leave-one-out copy $\mY^{(\ell)}$ as 
    \begin{equation*}
        \mY^{(\ell)} = \lambdastar \vustar {\vustar}^\top + \mW^{(\ell)}, 
    \end{equation*}
    where
    \begin{equation*}
        W_{ij}^{(\ell)} = 
        \begin{cases} 
            W_{ij}, & \mbox{ if } \ell \not \in \{i,j\} \\ 
            0 & \mbox{ otherwise. }
        \end{cases}
    \end{equation*}
    For each $\ell \in [n]$, let $\lambda^{(\ell)}$ and $\vu^{(\ell)}$ denote, respectively, the largest-magnitude eigenvalue of $\mY^{(\ell)}$ and its corresponding eigenvector.
    Without loss of generality, flipping signs if necessary, we assume that 
    \begin{equation*}
        d_\infty\left(\vu, \vustar\right) = \|\vu - \vustar\|_{\infty}.
    \end{equation*}
    By Equation (4.20) in \cite{chen2021spectral}, with probability at least $1-O\left(n^{-8}\right)$ we have 
    \begin{equation} \label{eq:vhat-vu:LOO:entrywise}
        \left|u_{\ell}^{(\ell)} - \ustar_\ell \right| 
        \leq \frac{20 \sqrt{\nuW n}}{|\lambdastar|} |u_{\ell}^\star|.
    \end{equation}
    By Equation (4.15) in \cite{chen2021spectral}, we have
    \begin{equation*}
        \left\|\vu - \vu^{(\ell)}\right\|_2 \leq \frac{4\left\|\left(\mY - \mY^{(\ell)}\right) \vu^{(\ell)}\right\|_2}{|\lambdastar|}.
    \end{equation*}
    Following the argument just after Equation (4.18) in \cite{chen2021spectral}, replacing the Gaussian concentration inequalities with subgaussian concentration inequalities, one can show that 
    \begin{equation*}
        \left\|\left(\mY - \mY^{(\ell)}\right)\vu^{(\ell)}\right\|_2 \leq 5\sqrt{\nuW \log n} + 5\sqrt{\nuW n} \left|u_\ell\right| + 5\sqrt{\nuW n} \left\|\vu - \vu^{(\ell)}\right\|_2,
    \end{equation*}
    with probability at least $1-O\left(n^{-8}\right)$.
    Combining the above two displays and rearranging slightly,
    \begin{equation*} 
    \left( 1 - \frac{ 5\sqrt{\nuW n} }{ |\lambdastar| } \right)
    \|\vu - \vu^{(\ell)}\|_2 \leq \frac{5 \sqrt{\nuW \log n} + 5\sqrt{\nuW n}\left|u_\ell\right|}{|\lambdastar|}.
    \end{equation*}
    Since $\lambdastar \geq 40 \sqrt{\nuW n}$ by assumption, it follows that
    \begin{equation} \label{eq:vhat-vu:LOO:euclidean}
        \|\vu - \vu^{(\ell)}\|_2 \leq \frac{40 \sqrt{\nuW \log n} + 40\sqrt{\nuW n}\left|u_\ell\right|}{|\lambdastar|}.
    \end{equation}
    
    Combining the triangle inequality with the trivial upper bound
    $\|\vu - \vu^{(\ell)}\|_{\infty} \leq \|\vu - \vu^{(\ell)}\|_2$,
    we have for any $\ell \in [n]$, 
    \begin{equation*}
    \left|u_\ell - \ustar_\ell\right| \leq \left|\ustar_\ell - u_\ell^{(\ell)}\right| + \left\|\vu - \vu^{(\ell)}\right\|_2.
    \end{equation*}
    Applying Equations~\eqref{eq:vhat-vu:LOO:entrywise} and~\eqref{eq:vhat-vu:LOO:euclidean}, it holds with probability at least $1-O(n^{-8})$ that
    \begin{equation*}     \begin{aligned}
    \left|u_\ell - u^\star_\ell\right| 
    &\leq  \frac{20\sqrt{ \nuW n}}{|\lambdastar|} \left|u_{\ell}^\star\right| + \frac{40 \sqrt{\nuW \log n} + 40\sqrt{\nuW n}\left|u_\ell\right|}{|\lambdastar|}\\
    &\leq \frac{20 \sqrt{\nuW n}}{|\lambdastar|} |u_{\ell}^\star| + \frac{40\sqrt{\nuW \log n} + 40 \sqrt{\nuW n}|u_\ell^\star| + 40\sqrt{\nuW n}\left|u_\ell - \ustar_\ell \right|}{|\lambdastar|},
    \end{aligned} \end{equation*}
    where the second inequality follows from the triangle inequality.
    After rearranging terms and using the fact that 
    $|\lambdastar| \ge 80 \sqrt{\nuW n}$,
    we obtain the desired bound 
    \begin{equation*}
        \left|u_\ell - \ustar_\ell\right| \leq \frac{80 \sqrt{\nuW \log n} + 120\sqrt{\nuW n}|\ustar_{\ell}|}{|\lambdastar|},
    \end{equation*}
    for all $\ell \in [n]$, completing the proof.
\end{proof}

Lemma~\ref{lem:Ihat-alpha}, which controls the behavior of the index set $\Ihat$ in Algorithm~\ref{alg:rank:one:simple}, follows from an application of Lemma~\ref{lem:refined:loo}.

\begin{lemma} \label{lem:Ihat-alpha}
    Under the model in Equation~\eqref{eq:signal-plus-noise} with $\mMstar = \lambdastar \ustar {\ustar}^\top$, suppose that Assumption~\ref{assump:W-Gauss} holds and Assumption~\ref{assump:lambdastar} holds with constant $C_1 > 2400/\epsilon_0$, where $\epsilon_0 \in (0,1)$ is fixed.
    For $\alpha \in \calA$, define 
    \begin{equation*}
        \Ihat_{\alpha} = \left\{i : |u_i| \geq \alpha\right\},
    \end{equation*}
    where $\vu$ is the leading eigenvector of $\mY$, as in Step~\ref{step:topeig} of Algorithm~\ref{alg:rank:one:simple}.
    For all sufficiently large $n$, the following all hold with probability at least $1 - O(n^{-8} \log n)$:
    \begin{equation} \label{eq:subset:i}
        \{i:|\ustar_i| \geq (1+\epsilon_0/2)\alpha\} \subseteq \Ihat_{\alpha} \subseteq \{i:|\ustar_i| \geq (1-\epsilon_0/2)\alpha\},
    \end{equation}
    \begin{equation}\label{eq:subset:ii}
    \begin{aligned}
         \left\{ i: (1-\epsilon_0/2) \alpha < \left| u_{i} \right| < (1+\epsilon_0/2)\alpha\right\}
         &\subseteq  \left\{ i: (1-\epsilon_0) \alpha < \left| \ustar_{i} \right| < (1+\epsilon_0) \alpha\right\},
    \end{aligned}
    \end{equation}
    \begin{equation}\label{eq:subset:iii}
    \begin{aligned}
        \left\{i:\left|\ustar_i\right| \geq \alpha \right\} \subseteq \left\{ i: \left| u_{i} \right| > (1-\epsilon_0/2) \alpha  \right\}
    \end{aligned}
    \end{equation}
    and
    \begin{equation}\label{eq:subset:iv}
        \left\{ i: \left| u_{i} \right| \geq (1+\epsilon_0/2) \alpha  \right\} \subseteq \left\{i:\left|\ustar_i\right| \geq \alpha\right\}.
    \end{equation}
\end{lemma}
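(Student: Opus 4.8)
The plan is to derive all four set inclusions from the entrywise bound in Lemma~\ref{lem:refined:loo}. Fix a large $n$ and work on the event of probability at least $1 - O(n^{-8})$ on which Equation~\eqref{eq:refined:loo-bound} holds simultaneously for all $\ell \in [n]$; there is no union bound cost over the finitely many $\alpha \in \calA$, so the stated probability $1 - O(n^{-8}\log n)$ is comfortable (the $\log n$ factor is slack, unless one separately needs the eigenvalue estimate from Assumption~\ref{assump:lambdahat}, whose failure probability is already $O(n^{-8}\log n)$). The key quantitative input is that, by Assumption~\ref{assump:lambdastar}, $|\lambdastar| \ge C_1 \sqrt{\nuW n \log n}$ with $C_1 > 2400/\epsilon_0$, so in particular $120\sqrt{\nuW n}/|\lambdastar| \le 120/(C_1\sqrt{\log n}) \le (\epsilon_0/20)/\sqrt{\log n}$ and, since each relevant $\alpha \in \calA$ satisfies $\alpha \ge \log^{-L/2} n = 1/\sqrt{2n} \gg$ the term $80\sqrt{\nuW\log n}/|\lambdastar| \le 80/(C_1\sqrt{n}) $, the additive ``noise floor'' $80\sqrt{\nuW\log n}/|\lambdastar|$ is at most $(\epsilon_0/4)\alpha$ for every $\alpha \in \calA$ once $n$ is large. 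Combining these, Lemma~\ref{lem:refined:loo} gives, on our event, the clean estimate
\begin{equation*}
    \bigl| |u_\ell| - |\ustar_\ell| \bigr| \le \min\{|u_\ell - \ustar_\ell|, |u_\ell + \ustar_\ell|\} \le \frac{\epsilon_0}{4}\,\alpha + \frac{\epsilon_0}{4}\,|\ustar_\ell|
\end{equation*}
for every $\ell$ and every $\alpha \in \calA$ (here I also used $120\sqrt{\nuW n}|\ustar_\ell|/|\lambdastar| \le (\epsilon_0/4)|\ustar_\ell|$, absorbing constants; one should double-check whether the constant $2400$ in Assumption~\ref{assump:lambdastar} was chosen precisely to make $120/C_1 \le \epsilon_0/20$, which it is).

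\textbf{Deriving the inclusions.} With the displayed two-sided bound in hand, each inclusion is an elementary case analysis. For Equation~\eqref{eq:subset:i}: if $|\ustar_i| \ge (1+\epsilon_0/2)\alpha$ then $|u_i| \ge |\ustar_i| - (\epsilon_0/4)\alpha - (\epsilon_0/4)|\ustar_i| \ge (1-\epsilon_0/4)|\ustar_i| - (\epsilon_0/4)\alpha \ge (1-\epsilon_0/4)(1+\epsilon_0/2)\alpha - (\epsilon_0/4)\alpha \ge \alpha$ for $\epsilon_0 \le 1$, giving $i \in \Ihat_\alpha$; conversely if $i \in \Ihat_\alpha$, i.e. $|u_i| \ge \alpha$, then $|\ustar_i| \ge |u_i| - (\epsilon_0/4)\alpha - (\epsilon_0/4)|\ustar_i|$, so $(1+\epsilon_0/4)|\ustar_i| \ge \alpha - (\epsilon_0/4)\alpha = (1-\epsilon_0/4)\alpha$, whence $|\ustar_i| \ge \frac{1-\epsilon_0/4}{1+\epsilon_0/4}\alpha \ge (1-\epsilon_0/2)\alpha$. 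Equations~\eqref{eq:subset:iii} and~\eqref{eq:subset:iv} are proved by exactly the same two computations (they are, respectively, the ``$\supseteq$-type'' half of each direction of \eqref{eq:subset:i} with the roles of $u$ and $\ustar$ and the thresholds lined up slightly differently), so I would present \eqref{eq:subset:i}, \eqref{eq:subset:iii}, \eqref{eq:subset:iv} together. For Equation~\eqref{eq:subset:ii}: if $(1-\epsilon_0/2)\alpha < |u_i| < (1+\epsilon_0/2)\alpha$, then from $|\ustar_i| \ge |u_i| - (\epsilon_0/4)\alpha - (\epsilon_0/4)|\ustar_i|$ we get $|\ustar_i| > \frac{(1-\epsilon_0/2) - \epsilon_0/4}{1+\epsilon_0/4}\alpha \ge (1-\epsilon_0)\alpha$, and from $|\ustar_i| \le |u_i| + (\epsilon_0/4)\alpha + (\epsilon_0/4)|\ustar_i|$ we get $|\ustar_i| \le \frac{(1+\epsilon_0/2) + \epsilon_0/4}{1-\epsilon_0/4}\alpha \le (1+\epsilon_0)\alpha$ — here one checks $\frac{1+3\epsilon_0/4}{1-\epsilon_0/4} \le 1+\epsilon_0$, i.e. $1+3\epsilon_0/4 \le 1 + 3\epsilon_0/4 - \epsilon_0^2/4$, which is false by a hair, so I would instead use a slightly sharper noise-floor bound (replace $\epsilon_0/4$ by $\epsilon_0/8$, which the constant $2400$ easily affords) to close this inequality. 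This is the one spot demanding care with the constants.

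\textbf{Main obstacle.} The real work is bookkeeping: making sure the single constant $C_1 > 2400/\epsilon_0$ from Assumption~\ref{assump:lambdastar} is large enough that the additive term $80\sqrt{\nuW\log n}/|\lambdastar|$ and the multiplicative slack $120\sqrt{\nuW n}/|\lambdastar|$ in Lemma~\ref{lem:refined:loo} are simultaneously small enough (relative to $\alpha \ge 1/\sqrt{2n}$ and relative to $\epsilon_0$) to turn all four inclusions into strict elementary inequalities, uniformly over the $O(\log n / \log\log n)$ values of $\alpha \in \calA$. I expect the cleanest write-up is: (i) state the event and cite Lemma~\ref{lem:refined:loo}; (ii) reduce to the displayed two-sided inequality $\bigl||u_\ell|-|\ustar_\ell|\bigr| \le (\epsilon_0/8)\alpha + (\epsilon_0/8)|\ustar_\ell|$ using Assumption~\ref{assump:lambdastar} and $\alpha \in \calA \subseteq [1/\sqrt{2n}, 1]$; (iii) do the four short case analyses. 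No genuinely hard estimate appears — everything downstream of Lemma~\ref{lem:refined:loo} is arithmetic — but the inequalities are tight enough that the factors of $\epsilon_0$ must be tracked honestly rather than hidden in a generic constant $C$.
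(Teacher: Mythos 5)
Your proposal is correct and follows essentially the same route as the paper: apply the leave-one-out entrywise bound of Lemma~\ref{lem:refined:loo}, use Assumption~\ref{assump:lambdastar} (with $C_1 > 2400/\epsilon_0$) and $\alpha \ge 1/\sqrt{2n}$ to make the additive and multiplicative error terms small fractions of $\epsilon_0\alpha$ and $\epsilon_0|\ustar_i|$, and finish with elementary case analyses for each inclusion. The "false by a hair" issue you flag is only an artifact of discarding the decay in $n$: the multiplicative term is actually $\le \epsilon_0/(20\sqrt{\log n})$ and the additive term is $\le (\epsilon_0/30)\sqrt{1/n} \le (\sqrt{2}\epsilon_0/30)\alpha$, both of which the paper retains, so all four inclusions close comfortably for $n$ large without sharpening to $\epsilon_0/8$.
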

\begin{proof} 
    Fix $\alpha \in \calA$.
    For any $i \in \Ihat_{\alpha}$, we have $\left|u_i\right| \geq \alpha$ by definition.
    We have
    \begin{equation}\label{eq:ui-upper-prob}
    \begin{aligned}
        \Pr&\left(\bigcup_{i \in \Ihat_{\alpha}} \left\{|\ustar_i| < \alpha - \frac{80\sqrt{\nuW \log n} + 120\sqrt{\nuW n}  |\ustar_i|}{|\lambdastar|}\right\}\right)\\
        &~~~~~~\leq \Pr\left(\bigcup_{i \in \Ihat_{\alpha}} \left\{|\ustar_i| < \left|u_i\right| - \frac{80\sqrt{\nuW \log n} + 120\sqrt{\nuW n}  |\ustar_i|}{|\lambdastar|} \right\} \right)\\
        &~~~~~~\leq \Pr\left(\bigcup_{i \in [n]} \left\{|\ustar_i| < \left|u_i\right| - \frac{80\sqrt{\nuW \log n} + 120\sqrt{\nuW n}  |\ustar_i|}{|\lambdastar|} \right\} \right)
        \leq O(n^{-8}).
    \end{aligned} \end{equation}
    where the first inequality follows from the fact that $|u_i| \geq \alpha$ for $i \in \Ihat_{\alpha}$, the second inequality follows from set inclusion, and the last inequality follows from combining the triangle inequality with Lemma~\ref{lem:refined:loo}.
    If
    \begin{equation} \label{eq:ustari:lb}
        |\ustar_i|
        \geq \alpha - \frac{80\sqrt{\nuW \log n} + 120\sqrt{\nuW n} |\ustar_i|}{|\lambdastar|}
    \end{equation}
    and we take $C_1 > 2400/\epsilon_0$ in Assumption~\ref{assump:lambdastar}, then we have
    \begin{equation*} 
        |\ustar_i|
        \geq \alpha - \frac{80\epsilon_0 \sqrt{\nuW \log n}}{2400 \sqrt{\nuW n \log n}} - \frac{120\epsilon_0\sqrt{\nuW n}}{2400 \sqrt{\nuW n\log n}}|\ustar_i|
        \geq \alpha - \frac{\epsilon_0}{30}\sqrt{\frac{1}{n}} - \frac{\epsilon_0}{20\sqrt{\log n}}|\ustar_i|.
    \end{equation*}
    Since $\alpha \geq \sqrt{1/2n}$, after rearranging terms,
    \begin{equation} \label{eq:ustar-lower:ii} \begin{aligned}
    |\ustar_i|
    &\geq \left(1 + \frac{\epsilon_0}{20\sqrt{\log n}}\right)^{-1} \left(\alpha - \frac{\epsilon_0}{30} \sqrt{\frac{1}{n}}\right)
    \geq \left(1 + \frac{\epsilon_0}{20\sqrt{\log n}}\right)^{-1} \left(1 - \frac{\epsilon_0}{15}\right) \alpha \\
    &> (1-\epsilon_0/2)\alpha,   
    \end{aligned} \end{equation}
    where the last inequality holds for all $i \in [n]$ satisfying Equation~\eqref{eq:ustari:lb} (i.e., all $i \in \Ihat_\alpha$) when $n$ is sufficiently large.
    Thus, we conclude that, applying a union bound followed by Equation~\eqref{eq:ui-upper-prob},
    \begin{equation} \label{eq:Ihat-alpha-subset}
    \begin{aligned}
        \Pr& \left(\Ihat_{\alpha} \! \subseteq \{i\!:\!|\ustar_i| \geq (1\!-\!\epsilon_0/2)\alpha\}\!\right) = 
        \Pr\left( \bigcap_{i \in \hat{I}_{\alpha}} \left\{|u_i^\star| \geq (1-\epsilon_0/2)\alpha \right\}\right) \\
        &~~~~~~~~~~~~\geq 1\! - \!\Pr \left(\bigcup_{i \in \hat{I}_{\alpha}} \!\left\{|\ustar_i| \!< \!\alpha\! -\! \frac{80\sqrt{\nuW \log n} + \! 120\sqrt{\nuW n} |\ustar_i|}{|\lambdastar|}\!\right\}\!\right)\\
        &~~~~~~~~~~~~\geq 1 - O(n^{-8}).
    \end{aligned} 
    \end{equation}
    Following a similar argument, we can show that Equation~\eqref{eq:subset:iv} holds with probability at least $1-O(n^{-8})$, and that, also with probability at least $1 - O(n^{-8})$,
    \begin{equation}\label{eq:subset-partial-ii-lower}
        \left\{i:\left(1-\epsilon_0 / 2\right) \alpha<\left|u_i\right|\right\} \subseteq \left\{i: (1-\epsilon_0) \alpha < |\ustar_i|\right\}.
    \end{equation}
    
    On the other hand, combining the triangle inequality with the bound in Equation~\eqref{eq:refined:loo-bound} in Lemma~\ref{lem:refined:loo}, it holds with probability at least $1 - O(n^{-8})$ that for all $i \in [n]$ satisfying $|\ustar_i| \geq (1+\epsilon_0/2)\alpha$,
    \begin{equation*} \begin{aligned}
    \left|u_i\right|
    \geq |\ustar_i| - \left|u_i - \ustar_i \right|
    \geq \left(1+\frac{\epsilon_0}{2}\right)\alpha - \frac{\epsilon_0}{30 \sqrt{n}} - \frac{\epsilon_0(1+\epsilon_0/2) \alpha}{20\sqrt{\log n}} > \alpha,
    \end{aligned} \end{equation*}
    where the last inequality holds when $n$ is sufficiently large. It follows that 
    \begin{equation} \label{eq:vi:lower}
    \begin{aligned}
    \Pr \left( \Ihat_\alpha \supseteq \{i:|\ustar_i| \geq (1+\epsilon_0/2)\alpha\} \right)
    &= \Pr \left(\bigcap_{i \in \{i:|\ustar_i| \geq (1+\epsilon_0/2)\alpha\}} \left\{ \left|u_i\right| \geq \alpha \right\}\right) \\
    &\geq 1 - O(n^{-8}). 
    \end{aligned} \end{equation}
    Following the same argument, we have that Equation~\eqref{eq:subset:iii}
    holds with probability at least $1 - O(n^{-8})$ and that
    \begin{equation}\label{eq:subset-partial-ii-upper}
    \{i : |\ustar_i| \geq (1+\epsilon_0)\alpha\}
    \subseteq
    \left\{i : \left|u_i\right| \geq (1+\epsilon_0/2)\alpha \right\} 
    \end{equation}
    also with probability at least $1-O(n^{-8})$. 
   
    Combining Equation~\eqref{eq:Ihat-alpha-subset} and Equation~\eqref{eq:vi:lower} yields that Equation~\eqref{eq:subset:i} holds with probability at least $1 - O(n^{-8})$. 
    Similarly, combining Equations~\eqref{eq:subset-partial-ii-lower} and~\eqref{eq:subset-partial-ii-upper} implies that Equation~\eqref{eq:subset:ii} holds with probability at least $1 - O(n^{-8})$.
    A union bound over all $\alpha \in \calA$ yields that Equations~\eqref{eq:subset:i},~\eqref{eq:subset:ii}~\eqref{eq:subset:iii} and~\eqref{eq:subset:iv} hold simultaneously for all $\alpha \in \calA$ with probability at least $1 - O(|\calA| n^{-8})$.
    Recalling from Equation~\eqref{eq:Acal} that $|\calA| = O(\log n)$ completes the proof.
\end{proof}


The results in Lemma~\ref{lem:Ihat-alpha} lead to Lemma~\ref{lem:Ihat}.

\begin{lemma} \label{lem:Ihat} 
Consider the model in Equation~\eqref{eq:signal-plus-noise} with $\mMstar = \lambdastar \vustar {\vustar}^\top$ and suppose that Assumptions~\ref{assump:W-Gauss},~\ref{assump:u:gap} and~\ref{assump:lambdastar} hold.
  For each $\alpha \in \calA$, let
    \begin{equation} \label{eq:Ialpha-define}
        I_\alpha := \{i: |\ustar_i| \geq \alpha\}
    \end{equation}
    and define the set
    \begin{equation} \label{eq:calA0-define}
        \calA_0 = \{\alpha \in \calA: |I_\alpha| \geq (\alpha^2 \log^2 n)^{-1}\},
    \end{equation}
    where $\calA$ is as defined in Equation~\eqref{eq:Acal}.
    Let $\Ihat$ be the set constructed in Step~\ref{step:pickalpha0} of Algorithm~\ref{alg:rank:one:simple} with $\beta = \epsilon_0/2$.
    With probability at least $1 - O(n^{-8} \log n)$, there exists an $\alpha \in \calA_0$ such that $\Ihat = I_\alpha$.
\end{lemma}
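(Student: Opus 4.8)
The plan is to condition on the high-probability event $\calE$ on which the set inclusions~\eqref{eq:subset:i}--\eqref{eq:subset:iv} of Lemma~\ref{lem:Ihat-alpha} hold simultaneously for every $\alpha \in \calA$; by that lemma (applied with constant $\beta = \epsilon_0/2$, as in Step~\ref{step:pickalpha0} and Remark~\ref{rem:u:gap}), $\Pr(\calE) \ge 1 - O(n^{-8}\log n)$. Throughout, write $\Ihat_\alpha = \{i : |u_i| \ge \alpha\}$ for $\alpha \in \calA$, so that the set $\Ihat$ produced by Step~\ref{step:pickalpha0} is exactly $\Ihat_{\alphahat}$ for the chosen value $\alphahat$. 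Everything then reduces to two things: showing that Step~\ref{step:pickalpha0} does not fail on $\calE$, and showing that for any admissible $\alphahat$ one has $\Ihat_{\alphahat} = I_{\alphahat}$ with $\alphahat \in \calA_0$.

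First I would check feasibility of the selection. By Assumption~\ref{assump:u:gap} there is an $\alpha_0 \in \calA$ with $|I_{\alpha_0}| > (\alpha_0^2\log^2 n)^{-1}$ (so $\alpha_0 \in \calA_0$ by~\eqref{eq:calA0-define}) and with $\{i : |\ustar_i| \in [(1-\epsilon_0)\alpha_0,(1+\epsilon_0)\alpha_0]\} = \emptyset$. On $\calE$, \eqref{eq:subset:ii} with $\alpha = \alpha_0$ gives $\{i : (1-\beta)\alpha_0 < |u_i| < (1+\beta)\alpha_0\} \subseteq \{i : (1-\epsilon_0)\alpha_0 < |\ustar_i| < (1+\epsilon_0)\alpha_0\} = \emptyset$, which is precisely~\eqref{eq:gap} for $\alpha_0$. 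Next, \eqref{eq:subset:iii} gives $I_{\alpha_0} \subseteq \{i : |u_i| > (1-\beta)\alpha_0\}$, and since the gap just established rules out any index with $|u_i| \in ((1-\beta)\alpha_0,(1+\beta)\alpha_0)$, the right-hand side equals $\{i : |u_i| \ge (1+\beta)\alpha_0\} \subseteq \Ihat_{\alpha_0}$; hence $|\Ihat_{\alpha_0}| \ge |I_{\alpha_0}| > (\alpha_0^2\log^2 n)^{-1}$, i.e.~\eqref{eq:alpha0} holds for $\alpha_0$. So Step~\ref{step:pickalpha0} succeeds, and $\alphahat$ is some element of $\calA$ satisfying both~\eqref{eq:alpha0} and~\eqref{eq:gap}.

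Second, I would show $\Ihat_{\alphahat} = I_{\alphahat}$ on $\calE$. For $I_{\alphahat} \subseteq \Ihat_{\alphahat}$: by~\eqref{eq:subset:iii} with $\alpha = \alphahat$, $I_{\alphahat} \subseteq \{i : |u_i| > (1-\beta)\alphahat\}$, and the gap~\eqref{eq:gap} collapses this to $\{i : |u_i| \ge (1+\beta)\alphahat\} \subseteq \{i : |u_i| \ge \alphahat\} = \Ihat_{\alphahat}$. For the reverse inclusion, note $[\alphahat,(1+\beta)\alphahat) \subseteq ((1-\beta)\alphahat,(1+\beta)\alphahat)$, so~\eqref{eq:gap} forces every $i$ with $|u_i| \ge \alphahat$ to in fact satisfy $|u_i| \ge (1+\beta)\alphahat$; thus $\Ihat_{\alphahat} = \{i : |u_i| \ge (1+\beta)\alphahat\} \subseteq \{i : |\ustar_i| \ge \alphahat\} = I_{\alphahat}$ by~\eqref{eq:subset:iv}. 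Combining the inclusions yields $\Ihat = \Ihat_{\alphahat} = I_{\alphahat}$. Finally, \eqref{eq:alpha0} together with $\Ihat = I_{\alphahat}$ gives $|I_{\alphahat}| \ge (\alphahat^2\log^2 n)^{-1}$, so $\alphahat \in \calA_0$ by~\eqref{eq:calA0-define}, and the proof is complete.

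The argument is essentially bookkeeping once Lemma~\ref{lem:Ihat-alpha} is in hand — the genuinely analytic work (the leave-one-out control of $\vu$ versus $\vustar$) has already been done there. The one point that requires care, and which I expect to be the main obstacle to making the write-up airtight, is the dual role of the gap condition~\eqref{eq:gap}: it is needed once to certify that Step~\ref{step:pickalpha0} is feasible on $\calE$ (so the algorithm is well-defined), and then again to upgrade the \emph{one-sided} inclusions~\eqref{eq:subset:iii}--\eqref{eq:subset:iv} of Lemma~\ref{lem:Ihat-alpha} into the \emph{equality} $\Ihat_{\alphahat} = I_{\alphahat}$. Getting the open/closed interval endpoints right at each of these steps, and confirming that no feasibility failure is possible — which is exactly what Assumption~\ref{assump:u:gap} together with the $\epsilon_0$-dependence of $C_1$ in Assumption~\ref{assump:lambdastar} is designed to guarantee — is the only delicate part.
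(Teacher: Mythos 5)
Your proposal is correct and follows essentially the same route as the paper: both arguments condition on the event that the four inclusions of Lemma~\ref{lem:Ihat-alpha} hold simultaneously for all $\alpha \in \calA$, use Assumption~\ref{assump:u:gap} to certify that Step~\ref{step:pickalpha0} is feasible, and then use the gap condition~\eqref{eq:gap} to collapse the one-sided inclusions into the equality $\Ihat_{\alphahat} = I_{\alphahat}$. The only (cosmetic) differences are the order of the two halves and that you deduce $\alphahat \in \calA_0$ from the established equality $|\Ihat_{\alphahat}| = |I_{\alphahat}|$ rather than reading it off Equation~\eqref{eq:alpha0} directly, which is in fact slightly more careful than the paper's phrasing.
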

\begin{proof} 
    We first show that with probability at least $1 - O(n^{-8}\log n)$, for any $\alpha \in \calA$ satisfying Equations~\eqref{eq:alpha0} and \eqref{eq:gap}, we have $\alpha \in \calA_0$ and $\Ihat_\alpha = I_\alpha$, where $\Ihat_\alpha$ is as defined in Lemma~\ref{lem:Ihat-alpha}.
    We then prove that there exists at least one $\alpha \in \calA_0 \subseteq \calA$ such that both Equations~\eqref{eq:alpha0} and \eqref{eq:gap} hold.
    Having established this, the value $\alphahat \in \calA$ chosen in Step~\ref{step:pickalpha0} of Algorithm~\ref{alg:rank:one:simple}, which satisfies Equations~\eqref{eq:alpha0} and \eqref{eq:gap} by definition, must be an element of $\calA_0 \subseteq \calA$. Since $\Ihat = \Ihat_{\alphahat}$ by definition, we must have $\Ihat = I_{\alphahat}$ with probability at least $1 - O(n^{-8}\log n)$, which will complete the proof.

Suppose that $\alpha \in \calA$ satisfies Equations~\eqref{eq:alpha0} and~\eqref{eq:gap}.
Trivially, by the definition of $\calA_0$ in Equation~\eqref{eq:calA0-define}, Equation~\eqref{eq:alpha0} implies that $\alpha \in \calA_0$.
By Lemma~\ref{lem:Ihat-alpha},
with probability at least $1-O(n^{-8} \log n)$, Equations~\eqref{eq:subset:iii} and~\eqref{eq:subset:iv} hold for all elements of $\calA$.
Thus, in particular,
    \begin{equation*}
        \left\{ i: \left| u_i \right| \geq (1+\epsilon_0/2) \alphahat \right\} \subseteq \left\{i:\left|\ustar_i\right| \geq \alphahat \right\} \subseteq \left\{ i: \left| u_{i} \right| > (1-\epsilon_0/2) \alphahat \right\}.
    \end{equation*}
    By Equation~\eqref{eq:gap} and the choice $\beta = \epsilon_0/2$ in Algorithm~\ref{alg:rank:one:simple},
    \begin{equation*}
        \Ihat_\alpha = \left\{ i: \left| u_{i} \right| > (1-\epsilon_0/2) \alpha \right\} = \left\{ i: \left| u_{i} \right| \geq (1+\epsilon_0/2) \alpha \right\},
    \end{equation*}
    from which it follows that $\Ihat_\alpha = I_{\alpha}$ with probability at least $1 - O(n^{-8} \log n)$.

We now establish the existence of at least one $\alpha \in \calA_0 \subseteq \calA$ satisfying Equations~\eqref{eq:alpha0} and~\eqref{eq:gap}.
By Assumption~\ref{assump:u:gap}, for sufficiently large $n$, there exists $\alpha \in \calA$ such that Equation~\eqref{eq:u-cardinal-bound} holds, from which $\alpha \in \calA_0$ immediately.
    Also by Assumption~\ref{assump:u:gap}, there are no $i \in [n]$ for which
    \begin{equation*}
    (1-\epsilon_0) \alpha \le |\ustar_i| \le (1+\epsilon_0) \alpha.
    \end{equation*}
    By Lemma~\ref{lem:Ihat-alpha}, Equation~\eqref{eq:subset:i} holds with probability at least $1 - O(n^{-8}\log n)$ for all elements of $\calA_0 \subseteq \calA$, and thus $\Ihat_\alpha = I_\alpha$. 
    Therefore, we have $|\Ihat_{\alpha}| = |I_\alpha| \geq (\alpha^2\log^2 n)^{-1}$, where the lower-bound follows from Equation~\eqref{eq:u-cardinal-bound}.
    As a result, any such $\alpha$ guaranteed by Assumption~\ref{assump:u:gap} satisfies Equation~\eqref{eq:alpha0}.

    By Equation~\eqref{eq:subset:ii} in Lemma~\ref{lem:Ihat-alpha} and the fact that $\alpha$ satisfies Assumption~\ref{assump:u:gap}, with probability at least $1 - O(n^{-8} \log n)$ we have
    \begin{equation*}
    \left\{ i: (1-\epsilon_0/2) \alpha < \left| u_{i} \right| < (1+\epsilon_0/2)\alpha\right\} = \emptyset.
    \end{equation*}
    Thus, with $\beta = \epsilon_0/2$, this particular $\alpha$ also satisfies Equation~\eqref{eq:gap} and we conclude that with probability at least $1 - O(n^{-8} \log n)$, there exists at least one $\alpha \in \calA_0$ such that both Equations~\eqref{eq:alpha0} and \eqref{eq:gap} hold.

    Our argument above ensures that with probability at least $1-O(n^{-8} \log n)$, there exists $\alpha \in \calA_0$ satisfying Equations~\eqref{eq:alpha0} and~\eqref{eq:gap}.
    Thus, $\alphahat \in \calA_0$ as constructed in Step~\ref{step:pickalpha0} exists, and our argument above ensures that $\Ihat$ as constructed in Step~\ref{step:pickalpha0} satisfies $\Ihat = \Ihat_{\alphahat} = I_{\alphahat}$, completing the proof.
\end{proof}


\begin{lemma} \label{lem:sign}
    Under Assumptions~\ref{assump:W-Gauss} and~\ref{assump:lambdastar}, with probability at least $1 - O(n^{-8})$ it holds for all $i \in [n]$ that
    \begin{equation*}
        |\ustar_i| \geq \frac{1}{24\sqrt{n}}
        ~~\text{ implies }~~
        \sgn{u_i} = \sgn{\ustar_i}.
    \end{equation*}
\end{lemma}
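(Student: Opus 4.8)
The plan is to derive this directly from the refined leave-one-out bound (Lemma~\ref{lem:refined:loo}) together with the growth condition on $|\lambdastar|$ (Assumption~\ref{assump:lambdastar}); no new probabilistic input is needed beyond the event on which Lemma~\ref{lem:refined:loo} holds, and everything else is deterministic arithmetic with constants.

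First I would check applicability: Assumption~\ref{assump:lambdastar} gives $|\lambdastar| \ge C_1\sqrt{\nuW n\log n}$ with $C_1 > 2400/\epsilon_0 \ge 2400$, which exceeds $80\sqrt{\nuW n}$ for large $n$, so Lemma~\ref{lem:refined:loo} is in force. I would then work on the event of probability $1 - O(n^{-8})$ furnished by that lemma, under the sign convention $d_\infty(\vu,\vustar) = \|\vu - \vustar\|_\infty$ adopted throughout the proof of Theorem~\ref{thm:positive:rank-one:upper}; under this convention the proof of Lemma~\ref{lem:refined:loo} in fact yields the one-sided bound $|u_\ell - \ustar_\ell| \le (80\sqrt{\nuW\log n} + 120\sqrt{\nuW n}\,|\ustar_\ell|)/|\lambdastar|$ for every $\ell$, which is what I need. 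Substituting Assumption~\ref{assump:lambdastar} and using $\sqrt{\log n}\ge 1$ for $n$ large, this becomes $|u_\ell - \ustar_\ell| \le \tfrac{80}{C_1}\cdot\tfrac{1}{\sqrt n} + \tfrac{120}{C_1\sqrt{\log n}}\,|\ustar_\ell| \le \tfrac{1}{30\sqrt n} + \tfrac{1}{20}\,|\ustar_\ell|$ for all $\ell \in [n]$.

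Next I would specialize to indices $\ell$ with $|\ustar_\ell| \ge 1/(24\sqrt n)$. For such $\ell$, $\tfrac{1}{30\sqrt n} = \tfrac{4}{5}\cdot\tfrac{1}{24\sqrt n} \le \tfrac{4}{5}|\ustar_\ell|$, so $|u_\ell - \ustar_\ell| \le (\tfrac{4}{5} + \tfrac{1}{20})|\ustar_\ell| = \tfrac{17}{20}|\ustar_\ell| < |\ustar_\ell|$. The inequality $|u_\ell - \ustar_\ell| < |\ustar_\ell|$ places $u_\ell$ strictly between $0$ and $2\ustar_\ell$, and since $|\ustar_\ell| > 0$ this forces $\sgn{u_\ell} = \sgn{\ustar_\ell}$. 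As the event from Lemma~\ref{lem:refined:loo} already bounds all coordinates simultaneously, no further union bound is required, so the conclusion holds with probability at least $1 - O(n^{-8})$.

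The main point to be careful about is the sign bookkeeping. Lemma~\ref{lem:refined:loo} is stated in terms of $\min\{|u_\ell - \ustar_\ell|,|u_\ell + \ustar_\ell|\}$, and the ``wrong'' branch $|u_\ell + \ustar_\ell| < |\ustar_\ell|$ would give $\sgn{u_\ell} = -\sgn{\ustar_\ell}$; the argument therefore genuinely relies on the one-sided bound on $|u_\ell - \ustar_\ell|$, which is available once $\vu$ is aligned with $\vustar$ (equivalently, under the $d_\infty(\vu,\vustar) = \|\vu - \vustar\|_\infty$ convention). If one insisted on using only the literal min-statement, the remedy would be to first establish $\langle\vu,\vustar\rangle \ge 0$ on the same event --- Davis--Kahan gives $1 - \langle\vu,\vustar\rangle^2 \le O(\nuW n\log n/|\lambdastar|^2) < 1$, and the entrywise bound at a maximal-magnitude coordinate of $\vustar$ then pins down the sign --- after which the minimizing branch at every coordinate with $|\ustar_\ell|$ large is necessarily $|u_\ell - \ustar_\ell|$. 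The remaining steps are elementary.
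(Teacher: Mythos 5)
Your proposal is correct and follows essentially the same route as the paper: both derive the sign consistency from the entrywise leave-one-out bound of Lemma~\ref{lem:refined:loo} combined with Assumption~\ref{assump:lambdastar}, showing $|u_i-\ustar_i|<|\ustar_i|$ (equivalently, $u_i \geq \tfrac{3}{20}\ustar_i>0$ in the paper's arithmetic) whenever $|\ustar_i|\geq 1/(24\sqrt n)$. Your explicit remark about the one-sided versus min form of the leave-one-out bound is a point the paper glosses over, and your Davis--Kahan fallback is a valid way to close that gap if one reads Lemma~\ref{lem:refined:loo} literally.
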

\begin{proof}[Proof of Lemma \ref{lem:sign}]
    Let $i \in [n]$ and suppose that $|\ustar_i| \geq 1/{24\sqrt{n}}$ with $\ustar_i \geq 0$.
    The case for $\ustar_i < 0$ follows by an analogous argument and details are omitted.
    By Equation~\eqref{eq:refined:loo-bound} in Lemma \ref{lem:refined:loo}, it holds with probability at least $1-O(n^{-8})$ that for all $i \in [n]$ such that $\ustar_i \geq 0$,
    \begin{equation*}\begin{aligned}
        u_i
        &\geq \ustar_i - \left|u_i - \ustar_i \right|
        \geq \left(1 - \frac{120 \sqrt{\nuW n}}{|\lambdastar|}\right) \ustar_i - \frac{80\sqrt{\nuW \log n}}{|\lambdastar|}.
    \end{aligned} \end{equation*}
    By Assumption~\ref{assump:lambdastar}, it follows that when $|\lambdastar| \geq 2400 \sqrt{\nuW n}$
    \begin{equation*}
        u_i \geq \frac{19}{20} \ustar_i - \frac{80\sqrt{\nuW \log n}}{|\lambdastar|}.
    \end{equation*}
    Since $\ustar_i \geq 1/{24 \sqrt{n}}$ by assumption, it follows that when $|\lambdastar| \geq 2400 \sqrt{\nuW n}$, we have
    \begin{equation*}
        \ustar_i \geq \frac{100\sqrt{\nuW \log n}}{|\lambdastar|}, 
    \end{equation*}
    from which we have $u_i > 0$ and therefore $\sgn{u_i} = \sgn{\ustar_i}$.
\end{proof}


\begin{lemma} \label{lem:W:I:alpha} 
Fix $\alpha \in \calA$ and $\vs  \in \{\pm 1\}^{n}$.
For a random noise matrix $\mW \in \R^{n \times n}$ satisfying Assumption~\ref{assump:W-Gauss}, it holds with probability at least $1-O(n^{-8})$ that
\begin{equation} \label{eq:W:Ialpha-bound}
    \left|\sum_{j,k \in I_\alpha} s_j s_k W_{jk}\right|
    \leq C |I_\alpha| \sqrt{\nuW \log n}
\end{equation}
and
\begin{equation} \label{eq:Wj:Ialpha-bound}
    \max_{j \in [n]} \left|\sum_{k \in I_\alpha} s_j s_k W_{jk}\right|
    \leq C \sqrt{|I_\alpha| \nuW \log n}.
\end{equation}
\end{lemma}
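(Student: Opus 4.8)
The plan is to observe that both bounds are immediate consequences of standard subgaussian (Hoeffding-type) tail inequalities. The key point is that neither $\alpha$, nor $\vs$, nor the set $I_\alpha=\{i:|\ustar_i|\ge\alpha\}$ depends on $\mW$, so that for fixed $\alpha$ and $\vs$ the quantities in Equations~\eqref{eq:W:Ialpha-bound} and~\eqref{eq:Wj:Ialpha-bound} are, respectively, a quadratic form and a linear form in the independent mean-zero subgaussian entries of $\mW$, with no dependence structure to untangle.

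For Equation~\eqref{eq:W:Ialpha-bound}, I would introduce the vector $\vz\in\R^n$ with $z_j=s_j$ for $j\in I_\alpha$ and $z_j=0$ otherwise, so that $\sum_{j,k\in I_\alpha}s_js_kW_{jk}=\vz^\top\mW\vz$. Using the symmetry of $\mW$, I would split this as
\begin{equation*}
    \vz^\top\mW\vz=\sum_{j\in I_\alpha}W_{jj}+2\sum_{\substack{j<k\\ j,k\in I_\alpha}}s_js_kW_{jk}.
\end{equation*}
The first sum is a sum of $|I_\alpha|$ independent, mean-zero, subgaussian variables with variance proxy $\nuW$, hence subgaussian with proxy $|I_\alpha|\nuW$, so a standard tail bound gives $\bigl|\sum_{j\in I_\alpha}W_{jj}\bigr|\le C\sqrt{|I_\alpha|\nuW\log n}$ with probability $1-O(n^{-8})$ for $C$ large enough. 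The second sum is a sum of $\binom{|I_\alpha|}{2}$ independent, mean-zero terms $2s_js_kW_{jk}$, each subgaussian with proxy $4\nuW$ (the signs $s_js_k=\pm1$ being irrelevant), hence subgaussian with proxy at most $2\nuW|I_\alpha|^2$, so the same tail bound gives a bound of $C|I_\alpha|\sqrt{\nuW\log n}$ with probability $1-O(n^{-8})$. Adding the two pieces and using $|I_\alpha|\ge1$ (so $\sqrt{|I_\alpha|}\le|I_\alpha|$) yields Equation~\eqref{eq:W:Ialpha-bound}.

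For Equation~\eqref{eq:Wj:Ialpha-bound}, I would fix $j\in[n]$ and note that the entries $\{W_{jk}:k\in[n]\}$ are mutually independent: for $k\ge j$ these are on or above the diagonal, while for $k<j$ we have $W_{jk}=W_{kj}$, which is above the diagonal, so all of them are distinct members of the independent on-and-above-diagonal collection of Assumption~\ref{assump:W-Gauss}. Hence $\sum_{k\in I_\alpha}s_js_kW_{jk}$ is a sum of at most $|I_\alpha|$ independent mean-zero subgaussians with common proxy $\nuW$, therefore subgaussian with proxy at most $|I_\alpha|\nuW$, and a subgaussian tail bound gives $\Pr\bigl(\bigl|\sum_{k\in I_\alpha}s_js_kW_{jk}\bigr|\ge C\sqrt{|I_\alpha|\nuW\log n}\bigr)\le O(n^{-9})$ for $C$ large enough; a union bound over $j\in[n]$ then gives Equation~\eqref{eq:Wj:Ialpha-bound} with probability $1-O(n^{-8})$, and a final union bound combines the two events. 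This lemma is routine; the only point requiring a little care will be the symmetric-quadratic-form bookkeeping in the first bound — correctly isolating the diagonal entries $W_{jj}$ (which contribute a genuinely smaller-order $\sqrt{|I_\alpha|\nuW\log n}$ term) and ensuring each off-diagonal entry $W_{jk}$, $j\ne k$, is counted once rather than twice — with everything else being a direct application of the subgaussian tail bounds used elsewhere in the paper (cf.~\cite{vershynin2018HDP}).
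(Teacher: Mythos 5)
Your proposal is correct and follows essentially the same route as the paper: both bounds are obtained by viewing the sums as linear combinations of the independent on-and-above-diagonal subgaussian entries (the paper reduces to $\vs=\vone$ via the symmetry of the $W_{ij}$ about zero, which is equivalent to your observation that the signs are irrelevant), applying a Hoeffding-type subgaussian tail bound, and taking a union bound over $j\in[n]$ for the second inequality. Your explicit separation of the diagonal and off-diagonal contributions is just a more detailed accounting of the variance proxy $\lesssim|I_\alpha|^2\nuW$ that the paper uses directly.
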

\begin{proof}
    Since $\vs \in \{\pm 1\}^n$ is fixed and the entries of $\mW$ are symmetric about zero, we assume without loss of generality that $\vs$ is a vector of all $1$'s.
    By standard concentration inequalities \cite{vershynin2018HDP}, we have that for any $t > 0$,
\begin{equation*}
    \Pr\left(\left|\sum_{j,k \in I_\alpha} W_{jk}\right| \geq t\right) 
    \leq  2\exp\left(\frac{-t^2}{2|I_\alpha|^2\nuW}\right).
\end{equation*}
Setting $t = C |I_\alpha| \sqrt{\nuW \log n}$ for suitably-chosen $C>0$, Equation~\eqref{eq:W:Ialpha-bound} holds with probability at least $1-O(n^{-8})$.
Similarly, by standard subgaussian concentration inequalities \cite{vershynin2018HDP} and a union bound over $j \in [n]$, it holds for all $t > 0$ that
\begin{equation*}
    \Pr\left(\max_{j \in [n]} \left|\sum_{k \in I_\alpha} W_{jk}\right| > t\right)
    \leq 2n\exp\left(\frac{-t^2}{2|I_\alpha| \nuW}\right).
\end{equation*}
Taking $t^2 = C|I_\alpha|\nuW \log n$ for $C>0$ chosen suitably large, Equation~\eqref{eq:Wj:Ialpha-bound} holds with probability at least $1-O(n^{-8})$, completing the proof.
\end{proof}


\subsection{Proof of Theorem \ref{thm:positive:rank-one:upper}}
\label{subsec:thm1:proof}

\begin{proof}
In this proof, we control the estimation error of $\vuhat$ under $d_\infty$. There are two types of entries of $\vuhat$, as stated in Step~\ref{step:final} of Algorithm~\ref{alg:rank:one:simple}.
One type is derived from $\vvhat$ constructed in Step~\ref{step:vhat} of Algorithm~\ref{alg:rank:one:simple}, and corresponds to large entries of $\vustar$.
The other type is given by the top eigenvector $\vu$ of $\mY$, corresponding to small entries of $\vustar$.
We handle these two types of entries separately.
In \textbf{Part I} of the proof, we control the estimation error corresponding to the first type of entries, derived from $\vvhat$.
In \textbf{Part II} of the proof, we control the estimation error corresponding to the second type of entries, derived from $\vu$.
Combining these two parts, we obtain a high probability bound for the estimation error $d_\infty(\vuhat, \vustar)$.

\textbf{Part I. Estimation error related to $\vvhat$.}

We define the event $\calE_1$ according to 
\begin{equation} \label{eq:calE1:define}
    \calE_1
    = \{|\lambdahat - \lambdastar| \leq C_2 \sqrt{\nuW} \log^{5 / 2} n\}.
\end{equation}
On event $\calE_1$, if $\lambdastar > 0$, then by the triangle inequality, we have
\begin{equation} \label{eq:lambdahat-sgn}
    \lambdahat \geq \lambdastar - \left|\lambdastar - \lambdahat\right|
    \geq \lambdastar - C_2 \sqrt{\nuW} \log ^{5 / 2} n > 0,
\end{equation}
where the second inequality holds for all sufficiently large $n$ by Assumption~\ref{assump:lambdastar}.
Similarly, for all sufficiently large $n$ we have $\lambdahat < 0$ if $\lambdastar < 0$.
In other words, for all sufficiently large $n$, we have
\begin{equation*}
    \sgn{\lambdahat} = \sgn{\lambdastar}
\end{equation*}
on event $\calE_1$. Hence, Step \ref{step:topeig} in Algorithm \ref{alg:rank:one:simple} guarantees that we work with the original $\mY$ when $\lambdastar > 0$, or with $-\mY$ when $\lambdastar < 0$. In either case, the algorithm proceeds with a signal matrix that has a positive leading eigenvalue on $\calE_1$. Without loss of generality, we assume that $\lambdastar > 0$.

On event $\calE_1$, we have
\begin{equation} \label{eq:lambda-ratio}
    \left| \frac{\lambdastar }{\lambdahat} - 1\right|
    \leq \frac{C_2\sqrt{\nuW} (\log n)^{5/2}}{\lambdastar - |\lambdastar - \lambdahat|}
    \leq \frac{C\sqrt{\nuW} (\log n)^{5/2}}{\lambdastar}. 
\end{equation}
where the second inequality holds for all sufficiently large $n$ by Assumption~\ref{assump:lambdastar}.
Similarly, for $n$ sufficiently large, following Equation~\eqref{eq:lambda-ratio}, event $\calE_1$ also implies
\begin{equation} \label{eq:lambda-ratio-ii}
    1/2 \leq |\lambdastar/\lambdahat| \leq 2.
\end{equation}

Recalling the set $\calA_0$ from Equation~\eqref{eq:calA0-define} and $I_\alpha$ from Equation~\eqref{eq:Ialpha-define} above, for each $\alpha \in \calA_0$, define the events $\calE_{2,\alpha}$ and $\calE_{3,\alpha}$ according to
\begin{equation} \label{eq:calE2:define}
    \calE_{2,\alpha} := \{\Ihat = I_\alpha\},
\end{equation}
where $\Ihat$ is as constructed in Step~\ref{step:pickalpha0} of Algorithm~\ref{alg:rank:one:simple}, and
\begin{equation}\label{eq:calE3:define}
    \calE_{3,\alpha} := \{ Q_{kk} = \sgn{\ustar_k} \text{ for all } k \in I_\alpha, \; \text{and} \; Q_{kk} = 1 \text{ for all } k \in I_\alpha^c\}.
\end{equation}

Define $\vs \in \{\pm 1\}^n$ according to
\begin{equation} \label{eq:s-define}
s_k := \begin{cases}
    \sgn{\ustar_k} & \mbox{ if } k \in I_{\alpha},\\
    1 & \mbox{ otherwise, }
\end{cases}
\end{equation}
and for each $\alpha \in \calA_0$, define the event $\calE_{4,\alpha}$ as
\begin{equation}\label{eq:calE4:define}
    \calE_{4,\alpha}
    = \left\{\left|\sum_{j,k \in I_\alpha} s_j s_k W_{jk}\right|
    \leq C |I_\alpha| \sqrt{\nuW \log n}\right\}.
\end{equation}
 
For $\Shat$ given in Step~\ref{step:vhat} of Algorithm~\ref{alg:rank:one:simple}, plugging in $\mYt = \mQ \mY \mQ$ and expanding, it follows that on the event $\calE_{2,\alpha} \cap \calE_{3,\alpha}$, 
\begin{equation} \label{eq:S1:expand}
    \Shat^2 = 
    \sum_{j,k \in \Ihat} Q_{jj} Q_{kk} Y_{jk}
    = \lambdastar\left(\sum_{k \in I_\alpha} |\ustar_k|\right)^2 + \sum_{j,k \in I_\alpha} s_j s_k W_{jk}.
\end{equation}
On the event $\calE_{2,\alpha} \cap \calE_{3,\alpha} \cap \calE_{4,\alpha}$, we can lower-bound this right-hand side, obtaining
\begin{equation} \label{eq:S2-well-defined}
    \Shat^2
    \ge \lambdastar\left(\sum_{k \in I_\alpha} |\ustar_k|\right)^2 - C |I_\alpha| \sqrt{\nuW \log n} \\
    \geq \lambdastar |I_\alpha|^2 \alpha^2 - C |I_\alpha| \sqrt{\nuW \log n},
\end{equation}
where the first inequality follows from the fact that event $\calE_{4,\alpha}$ holds and
the second inequality follows from the definition of $I_\alpha$ in Equation~\eqref{eq:Ialpha-define}.
Using Equation~\eqref{eq:calA0-define} and the fact that $\alpha \in \calA_0$, we may further bound this by
\begin{equation*}
   \Shat^2
    \ge \frac{|I_\alpha|}{\log^2 n} \left(\lambdastar - C\sqrt{\nuW} \log^{5/2} n\right)
    > 0,
\end{equation*}
where the second inequality holds under Assumption~\ref{assump:lambdastar} when $n$ is sufficiently large.
Thus, on the event $\calE_{2,\alpha} \cap \calE_{3,\alpha} \cap \calE_{4,\alpha}$, we have $\sum_{j,k \in \Ihat} \Yt_{jk} > 0$ and thus $\Shat$ is well-defined (i.e., it is the square root of a positive number).

We now show that $\Shat$ is close to its population target.
Rearranging the terms in Equation~\eqref{eq:S1:expand}, we have that on the event $\calE_{2,\alpha} \cap \calE_{3,\alpha}\cap \calE_{4, \alpha}$,
\begin{equation}\label{eq:v-sum-u-sum}
    \left|\Shat^2  - \lambdastar\left(\sum_{k \in I_\alpha} |\ustar_k|\right)^2\right|
    \leq C |I_\alpha| \sqrt{\nuW \log n}.
\end{equation}
Dividing both sides by $\Shat + \sqrt{\lambdastar}\sum_{k \in I_\alpha} |\ustar_k|$, when the event $\calE_{2,\alpha} \cap \calE_{3,\alpha}\cap \calE_{4, \alpha}$ holds, we have
\begin{equation}\label{eq:S1-bound} \begin{aligned}
    \left| \Shat - \sqrt{\lambdastar}\sum_{k \in I_\alpha} |\ustar_k| \right|
    \leq \frac{C |I_\alpha| \sqrt{\nuW \log n}}{\left(\Shat + \sqrt{\lambdastar}\sum_{k \in I_\alpha} |\ustar_k|\right)} 
    \leq \frac{C |I_\alpha| \sqrt{\nuW \log n}}{\sqrt{\lambdastar} \sum_{k \in I_\alpha} |\ustar_k|}
    \leq \frac{C \sqrt{\nuW \log n}}{\sqrt{\lambdastar} \alpha}, 
\end{aligned} \end{equation}
where the second inequality follows from the fact that $\Shat > 0$ and the last inequality follows from the definition of $I_\alpha$ in Equation~\eqref{eq:Ialpha-define}.
Dividing by $\sqrt{\lambdastar}\sum_{k\in I_\alpha} |\ustar_k|$ on both sides (note that this quantity is positive by definition of $I_\alpha$ and the fact that $\alpha \in \calA_0$), it follows that on the event $\calE_{2,\alpha} \cap \calE_{3,\alpha}\cap \calE_{4, \alpha}$,
\begin{equation} \label{eq:u-sum-diff}
    \left| \frac{\Shat}{\sqrt{\lambdastar}\sum_{k\in I_\alpha} |\ustar_k|} - 1\right|
    \leq \frac{C \sqrt{\nuW \log n}}{\lambdastar \alpha\sum_{k \in I_\alpha} |\ustar_k|}
    \leq \frac{C \sqrt{\nuW \log n}}{\lambdastar |I_\alpha| \alpha^2}\leq \frac{C\sqrt{\nuW}(\log n)^{5/2}}{\lambdastar},
\end{equation}
where the second inequality follows from the definition of $I_\alpha$ in Equation~\eqref{eq:Ialpha-define} and the last inequality follows from Equation~\eqref{eq:calA0-define}.

We move on to Step~\ref{step:vhat} of Algorithm~\ref{alg:rank:one:simple}, from which we recall, for ease of reference, that
\begin{equation} \label{eq:def:vhat}
    \vhat_j  = \frac{\sum_{k \in \Ihat} \Yt_{j k}}{\sqrt{\lambdahat}\Shat} \quad \text { for } j \in[n].
\end{equation}
Note that by Equation~\eqref{eq:lambdahat-sgn}, the square root of $\lambdahat$ is well-defined on $\calE_1$.
Rearranging the definition of $\vhat_j$ in Equation~\eqref{eq:def:vhat}, plugging in $\mYt = \mQ \mY \mQ$ and expanding, we have for $j \in [n]$,
\begin{equation*}
    \frac{\vhat_j \Shat}{\sqrt{\lambdastar}}
    = \sqrt{\frac{\lambdastar}{\lambdahat}} Q_{jj} \ustar_j \sum_{k \in \Ihat} Q_{kk} \ustar_k + \frac{\sum_{k \in \Ihat} Q_{jj}Q_{kk}W_{j k}}{\sqrt{\lambdahat \lambdastar}}.
\end{equation*}
Rearranging terms and adding $\vhat_j \sum_{k \in \Ihat} Q_{kk} \ustar_k$ to both sides, we have for any $j \in [n]$ 
\begin{equation*} \begin{aligned}
\left(\vhat_j-Q_{jj} \ustar_j\right) \sum_{k \in \Ihat} Q_{kk} \ustar_k
&=\vhat_j\left(\sum_{k \in \Ihat} Q_{kk} \ustar_k-\frac{\Shat}{\sqrt{\lambdastar}}\right)\\
&~~~~~~~~~+ \left(\sqrt{\frac{\lambdastar}{\lambdahat}}-1\right) Q_{jj} \ustar_j \sum_{k \in \Ihat} Q_{kk} \ustar_k + \frac{\sum_{k \in \Ihat} Q_{jj}Q_{kk}W_{j k}}{\sqrt{\lambdahat \lambdastar}}.
\end{aligned} \end{equation*}
Dividing by $\sum_{k \in \Ihat} Q_{kk} \ustar_k$ on both sides, again noting that this quantity is positive on the event $\calE_{2,\alpha} \cap \calE_{3,\alpha}$,
\begin{equation} \label{eq:v-sub-Qu} 
\vhat_j\!-Q_{jj} \ustar_j
= \vhat_j\!\left(\! 1 - \frac{\Shat}{\sqrt{\lambdastar} \sum_{k \in \Ihat} Q_{kk} \ustar_k} \right) +
\left(\!\sqrt{\frac{\lambdastar}{\lambdahat}}-1 \!\right)\!Q_{jj} \ustar_j
+ \frac{\sum_{k \in \Ihat} Q_{jj}Q_{kk}W_{j k}}{\sqrt{\lambdahat \lambdastar} \sum_{k \in \Ihat} Q_{kk} \ustar_k }.
\end{equation}

Taking absolute values in Equation~\eqref{eq:v-sub-Qu} and applying the triangle inequality,
\begin{equation*}
\left|\vhat_j-Q_{jj}\ustar_j\right|
\leq |\vhat_j|\left|1-\frac{\Shat}{\sqrt{\lambdastar}\sum_{k \in \Ihat} Q_{kk} \ustar_k}\right|+ \left|\sqrt{\frac{\lambdastar}{\lambdahat}}-1\right| \left|Q_{jj} \ustar_j\right|  + \left|\frac{\sum_{k \in \Ihat} Q_{jj}Q_{kk}W_{j k}}{\sqrt{\lambdahat\lambdastar} \sum_{k \in \Ihat} Q_{kk} \ustar_k}\right|.
\end{equation*}
Trivially, $|Q_{jj} \ustar_j| \le 1$.
Further, on the event $\calE_{2,\alpha} \cap \calE_{3,\alpha}$, we have $\Ihat = I_\alpha$ and $Q_{jj} = \sgn{\ustar_j} = s_j$ for all $j \in I_\alpha$.
Thus, we may write
\begin{equation} \label{eq:pauseforlambda}
\left|\vhat_j-Q_{jj}\ustar_j\right|
\leq
|\vhat_j|\left|1-\frac{\Shat}{\sqrt{\lambdastar}\sum_{k \in I_\alpha} |\ustar_k|}\right|+ \left|\sqrt{\frac{\lambdastar}{\lambdahat}}-1\right|  + \left|\frac{\sum_{k \in I_\alpha} s_j s_k W_{j k}}{\lambdastar \sum_{k \in I_\alpha} |\ustar_k|}\right|\sqrt{\frac{\lambdastar}{\lambdahat}}.
\end{equation}
On the event $\calE_1$, applying Equations~\eqref{eq:lambda-ratio} and~\eqref{eq:lambda-ratio-ii} yields that
\begin{equation} \label{eq:sqrtlambdaratio}
    \left|\sqrt{\frac{\lambdastar}{\lambdahat}}-1\right| = \frac{\left|\frac{\lambdastar}{\lambdahat} - 1\right|}{\sqrt{\frac{\lambdastar}{\lambdahat}} + 1}
    \leq \frac{C\sqrt{\nuW} \log^{5/2} n}{\lambdastar}.
\end{equation}
Applying this bound in Equation~\eqref{eq:pauseforlambda}, on the event $\calE_1 \cap \calE_{2,\alpha} \cap \calE_{3,\alpha}$, we have
\begin{equation*}
\left|\vhat_j-Q_{jj}\ustar_j\right|
\leq
|\vhat_j|\left|1-\frac{\Shat}{\sqrt{\lambdastar}\sum_{k \in I_\alpha} |\ustar_k|}\right|
+ \frac{C\sqrt{\nuW} \log^{5/2} n }{\lambdastar}
+ \left|\frac{\sum_{k \in I_\alpha} s_j s_k W_{j k}}{\lambdastar \sum_{k \in I_\alpha} |\ustar_k|}\right|\sqrt{\frac{\lambdastar}{\lambdahat}}.
\end{equation*}
We remind the reader that in the proof, constant $C$ may change its precise value from line to line.
Another application of Equation~\eqref{eq:sqrtlambdaratio} and using Assumption~\ref{assump:lambdastar} yields
\begin{equation*}
\left|\vhat_j-Q_{jj}\ustar_j\right|
\leq
|\vhat_j|\left|1-\frac{\Shat}{\sqrt{\lambdastar}\sum_{k \in I_\alpha} |\ustar_k|}\right|
+ \frac{C\sqrt{\nuW} \log^{5/2} n }{\lambdastar}
+ C \left|\frac{\sum_{k \in I_\alpha} s_j s_k W_{j k}}{\lambdastar \sum_{k \in I_\alpha} |\ustar_k|}\right|.
\end{equation*}
On the event $\calE_1 \cap \calE_{2,\alpha} \cap \calE_{3,\alpha}\cap \calE_{4, \alpha}$, Equation~\eqref{eq:u-sum-diff} holds and we have
\begin{equation} \label{eq:vhat:almostthere}
\left|\vhat_j-Q_{jj}\ustar_j\right|
\leq
|\vhat_j|
\frac{ C \sqrt{\nuW} \log^{5/2} n }{ \lambdastar } + \frac{C\sqrt{\nuW} \log^{5/2} n}{\lambdastar}
+ C \left|\frac{\sum_{k \in I_\alpha} s_j s_k W_{j k}}{\lambdastar \sum_{k \in I_\alpha} |\ustar_k|}\right|.
\end{equation}

Define the event
\begin{equation} \label{eq:calE5:define}
    \calE_{5,\alpha}
    = \left\{ \max_{j \in [n]} \left|\sum_{k\in I_\alpha} s_j s_k W_{jk}\right| \leq C \sqrt{|I_\alpha| \nuW \log n} \right\}.
\end{equation}
Under the event $\calE_1 \cap \calE_{2,\alpha} \cap \calE_{3,\alpha} \cap \calE_{4,\alpha} \cap \calE_{5,\alpha}$, we can further bound Equation~\eqref{eq:vhat:almostthere} by
\begin{equation*}
\left|\vhat_j-Q_{jj}\ustar_j\right|
\leq
|\vhat_j|\frac{C\sqrt{\nuW} \log^{5/2} n}{\lambdastar}
+ \frac{C\sqrt{\nuW} \log^{5/2} n}{\lambdastar}
+ \frac{C \sqrt{|I_\alpha| \nuW \log n}}{\lambdastar |I_\alpha| \alpha}.
\end{equation*}
Applying the definition of $\calA_0$ in Equation~\eqref{eq:calA0-define}, on the event $\calE_1 \cap \calE_{2,\alpha} \cap \calE_{3,\alpha} \cap \calE_{4,\alpha} \cap \calE_{5,\alpha}$, it holds for all $j \in [n]$ that
\begin{equation} \label{eq:vhat:lastone}
\left|\vhat_j-Q_{jj}\ustar_j\right|
\leq |\vhat_j|\frac{C\sqrt{\nuW} \log^{5/2} n}{\lambdastar} + \frac{C\sqrt{\nuW} \log^{5/2} n}{\lambdastar}.
\end{equation}
Applying the triangle inequality and using the fact that $|Q_{jj} \ustar_j| \le 1$ by definition,
\begin{equation*} \begin{aligned}
\left( |\vhat_j| + 1\right)\frac{C\sqrt{\nuW} \log^{5/2} n}{\lambdastar} 
&\le
|\vhat_j - Q_{jj} \ustar_j| \frac{C\sqrt{\nuW}\log^{5/2} n}{\lambdastar}
+ \left( |Q_{jj} \ustar_j| + 1 \right) \frac{C\sqrt{\nuW}\log^{5/2} n}{\lambdastar} \\
&\le
|\vhat_j - Q_{jj} \ustar_j| \frac{C\sqrt{\nuW}\log^{5/2} n}{\lambdastar}
+ \frac{C\sqrt{\nuW} \log^{5/2} n }{\lambdastar}.
\end{aligned} \end{equation*}
Applying this bound to Equation~\eqref{eq:vhat:lastone},
\begin{equation*} \begin{aligned}
    \left|\vhat_j-Q_{jj} \ustar_j\right| 
    &\leq |\vhat_j - Q_{jj} \ustar_j|\frac{C\sqrt{\nuW} (\log n)^{5/2}}{\lambdastar} + \frac{C\sqrt{\nuW}(\log n)^{5/2}}{\lambdastar}.
\end{aligned} \end{equation*}

Rearranging the terms, under Assumption~\ref{assump:lambdastar} for $n$ sufficiently large and when the event $\calE_1 \cap \calE_{2,\alpha} \cap \calE_{3,\alpha} \cap \calE_{4,\alpha} \cap \calE_{5,\alpha}$ holds, we have for all $j \in [n]$,
\begin{equation} \label{eq:vhat:mainbound}
    \left|\vhat_j-Q_{jj} \ustar_j\right| \leq \frac{C\sqrt{\nuW} (\log n)^{5/2}}{\lambdastar}.
\end{equation}

Recall from Step~\ref{step:final} of Algorithm~\ref{alg:rank:one:simple} that we set $\uhat_j = Q_{jj} \vhat_j$ for all $j \in [n]$ such that $|u_j|  > (\sigma/\lambdahat) \log n$.
For any such $j$, on the event
$\calE_1 \cap \calE_{2,\alpha} \cap \calE_{3,\alpha} \cap \calE_{4,\alpha} \cap \calE_{5,\alpha}$, Equation~\eqref{eq:vhat:mainbound} holds, and we have
\begin{equation} \label{eq:u-bound}
|\uhat_j - \ustar_j| = |Q_{jj}\vhat_j - Q_{jj}^2 \ustar_{j}|
    = |Q_{jj}| | \vhat_j - Q_{jj} \ustar_j |
    \leq \frac{C\sqrt{\nuW} \log^{5/2} n}{\lambdastar}.
\end{equation}

Consider the event
\begin{equation*}
    \calE
    = \left\{ \|\mQ \vvhat - \vustar\|_{\infty} \leq \frac{C\sqrt{\nuW} \log^{5/2} n}{\lambdastar}\right\}.
\end{equation*}
Recalling the events $\calE_1, \calE_{2,\alpha}, \calE_{3,\alpha}, \calE_{4,\alpha}$ and $\calE_{5,\alpha}$ defined in Equations~\eqref{eq:calE1:define} \eqref{eq:calE2:define} \eqref{eq:calE3:define} \eqref{eq:calE4:define} and \eqref{eq:calE5:define}, respectively, we have
\begin{equation} \label{eq:calE-pr-i}
\begin{aligned}
    \Pr\left(\calE\right) &\geq \Pr\left(\calE \cap \calE_1 \cap \left(\bigcup_{\alpha \in \calA_0} \calE_{2,\alpha} \cap \calE_{3,\alpha}\cap \calE_{4, \alpha}\cap \calE_{5, \alpha}\right)\right)\\
    &= \Pr\left(\bigcup_{\alpha \in \calA_0} \calE_1 \cap \calE_{2,\alpha} \cap \calE_{3,\alpha}\cap \calE_{4, \alpha}\cap \calE_{5, \alpha}\right),
\end{aligned}
\end{equation}
where the last equality holds because for any $\alpha \in \calA_0$, the event $\calE$ is implied by the event $\calE_1 \cap \calE_{2,\alpha} \cap \calE_{3,\alpha} \cap \calE_{4,\alpha} \cap \calE_{5,\alpha}$, which is a fact established in the proof of Equations~\eqref{eq:vhat:mainbound} and~\eqref{eq:u-bound}. 

Since by the definition in Equation~\eqref{eq:calE2:define}, $\{\calE_{2, \alpha}\}_{\alpha \in \calA_0}$ are disjoint, it follows from Equation~\eqref{eq:calE-pr-i} that 
\begin{equation} \label{eq:calE-lower-i:setup}
\begin{aligned}
    \Pr\left(\calE\right)
    &\geq \sum_{\alpha \in \calA_0} \Pr\left(\calE_1 \cap \calE_{2,\alpha} \cap \calE_{3,\alpha}\cap \calE_{4, \alpha}\cap \calE_{5, \alpha}\right)\\
    &= \sum_{\alpha \in \calA_0} \left[ \Pr\left(\calE_1 \cap \calE_{2,\alpha}\right) - \Pr\left(\calE_1 \cap \calE_{2,\alpha} \cap (\calE_{3,\alpha}\cap \calE_{4, \alpha}\cap \calE_{5, \alpha})^c \right) \right].
\end{aligned} \end{equation}
By the union bound and basic set inclusions,
\begin{equation*} \begin{aligned}
\Pr\left(\calE_1 \cap \calE_{2,\alpha} \cap (\calE_{3,\alpha}\cap \calE_{4, \alpha}\cap \calE_{5, \alpha})^c \right)
&=
\Pr\left[\calE_1 \cap \calE_{2,\alpha} \cap
        \left( \calE_{3,\alpha}^c \cup \calE_{4, \alpha}^c \cup \calE_{5, \alpha})^c \right) \right] \\
&\le
\Pr\left(\calE_1 \cap \calE_{2,\alpha} \cap \calE_{3,\alpha}^c \right)
+
\Pr\left(\calE_1 \cap \calE_{2,\alpha} \cap \calE_{4,\alpha}^c \right) \\
&~~~~~~~~~~~~+
\Pr\left(\calE_1 \cap \calE_{2,\alpha} \cap \calE_{5,\alpha}^c \right) \\
&\le
\Pr\left(\calE_{2,\alpha} \cap \calE_{3,\alpha}^c \right)
+
\Pr\left( \calE_{4,\alpha}^c \right)
+
\Pr\left( \calE_{5,\alpha}^c \right).
\end{aligned} \end{equation*}
Applying this bound in Equation~\eqref{eq:calE-lower-i:setup},
\begin{equation} \label{eq:calE-lower-i}
\begin{aligned}
    \Pr\left(\calE\right)
    &\geq \sum_{\alpha \in \calA_0} \left[\Pr\left(\calE_1 \cap \calE_{2,\alpha}\right) - \left(\Pr(\calE_{2,\alpha}\cap\calE^c_{3,\alpha}) + \Pr(\calE^c_{4, \alpha}) + \Pr(\calE^c_{5, \alpha})\right)\right]
\end{aligned} \end{equation}

The terms $\Pr(\calE^c_{4, \alpha})$ and $\Pr(\calE^c_{5, \alpha})$ are bounded in Lemma \ref{lem:W:I:alpha} as
\begin{equation}\label{eq:calE4c:calE5c}
    \Pr(\calE^c_{4, \alpha}) + \Pr(\calE^c_{5, \alpha}) \leq O(n^{-8}).
\end{equation}
For the term $\Pr(\calE_{2,\alpha}\cap\calE^c_{3,\alpha})$, we have 
\begin{equation*}
\begin{aligned}
    \Pr(\calE_{2,\alpha}\cap\calE^c_{3,\alpha}) &= \Pr\left(\bigcup_{k \in I_\alpha} \left\{ Q_{kk} \neq \sgn{u_k^\star}, \Ihat = I_\alpha\right\} \cup \bigcup_{k \in I^c_\alpha} \left\{Q_{kk} \neq 1, \Ihat = I_\alpha\right\}\right).
\end{aligned}
\end{equation*}
By Step \ref{step:setQ} in Algorithm \ref{alg:rank:one:simple}, we must have 
\begin{equation*}
    \bigcup_{k \in I^c_\alpha} \left\{Q_{kk} \neq 1, \Ihat = I_\alpha\right\} = \bigcup_{k \in \Ihat^c} \left\{Q_{kk} \neq 1, \Ihat = I_\alpha\right\} = \emptyset.
\end{equation*}
It follows that 
\begin{equation}\label{eq:calE_3-Pr-mid}
\begin{aligned}
    \Pr(\calE_{2,\alpha}\cap\calE^c_{3,\alpha}) &= \Pr\left(\bigcup_{k \in I_\alpha} \left\{ \sgn{u_k} \neq \sgn{\ustar_k}, \Ihat = I_\alpha\right\}\right)\\
    &\leq \Pr\left(\bigcup_{k \in I_\alpha} \left\{ \sgn{u_k} \neq \sgn{\ustar_k}\right\}\right).
\end{aligned}
\end{equation}
For any $k \in I_\alpha$, by the construction of $\calA$ in Equation~\eqref{eq:Acal}, we must have $|\ustar_k| \geq \alpha \geq 1/\sqrt{2n}$.
Thus, by Lemma~\ref{lem:sign}, we can further bound Equation~\eqref{eq:calE_3-Pr-mid} as
\begin{equation} \label{eq:calE3c}
    \Pr(\calE_{2,\alpha}\cap\calE^c_{3,\alpha}) \leq O(n^{-8}). 
\end{equation}

Applying Equations~\eqref{eq:calE4c:calE5c} and~\eqref{eq:calE3c} to Equation~\eqref{eq:calE-lower-i}, we have
\begin{equation*} \begin{aligned}
    \Pr(\calE)
    &\geq \sum_{\alpha \in \calA_0}
	\Pr\left(\calE_1 \cap \calE_{2,\alpha}\right) - O(|\calA_0|n^{-8}) \\
    &= 1 - \Pr\left(\calE_1^c \cup \left(\bigcap_{\alpha\in\calA_0} \calE_{2,\alpha}^c\right)\right) - O(|\calA_0|n^{-8}) \\ 
    &\geq 1 - \Pr(\calE_1^c) - \Pr\left(\bigcap_{\alpha\in\calA_0} \left\{\Ihat \neq I_\alpha\right\}\right) - O(|\calA_0|n^{-8}).
\end{aligned} \end{equation*}
Under Assumption~\ref{assump:lambdahat}, we have $\Pr(\calE_1^c) \leq O(n^{-8}\log n)$, and it follows that
\begin{equation*}
    \Pr(\calE) \geq 1 - O(n^{-8}\log n) - \Pr\left(\bigcap_{\alpha\in\calA_0} \left\{\Ihat \neq I_\alpha\right\}\right) - O(|\calA_0|n^{-8}).
\end{equation*}
Applying Lemma~\ref{lem:Ihat} to the right hand side yields that 
\begin{equation*}
    \Pr(\calE) \geq 1 - O(n^{-8}\log n) - O(n^{-8}) - O(|\calA_0|n^{-8}) \geq 1 - O(n^{-8} \log n), 
\end{equation*}
where the last inequality follows from the fact that $|\calA_0| \leq |\calA|$ and $|\calA|= O(\log n)$ by Equation~\eqref{eq:Acal}. 

At this stage, we have shown that with probability at least $1 - O(n^{-8} \log n)$,
\begin{equation} \label{eq:Qvhat-vustar}
    \|\mQ\vvhat - \vustar\|_{\infty}
    \leq \frac{C \sqrt{\nuW} \log^{5/2} n}{\lambdastar}.
\end{equation}

\textbf{Part II. Estimation error related to $\vu$.}

Note that Step~\ref{step:final} in Algorithm~\ref{alg:rank:one:simple} further refines $\vvhat$ to adjust the estimates of small entries of $\vustar$.
In the remainder of the proof, we show that this refinement does not affect our bound in Equation~\eqref{eq:Qvhat-vustar}. 
By Lemma~\ref{lem:refined:loo}, it holds with probability at least $1 - O(n^{-8})$ that for all $j \in [n]$,
\begin{equation*} \begin{aligned}
    |u_j - \ustar_j|
    &\leq \frac{80 \sqrt{\nuW \log n}+120\sqrt{\nuW n}\left|\ustar_{j} \right|}{\lambdastar} \\
    &\leq \frac{80\sqrt{\nuW \log n}}{\lambdastar} + \frac{120\sqrt{\nuW n}  }{\lambdastar}\left(\left|u_j\right| + \left|u_j - \ustar_j\right|\right),
\end{aligned} \end{equation*}
where the second line follows from the triangle inequality.
Rearranging terms and noting that $\lambdastar > 240\sqrt{\nuW n \log n}$ under Assumption~\ref{assump:lambdastar}, we have that with probability at least $1 - O(n^{-8})$, it holds for all $j \in [n]$ that
\begin{equation*} \begin{aligned}
    |u_j - \ustar_j|
    &\leq \frac{1}{1-{120\sqrt{\nuW n}}/{\lambdastar}} \left(\frac{80 \sqrt{\nuW \log n}}{\lambdastar} + \frac{120 \sqrt{\nuW n} }{\lambdastar} \left|u_j \right| \right)\\
    &\leq \frac{160 \sqrt{\nuW \log n}}{\lambdastar} + \frac{\left|u_j \right|}{\sqrt{\log n}}.
\end{aligned} \end{equation*}
That is, defining the event
\begin{equation} \label{eq:calE6:define}
    \calE_6 = \bigcap_{j=1}^n\left\{|u_j - \ustar_j| \leq \frac{160 \sqrt{\nuW \log n}}{\lambdastar} + \frac{\left|u_j\right|}{\sqrt{\log n}}\right\},
\end{equation}
we have
\begin{equation} \label{eq:calE6:prbound}
  \Pr(\calE_6^c) \leq O(n^{-8}).
\end{equation}

Note that by Equation~\eqref{eq:lambda-ratio} and Assumption~\ref{assump:lambdastar}, when $\calE_1$ holds, we have $|\lambdastar/\lambdahat| \leq 2$ for sufficiently large $n$.
It follows that when $\calE_1$ holds, for all $j\in [n]$ such that $|u_j| \leq ( \sqrt{\nuW} / \lambdahat) \log n$, we have
\begin{equation} \label{eq:uj-upper}
     \left|u_j\right|
     = \frac{\sqrt{\nuW} \log n}{\lambdastar} \cdot \frac{\lambdastar}{\lambdahat}
     \leq \frac{2\sqrt{\nuW} \log n}{\lambdastar},
\end{equation}
where the inequality follows from Equation~\eqref{eq:lambda-ratio}.  
Define the set $\Jhat = \{j: |u_j| \leq (\sqrt{\nuW}/\lambdahat) \log n\}$.
We have
\begin{equation*} \begin{aligned}
\Pr& \left(\bigcup_{j \in \Jhat} \left\{|u_j - \ustar_j| > \frac{162\sqrt{\nuW \log n}}{\lambdastar}\right\}\right) \\
   &~~~\leq\Pr\left(\bigcup_{j \in \Jhat} \left\{|u_j - \ustar_j| > \frac{162\sqrt{\nuW \log n}}{\lambdastar}\right\} \cap \calE_1\right) + \Pr(\calE_1^c)\\
   &~~~\leq \Pr\left(\bigcup_{j \in \Jhat} \left\{|u_j - \ustar_j| > \frac{160 \sqrt{\nuW \log n}}{\lambdastar} + \frac{\left|u_j\right|}{\sqrt{\log n}}\right\} \cap \calE_1\right) + O( n^{-8} ),
\end{aligned} \end{equation*}
where the second inequality follows from Equation~\eqref{eq:uj-upper} and Assumption~\ref{assump:lambdahat}.
Recalling the definition of $\calE_6$ in Equation~\eqref{eq:calE6:define} and using the fact that $\Jhat \subseteq [n]$, we have that the set in the last inequality of the above display is a subset of $\calE_6^c$. Following the above bound and basic set inclusions, we have
\begin{equation*}
\begin{aligned}
    \Pr\left(\bigcup_{j \in \hat{J}} \left\{|u_j - \ustar_j| > \frac{162\sqrt{\nuW \log n}}{\lambdastar}\right\}\right)
    \le \Pr(\calE_6^c) + O(n^{-8}) \leq O(n^{-8}).
\end{aligned}
\end{equation*}
Combining this with Equation~\eqref{eq:Qvhat-vustar} and recalling the definition of $\vuhat$ in Step~\ref{step:final}, we have
that with probability at least $1 - O(n^{-8})$,
\begin{equation*}
    d_{\infty}(\vuhat, \vustar) \leq \frac{C\sqrt{\nuW} \log^{5/2} n}{\lambdastar},
\end{equation*}
as we set out to show.
\end{proof}

\section{A new estimator for $\lambdastar$}
Consider $\mYt$ and $\Shat$ as constructed in Steps~\ref{step:setQ} and~\ref{step:vhat}, respectively, of Algorithm~\ref{alg:rank:one:simple}.
We can estimate $\lambdastar$ by
\begin{equation} \label{eq:est-eq:lambda}
\lambdahat=\frac{\sum_{j=1}^n \left(\sum_{k\in \Ihat} \Yt_{jk}\right)^2 - |\Ihat| n \sigma^2}{\Shat^2}.
\end{equation}

Proposition~\ref{prop:lambda-bound} controls the estimation error of $\lambdahat$ given in Equation~\eqref{eq:est-eq:lambda}.
\begin{proposition}\label{prop:lambda-bound}
Under the model in Equation~\eqref{eq:signal-plus-noise}, suppose that Assumptions \ref{assump:W-Gauss},~\ref{assump:u:gap} and~\ref{assump:lambdastar} hold and consider the eigenvalue estimate $\lambdahat$ as defined in Equation~\eqref{eq:est-eq:lambda}.
For $n$ sufficiently large, we have
\begin{equation*} 
    \left| \lambdahat - \lambda^\star \right|
    \lesssim \sqrt{\nuW} \log ^{5 / 2} n
\end{equation*}
with probability at least $1 - O(n^{-8}\log n)$.
\end{proposition}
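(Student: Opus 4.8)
The plan is to leverage the structural facts already established in the proof of Theorem~\ref{thm:positive:rank-one:upper} and then perform an explicit algebraic expansion of $\lambdahat$. Assume without loss of generality that $\lambdastar > 0$ (otherwise replace $\mY$ by $-\mY$, exactly as in the proof of Theorem~\ref{thm:positive:rank-one:upper}). Recall the deterministic sets $\calA_0$ and $I_\alpha = \{i : |\ustar_i| \ge \alpha\}$ from Equations~\eqref{eq:calA0-define} and~\eqref{eq:Ialpha-define}. By Lemma~\ref{lem:Ihat} together with Lemma~\ref{lem:sign}, on an event $\calE^\star$ of probability at least $1 - O(n^{-8}\log n)$ there exists $\alpha \in \calA_0$ with $\Ihat = I_\alpha$ and, for the matrix $\mQ$ of Step~\ref{step:setQ}, $Q_{kk} = \sgn{\ustar_k}$ for all $k \in I_\alpha$ and $Q_{kk} = 1$ otherwise. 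Set $\vs \in \{\pm 1\}^n$ as in Equation~\eqref{eq:s-define}, $T := \sum_{k\in I_\alpha} |\ustar_k|$, $W_j := \sum_{k \in I_\alpha} s_k W_{jk}$ for $j \in [n]$, and $W_{II} := \sum_{j,k \in I_\alpha} s_j s_k W_{jk}$. On $\calE^\star$, expanding $\mYt = \mQ \mY \mQ$ entrywise gives $\sum_{k \in \Ihat} \Yt_{jk} = Q_{jj}(\lambdastar \ustar_j T + W_j)$ and $\Shat^2 = \sum_{j,k\in\Ihat}\Yt_{jk} = \lambdastar T^2 + W_{II}$; since $\|\vustar\|_2 = 1$ and $\E \sum_{j} W_j^2 = n|I_\alpha|\sigma^2$ (which is why the correction $|\Ihat|n\sigma^2$ appears in Equation~\eqref{eq:est-eq:lambda}), the estimator satisfies
\[
\lambdahat - \lambdastar = \frac{2\lambdastar T \sum_{j=1}^n \ustar_j W_j + \big(\sum_{j=1}^n W_j^2 - n|I_\alpha|\sigma^2\big) - \lambdastar W_{II}}{\lambdastar T^2 + W_{II}}.
\]

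Next I would bound every piece, for each fixed $\alpha \in \calA$, and union bound over the $O(\log n)$ elements of $\calA$. For the denominator, $\alpha \in \calA_0$ forces $T^2 \ge |I_\alpha|^2 \alpha^2 \ge |I_\alpha|/\log^2 n$, while $|W_{II}| \le C|I_\alpha|\sqrt{\nuW \log n}$ by Equation~\eqref{eq:W:Ialpha-bound} of Lemma~\ref{lem:W:I:alpha}; combined with Assumption~\ref{assump:lambdastar} this yields $\lambdastar T^2 + W_{II} \ge \tfrac{1}{2}\lambdastar |I_\alpha| / \log^2 n$ for $n$ large, which in particular makes $\lambdahat$ well defined on $\calE^\star$. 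For the numerator: the contribution $|\lambdastar W_{II}|$ is $\lesssim \lambdastar |I_\alpha| \sqrt{\nuW \log n}$ by the same bound; the term $\sum_j \ustar_j W_j = \sum_{j}\sum_{k \in I_\alpha} \ustar_j s_k W_{jk}$ is a bilinear form in $\mW$ whose two vectors have $\ell_2$ norms $1$ and $\sqrt{|I_\alpha|}$, so standard subgaussian concentration (of the type used in Lemma~\ref{lem:W:I:alpha}) gives $|\sum_j \ustar_j W_j| \le C\sqrt{\nuW |I_\alpha| \log n}$, hence $\lambdastar T \,|\sum_j \ustar_j W_j| \lesssim \lambdastar |I_\alpha| \sqrt{\nuW \log n}$ using $T \le \sqrt{|I_\alpha|}$; and $\sum_j W_j^2 = \|\mW \vv\|_2^2$, where $\vv \in \R^n$ has $v_k = s_k$ for $k \in I_\alpha$ and $v_k = 0$ otherwise, is a quadratic form in the independent on-and-above-diagonal entries of $\mW$, to which the Hanson--Wright inequality applies with Frobenius and operator parameters of order $\sqrt{n}\,|I_\alpha|$ and $|I_\alpha|$ respectively, giving $|\sum_j W_j^2 - n|I_\alpha|\sigma^2| \le C\nuW |I_\alpha| \sqrt{n \log n}$.

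Dividing the numerator bound by the denominator lower bound, the first two numerator terms each contribute at most $C\log^2 n \cdot \sqrt{\nuW \log n} = C\sqrt{\nuW}\,\log^{5/2} n$, while the third contributes at most $C\nuW \sqrt{n\log n}\,\log^2 n / \lambdastar \le C\sqrt{\nuW}\,\log^2 n$ after invoking $\lambdastar \ge C_1\sqrt{\nuW n \log n}$ from Assumption~\ref{assump:lambdastar}. Intersecting $\calE^\star$ with the concentration events above (total failure probability $O(n^{-8}\log n)$) then yields $|\lambdahat - \lambdastar| \lesssim \sqrt{\nuW}\,\log^{5/2} n$, as claimed.

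The main obstacle is the centered estimate $|\sum_j W_j^2 - n|I_\alpha|\sigma^2| = O(\nuW |I_\alpha| \sqrt{n \log n})$: the naive bound $\|\mW \vv\|_2^2 \le \|\mW\|_{\mathrm{op}}^2 \|\vv\|_2^2 \lesssim \nuW n \log n \cdot |I_\alpha|$ is far too lossy, since after division by the denominator it would leave a term of order $\nuW n (\log n)^3 / \lambdastar \asymp \sqrt{\nuW n}\cdot \mathrm{polylog}(n)$ that does not vanish. One genuinely needs the fluctuation-level bound: write $\|\mW\vv\|_2^2$ as a degree-two chaos in the independent entries, compute the Frobenius and operator norms of the associated coefficient tensor under the symmetry constraint of $\mW$, and apply a Hanson--Wright inequality valid for subgaussian (not merely Gaussian) coordinates together with a truncation argument --- precisely the kind of upgrade from the Gaussian analysis of \cite{chen2021spectral} already invoked in the proof of Lemma~\ref{lem:refined:loo}.
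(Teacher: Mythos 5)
Your proposal is correct and follows essentially the same route as the paper's proof: the same event structure (via Lemma~\ref{lem:Ihat} and the sign-consistency of $\mQ$), the same algebraic expansion of $\lambdahat$ into a signal term plus a centered quadratic chaos, a bilinear term, and the $W_{II}$ term, and the same three concentration bounds (the paper's Lemmas~\ref{lem:W:I:alpha},~\ref{lem:quad-W-bound} and~\ref{lem:W:linear:bound}), with a union bound over the $O(\log n)$ elements of $\calA_0$. The only difference is presentational --- you write $\lambdahat-\lambdastar$ as a single fraction while the paper factors through the ratio $R_\alpha=\sqrt{\lambdastar}\,T/\Shat$ --- and your identification of the Hanson--Wright step as the essential ingredient matches the paper exactly.
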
 

\subsection{Technical lemmas for Proposition \ref{prop:lambda-bound}}

Our proof of Proposition~\ref{prop:lambda-bound}, which appears in Section~\ref{subsec:proof:lambda-bound} below, makes use of two technical lemmas, which we establish first. 

\begin{lemma} \label{lem:quad-W-bound}
Let $\mW \in \R^{n \times n}$ be a random matrix satisfying Assumption~\ref{assump:W-Gauss}.
For a fixed $\vs \in\{ \pm 1\}^n$, for any $\alpha \in \calA_0$, with probability at least $1 - O(n^{-8})$,
    \begin{equation} \label{eq:quad-W-bound}
        \left|\sum_{j=1}^n\left(\sum_{k \in I_\alpha} s_k W_{j k}\right)^2 - n|I_\alpha|\sigma^2\right|
        \leq C\nuW |I_\alpha| \sqrt{n} \log n .
    \end{equation}
\end{lemma}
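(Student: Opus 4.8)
The plan is to recognize the left side of Equation~\eqref{eq:quad-W-bound} as a quadratic form in the independent entries of $\mW$ and then apply the Hanson--Wright inequality. Only the coordinates $s_k$ with $k\in I_\alpha$ enter, so I would define $\boldsymbol{p}\in\R^n$ by $p_k=s_k$ for $k\in I_\alpha$ and $p_k=0$ otherwise; then $\|\boldsymbol{p}\|_2^2=|I_\alpha|$ and, since $\mW$ is symmetric,
\begin{equation*}
    \sum_{j=1}^n\left(\sum_{k\in I_\alpha}s_kW_{jk}\right)^2=\|\mW\boldsymbol{p}\|_2^2=\boldsymbol{p}^\top\mW^2\boldsymbol{p}.
\end{equation*}
Writing $\mW=\sum_{1\le i\le j\le n}W_{ij}\mE_{ij}$ with $\mE_{ij}=\ve_i\ve_j^\top+\ve_j\ve_i^\top$ for $i<j$ and $\mE_{ii}=\ve_i\ve_i^\top$, I would set $\vw=(W_{ij})_{1\le i\le j\le n}$, the vector of independent noise entries, and note that $\mW\boldsymbol{p}=\sum_{i\le j}W_{ij}(\mE_{ij}\boldsymbol{p})$, so $\|\mW\boldsymbol{p}\|_2^2=\vw^\top A\vw$ where $A:=G^\top G\succeq 0$ and $G$ is the matrix whose column indexed by $(i,j)$ is $\mE_{ij}\boldsymbol{p}$.

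The second step is to compute $\E[\vw^\top A\vw]$ and bound $\|A\|$ and $\|A\|_{\frobnorm}$. Since the entries of $\vw$ are independent, mean zero, with common variance $\sigma^2$, we get $\E[\vw^\top A\vw]=\sigma^2\tr(A)=\sigma^2\sum_{i\le j}\|\mE_{ij}\boldsymbol{p}\|_2^2=\sigma^2\,n\,\|\boldsymbol{p}\|_2^2=\sigma^2 n|I_\alpha|$, which is exactly the centering term in Equation~\eqref{eq:quad-W-bound}. For the operator norm, I would observe that $G\vw=\Wt\boldsymbol{p}$, where $\Wt$ is the symmetric matrix whose upper triangle is $\vw$, and that $\|\Wt\|_{\frobnorm}^2=\sum_i\Wt_{ii}^2+2\sum_{i<j}\Wt_{ij}^2\le 2\|\vw\|_2^2$; hence $\|G\vw\|_2=\|\Wt\boldsymbol{p}\|_2\le\|\Wt\|\,\|\boldsymbol{p}\|_2\le\|\Wt\|_{\frobnorm}\|\boldsymbol{p}\|_2\le\sqrt{2}\,\|\vw\|_2\|\boldsymbol{p}\|_2$, so $\|A\|=\|G\|^2\le 2|I_\alpha|$. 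Since $A\succeq 0$, this gives $\|A\|_{\frobnorm}^2=\tr(A^2)\le\|A\|\,\tr(A)\le 2n|I_\alpha|^2$.

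The final step is a direct invocation of the Hanson--Wright inequality for quadratic forms in independent subgaussian random variables (see, e.g., \cite{vershynin2018HDP}): with $\nuW$ playing the role of the variance proxy of the entries of $\mW$, for a universal $c>0$,
\begin{equation*}
    \Pr\left(\left|\vw^\top A\vw-\E\vw^\top A\vw\right|>t\right)\le 2\exp\left(-c\min\left\{\frac{t^2}{\nuW^2\|A\|_{\frobnorm}^2},\;\frac{t}{\nuW\|A\|}\right\}\right).
\end{equation*}
Taking $t=C\nuW|I_\alpha|\sqrt{n}\log n$ and substituting $\|A\|_{\frobnorm}^2\le 2n|I_\alpha|^2$ and $\|A\|\le 2|I_\alpha|$, the exponent $c\min\{t^2/(\nuW^2\|A\|_{\frobnorm}^2),\,t/(\nuW\|A\|)\}$ is at least $\min\{cC^2(\log n)^2/2,\,cC\sqrt{n}\log n/2\}$, which exceeds $8\log n$ for all sufficiently large $n$ and any fixed $C\ge 1$. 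Combined with $\E[\vw^\top A\vw]=\sigma^2 n|I_\alpha|$, this is exactly Equation~\eqref{eq:quad-W-bound}, with probability at least $1-O(n^{-8})$; no subgaussian-to-Gaussian reduction is needed since Hanson--Wright already accommodates general subgaussian entries.

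The hard part is that the row sums $Z_j:=\sum_{k\in I_\alpha}s_kW_{jk}$ are \emph{not} independent across $j$ (the entry $W_{jk}$ is shared by $Z_j$ and $Z_k$ whenever $j,k\in I_\alpha$), so one cannot simply apply a Bernstein bound to $\sum_j Z_j^2$; meanwhile the crude estimate $\|\mW\boldsymbol{p}\|_2^2\le\|\mW\|^2\|\boldsymbol{p}\|_2^2\lesssim\nuW n|I_\alpha|$, although of the correct order, is far too lossy to yield the sharper $O(\sqrt{n}\log n)$-size fluctuation required when $|I_\alpha|$ is large (as it can be, e.g.\ $|I_\alpha|=\Theta(n)$ when $\vustar$ is nearly flat). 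Controlling the cross terms is precisely what Hanson--Wright buys us; the only mildly fiddly bookkeeping is correctly identifying the matrix $A$ of the quadratic form in the independent \emph{upper-triangular} entries of $\mW$ and bounding $\|A\|$ and $\|A\|_{\frobnorm}$, which reduces to the elementary inequality $\|\Wt\boldsymbol{p}\|_2\le\sqrt{2}\,\|\vw\|_2\|\boldsymbol{p}\|_2$.
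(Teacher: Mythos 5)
Your proof is correct, and it takes a genuinely different route from the paper's. The paper splits the sum into three pieces: the squared terms $\sum_j\sum_{k}W_{jk}^2 - nK\sigma^2$ (handled by a sub-exponential Bernstein bound), the cross terms with $j\notin I_\alpha$ (handled by Hanson--Wright applied row-by-row with the coefficient matrix $\vone_K\vone_K^\top-\mI_K$), and the cross terms with $j\in I_\alpha$ (handled by a second Hanson--Wright application with an explicitly constructed $0/1$ matrix indexed by pairs from the half-vectorization, whose Frobenius norm must be counted by hand). You instead recognize the entire quantity as a single positive semidefinite quadratic form $\vw^\top G^\top G\vw$ in the independent upper-triangular entries and apply Hanson--Wright once; your computations $\tr(A)=n|I_\alpha|$, $\|A\|\le 2|I_\alpha|$ (via $\|G\vw\|_2=\|\widetilde{\mW}\boldsymbol{p}\|_2\le\sqrt{2}\|\vw\|_2\|\boldsymbol{p}\|_2$), and $\|A\|_{\frobnorm}^2\le\|A\|\tr(A)\le 2n|I_\alpha|^2$ are all correct, and the resulting exponent $\min\{C^2\log^2 n/2,\,C\sqrt{n}\log n/2\}$ indeed gives the $1-O(n^{-8})$ guarantee. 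Your approach buys a considerably cleaner argument: the centering term $\sigma^2 n|I_\alpha|$ falls out automatically as $\sigma^2\tr(A)$ rather than being matched piece by piece, and the fiddly combinatorial bookkeeping of the paper's matrix $\mB$ is replaced by the one-line operator-norm bound on $G$. The paper's decomposition, by contrast, makes the three distinct sources of fluctuation (diagonal, exterior cross terms, interior cross terms) individually visible, which is mildly informative but not necessary for the stated bound.
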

\begin{proof}
    Since $\vs \in\{ \pm 1\}^n$ is fixed and the entries of $\mW$ are symmetric about zero,
    without loss of generality, we assume that $\vs$ is a vector of all $1$'s.
Reindexing if necessary, we assume without loss of generality that $I_\alpha = [K]$, where $K = |I_\alpha| > 0$. We have $K > 0$ since $\alpha \in \calA_0$ and by the definition of $\calA_0$ in Equation~\eqref{eq:calA0-define}, $|I_\alpha| \geq \left(\alpha^2 \log ^2 n\right)^{-1} > 0$ . It follows that 
\begin{equation} \label{eq:step3-ii}
\begin{aligned}
\sum_{j=1}^n\left(\sum_{k \in I_\alpha} W_{jk}\right)^2 - n|I_\alpha|\sigma^2 
&= \sum_{j=1}^n \sum_{k=1}^{K} W_{jk}^2 -nK\sigma^2+ 2\sum_{j=1}^n \sum_{1\leq k<\ell \leq K} W_{jk} W_{j\ell}\\
    &= \gamma_1 + \gamma_2 + \gamma_3,
\end{aligned}
\end{equation}
where
\begin{equation*} \begin{aligned}
\gamma_1 &:= \sum_{j=K+1}^n \sum_{k=1}^K W_{jk}^2 + \sum_{k=1}^K W_{kk}^2 + 2\sum_{1\leq j< k \leq K} W_{jk}^2 - nK\sigma^2,\\
\gamma_2 &:= 2\sum_{j=K+1}^n \sum_{1\leq k<\ell \leq K} W_{jk} W_{j\ell}~\text{ and }~\\
\gamma_3 &:= 
\sum_{j=1}^K \sum_{1\leq k \neq \ell \leq K} W_{jk} W_{j\ell}.
\end{aligned} \end{equation*}
We will bound these three quantities separately.

We begin by bounding $\gamma_1$ in Equation~\eqref{eq:step3-ii}.
For ease of notation, define $N = K(n-K/2+1/2)$.
$\gamma_1$ is a sum of $N$ independent sub-exponential random variables. By Equation (2.18) in \cite{wainwright2019high}, for any $t \in (0,2)$,
\begin{equation*} \begin{aligned}
    \Pr\left(\frac{\left|\gamma_1 \right|}{N} \geq \nuW t\right) \leq 2 e^{-N t^2 / 32}. 
\end{aligned} \end{equation*}
Taking $t = 16\sqrt{N^{-1} \log n}$ yields that for all suitably large $n$, with probability at least $1 - O(n^{-8})$,
\begin{equation} \label{eq:gamma1-bound}
    \left|\gamma_1 \right|
    \leq 16\nuW \sqrt{N \log n} \leq 16 \nuW \sqrt{K n \log n},
\end{equation}
where the second inequality follows from the trivial bound $N \le Kn$. 

Turning to $\gamma_2$ in Equation~\eqref{eq:step3-ii}, define
\begin{equation*}
    \mA = \vone_K \vone_K^\top - \mI_K \in \R^{K \times K},
\end{equation*}
where $\vone_K \in \R^{K}$ is a vector of all $1$'s and $\mI_K$ is the $K\times K$ identity matrix.
It follows that 
\begin{equation*}
  \gamma_2
  = \sum_{j=K+1}^n \sum_{1\leq k\neq \ell \leq K} W_{jk} W_{j\ell}
  = \sum_{j=K+1}^n \sum_{1\leq k,\ell \leq K} W_{jk} A_{k\ell} W_{j\ell}.
\end{equation*}
By the Hanson-Wright inequality (see Theorem 6.2.1 in \cite{vershynin2018HDP}), for $t \geq 0$, we have
\begin{equation*} \begin{aligned}
    \Pr & \left(\left|\sum_{j=K+1}^n \sum_{1\leq k, \ell \leq K} W_{jk} A_{k\ell} W_{j\ell}\right| \geq t\right) \\
    &~~~~~~\le 2\exp\left\{ -c \min\left(\frac{t^2}{(n-K)\nuW^2\|\mA\|_{\frobnorm}^2}, \frac{t}{\sqrt{n - K}\nuW \|\mA\|_{\frobnorm}}\right) \right \},
\end{aligned} \end{equation*}
where $c > 0$ is a universal constant.
We note that $\|\mA\|_{\frobnorm}\leq K^2$ by construction.
Taking $t = (8\nuW/c)K\sqrt{n-K} \log n$, with probability at least $1 - O(n^{-8})$ we have
\begin{equation}\label{eq:hanson1}
    |\gamma_2| = \left|\sum_{j=K+1}^n \sum_{1\leq k, \ell \leq K} W_{jk} A_{k\ell} W_{j\ell}\right| \leq \frac{8\nuW}{c}K\sqrt{n-K} \log n.
\end{equation}

Finally, to bound $\gamma_3$ in Equation~\eqref{eq:step3-ii}, we define
\begin{equation*}
    \vw = \vech([W_{ij}]_{1\leq i, j \leq K}) \in \R^{K(K+1)/2},
\end{equation*}
where $\vech(\cdot)$ is the half-vectorization operator that vectorizes the upper triangular part (including the diagonal) of a given matrix.

We identify the elements of $\vw$ with the elements of
\begin{equation*}
  \calJ_K = \left\{ (i_1,i_2) : 1 \le i_1 \le i_2 \le K \right\},
\end{equation*}
that is, pairs $(i_1,i_2)$ satisfying $1 \leq i_1 \leq i_2 \leq K$.
We note that $\gamma_3$ is a sum of products of the form $w_{(i_1,i_2)} w_{(j_1,j_2)}$ such that $(i_1,i_2),(j_1,j_2) \in \calJ_K$ and one element of $(i_1,i_2)$ agrees with one element of $(j_1,j_2)$, while the others disagree. 
For $(i_1,i_2) \in \calJ_K$ with $i_1 < i_2$, there are $2(K-1)$ other pairs $(j_1,j_2) \in \calJ_K$ satisfying this requirement, while for $(i_1,i_2) \in \calJ_K$ with $i_1 = i_2$, there are $(K-1)$ other elements of $\calJ_K$ satisfying the requirement.
In total, there are $K^2(K-1)$ such pairs, which agrees with the number of terms in $\gamma_3$. 
We define matrix $\mB \in \R^{K(K+1)/2 \times K(K+1)/2}$ by identifying its rows and columns with elements of $\calJ_K$ and setting
\begin{equation*} 
B_{(i_1,i_2),(j_1,j_2)}
= \max\left\{\delta_{i_1 j_1}(1-\delta_{i_2 j_2}), \delta_{i_1 j_2}(1-\delta_{i_2 j_1}), \delta_{i_2 j_1}(1-\delta_{i_1 j_2}), \delta_{i_2 j_2}(1-\delta_{i_1 j_2})\right\}
\end{equation*}
for $(i_1,i_2), (j_1,j_2) \in \calJ_K$, where 
\begin{equation*}
    \delta_{ij} = \begin{cases}
        1 & \mbox{ if } i = j,\\
        0 & \mbox{ otherwise. }
    \end{cases}
\end{equation*}
There are $K^2(K-1)$ entries in $\mB$ that are equal to 1, while all others are equal to zero.
Thus, we have $\|\mB\|_{\mathrm{F}} = K\sqrt{K-1} \leq K^{3/2}$.
By construction, one can verify that
\begin{equation*}
    \vw^\top \mB \vw
    = \sum_{(i_1,i_2) \in \calJ} \sum_{(j_1,j_2) \in \calJ}
    w_{(i_1,i_2)} B_{(i_1,i_2),(j_1,j_2)} w_{(j_1,j_2)}
    = \sum_{j=1}^K \sum_{1\leq k\neq \ell \leq K} W_{jk} W_{j\ell}
    = \gamma_3. 
\end{equation*}

Again, by the Hanson-Wright inequality, for every $t \geq 0$, we have
\begin{equation*}
    \Pr \left(\left|\vw^\top \mB \vw\right| \geq t\right)
    \leq 2\exp\left\{ -c \min\left(\frac{t^2}{\nuW^2 \|\mB\|_{\frobnorm}^2}, \frac{t}{\nuW \|\mB\|_{\frobnorm}}\right) \right\},
\end{equation*}
where $c > 0$ is a universal constant.
Taking $t = (8\nuW/c)\|\mB\|_{\frobnorm} \log n$ yields that with probability at least $1 - O(n^{-8})$,
\begin{equation} \label{eq:hanson2}
    |\gamma_3| = \left|\vw^\top \mB \vw\right| \leq \frac{8\nuW}{c}\|\mB\|_{\frobnorm} \log n \leq \frac{8\nuW}{c}K^{3/2}\log n,
\end{equation}
where the second inequality follows from our bound on $\| \mB \|_{\frobnorm}$.

Finally, plugging the bounds in Equations~\eqref{eq:gamma1-bound} ~\eqref{eq:hanson1} and~\eqref{eq:hanson2} into Equation~\eqref{eq:step3-ii}, we have that with probability at least $1 - O(n^{-8})$, Equation~\eqref{eq:quad-W-bound} holds, as we set out to show.
\end{proof}

\begin{lemma}\label{lem:W:linear:bound}
Under Assumption~\ref{assump:W-Gauss}, for any fixed $\vs \in\{ \pm 1\}^n$ and any fixed $\vustar \in \bbS^{n-1}$, for any $\alpha \in \calA$, it holds with probability at least $1 - O(n^{-8})$ that
    \begin{equation*} 
        \left|\sum_{k \in I_\alpha} \sum_{j=1}^n \ustar_j s_k W_{j k}\right|
        \leq 4 \sqrt{2\nuW |I_\alpha|\log n} .
    \end{equation*}
\end{lemma}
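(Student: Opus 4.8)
The plan is to bound the linear form $\sum_{k \in I_\alpha} \sum_{j=1}^n \ustar_j s_k W_{jk}$ by recognizing it as a weighted sum of the independent entries of $\mW$ on and above the diagonal, and then applying a standard subgaussian tail bound. First I would fix $\vs$ and $\vustar$, and because the $s_k$ only appear as signs attached to the independent, symmetric-about-zero variables $W_{jk}$, I may absorb them without loss of generality (as in the proofs of Lemmas~\ref{lem:W:I:alpha} and~\ref{lem:quad-W-bound}), so it suffices to bound $T := \sum_{k \in I_\alpha} \sum_{j=1}^n \ustar_j W_{jk}$. Writing this as a linear combination $\sum_{i \le j} c_{ij} W_{ij}$ over the independent upper-triangular entries, a given entry $W_{ij}$ with $i \ne j$ receives coefficient $\ustar_i \indic[j \in I_\alpha] + \ustar_j \indic[i \in I_\alpha]$ (from the two ordered pairs), and a diagonal entry $W_{ii}$ receives coefficient $\ustar_i \indic[i \in I_\alpha]$.

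The key computation is to bound $\sum_{i \le j} c_{ij}^2$. Using $(a+b)^2 \le 2a^2 + 2b^2$ and extending the sum to all ordered pairs $(i,j) \in [n]^2$ for an easy upper bound, one gets
\begin{equation*}
\sum_{i \le j} c_{ij}^2 \le 2 \sum_{i,j=1}^n \ustar_i{}^2 \indic[j \in I_\alpha] = 2 |I_\alpha| \sum_{i=1}^n \ustar_i{}^2 = 2 |I_\alpha|,
\end{equation*}
since $\|\vustar\|_2 = 1$. Hence $T$ is a subgaussian random variable with variance proxy at most $2\nuW |I_\alpha|$, so the standard tail bound (e.g.\ from Chapter~2 of \cite{vershynin2018HDP}) gives $\Pr(|T| \ge t) \le 2\exp(-t^2 / (4 \nuW |I_\alpha|))$. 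Taking $t = 4\sqrt{2 \nuW |I_\alpha| \log n}$ yields $\Pr(|T| \ge t) \le 2\exp(-8\log n) = 2 n^{-8}$, which is $O(n^{-8})$, as desired. Finally I would note that since the bound holds for the fixed sign vector $\vs \equiv 1$ and the distribution of $\sum c_{ij} W_{ij}$ is unchanged under flipping signs of independent symmetric summands, the same bound holds for the original $\vs$.

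There is no real obstacle here; the statement is a routine subgaussian concentration inequality for a fixed linear functional of the noise, and the only mild care needed is the bookkeeping of coefficients when passing from the double sum over $(j,k)$ to the sum over independent upper-triangular entries (handling diagonal versus off-diagonal terms and the symmetry $W_{jk} = W_{kj}$), together with tracking constants so that the final constant comes out as $4\sqrt{2}$. The slack from bounding the restricted sum by the full sum over all ordered pairs is harmless and simplifies the argument.
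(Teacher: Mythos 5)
Your proposal is correct and follows essentially the same route as the paper: reduce to $\vs \equiv \vone$ by symmetry, view the double sum as a fixed linear combination of the independent upper-triangular entries of $\mW$, bound the sum of squared coefficients by $2|I_\alpha|$ using $\|\vustar\|_2 = 1$, and apply the standard subgaussian tail bound. The paper organizes the bookkeeping by splitting the sum into within-$I_\alpha$, diagonal, and cross terms rather than writing out per-entry coefficients, but the argument and the resulting constant are the same.
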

\begin{proof}
    Since $\vs \in\{ \pm 1\}^n$ is fixed and the entries of $\mW$ are symmetric about zero,
    without loss of generality, we assume that $\vs$ is a vector of all $1$'s. 
    Reindexing if necessary, we again assume that $I_\alpha = [K]$  without loss of generality, where $K = |I_\alpha|$. 
    Rearranging sums, we have
\begin{equation*} \begin{aligned}
    \sum_{k=1}^K \sum_{j=1}^n \ustar_j s_k W_{j k}
    &= \sum_{k=1}^K \sum_{j=1}^n \ustar_j W_{j k} \\
    &=\sum_{j=1}^K \sum_{k=j+1}^K (\ustar_j + \ustar_k)W_{j k} + \sum_{j=1}^K \ustar_j W_{j j} + \sum_{k=1}^K \sum_{j=K+1}^n  \ustar_j W_{j k}.
\end{aligned} \end{equation*}
By standard subgaussian concentration inequalities \cite{vershynin2018HDP,wainwright2019high}, using the fact that $\vustar$ is unit-norm,
\begin{equation*}
\Pr\left(
    \left|
    \sum_{k=1}^K \sum_{j=1}^n \ustar_j s_k W_{j k}
    \right| \ge 4 \sqrt{\nuW 2K\log n} \right)
\le O(n^{-8}),
\end{equation*}
completing the proof.
\end{proof}

\subsection{Proof of Proposition \ref{prop:lambda-bound}}
\label{subsec:proof:lambda-bound}
\begin{proof}
Similar to the proof of Theorem~\ref{thm:positive:rank-one:upper}, we assume that $\lambdastar > 0$ without loss of generality. Recalling the quantity $\Shat$ as defined in Step~\ref{step:vhat} of Algorithm~\ref{alg:rank:one:simple} and the fact that $\Shat > 0$ from Equation~\eqref{eq:S2-well-defined}, we define 
\begin{equation}\label{eq:RIhat-define}
    R_{\Ihat} = \frac{\sqrt{\lambdastar}\left(\sum_{k \in \Ihat}Q_{kk} \ustar_k \right)}{\Shat},
\end{equation}
where we recall that $\Ihat$ is the set given in Step \ref{step:pickalpha0} of Algorithm \ref{alg:rank:one:simple} and $\mQ \in \R^{n\times n}$ is a diagonal matrix given in Step \ref{step:setQ} of Algorithm \ref{alg:rank:one:simple}.

Plugging in $\mYt = \mQ \mY \mQ$ and expanding the right hand side of Equation~\eqref{eq:est-eq:lambda}, we have
\begin{equation*}
\begin{aligned}
    \lambdahat
    &= \frac{1}{\Shat^2} \left( \sum_{j=1}^n \left(\lambdastar Q_{j j} \ustar_j \sum_{k \in \Ihat} Q_{k k} \ustar_k + \sum_{k \in \Ihat} Q_{j j} Q_{k k} W_{j k}\right)^2 \right) - \frac{n |\Ihat| \sigma^2}{\Shat^2} \\
    &= \frac{1}{\Shat^2}  \sum_{j=1}^n (\lambdastar \ustar_j)^2  \left(\sum_{k \in \Ihat} Q_{k k} \ustar_k \right)^2
    + \frac{1}{\Shat^2}
    \sum_{j=1}^n\left(\sum_{k \in \Ihat} Q_{k k} W_{j k}\right)^2  \\
    &~~~~~~~~~~~~~~~+ \frac{2\lambdastar}{\Shat^2} \sum_{j=1}^n\left( \ustar_j \sum_{k\in \Ihat} Q_{kk} \ustar_k \sum_{\ell \in \Ihat} Q_{\ell \ell} W_{j \ell} \right) - \frac{n|\Ihat| \sigma^2}{\Shat^2}.
    \end{aligned}      \end{equation*}
Applying the definition of $R_{\Ihat}$ in Equation~\eqref{eq:RIhat-define} and rearranging terms,
    \begin{equation} \label{eq:lambdahat}
    \lambdahat = \lambdastar R_{\Ihat}^2 + R_{\Ihat}^2 \frac{\sum_{j=1}^n\left(\sum_{k \in \Ihat} Q_{kk} W_{jk}\right)^2 - n|\Ihat| \sigma^2}{\left(\sqrt{\lambdastar} \sum_{k \in \Ihat} Q_{kk}\ustar_k\right)^2} + 2 R_{\Ihat}^2 \sum_{j=1}^n \ustar_j \frac{\sum_{k \in \Ihat} Q_{kk} W_{jk}}{\sum_{k \in \Ihat} Q_{kk}\ustar_k}.
\end{equation}

To control the term $R_{\Ihat}$ in Equation~\eqref{eq:lambdahat}, we proceed to bound a related term $R_\alpha$ for all $\alpha \in \calA_0$, defined as  
\begin{equation*}
    R_\alpha = \frac{\sqrt{\lambdastar}\left(\sum_{k  \in I_\alpha}\left|\ustar_k \right|\right)}{\Shat}.
\end{equation*}
 
We recall the events $\calE_{2, \alpha}$, $\calE_{3, \alpha}$ and $\calE_{4, \alpha}$ for all $\alpha \in \calA_0$ as defined in Equations~\eqref{eq:calE2:define},~\eqref{eq:calE3:define} and~\eqref{eq:calE4:define} in the proof of Theorem~\ref{thm:positive:rank-one:upper}. 
For any $\alpha \in \calA_0$, to control the term $R_\alpha$, we divide $\Shat$ on both sides of Equation \eqref{eq:S1-bound} stated in the proof of Theorem~\ref{thm:positive:rank-one:upper}.
It follows from Equation \eqref{eq:S1-bound} that
\begin{equation*}
\begin{aligned}
    \left|R_\alpha - 1\right| &= \left|\frac{\sqrt{\lambdastar} \sum_{k  \in I_\alpha}\left|\ustar_k\right|}{\Shat} - 1\right| 
    \leq \frac{C\sqrt{\nuW \log n}}{\alpha \sqrt{\lambdastar} \Shat}
\end{aligned}\end{equation*}
holds on the event $\calE_{2, \alpha} \cap \calE_{3, \alpha} \cap \calE_{4, \alpha}$.
Following the previous bound, on the event $\calE_{2, \alpha} \cap \calE_{3, \alpha} \cap \calE_{4, \alpha}$, we have that
\begin{equation*}
\begin{aligned}
    \left|R_\alpha - 1\right| &\leq \frac{C \sqrt{\nuW \log n}}{\alpha \sqrt{\lambdastar} \Shat}
    \leq \frac{C\sqrt{\nuW \log n}}{\alpha \lambdastar \left(\sum_{k \in I_\alpha}\left|\ustar_k \right|\right) - \alpha \sqrt{\lambdastar} \left|\Shat - \sqrt{\lambdastar}\sum_{k \in I_\alpha}\left|\ustar_k\right|\right|}\\
    &\leq \frac{C\sqrt{\nuW \log n}}{\lambdastar |I_\alpha|\alpha^2 - C \sqrt{\nuW \log n}}
    \leq \frac{C \sqrt{\nuW} \log^{5/2} n}{\lambdastar - C\sqrt{\nuW} \log^{5/2} n},
\end{aligned}\end{equation*}
where the second inequality holds by the triangle inequality, the third inequality follows from Equations~\eqref{eq:v-sum-u-sum} and~\eqref{eq:Ialpha-define}, the last inequality holds by Equation~\eqref{eq:calA0-define}.
We remind the reader that we allow constant $C$ to change its precise value from line to line in the proof.
Under Assumption~\ref{assump:lambdastar}, it follows from the previous bound that for $n$ sufficiently large, we have on the event $\calE_{2, \alpha} \cap \calE_{3, \alpha} \cap \calE_{4, \alpha}$ that
\begin{equation} \label{eq:ustar-SIhat:ratio:close-to-1}
    \left|R_\alpha - 1\right|  \leq \frac{C \sqrt{\nuW} \log^{5/2} n}{\lambdastar}.
\end{equation}

By Equation~\eqref{eq:ustar-SIhat:ratio:close-to-1} and Assumption \ref{assump:lambdastar}, for $n$ sufficiently large, we have 
\begin{equation} \label{eq:R-alpha-bound}
    1/2 \leq R_\alpha \leq 2
\end{equation}
on the event $\calE_{2, \alpha} \cap \calE_{3, \alpha} \cap \calE_{4, \alpha}$. 

Recall $\vs \in \{\pm 1\}^n$ as defined in Equation~\eqref{eq:s-define}. On the event $\mathcal{E}_{2, \alpha} \cap \mathcal{E}_{3, \alpha} \cap \mathcal{E}_{4, \alpha}$,
we find that, using  Equation~\eqref{eq:lambdahat} to substitute for $\lambdahat$ and applying the triangle inequality,
\begin{equation*}
\begin{aligned}
    \left|\lambdahat - \lambdastar\right|
    &\leq \lambdastar \left|R_{\alpha}^2 - 1\right| + R_{\alpha}^2 \frac{\left|\sum_{j=1}^n\left(\sum_{k \in I_\alpha} s_k W_{jk}\right)^2 - n|I_\alpha| \sigma^2\right|}{\left(\sqrt{\lambdastar} \sum_{k \in I_\alpha} |\ustar_k|\right)^2} \\
    &~~~~~~~~~~~~+ 2 R_{\alpha}^2 \left|\sum_{j=1}^n \ustar_j \frac{\sum_{k \in I_\alpha} s_k W_{jk}}{\sum_{k \in I_\alpha} |\ustar_k|}\right|,
\end{aligned}
\end{equation*}
where we substitute $R_{\Ihat}$ by $R_{\alpha}$ using the fact that on $\calE_{2,\alpha}$, we have $\Ihat = I_{\alpha}$. Continuing from the previous bound, it follows from Equation~\eqref{eq:ustar-SIhat:ratio:close-to-1} that
\begin{equation*}
\begin{aligned}
    \left|\lambdahat - \lambdastar\right| 
    &\leq \lambdastar \cdot \frac{C \sqrt{\nuW} \log^{5 / 2} n}{\lambdastar} \cdot |R_\alpha + 1| + R_{\alpha}^2 \frac{\left|\sum_{j=1}^n\left(\sum_{k \in I_\alpha} s_k W_{jk}\right)^2 - n|I_\alpha| \sigma^2\right|}{\left(\sqrt{\lambdastar} \sum_{k \in I_\alpha} |\ustar_k|\right)^2} \\
    &~~~~~~~~~~~~+ 2 R_{\alpha}^2 \left|\sum_{j=1}^n \ustar_j \frac{\sum_{k \in I_\alpha} s_k W_{jk}}{\sum_{k \in I_\alpha} |\ustar_k|}\right|.
\end{aligned}
\end{equation*}
Applying Equation~\eqref{eq:R-alpha-bound} to $R_\alpha$ in the above display, we have
\begin{equation} \label{eq:lambdahat-ii}
\begin{aligned}
    \left|\lambdahat - \lambdastar\right| 
    &\leq C\sqrt{\nuW} \log^{5/2} n + \frac{C\left|\sum_{j=1}^n\left(\sum_{k \in I_\alpha} s_k W_{jk}\right)^2 - n|I_\alpha| \sigma^2\right|}{\left(\sqrt{\lambdastar} \sum_{k \in I_\alpha} |\ustar_k|\right)^2} \\
    &~~~~~~~~~~~~+  C \left|\frac{\sum_{k \in I_\alpha} \sum_{j=1}^n \ustar_j s_k W_{jk}}{\sum_{k \in I_\alpha} |\ustar_k|}\right|.
\end{aligned}
\end{equation}
For all $\alpha \in \calA_0$, define the events
\begin{equation} \label{eq:calG-define} \begin{aligned}
    \calG_{1, \alpha} &= \left\{\left|\sum_{j=1}^n\left(\sum_{k \in I_\alpha} s_k W_{j k}\right)^2-n|I_\alpha|\sigma^2\right| \leq C \nuW \left|I_\alpha\right| \sqrt{n} \log n\right\} ~~~\text{ and } \\
    \calG_{2,\alpha} &= \left\{\left|\sum_{k \in I_\alpha} \sum_{j=1}^n u^\star_j s_k W_{jk}\right| \leq 4\sqrt{2\nuW \left|I_\alpha\right| \log n}\right\}.
\end{aligned} \end{equation}
On the event $\calE_{2, \alpha} \cap \calE_{3, \alpha} \cap \calE_{4, \alpha} \cap \calG_{1, \alpha} \cap \calG_{2, \alpha}$, we can further bound Equation~\eqref{eq:lambdahat-ii} according to
\begin{equation*} 
\begin{aligned}
     \left|\lambdahat - \lambdastar\right|
     &\leq C\sqrt{\nuW} \log^{5/2} n + \frac{C \nuW |I_\alpha| \sqrt{n} \log n}{\left(\sqrt{\lambdastar} \sum_{k \in I_\alpha} |\ustar_k|\right)^2} + \frac{C \sqrt{\nuW |I_\alpha| \log n}}{\sum_{k \in I_\alpha} |\ustar_k|},
\end{aligned}
\end{equation*}
following the definition of $\calG_{1, \alpha}$ and $\calG_{2, \alpha}$ in Equation~\eqref{eq:calG-define}. By the definition of $I_{\alpha}$ in Equation~\eqref{eq:Ialpha-define}, it follows that 
\begin{equation*} 
\begin{aligned}
    \left|\lambdahat - \lambdastar\right| &\leq C\sqrt{\nuW} \log^{5/2} n + \frac{C \nuW |I_\alpha| \sqrt{n} \log n}{\lambdastar|I_\alpha|^2 \alpha^2} + \frac{C\sqrt{\nuW |I_\alpha| \log n}}{|I_\alpha|\alpha}\\
\end{aligned}
\end{equation*}
holds on the event $\calE_{2, \alpha} \cap \calE_{3, \alpha} \cap \calE_{4, \alpha} \cap \calG_{1, \alpha} \cap \calG_{2, \alpha}$.
Applying Equation~\eqref{eq:calA0-define} to lower bound $|I_\alpha|$, we have
\begin{equation} \label{eq:step3:iii}
\begin{aligned}
    \left|\lambdahat - \lambdastar\right| &\leq C\sqrt{\nuW} \log^{5/2} n + \frac{C \nuW \sqrt{n} \log^3 n}{\lambdastar} + C\sqrt{\nuW} \log^{3/2} n
    \leq C\sqrt{\nuW} \log^{5/2} n,
\end{aligned}
\end{equation}
holds on the event $\calE_{2, \alpha} \cap \calE_{3, \alpha} \cap \calE_{4, \alpha} \cap \calG_{1, \alpha} \cap \calG_{2, \alpha}$,
where the last inequality holds under Assumption~\ref{assump:lambdastar} when $n$ is sufficiently large. 

Consider the event 
\begin{equation*}
    \calG 
    = \left\{ \left|\lambdahat - \lambdastar\right| \leq C\sqrt{\nuW} \log^{5/2} n \right\}.
\end{equation*}
By the proof leading to Equation~\eqref{eq:step3:iii}, we have for all $\alpha \in \calA_0$,
\begin{equation*}
    \calE_{2, \alpha} \cap \calE_{3, \alpha} \cap \calE_{4, \alpha} \cap \calG_{1, \alpha} \cap \calG_{2, \alpha} \subseteq \calG.
\end{equation*}
It follows that 
\begin{equation*}
\begin{aligned}
    \Pr(\calG) &\geq \Pr\left(\bigcup_{\alpha \in \calA_0}\calE_{2, \alpha} \cap \calE_{3, \alpha} \cap \calE_{4, \alpha} \cap \calG_{1, \alpha} \cap \calG_{2, \alpha}\right)\\
    &= \sum_{\alpha \in \calA_0} \Pr\left(\calE_{2, \alpha} \cap \calE_{3, \alpha} \cap \calE_{4, \alpha} \cap \calG_{1, \alpha} \cap \calG_{2, \alpha}\right),
\end{aligned}
\end{equation*}
where the first inequality follows from set inclusion and the last equality follows from the fact that $\{\calE_{2,\alpha}\}_{\alpha\in\calA_0}$ are disjoint events according to Equation~\eqref{eq:calE2:define}.  By basic set inclusions, it follows that 
\begin{equation*}
\begin{aligned}
    \Pr(\calG) &\geq \sum_{\alpha \in \calA_0} \left(\Pr(\calE_{2, \alpha}) - \Pr(\calE_{2, \alpha}\cap \calE_{3, \alpha}^c) - \Pr(\calE_{4,\alpha}^c) - \sum_{j=1}^2\Pr(\calG_{j, \alpha}^c) \right).
\end{aligned}
\end{equation*}
Applying Lemma \ref{lem:W:I:alpha}, Equations~\eqref{eq:calE3c}, Lemma \ref{lem:quad-W-bound} and Lemma \ref{lem:W:linear:bound} to the above display yields that 
\begin{equation*}
    \Pr(\calG) \geq \sum_{\alpha \in \calA_0} \Pr(\calE_{2, \alpha}) - O(|\calA_0|n^{-8}) = 1 - \Pr\left(\bigcap_{\alpha \in \calA_0}\left\{\Ihat \neq I_\alpha\right\}\right)- O(|\calA_0|n^{-8}).
\end{equation*}
Applying Lemma \ref{lem:Ihat} and using the fact that $|\calA_0| \leq |\calA|$ and $|\calA| = O(\log n)$ by its definition in Equation~\eqref{eq:Acal}, we have
\begin{equation*}
    \Pr(\calG) \geq 1 - O(n^{-8}) - O(|\calA_0|n^{-8}) \geq 1 - O(n^{-8}\log n),
\end{equation*}
completing the proof. 
\end{proof}

\section{Proofs for lower bounds of metric entropy under $\dtti$}

To prove Lemma~\ref{lem:packing-lower}, we first state a few technical lemmas.

\subsection{Technical Lemmas}

For a semi-metric $\rho$ defined over a set $\bbK$, we let $\calN(\bbK, \rho, \delta)$ denote the $\delta$-covering number of $\bbK$ under $\rho$ (see Chapter 15 in \cite{wainwright2019high}). The collection of all linear subspaces of fixed dimension $r$ of the Euclidean space $\R^n$ forms the Grassmann manifold $\bbG_{n, r}$, also termed the Grassmannian. For points on the Grassmannian, we adopt the projector perspective \citep{bendokat2024grassmann}: a subspace $\calU \in \bbG_{n,r}$ is identified with the (unique) orthogonal projector $\mP \in \R^{n \times n}$ onto $\calU$, which in turn is uniquely represented by $\mP=\mU \mU^T$, where $\mU \in \R^{n\times r}$ whose columns form an orthonormal basis of $\calU$. 
For any matrix $\mA \in \R^{m\times n}$ with singular values denoted by $\sigma_i(\mA)$ for $1 \leq i \leq \min\{m, n\}$, the Schatten-$q$ norm $\|\cdot\|_{S_q}$ is defined for any $1 \leq q \leq \infty$
\begin{equation*}
    \|\mA\|_{S_q} := \left(\sum_{i=1}^{\min \{m, n\}} \sigma_i^q(\mA)\right)^{1 / q}. 
\end{equation*}

For a pair of subspaces $\calU_1, \calU_2 \in \bbG_{n,r}$ identified with projectors $\mP_1 = \mU_1 \mU_1^\top, \mP_2 = \mU_2\mU_2^\top \in \R^{n\times n}$, respectively, we consider the distance $d_{S_q}(\cdot, \cdot)$ induced by the Schatten-$q$ norm
\begin{equation} \label{eq:dsq}
    d_{S_q}(\calU_1, \calU_2) := \|\mP_1 - \mP_2\|_{S_q} = \|\mU_1\mU^\top_1 - \mU_2\mU_2^\top\|_{S_q}.
\end{equation}
Lemma~\ref{lem:grassmann-covering} controls the covering number of $\bbG_{n,r}$ under $d_{S_q}$. 

\begin{lemma}[\cite{pajor1998metric} Proposition 8] \label{lem:grassmann-covering}
    For any integer $ 1\leq r \leq n$ such that $2r \leq n$, any $q$ such that $1 \leq q \leq \infty$ and any $\delta > 0$, we have 
    \begin{equation*}
        \left(\frac{c_0}{\delta}\right)^{r(n-r)} \leq \calN(\bbG_{n,r},~d_{S_q},~ \delta r^{1/q}) \leq \left(\frac{C_0}{\delta}\right)^{r(n-r)}
    \end{equation*}
    where $c_0, C_0 > 0$ are universal constants, $d_{S_q}$ is the distance defined in Equation~\eqref{eq:dsq} induced by the Schatten-$q$ norm.
\end{lemma}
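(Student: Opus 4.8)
The statement is classical, and the authors presumably invoke it directly from \cite{pajor1998metric}; here I sketch the volumetric argument one would use to reprove it. Write $D = r(n-r)$ for the dimension of $\bbG_{n,r}$ and equip the Grassmannian with its unique $\bbO_n$-invariant probability measure $\mu_{n,r}$. Since $\bbO_n$ acts transitively on $\bbG_{n,r}$ and, through conjugation of the associated orthogonal projectors, acts by $d_{S_q}$-isometries, every $d_{S_q}$-ball of a fixed radius $\rho$ has the same $\mu_{n,r}$-mass, which I denote $V_q(\rho)$. The standard packing/covering comparisons then reduce the problem to estimating $V_q$: a maximal $\delta r^{1/q}$-separated subset of $\bbG_{n,r}$ is a $\delta r^{1/q}$-net, so $\calN(\bbG_{n,r}, d_{S_q}, \delta r^{1/q}) \le V_q(\tfrac12 \delta r^{1/q})^{-1}$, whereas any $\delta r^{1/q}$-net $\calF$ must satisfy $1 \le |\calF| \cdot V_q(\delta r^{1/q})$, giving a matching lower bound. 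So it suffices to prove that $V_q(\rho)^{1/D} \asymp \rho \, r^{-1/q}$ for $\rho$ up to a fixed fraction of the $d_{S_q}$-diameter, which itself is of order $r^{1/q}$ because $d_{S_q}(\calU_1, \calU_2)^q = 2 \sum_{i=1}^{r} \sin^q \theta_i \le 2r$ for the principal angles $\theta_1, \dots, \theta_r$ between $\calU_1$ and $\calU_2$.

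To estimate $V_q(\rho)$ for small $\rho$, I would pass to local graph coordinates around a reference subspace $\calU_0$: every subspace in a fixed neighbourhood of $\calU_0$ is uniquely the column space of the $n \times r$ matrix whose top $r \times r$ block is $\mI_r$ and whose bottom $(n-r) \times r$ block is some $\mB \in \R^{(n-r)\times r}$ of small norm, and the principal angles then satisfy $\tan \theta_i = \sigma_i(\mB)$. Because the nonzero singular values of the difference $\mP - \mP_0$ of the projectors onto $\calU$ and $\calU_0$ are exactly the $\sin \theta_i$, each with multiplicity two, $d_{S_q}(\calU, \calU_0) = 2^{1/q} \bigl( \sum_i \sin^q \theta_i \bigr)^{1/q}$ is, for $\mB$ in a bounded neighbourhood of the origin, comparable up to absolute constants to the Schatten-$q$ norm $\|\mB\|_{S_q}$; moreover the Jacobian relating $\mu_{n,r}$ to Lebesgue measure on $\R^{(n-r)\times r}$ under this parametrization is bounded above and below by absolute constants on that range. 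Hence $V_q(\rho)$ is comparable to the Lebesgue measure of a Schatten-$q$ ball of radius $\asymp \rho$ in $\R^{(n-r)\times r}$, whose $D$-th root is of order $\rho \, r^{-1/q}$; the normalization $\delta r^{1/q}$ in the statement is precisely what converts this into an exponent base that is a universal constant, free of $q$ (and of $n$ and $r$). Combining this with the packing/covering inequalities above yields the two-sided bound. One can alternatively transfer the estimate from known covering numbers of $\bbO_n$ for Schatten metrics through the quotient map $\bbO_n \to \bbG_{n,r}$, but the volume argument is the most self-contained.

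The genuinely delicate point — and the reason I would in practice simply cite \cite{pajor1998metric} rather than rederive this — is \emph{uniformity} of the constants $c_0, C_0$ over $n$, $r$ and $q$. Both the bi-Lipschitz comparison of $d_{S_q}$ with the coordinate Schatten norm and the comparison of the invariant small-ball volume with flat Lebesgue volume carry curvature-type correction terms that a priori depend on $n$ and $r$; using $2r \le n$ and restricting $\rho$ to a universal multiple of the diameter, one must check that these corrections stay within absolute constants even after being raised to the power $D$. The $q$-dependence must also be handled with care: the normalized metrics $r^{-1/q} d_{S_q}$ are \emph{not} uniformly equivalent across $q$, so the $q$-free exponent base comes entirely from the volume estimate $V_q(\rho)^{1/D} \asymp \rho \, r^{-1/q}$ itself, and pinning down the Schatten-$q$ ball volume in the rectangular regime $r \le n-r$ with the required precision is the main computational obstacle.
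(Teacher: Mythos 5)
The paper offers no proof of this lemma at all: it is imported verbatim as Proposition~8 of \cite{pajor1998metric} and used as a black box (its only role is to derive the Stiefel-manifold covering bound in the next lemma). Your proposal correctly anticipates this, and the volumetric sketch you give --- invariance of the ball measure under the transitive $\bbO_n$-action, the two standard packing/covering comparisons, and the reduction to the volume of Schatten-$q$ balls in $\R^{(n-r)\times r}$ via graph coordinates --- is indeed the route Pajor takes, with the points you flag (uniformity of the constants in $n$, $r$, $q$, and the Schatten-ball volume estimates of Saint-Raymond type) being exactly where the real work lies. Since you explicitly defer to the citation rather than claim a complete rederivation, there is no gap to report; citing the result, as the paper does, is the appropriate move here.
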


Let $\bbV_{n, r} := \{\mU \in \R^{n\times r}: \mU^\top \mU = \mI_{r}\}$ be the $n\times r$ Stiefel manifold \citep{boumal2023introduction}. The distance $d_{S_q}(\cdot, \cdot)$ can also be viewed as a distance defined on $\bbV_{n,r}$. For a pair of orthogonal matrices $\mU_1, \mU_2 \in \bV_{n, r}$, we let
\begin{equation*}
    d_{S_q}(\mU_1, \mU_2) := \| \mU_1\mU_1^\top - \mU_2\mU_2^\top \|_{S_q}. 
\end{equation*}
When $q = 2$, the Schatten-$q$ norm coincides with the Frobenius norm, and we have
\begin{equation*}
    d_{S_2}(\mU_1, \mU_2) = \left\|\mU_1\mU_1^\top - \mU_2\mU_2^\top \right\|_{\rmF}. 
\end{equation*}
Define a distance $\dF$ over $\bbV_{n,r}$ as 
\begin{equation}\label{eq:frob-dist}
    \dF(\mU_1, \mU_2) := \min_{\mGamma \in \bbO_r} \|\mU_1 - \mU_2 \mGamma\|_{\rmF}.
\end{equation}
Lemma \ref{lem:stiefel-covering} controls the covering number of $\bbV_{n,r}$ under $\dF$, which follows immediately from Lemma \ref{lem:grassmann-covering}.
\begin{lemma} \label{lem:stiefel-covering}
For any integer $1 \leq r \leq n/2$ and for every $\delta>0$, we have
\begin{equation*}
    \left(\frac{c_0\sqrt{r}}{\delta}\right)^{r(n-r)} \leq \calN(\bV_{n, r}, \dF, \delta) \leq \left(\frac{C_0\sqrt{2r}}{\delta}\right)^{r(n-r)},
\end{equation*}
where $c_0, C_0 > 0$ are universal constants and $\dF$ is the 
\end{lemma}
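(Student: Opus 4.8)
The plan is to deduce Lemma~\ref{lem:stiefel-covering} directly from the Grassmann covering bound of Lemma~\ref{lem:grassmann-covering} specialized to $q = 2$; the only genuine work is a routine comparison between the quotient distance $\dF$ on $\bbV_{n,r}$ and the Schatten-$2$ (projector Frobenius) distance $d_{S_2}$.

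First I would record the metric equivalence. Fix $\mU_1,\mU_2 \in \bbV_{n,r}$ and let $\theta_1,\dots,\theta_r \in [0,\pi/2]$ be the principal angles between their column spaces, so that the singular values of $\mU_2^\top\mU_1$ are $\cos\theta_1,\dots,\cos\theta_r$. Expanding $\|\mU_1 - \mU_2\mGamma\|_{\rmF}^2 = 2r - 2\tr(\mGamma^\top \mU_2^\top\mU_1)$ and maximizing $\tr(\mGamma^\top \mU_2^\top\mU_1)$ over $\mGamma \in \bbO_r$ via the SVD of $\mU_2^\top\mU_1$ gives $\dF(\mU_1,\mU_2)^2 = 2\sum_{i=1}^r(1-\cos\theta_i)$, while $d_{S_2}(\mU_1,\mU_2)^2 = \|\mU_1\mU_1^\top - \mU_2\mU_2^\top\|_{\rmF}^2 = 2\sum_{i=1}^r \sin^2\theta_i$. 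Since $\tfrac12\sin^2\theta_i \le 1 - \cos\theta_i \le \sin^2\theta_i$ for $\theta_i \in [0,\pi/2]$, this yields $\tfrac{1}{\sqrt2}\, d_{S_2}(\mU_1,\mU_2) \le \dF(\mU_1,\mU_2) \le d_{S_2}(\mU_1,\mU_2)$. In particular both distances depend only on the column spaces and hence descend to the Grassmannian $\bbG_{n,r}$, so the covering numbers of $(\bbV_{n,r},\dF)$ equal those of $\bbG_{n,r}$ under the induced metric, and likewise for $d_{S_2}$.

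Next I would translate the equivalence into covering numbers: from $\dF \le d_{S_2}$ one gets $\calN(\bbV_{n,r},\dF,\delta) \le \calN(\bbG_{n,r}, d_{S_2},\delta)$, and from $d_{S_2} \le \sqrt2\,\dF$ one gets $\calN(\bbG_{n,r}, d_{S_2}, \sqrt2\,\delta) \le \calN(\bbV_{n,r},\dF,\delta)$. Then I apply Lemma~\ref{lem:grassmann-covering} with $q = 2$: there $d_{S_2}$ is exactly the Schatten-$2$ distance and $r^{1/q} = \sqrt r$, so substituting $\delta' = \epsilon/\sqrt r$ in that statement gives $(c_0\sqrt r/\epsilon)^{r(n-r)} \le \calN(\bbG_{n,r}, d_{S_2},\epsilon) \le (C_0\sqrt r/\epsilon)^{r(n-r)}$ for every $\epsilon > 0$, using $2r \le n$. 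Taking $\epsilon = \delta$ for the upper bound (and relaxing $\sqrt r \le \sqrt{2r}$) and $\epsilon = \sqrt2\,\delta$ for the lower bound (and absorbing the resulting $\sqrt2$ into a redefined universal constant $c_0$) yields exactly the two claimed inequalities.

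The argument is essentially bookkeeping, so there is no serious obstacle; the only point requiring care is aligning the power of $r$, since the $r^{1/q}$ appearing in Lemma~\ref{lem:grassmann-covering} collapses to $\sqrt r$ at $q = 2$ and must be carried correctly through the reparametrization so that $\sqrt r$ ends up inside the bracket. The $\sqrt2$ mismatch between $\dF$ and $d_{S_2}$ is immaterial because the constants are universal, which is also why I do not optimize it.
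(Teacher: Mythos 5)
Your proposal is correct and follows essentially the same route as the paper: both reduce the claim to the metric equivalence $\tfrac{1}{\sqrt 2}\,d_{S_2}\le \dF\le d_{S_2}$, transfer it to covering numbers, and invoke Lemma~\ref{lem:grassmann-covering} at $q=2$ so that $r^{1/q}=\sqrt r$. The only difference is cosmetic — you derive the equivalence from principal angles while the paper cites it from Lemma~2.6 of \cite{chen2021spectral} — and your handling of the $\sqrt 2$ in the lower-bound direction (passing to radius $\sqrt2\,\delta$ before absorbing the constant) is in fact slightly more careful than the paper's displayed chain.
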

\begin{proof}[Proof of Lemma \ref{lem:stiefel-covering}]
We identify each element $\mU$ of $\bbV_{n,r}$ with the element $\mU\mU^\top$ in $\bbG_{n,r}$ and apply Lemma \ref{lem:grassmann-covering} to $\bV_{n,r}$ under $\dF$. 
By the fact that (see Lemma 2.6 in \cite{chen2021spectral})
\begin{equation*}
    \frac{1}{\sqrt{2}}\left\| \mU_1\mU_1^\top - \mU_2\mU_2^\top \right\|_{\frobnorm} \leq \dF(\mU_1, \mU_2) \leq \left\| \mU_1\mU_1^\top - \mU_2\mU_2^\top \right\|_{\frobnorm},
\end{equation*}
we have 
\begin{equation*}
    \calN\left(\bV_{n, r}, d_{S_2}, \delta\right) \leq \calN(\bV_{n, r}, \dF, \delta) \leq \calN\left(\bV_{n, r}, d_{S_2}, \frac{\delta}{\sqrt{2}}\right)
\end{equation*}
and it follows from Lemma \ref{lem:grassmann-covering} that
\begin{equation*}
    \left(\frac{c_0\sqrt{r}}{\delta}\right)^{r(n-r)} \leq \calN(\bV_{s, r}, \dF, \delta) \leq \left(\frac{C_0\sqrt{2r}}{\delta}\right)^{r(n-r)}, 
\end{equation*}
completing the proof.
\end{proof}

For any $s \geq r$, Lemma \ref{lem:haar-packing} relates the packing number of any subset $\bbK \subseteq \bbV_{s,r}$ to its Haar measure and the covering number of $\bbV_{s,r}$. Recall that the Haar measure on $\bbV_{s,r}$ is the invariant measure under both left- and right-orthogonal transformation \citep{chikuse2012statistics}. In other words, for any subset $\bbK \subseteq \bbV_{s,r}$ and orthogonal matrices $\mGamma_1 \in \bbO_s$ and $\mGamma_2 \in \bbO_r$, we have 
\begin{equation*}
    \xi_H\left(\mGamma_1 \cdot \bbK\right) = \xi_H\left(\bbK\right) ~\text{ and }~ \xi_H\left(\bbK \cdot \mGamma_2\right) = \xi_H\left(\bbK\right),
\end{equation*}
where 
\begin{equation*}
    \Gamma_1 \cdot \bbK := \left\{ \mGamma_1 \mU: \mU \in \bbK \right\} ~\text{ and }~ \bbK \cdot \mGamma_2 := \left\{ \mU \mGamma_2: \mU \in \bbK \right\}.
\end{equation*}
For any norm $\vertiii{\cdot}$ defined over a set $\bbK$, we also use the notation $\calM(\bbK, \vertiii{\cdot}, \delta)$ and $\calN(\bbK, \vertiii{\cdot}, \delta)$ to denote the $\delta$-packing number and $\delta$-covering number of $\bbK$ under the distance induced by $\vertiii{\cdot}$, respectively. 
With the above setup, we state Lemma \ref{lem:haar-packing} below. 
\begin{lemma} \label{lem:haar-packing}
    For $1 \leq r \leq s$, let $\xi_H$ denote the Haar measure on $\bbV_{s,r}$.
    For $1\leq r\leq s$ and any $\bbK \subseteq \bbV_{s,r}$ such that $\xi_H(\bbK) \geq \gamma > 0$, for any unitarily invariant norm $\vertiii{\cdot}$ on $\R^{s\times r}$, we have
    \begin{equation}\label{eq:haar-pack-lower}
        \calM\left(\bbK, \vertiii{\cdot}, \frac{\delta}{2}\right)
        \geq \gamma \calN\left(\bbV_{s,r}, \vertiii{\cdot}, \delta\right),
    \end{equation}
    and
    \begin{equation}\label{eq:haar-pack-upper}
        \calM\left(\bbK, \vertiii{\cdot}, \frac{\delta}{2}\right) \leq \calN\left(\bbV_{s,r}, \vertiii{\cdot}, \frac{\delta}{8}\right).
    \end{equation}
\end{lemma}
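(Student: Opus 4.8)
## Proof proposal for Lemma~\ref{lem:haar-packing}

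The plan is to exploit the homogeneity of the Stiefel manifold under the Haar measure, using a ``mass-shifting'' argument: a maximal $\delta/2$-packing of $\bbK$ gives rise to a $\delta$-covering of all of $\bbV_{s,r}$ \emph{on average over orthogonal translates}, and conversely a $\delta/8$-covering of $\bbV_{s,r}$ restricts to a covering of $\bbK$ that bounds its packing number.

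For the lower bound \eqref{eq:haar-pack-lower}, I would start from the standard fact that for any semi-metric space, the maximal $\delta$-packing number is at least the minimal $\delta$-covering number, so $\calM(\bbK,\vertiii{\cdot},\delta/2)\ge\calN(\bbK,\vertiii{\cdot},\delta/2)$; actually the cleaner route is to let $\{\mU_1,\dots,\mU_N\}$ be a maximal $\delta/2$-packing of $\bbK$ with $N=\calM(\bbK,\vertiii{\cdot},\delta/2)$, so that the balls of radius $\delta/2$ centered at the $\mU_i$ cover $\bbK$ (maximality). Now consider left-translating by a Haar-random $\mGamma\in\bbO_s$: for a fixed $\mV\in\bbV_{s,r}$, the event that $\mGamma^\top\mV\in\bbK$ has probability $\xi_H(\bbK)\ge\gamma$ by left-invariance of $\xi_H$, and on that event $\mGamma^\top\mV$ lies within $\delta/2$ of some $\mU_i$, hence $\mV$ lies within $\delta/2$ of some $\mGamma\mU_i$ (using that $\vertiii{\cdot}$ is unitarily invariant, so left-multiplication by $\mGamma$ is an isometry). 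Integrating over $\mV$ ranging through a minimal $\delta$-covering... — rather, the slick version: pick a maximal $\delta$-packing $\{\mV_1,\dots,\mV_K\}$ of $\bbV_{s,r}$ with $K=\calN(\bbV_{s,r},\vertiii{\cdot},\delta)$ (using packing $\ge$ covering again). For each $j$, $\Pr_\mGamma(\mGamma^\top\mV_j\in\bbK)\ge\gamma$, so $\E_\mGamma|\{j:\mGamma^\top\mV_j\in\bbK\}|\ge\gamma K$; fix a good $\mGamma$ achieving at least this many. The points $\{\mGamma^\top\mV_j\}$ landing in $\bbK$ are a $\delta$-packing of $\bbV_{s,r}$ intersected with $\bbK$, hence a $\delta/2$-packing of $\bbK$ of size $\ge\gamma K$, giving \eqref{eq:haar-pack-lower}. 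I need to double-check the triangle-inequality bookkeeping (a $\delta$-separated subset of $\bbK$ is automatically a $\delta$-packing of $\bbK$, and $\delta>\delta/2$, so no loss is even needed — the $/2$ is slack), and confirm that $\calM\ge\calN$ in the nonstandard direction I want.

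For the upper bound \eqref{eq:haar-pack-upper}, the argument is the routine ``packing is bounded by covering at half radius'': take a $\delta/8$-covering $\{\mC_1,\dots,\mC_M\}$ of $\bbV_{s,r}$, $M=\calN(\bbV_{s,r},\vertiii{\cdot},\delta/8)$, and a $\delta/2$-packing $\{\mU_1,\dots,\mU_N\}$ of $\bbK\subseteq\bbV_{s,r}$. Each $\mU_i$ lies in some ball $B(\mC_{k(i)},\delta/8)$; if $k(i)=k(i')$ then $\vertiii{\mU_i-\mU_{i'}}\le\delta/4<\delta/2$ by the triangle inequality, contradicting the packing property. Hence $i\mapsto k(i)$ is injective and $N\le M$, which is \eqref{eq:haar-pack-upper} — in fact this direction doesn't even use the Haar hypothesis or the constant $8$ tightly (any covering radius $<\delta/4$ works).

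The main obstacle I anticipate is purely bookkeeping rather than conceptual: being careful about the distinction between packing and covering numbers and about which inequalities hold in which direction (e.g.\ $\calN(\bbK,\rho,2\delta)\le\calM(\bbK,\rho,\delta)\le\calN(\bbK,\rho,\delta)$), and making sure that when I translate $\bbK$ by a random orthogonal matrix and intersect with a packing of the ambient manifold, the surviving points are genuinely a valid packing of $\bbK$ under $\vertiii{\cdot}$ — this requires only that $\vertiii{\cdot}$ is invariant under $\mU\mapsto\mGamma\mU$ for $\mGamma\in\bbO_s$, which holds since $\vertiii{\cdot}$ is unitarily invariant. I would also verify measurability of the relevant events so that the averaging argument is legitimate (standard, since $\bbK$ is Borel and the maps involved are continuous). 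Once these points are nailed down, the proof is short.
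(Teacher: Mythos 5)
Your proof is correct. The upper bound \eqref{eq:haar-pack-upper} is handled essentially as in the paper (the standard comparison $\calM(\bbK,\vertiii{\cdot},\delta/2)\le\calN(\bbK,\vertiii{\cdot},\delta/4)\le\calN(\bbV_{s,r},\vertiii{\cdot},\delta/8)$, which, as you note, never uses the Haar hypothesis). For the lower bound \eqref{eq:haar-pack-lower} you take a genuinely different route. The paper argues volumetrically: by homogeneity of $\bbV_{s,r}$ under $\bbO_s$ and unitary invariance of $\vertiii{\cdot}$, every ball of radius $\delta/2$ has the same Haar measure $\psi(\delta/2)$; a maximal $\delta$-packing of $\bbV_{s,r}$ of size $M$ forces $M\,\psi(\delta/2)\le 1$, while any $(\delta/2)$-covering of $\bbK$ of size $N$ forces $N\,\psi(\delta/2)\ge\xi_H(\bbK)\ge\gamma$, and dividing gives $N\ge\gamma M\ge\gamma\,\calN(\bbV_{s,r},\vertiii{\cdot},\delta)$. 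Your version instead averages over a Haar-random rotation $\mGamma\in\bbO_s$: since $\mGamma^\top\mV_j$ is Haar-distributed on $\bbV_{s,r}$ for each fixed $\mV_j$ in a maximal $\delta$-packing of the ambient manifold, in expectation a $\gamma$-fraction of the rotated packing lands inside $\bbK$, and any realization achieving the expectation is a $\delta$-separated (hence a fortiori $\delta/2$-separated) subset of $\bbK$. Both arguments consume exactly the same two invariances (of $\xi_H$ and of $\vertiii{\cdot}$ under left multiplication by $\bbO_s$) and both pass through $\calM(\bbV_{s,r},\vertiii{\cdot},\delta)\ge\calN(\bbV_{s,r},\vertiii{\cdot},\delta)$ at the end; yours trades the ``all balls of a given radius have equal measure'' step for a first-moment argument. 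The bookkeeping issues you flag all resolve in the direction you need: a $\delta$-separated subset of $\bbK$ of size $m$ certifies $\calM(\bbK,\vertiii{\cdot},\delta/2)\ge m$ because packing numbers are nonincreasing in the radius, and the existence of a $\mGamma$ for which the integer count meets its expectation $\gamma K$ is immediate. The only hypothesis worth making explicit is measurability of $\bbK$, which the statement $\xi_H(\bbK)\ge\gamma$ already presupposes.
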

\begin{proof}[Proof of Lemma \ref{lem:haar-packing}]
    The bound in Equation~\eqref{eq:haar-pack-upper} follows from the fact that $\bbK \subseteq \bV_{s, r}$, and (see Exercise 4.2.10 in \cite{vershynin2018HDP}),
    \begin{equation*}
        \calM\left(\bbK, \vertiii{\cdot}, \frac{\delta}{2}\right)
        \leq \calN\left(\bbK, \vertiii{\cdot}, \frac{\delta}{4}\right)
        \leq \calN\left(\bV_{s,r}, \vertiii{\cdot}, \frac{\delta}{8}\right).
    \end{equation*}
    
    To establish the bound in Equation~\eqref{eq:haar-pack-lower}, suppose that $\bV_{s,r}$ has a $\delta$-packing set $\calP = \{\mU^{(i)}\}_{i=1}^M$ under $\vertiii{\cdot}$, then
\begin{equation*}
    \bigcup_{i=1}^M \bbB\left(\mU^{(i)}, \vertiii{\cdot}, \frac{\delta}{2} \right)  \subseteq \bV_{s, r}
\end{equation*}
where 
\begin{equation*}
    \bbB\left(\mU^{(i)}, \vertiii{\cdot}, \frac{\delta}{2}\right) := \left\{ \mU \in \bV_{s, r}: \vertiii{\mU - \mU^{(i)}} \leq \delta / 2 \right\}.
\end{equation*}
Since $\calP$ is a $\delta$-packing set, the sets $\bbB\left(\mU^{(i)}, \vertiii{\cdot}, \delta/2\right)$ for $i=1,2,\dots,M$ are disjoint and we have
\begin{equation}\label{eq:disjoint}
    \sum_{i=1}^M \xi_H\left(\bbB\left(\mU^{(i)}, \vertiii{\cdot}, \frac{\delta}{2} \right)\right) = \xi_H\left(\bigcup_{i=1}^M \bbB\left(\mU^{(i)}, \vertiii{\cdot}, \frac{\delta}{2} \right)\right) \leq \xi_H(\bV_{s, r}) = 1.
\end{equation}
For any $\mU, \mUt \in \bV_{s, r}$, there exists $\mGamma \in \bbO_s$ such that $\mGamma \mU = \mUt$. Since $\vertiii{\cdot}$ is invariant under $\bbO_s$, we have 
\begin{equation*}
    \mGamma \cdot \bbB\left(\mU, \vertiii{\cdot}, \frac{\delta}{2}\right)
    = \bbB\left(\mGamma\mU, \vertiii{\cdot}, \frac{\delta}{2}\right)
    = \bbB\left(\mUt, \vertiii{\cdot}, \frac{\delta}{2}\right).
\end{equation*}
Since $\xi_H$ is invariant under left-orthogonal transformation, it follows that
\begin{equation*}
    \xi_H\left(\bbB\left(\mU, \vertiii{\cdot}, \frac{\delta}{2}\right)\right)
    = \xi_H\left(\mGamma \cdot \bbB\left(\mU, \vertiii{\cdot}, \frac{\delta}{2}\right)\right)
    = \xi_H\left(\bbB\left(\mUt, \vertiii{\cdot}, \frac{\delta}{2}\right)\right).
\end{equation*}
Therefore, all $\vertiii{\cdot}$-balls with the same radius have the same Haar measure, which does not depend on the particular choice of $\mU$ due to the invariance properties of $\vertiii{\cdot}$ and $\xi_H$. We denote the Haar measure of a $\vertiii{\cdot}$-balls with radius $\delta/2$ as 
\begin{equation*}
    \psi(\delta/2) := \xi_H\left(\bbB\left(\mU, \vertiii{\cdot}, \frac{\delta}{2}\right)\right).
\end{equation*}
It then follows from Equation~\eqref{eq:disjoint} that
\begin{equation*}
    M \psi(\delta/2) = \sum_{i=1}^M \xi_H\left(\bbB\left(\mU^{(i)}, \vertiii{\cdot}, \frac{\delta}{2} \right)\right) \leq 1,
\end{equation*}
from which we conclude that $M \le 1/\psi(\delta/2)$. 
Taking the maximal such $\delta$-packing set yields that 
\begin{equation*}
     \calM\left(\bV_{s,r}, \vertiii{\cdot}, \delta\right) \leq \frac{1}{\psi(\delta/2) }. 
\end{equation*}

Recall that $\bbK$ is a subset of $\bV_{s,r}$ and $\xi_H(\bbK) \geq \gamma$. Suppose that $\mathscr{C}_{\bbK} = \{\mV^{(i)}\}_{i=1}^N$ is a $\delta/2$-covering set of $\bbK$ under $\vertiii{\cdot}$, then 
\begin{equation*}
    \bbK \subseteq \bigcup_{i=1}^N  \bbB\left(\mV^{(i)}, \vertiii{\cdot}, \frac{\delta}{2}\right) 
\end{equation*}
and by the subadditivity of measures, 
\begin{equation*}
    \sum_{i=1}^N \xi_H\left(\bbB\left(\mV^{(i)}, \vertiii{\cdot}, \frac{\delta}{2} \right)\right)
    \geq \xi_H\left(\bigcup_{i=1}^N  \bbB\left(\mV^{(i)}, \vertiii{\cdot}, \frac{\delta}{2}\right)\right)
    \geq \xi_H(\bbK) \geq \gamma. 
\end{equation*}
Thus, it follows that  
\begin{equation*}
    N\psi(\delta / 2) \geq \gamma \implies N \geq \gamma \calM\left(\bV_{s,r}, \vertiii{\cdot}, \delta\right). 
\end{equation*}
Taking the maximal $\delta/2$-covering set yields that
\begin{equation*}
    \calN\left(\bbK,\vertiii{\cdot}, \frac{\delta}{2}\right)
    \geq \gamma \calM\left(\bV_{s,r}, \vertiii{\cdot}, \delta\right).
\end{equation*}
Finally, by Lemma 4.2.8 in \cite{vershynin2018HDP}, we have
\begin{equation*}
    \calM\left(\bbK, \vertiii{\cdot}, \frac{\delta}{2}\right)
    \geq \calN\left(\bbK, \vertiii{\cdot}, \frac{\delta}{2}\right).
\end{equation*}
and
\begin{equation*}
    \calM\left(\bV_{s,r}, \vertiii{\cdot}, \delta\right)
    \geq \calN\left(\bV_{s,r}, \vertiii{\cdot}, \delta\right)
\end{equation*}
Combining the three displays above yields Equation~\eqref{eq:haar-pack-lower}, completing the proof. 
\end{proof}

Lemma \ref{lem:haar-lower-bound} controls the Haar measure of $\bbK(s, r, \sqrt{\mu r / n})$.
We remind the reader that $\bbK(s, r, \sqrt{\mu r/n})$ is defined in Equation~\eqref{eq:bbK}.
\begin{lemma} \label{lem:haar-lower-bound}
    For $n/\mu \geq \max\{4, r\}$ and $s \geq (12n/\mu) \log (12n/\mu)$ we have
    \begin{equation*}
        \xi_H(\bbK(s, r, \sqrt{\mu r / n})) \geq \frac{1}{2}.
    \end{equation*}
\end{lemma}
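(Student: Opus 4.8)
The plan is to realize a Haar-distributed $\mU \in \bbV_{s,r}$ as the first $r$ columns of a Haar-distributed $\mO \in \bbO_s$, and to bound the bad event row-by-row via a union bound. Since each row of $\mO$ is uniform on $\bbS^{s-1}$ (a standard property of Haar orthogonal matrices), the $i$-th row $\mU_{i,\cdot}$ of $\mU$ has the distribution of the first $r$ coordinates of a uniform unit vector; writing $\vg \sim N(\vo, \mI_s)$,
\begin{equation*}
\|\mU_{i,\cdot}\|_2^2 \stackrel{d}{=} \frac{g_1^2 + \cdots + g_r^2}{g_1^2 + \cdots + g_s^2} = \frac{A}{A+C}, \qquad A := \sum_{j=1}^r g_j^2 \sim \chi^2_r, \quad C := \sum_{j=r+1}^s g_j^2 \sim \chi^2_{s-r},
\end{equation*}
with $A$ and $C$ independent and $A+C \sim \chi^2_s$. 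Because $\bbK(s,r,\sqrt{\mu r/n})^c = \{\max_{i\in[s]} \|\mU_{i,\cdot}\|_2^2 > \mu r/n\}$, it suffices by a union bound over $i \in [s]$ to show $\Pr[\|\mU_{1,\cdot}\|_2^2 > \mu r/n] \le 1/(2s)$.

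First I would note that on the event $\{A+C \ge s/2\}$ the inequality $A/(A+C) > \tau$ forces $A > \tau s/2$, so with $\tau = \mu r/n$,
\begin{equation*}
\Pr\left[\frac{A}{A+C} > \frac{\mu r}{n}\right] \le \Pr\left[\chi^2_s < \frac{s}{2}\right] + \Pr\left[\chi^2_r > \frac{\mu r s}{2n}\right].
\end{equation*}
The hypothesis $s \ge (12n/\mu)\log(12n/\mu)$ gives $\tfrac{\mu r s}{2n} = \beta r$ with $\beta := \tfrac{\mu s}{2n} \ge 6\log(12n/\mu)$, and I would bound each term with standard Chernoff estimates, namely $\Pr[\chi^2_s \le s/2] \le e^{-s/16}$ and $\Pr[\chi^2_r \ge \beta r] \le (\beta e^{1-\beta})^{r/2} \le \sqrt{\beta}\,e^{(1-\beta)/2}$ (valid since $\beta > 1$, $r \ge 1$). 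Re-expressing the second bound via $s = 2n\beta/\mu$, the target $\sqrt{\beta}\,e^{(1-\beta)/2} \le 1/(4s)$ is equivalent to $\beta^{3/2}e^{(1-\beta)/2} \le \mu/(8n)$; since the left side is decreasing in $\beta$ for $\beta \ge 3$, it is enough to verify this at $\beta = 6\log(12n/\mu)$, where (using $n/\mu \ge 4$, i.e. $12n/\mu \ge 48$) it reduces to $(6\log L)^{3/2} \lesssim L^2$ with $L = 12n/\mu \ge 48$, which holds with room to spare and improves as $L$ grows. Likewise $e^{-s/16} \le 1/(4s)$ holds because the bound on $s$ and $n/\mu \ge 4$ force $s \ge 48\log 48$. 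Combining, $\Pr[\|\mU_{1,\cdot}\|_2^2 > \mu r/n] \le 1/(4s) + 1/(4s) = 1/(2s)$, and the union bound yields $\xi_H(\bbK(s,r,\sqrt{\mu r/n})) \ge 1 - s\cdot\tfrac{1}{2s} = \tfrac12$.

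The main obstacle is not conceptual but is the constant bookkeeping in the last step: since $s$ is only bounded below and may be arbitrarily large, one cannot substitute the lower bound for $s$ into $\Pr[\chi^2_r \ge \mu r s/(2n)]$ prematurely; this tail must be tracked as a function of $s$ and shown to beat $1/(4s)$ uniformly, which works precisely because $\mu r s/(2n)$ grows linearly in $s$ while $\log(4s)$ grows only logarithmically, and the $\log(12n/\mu)$ factor in the hypothesis on $s$ supplies exactly the slack needed when $s$ sits near its lower bound. A slightly slicker alternative would be to use that $\mU \mapsto \|\mU_{i,\cdot}\|_2$ is $1$-Lipschitz in Frobenius norm together with concentration of measure for Haar measure on $\bbO_s$, of the form $\Pr[\|\mU_{i,\cdot}\|_2 - \E\|\mU_{i,\cdot}\|_2 \ge t] \le e^{-c s t^2}$, combined with $\E\|\mU_{i,\cdot}\|_2 \le \sqrt{r/s} \le \sqrt{\mu r/(12n)}$; this avoids the Beta representation, but it requires invoking the precise Lipschitz-concentration constant on $\bbO_s$, so I would keep the elementary chi-squared computation as the primary argument.
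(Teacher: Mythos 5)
Your proof is correct, but it takes a genuinely different route from the paper's. The paper works \emph{columnwise}: it observes that the complement of the entrywise event $\{\|\mU\|_\infty \ge \sqrt{\mu/n}\}$ is contained in $\bbK(s,r,\sqrt{\mu r/n})$ (via the crude bound $\|\mU_{i,\cdot}\|_2 \le \sqrt{r}\,\|\mU\|_\infty$), represents each Haar column as a normalized Gaussian $\vg/\|\vg\|_2$ in $\R^s$, and union-bounds over the $r$ columns, getting failure probability $3r/s^2$. You work \emph{rowwise}: each row of the first $r$ columns of a Haar element of $\bbO_s$ has squared norm distributed as $\mathrm{Beta}(r/2,(s-r)/2)$, and you control its tail directly with chi-squared Chernoff bounds, union-bounding over the $s$ rows with a per-row budget of $1/(2s)$. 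Your approach avoids the lossy factor of $r$ in passing from $\|\cdot\|_\infty$ to $\|\cdot\|_{2,\infty}$ and attacks the quantity $\|\mU\|_{2,\infty}$ that actually defines $\bbK$, at the cost of a union bound over the larger index set $[s]$ and the attendant bookkeeping you describe (verifying $\beta^{3/2}e^{(1-\beta)/2}\le \mu/(8n)$ uniformly over $\beta \ge 6\log(12n/\mu)$, which your monotonicity argument handles correctly); the paper's approach is shorter because the $1/s^2$ per-column tail trivially absorbs the factor $r \le s$. Both arguments use exactly the hypotheses $n/\mu \ge \max\{4,r\}$ and $s \ge (12n/\mu)\log(12n/\mu)$, and your constants check out, so either proof is acceptable.
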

\begin{proof}[Proof of Lemma~\ref{lem:haar-lower-bound}]

Define the set
\begin{equation*}
    \bbL\left(s, r, \sqrt{ \frac{\mu}{n} } \right)
    = \left\{\mU \in \bV_{s, r}: \|\mU\|_{\infty} \geq \sqrt{\frac{\mu}{n}}\right\}, 
\end{equation*}
where $\|\mU\|_{\infty}$ denotes the entrywise $\ell_{\infty}$ norm (i.e., $\|\mU\|_{\infty} := \max_{i,j} |U_{i,j}|$). 
In what follows, we omit the parameters and abbreviate $\bbL(s, r, \sqrt{\mu / n})$ to $\bbL$.
Noting that for any $\mU \in \bbL^c$, we have $\|\mU\|_{\infty} \leq \sqrt{\mu/n}$, which implies that $\|\mU\|_{2, \infty} \leq \sqrt{\mu r / n}$. Therefore, $\bbL^c \subseteq \bbK(s, r, \sqrt{\mu r / n})$ and 
\begin{equation} \label{eq:nuHK:LB}
    \xi_H\left(\bbK(s, r, \sqrt{\mu r / n})\right) \geq 1 - \xi_H\left(\bbL\right).
\end{equation}
Since
\begin{equation*}
    \bbL = \bigcup_{i=1}^r \left\{\mU \in \bV_{s, r}: \left\|\mU_{\cdot,i}\right\|_{\infty} \geq \sqrt{\frac{\mu}{n}}\right\},
\end{equation*}
it follows that 
\begin{equation*}
    \xi_H\left(\bbL\right) \leq r ~\xi_H \left(\left\{\mU \in \bV_{s, r}: \left\|\mU_{\cdot,1}\right\|_{\infty} \geq \sqrt{\frac{\mu}{n}}\right\}\right).
\end{equation*}

Since $\mU$ is distributed according to the Haar measure on $\bbV_{s,r}$, the column $\mU_{\cdot, 1} \in \R^s$ is uniformly distribution on $\bbS^{s-1}$.
As a result, we have
\begin{equation*}
    \xi_H \left(\left\{\mU \in \bV_{s, r}: \left\|\mU_{\cdot,1}\right\|_{\infty} \geq \sqrt{\frac{\mu}{n}}\right\}\right) = \Pr\left(\frac{\|\vg\|_{\infty}}{\|\vg\|_2} \geq \sqrt{\frac{\mu}{n}}\right),
\end{equation*}
where $\vg \sim N(0, \mI_s)$ (see Lemma 10.1 in \cite{o2016eigenvectors}).
Combining the above two displays,
\begin{equation} \label{eq:L-Haar-upper}
    \xi_H\left(\bbL\right) \leq
    r ~\Pr\left(\frac{\|\vg\|_{\infty}}{\|\vg\|_2} \geq \sqrt{\frac{\mu}{n}}\right).
\end{equation}

We introduce two events 
\begin{equation*}
    \calE_{0,1} := \left\{ \|\vg\|^2_2 \geq \frac{3n}{\mu} \log s \right\} \quad \text{and} \quad \calE_{0,2} := \left\{ \|\vg\|_{\infty} \leq \sqrt{3\log s} \right\}.
\end{equation*} 
By Lemma 10.2 in \cite{o2016eigenvectors}, we have for all $t > 0$
\begin{equation*}
    \Pr\left(\|\vg\|^2_2 \leq s - 2\sqrt{st}\right) \leq \exp\left(-t\right).
\end{equation*}
Taking $t = 2\log s$ yields that
\begin{equation*}
    \Pr\left(\|\vg\|^2_2 \leq s - 2\sqrt{2s\log s}\right) \leq \frac{1}{s^2}.
\end{equation*}
When $n \geq 4\mu$ and $s \geq (12n/\mu) \log \left(12 n/\mu \right)$, one can verify that
\begin{equation*}
    s - 2\sqrt{2s\log s} \geq s/2 \geq \frac{3n}{\mu} \log s.
\end{equation*}
Thus, it follows that
\begin{equation} \label{eq:E_3c}
    \Pr(\calE^c_{0,1}) \leq \Pr\left(\|\vg\|^2_2 \leq s - 2\sqrt{2s\log s}\right) \leq \frac{1}{s^2}.
\end{equation}
By standard Gaussian concentration inequalities and the union bound, we have 
\begin{equation}\label{eq:E_4c}
\begin{aligned}
    \Pr\left(\calE_{0,2}^c\right) = \Pr\left(\|\vg\|_{\infty} \geq \sqrt{3\log s}\right) &\leq 2s \exp\left(-3 \log s\right)
    = \frac{2}{s^2}. 
\end{aligned}
\end{equation}
On the event $\calE_{0,1} \cap \calE_{0,2}$, one has
\begin{equation*}
\frac{ \|\vg\|_{\infty} }{ \|\vg\|_2 }
\leq \frac{\sqrt{3\log s}}{\sqrt{3n \log s / \mu}} = \sqrt{\frac{\mu}{n}}.
\end{equation*}
Combining Equations~\eqref{eq:E_3c} and Equations~\eqref{eq:E_4c}, it follows that
\begin{equation*}
    \Pr\left(\frac{\|\vg\|_{\infty}}{\|\vg\|_2} \geq \sqrt{\frac{\mu}{n}}\right) \leq \Pr\left(\calE_{0,1}^c \cup \calE_{0,2}^c\right) \leq \frac{3}{s^2}.
\end{equation*}
Applying this bound to Equation~\eqref{eq:L-Haar-upper}, we obtain
\begin{equation}\label{eq:haar-bbL}
\begin{aligned}
    \xi_H(\bbL) &\leq r \Pr\left(\frac{\|\boldsymbol{g}\|_{\infty}}{\|\boldsymbol{g}\|_2} \geq \sqrt{\frac{\mu}{n}}\right)
    \leq \frac{3r}{s^2}.
\end{aligned}
\end{equation}
By the assumption that $n/\mu \geq \max\{4, r\}$, for any $r \geq 1$ we have 
\begin{equation*}
    s \geq (12n/\mu) \log(12n / \mu) > 12r > \sqrt{6 r}.
\end{equation*}
Combining the above bound with Equation~\eqref{eq:haar-bbL}, we have
\begin{equation*}
    \xi_H(\bbL) \leq \frac{1}{2}. 
\end{equation*}
Finally, applying this upper bound to Equation~\eqref{eq:nuHK:LB}, we have 
\begin{equation*}
\xi_H(\bbK(s, r, \sqrt{\mu r / n})) \geq 1-\xi_H(\bbL) \geq \frac{1}{2},
\end{equation*}
as desired.
\end{proof}

\subsection{Proof of Lemma \ref{lem:packing-lower}}

\begin{proof}
    Set $s = \left\lceil c_0^2r / 8 e^2 \delta^2 \right\rceil$. 
    Under our upper bound assumption on $\delta$ in Equation~\eqref{eq:delta-assumption} and $n/\mu \geq \max\{4, r\}$, we have 
    \begin{equation} \label{eq:s-lower-0}
        s \geq \frac{c_0^2r}{ 8 e^2 } \cdot \frac{96 e^2 n \log (12n /\mu)}{c_0^2 \mu r} = ( 12 n / \mu )\log \left(12 n / \mu\right) > 12 r,
    \end{equation}
    so that we always have
    \begin{equation} \label{eq:s-lower}
        s \geq \max\left\{ \frac{12 n}{\mu} \log \left(\frac{12 n}{\mu} \right), \frac{c_0^2r}{8e^2 \delta^2}\right\}. 
    \end{equation}
    Note that owing to our assumptions that $\mu \geq 12 \log (12 n)$ and $\delta^2 \geq c_0^2 r/8 e^2 n$, we have $s \leq n$. 
    Consider the subset
    \begin{equation*}
        \bbK_s(n, r, \sqrt{\mu r / n}) := \left\{ \mU \in \bbK_{r,\mu}: \mU_{i, \cdot} = 0, \text{ for } s+1 \leq i \leq n \right\},
    \end{equation*}
    which is a subset related (but different) to $\bbK(s, r , \sqrt{\mu r / n})$ previously considered in Lemma \ref{lem:haar-lower-bound}.  
    Our lower bound relies on the key observation that
    \begin{equation}\label{eq:key-observation}
         \calM(\bbK_{r,\mu}, \dtti, \delta)
         \geq \calM(\bbK_s(n, r, \sqrt{\mu r / n}), \dtti, 2\delta)
         \geq \calM(\bbK_s(n, r, \sqrt{\mu r / n}), \dF, 2\sqrt{s}\delta),
    \end{equation}
    where the first inequality follows from Exercise 4.2.10 in \cite{vershynin2018HDP}, and the second inequality follows from the fact that for any $\mU_1, \mU_2 \in \bbK_s(n, r, \sqrt{\mu r / n})$, we have
    \begin{equation*}
        \dtti(\mU_1, \mU_2) \geq \frac{1}{\sqrt{s}} \dF(\mU_1, \mU_2).
    \end{equation*}
    Starting from Equation~\eqref{eq:key-observation}, we proceed to obtain a $\sqrt{s}\delta$-packing for $\bbK_s(n, r, \sqrt{\mu r / n})$ under $\dF$. 
     Since any $\mU \in \bbK_s(n, r, \sqrt{\mu r / n})$ can be uniquely identified with an element in $\bbK(s, r, \sqrt{\mu r / n})$ by restricting it to its first $s$ rows. By the assumption that $n/\mu \geq \max\{4, r\}$ and the lower bound on $s$ in Equation~\eqref{eq:s-lower-0}, we verify that the conditions of Lemma \ref{lem:haar-lower-bound} are all satisfied. 
     Thus, following directly from the lower bound of  $\xi_H(\bbK(s, r, \sqrt{\mu r / n}))$ in Lemma \ref{lem:haar-lower-bound} and Lemma~\ref{lem:haar-packing}, we have 
\begin{equation*}
    \calM\left(\bbK_s(n, r, \sqrt{\mu r / n}), \dF, 2\sqrt{s} \delta\right) \geq \frac{1}{2} \calN\left(\bbV_{s,r}, \dF, 4\sqrt{s} \delta\right).
\end{equation*}
Applying the lower bound in Lemma \ref{lem:stiefel-covering} to the right hand side of the above bound, we have
\begin{equation}  \label{eq:packing-lower-i}
    \calM\left(\bbK_s(n, r, \sqrt{\mu r / n}), \dF, 2\sqrt{s} \delta\right) \geq \frac{1}{2} \left(\frac{c_0\sqrt{r}}{4\sqrt{s}\delta}\right)^{r(s-r)}. 
\end{equation}
Under our upper bound assumption on $\delta$ in Equation~\eqref{eq:delta-assumption} and $n/\mu \geq \max\{4, r\}$,
\begin{equation*}
    \frac{c_0^2 r}{384 e^2 \delta^2} \geq \frac{c_0^2 r}{384 e^2} \cdot \frac{96 e^2 n \log (12 n / \mu)}{c_0^2 \mu r} = \frac{n}{4\mu} \log (12 n /\mu) \geq 1,
\end{equation*}
from our choice of $s$, it follows that
\begin{equation*}
    s \leq \frac{c_0^2 r}{8 e^2 \delta^2} + 1 \leq \frac{c_0^2 r}{8 e^2 \delta^2} + \frac{c_0^2 r}{384 e^2 \delta^2} \leq \frac{c_0^2 r}{7 e^2 \delta^2},
\end{equation*}
which implies $\sqrt{s}\delta \leq (\sqrt{7} e)^{-1}c_0\sqrt{r}$. 
Thus, it follows from Equation~\eqref{eq:packing-lower-i} that
\begin{equation*}
    \calM\left(\bbK_s(n, r, \sqrt{\mu r / n}), \dF, 2\sqrt{s} \delta\right)
    \geq \frac{1}{2}\left(\frac{\sqrt{7}e}{4}\right)^{r(s-r)}
    \geq \frac{1}{2}\left(\frac{\sqrt{7}e}{4}\right)^\frac{rs}{2},
\end{equation*}
where the last inequality follows from Equation~\eqref{eq:s-lower-0}. Noting that $\sqrt{7}e/4 \geq \sqrt{e}$, we have
\begin{equation*}
    \calM\left(\bbK_s(n, r, \sqrt{\mu r / n}), \dF, 2\sqrt{s} \delta\right)
    \geq \frac{1}{2} \exp\left(\frac{rs}{4}\right) 
\end{equation*}
Applying Equation~\eqref{eq:key-observation} followed by Equation~\eqref{eq:s-lower}, we obtain
\begin{equation} \label{eq:packing-lower-exact}
\begin{aligned}
    \calM\left(\bbK_{r,\mu}, \dtti, \delta\right)
    &\geq \calM\left(\bbK_s(n, r, \sqrt{\mu r / n}), \dF, 2\sqrt{s} \delta\right) \\
    &\geq \frac{1}{2} \exp\left( \max\left\{ \frac{3 n r}{\mu} \log \left(\frac{12 n}{\mu}\right), \frac{c_0^2 r^2}{32 e^2\delta^2} \right\} \right)
    \geq \frac{1}{2} \exp\left( \frac{c_0^2 r^2}{32 e^2\delta^2} \right).
\end{aligned} \end{equation}
Taking the logarithm on both sides of the above display yields Equation~\eqref{eq:metric-lower}.
\end{proof}

\section{Upper bounds of metric entropy under $\dtti$}
To provide a complete picture, Lemma~\ref{lem:metric-entropy-upper} establishes an upper bound on the packing $\delta$-entropy of $\bbK_{r,\mu}$ under $\dtti$ that matches the lower bound in Lemma~\ref{lem:packing-lower} up to log-factors when $\delta$ satisfies Equation~\eqref{eq:delta-assumption}. 

\begin{lemma} \label{lem:metric-entropy-upper}
    Assume that $n \geq \mu r$ for $r \in [n]$ and $\mu > 0$. For all $n$ sufficiently large and any $\sqrt{4/(n-1)} < \delta < \sqrt{\mu r / n}$, we have 
    \begin{equation}\label{eq:metric-upper}
        \log \calM\left(\bbK_{r, \mu},\dtti, \delta\right)  \lesssim \frac{r^2}{\delta^2} \log n. 
    \end{equation}
\end{lemma}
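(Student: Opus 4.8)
\textbf{Proof proposal for Lemma~\ref{lem:metric-entropy-upper}.}

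The plan is to upper bound the packing number $\calM(\bbK_{r,\mu}, \dtti, \delta)$ by a covering number, and to build an explicit covering by combining (i) a choice of which rows of $\mU$ are ``large,'' (ii) a covering of the subspace itself under a Schatten/Frobenius metric via Lemma~\ref{lem:grassmann-covering} (or its Stiefel consequence, Lemma~\ref{lem:stiefel-covering}), and (iii) a standard Euclidean $\epsilon$-net covering of the individual rows. First I would recall the elementary relation $\calM(\bbK, \dtti, \delta) \le \calN(\bbK, \dtti, \delta/2)$ (Exercise 4.2.10 in \cite{vershynin2018HDP}), so it suffices to bound $\log \calN(\bbK_{r,\mu}, \dtti, \delta/2)$ by $O(\delta^{-2} r^2 \log n)$.

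The key structural observation is that $\dtti$ control requires two scales. Given $\mU \in \bbK_{r,\mu}$, at most $O(1/\delta^2)$ rows can have $\|\mU_{i,\cdot}\|_2 \ge \delta/(4\sqrt{r})$ (since $\|\mU\|_\rmF^2 = r$), so let $S(\mU) \subseteq [n]$ be the set of such ``heavy'' rows, with $|S(\mU)| =: s \le 16 r/\delta^2$. There are at most $\binom{n}{s} \le n^{s}$ choices of the heavy-row set, contributing $\log$-complexity $O(s \log n) = O(\delta^{-2} r \log n)$. Conditioning on the heavy set $S$, I would cover $\mU$ as follows. The projector $\mU\mU^\top$ lives in the Grassmannian $\bbG_{n,r}$; by Lemma~\ref{lem:grassmann-covering} (with $q = \infty$, so that $d_{S_\infty}$ is the operator norm and dominates the $\ell_{2,\infty}$-control of $\mU\mU^\top - \mV\mV^\top$ up to a factor), one can $\delta'$-cover $\bbG_{n,r}$ under the operator norm with $\log$-complexity $O(r(n-r) \log(1/\delta'))$ --- but this is \emph{too large}. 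Instead the right move is to cover only the restriction of $\mU$ to the heavy block: the submatrix $\mU_{S,\cdot} \in \R^{s \times r}$ has Frobenius norm $\le \sqrt{r}$, so a standard volumetric $\epsilon$-net of the Frobenius ball of radius $\sqrt{r}$ in $\R^{s\times r}$ has $\log$-size $O(sr \log(\sqrt{r}/\epsilon))$; taking $\epsilon = \delta/4$ this is $O((r^2/\delta^2)\log(r/\delta)) = O(\delta^{-2} r^2 \log n)$ for $\delta > \sqrt{4/(n-1)}$. For the complementary (light) rows $i \notin S$, each has $\|\mU_{i,\cdot}\|_2 < \delta/(4\sqrt{r})$, so one can simply replace them by the zero row and incur error $\le \delta/4$ per row in the $\ell_2$-sense --- crucially this costs \emph{nothing} in covering complexity because it is a deterministic truncation, not a net. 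Assembling: the heavy-block net plus zeroed light rows gives a matrix $\Vt$ with $\|\mU - \Vt\|_{2,\infty} \le \delta/2$ (and one absorbs the $\min_{\mGamma}$ over $\bbO_r$ by covering up to orthogonal rotation as in Lemma~\ref{lem:stiefel-covering}, noting that restricting to the heavy block commutes with right-multiplication by $\mGamma$). The total $\log$-complexity is the sum $O(\delta^{-2} r \log n) + O(\delta^{-2} r^2 \log n) = O(\delta^{-2} r^2 \log n)$, which is Equation~\eqref{eq:metric-upper}.

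The main obstacle I anticipate is handling the rotational non-identifiability and the interaction between the per-row truncation and the subspace structure cleanly: after zeroing the light rows, the resulting matrix $\Vt$ is no longer exactly orthonormal, so one must either (a) project $\Vt$ back onto the Stiefel manifold and check the projection moves it by $O(\delta)$ in $\ell_{2,\infty}$ (using that the light rows carry total squared norm $O(1)$, so the Gram matrix $\Vt^\top \Vt$ is within $O(\delta)$ of $\mI_r$ in operator norm, hence the polar factor is $O(\delta)$-close), or (b) phrase the covering in $\bbK_{r,\mu}$ rather than requiring exact orthonormality of net points --- the former is cleaner and I would pursue it, invoking perturbation bounds for the polar decomposition (e.g., the bound $\|\mA(\mA^\top\mA)^{-1/2} - \mA\|_\rmF \lesssim \|\mA^\top\mA - \mI\|_\rmF$ valid when $\mA^\top\mA$ is near $\mI$). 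The condition $\delta > \sqrt{4/(n-1)}$ enters precisely to ensure $s \le 16r/\delta^2 \le 4r(n-1) \le$ (something $\le n$ after using $n \ge \mu r \ge r$), i.e., the heavy set does not exhaust all rows, and to control $\log(r/\delta) = O(\log n)$. A secondary technical point is choosing $q=\infty$ versus $q=2$ in the Schatten covering lemma --- I would sidestep the subspace covering entirely in favor of the direct volumetric net on the heavy block, since that is what yields the sharp $r^2/\delta^2$ rate rather than the crude $r(n-r)$ rate.
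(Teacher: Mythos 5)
Your proposal is correct in outline but takes a genuinely different route from the paper. The paper does not enumerate heavy-row supports at all: it first builds an exterior $(\delta/2)$-covering of the \emph{row-norm profile} $h(\mU)=(\|\mU_{1,\cdot}\|_2,\dots,\|\mU_{n,\cdot}\|_2)$ under $\|\cdot\|_\infty$, by quantizing the squared row norms to multiples of $1/s$ with $s=\lceil 4/\delta^2\rceil$ (the quantized profiles are the integer solutions of $z_1+\cdots+z_n=rs$, $\|\vz\|_\infty\le s+1$, of which there are at most $\binom{n+rs-1}{rs}$); it then covers, for each quantized profile $\vv$, the set of matrices with that profile by taking a product of $(\delta/2)$-nets of the spheres $v_i\bbS^{r-1}$ over the at most $rs$ indices with $v_i>0$. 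This yields $\log$-complexity $rs\log(\cdot)+sr^2\log(\cdot)=O(\delta^{-2}r^2\log n)$, the same rate as your heavy-set-enumeration-plus-volumetric-block-net argument. Both are two-scale constructions; the paper's profile quantization avoids choosing a support explicitly, while yours is the more standard ``sparse support + net'' argument. Your concern about net points failing to be orthonormal is moot in both approaches: the paper works with \emph{exterior} coverings throughout (its net points $\mTheta\in\scrC_{\vv}$ are also not orthonormal), and since $\dtti(\mU_1,\mU_2)\le\|\mU_1-\mU_2\|_{2,\infty}$ one never needs to handle the minimization over $\mGamma\in\bbO_r$ for the upper bound, so no polar projection is needed.

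One concrete inconsistency you should fix: with your stated threshold $\|\mU_{i,\cdot}\|_2\ge\delta/(4\sqrt r)$, each heavy row contributes at least $\delta^2/(16r)$ to $\|\mU\|_{\rmF}^2=r$, so the heavy set has size up to $16r^2/\delta^2$, not $16r/\delta^2$. With that larger $s$, your volumetric net on the $s\times r$ heavy block has $\log$-size $sr\log(\cdot)=O(\delta^{-2}r^3\log n)$, which overshoots the target by a factor of $r$. The fix is to take the threshold to be $\delta/4$ (no $\sqrt r$): then $s\le 16r/\delta^2$ as you claimed, the volumetric term is $O(\delta^{-2}r^2\log n)$, and zeroing the light rows still incurs at most $\delta/4$ per row in $\ell_2$, which is all that $\|\cdot\|_{2,\infty}$ control requires. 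With that correction your argument goes through and delivers Equation~\eqref{eq:metric-upper}.
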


\begin{proof}[Proof of Lemma \ref{lem:metric-entropy-upper}]
    We obtain an upper bound of $\calM(\bbK_{r,\mu}, d_{2,\infty}, \delta)$ by finding an upper bound of $\calN(\bbK_{r,\mu}, d_{2,\infty}, \delta)$. 
    Since for any $\mU_1$ and $\mU_2 \in \R^{n\times r}$,
    \begin{equation*}
        d_{2,\infty}\left(\mU_1, \mU_2\right) \leq \|\mU_1 - \mU_2\|_{2,\infty},
    \end{equation*}
    we have
    \begin{equation} \label{eq:dtti-tti-covering}
        \calN\left(\bbK_{r,\mu}, d_{2,\infty}, \delta\right) \leq \calN\left(\bbK_{r,\mu}, \|\cdot\|_{2,\infty}, \delta\right). 
    \end{equation}
    Thus, it suffices to obtain an upper bound for $\calN\left(\bbK_{r,\mu}, \|\cdot\|_{2,\infty}, \delta\right)$ instead. Let 
    \begin{equation} \label{eq:def:bbT}
    \bbT := \left\{ \vu \geq 0: \vu \in \sqrt{r}\bbS^{n-1}, \|\vu\|_{\infty} \leq 1 \right\}.    
    \end{equation}
    For any $\mU \in \bbK_{r,\mu}$, noting that $\sqrt{\mu r / n} \leq 1$, we define a mapping $h: \bbK_{r,\mu} \to \bbT$ given by
\begin{equation*}
    h(\mU) := (\|\mU_{1,\cdot}\|_2, \|\mU_{2,\cdot}\|_2, \cdots, \|\mU_{n,\cdot}\|_2)^\top.
\end{equation*}
Indeed, $h(\mU) \in \bbT$ since by definition, we have $h(\mU) \geq 0$, $\|h(\mU)\|_2 = \sqrt{r}$ and $\|h(\mU)\|_{\infty} \leq 1$. 

We pause to give a roadmap of our proof.
Recall that for a set $\bbK$, its exterior $\delta$-covering is a $\delta$-covering, except that the exterior $\delta$-covering allows elements not in $\bbK$ to form the covering (see Exercise 4.2.9 in \cite{vershynin2018HDP}).
To obtain $\calN\left(\bbK_{r,\mu}, \|\cdot\|_{2,\infty}, \delta\right)$, we will explicitly construct an exterior $\delta$-covering of $\bbK_{r,\mu}$ under $\|\cdot\|_{2,\infty}$.
To do so, we proceed in three steps.
In the first step, we construct an exterior $\delta$-covering $\scrC$ of $\bbT$ under $\|\cdot\|_{\infty}$.
In the second step, we divide the set $\bbK_{r,\mu}$ into separate subsets $\{\bbU_{\vv}\}_{\vv \in \bbT}$ via the mapping $h: \bbK_{r,\mu} \to \bbT$.
In this way, each element $\vv$ in $\bbT$ is associated with a subset $\bbU_{\vv} \subseteq \bbK_{r, \mu}$. 
For every $\vv \in \scrC$ (the exterior $(\delta/2)$-covering set of $\bbT$), we construct an exterior $(\delta/2)$-covering $\scrC_{\vv}$ of $\bbU_{\vv}$ under $\|\cdot\|_{2,\infty}$.
In the last step, we take the union of $\scrC_{\vv}$ over $\vv \in \scrC$ to form a set $\scrC_{\bbK}$, and show this set is an exterior $\delta$-covering set for $\bbK_{r,\mu}$ under $\|\cdot\|_{2,\infty}$.
Finally, we control the cardinality of $\scrC_{\bbK}$ to obtain an upper bound of $\calN\left(\bbK_{r,\mu}, \|\cdot\|_{2,\infty}, \delta\right)$, which in turn will yield our desired upper bound on $\calM(\bbK_{r,\mu}, d_{2,\infty}, \delta)$. 

\textbf{Step 1. Construct an exterior $(\delta/2)$-covering set of $\bbT$ under $\|\cdot\|_{\infty}$.} 

Recall that $\bbT$ is given in Equation~\eqref{eq:def:bbT}. Let $s = \lceil 4/\delta^2 \rceil$. Under the assumption that $\delta > \sqrt{4/(n-1)}$, we have $s \leq n$.
We consider the nonnegative integer solutions to the indeterminate equation
\begin{equation} \label{eq:indeterminate}
    z_1+z_2+\cdots+z_n= rs, \quad \|\vz\|_{\infty} \leq s+1
\end{equation}
and let 
\begin{equation} \label{eq:scrC-define}
    \scrC := \left\{\sqrt{\frac{1}{s}}\left( \sqrt{z_1}, \cdots, \sqrt{z_n}\right): \mathbf{z} \text { is a solution of Equation~\eqref{eq:indeterminate} } \right\}.
\end{equation}

By Lemma \ref{lem:vh-norm-bound}, we have $\scrC$ forms an exterior $\delta/2$-covering set of $\bbT$ under $\|\cdot\|_{\infty}$. 
Since without the constraint $\|\vz\|_{\infty} \leq s+1$, Equation~\eqref{eq:indeterminate} has $\binom{n + rs - 1}{rs}$ solutions, we have
\begin{equation} \label{eq:scrC:upper}
\left|\scrC \right| \leq \binom{n + rs - 1}{rs}
    \leq \left(\frac{e(n+rs)}{rs}\right)^{rs} = \left(\frac{e(n+r\left\lceil 4 / \delta^2\right\rceil)}{r\left\lceil 4 / \delta^2\right\rceil}\right)^{r\left\lceil 4 / \delta^2\right\rceil}.
\end{equation}

\textbf{Step 2. Construct an exterior $(\delta/2)$-covering set of $\bbU_{\vv}$ under $\|\cdot\|_{2, \infty}$.} 

For a given $\vv \in \scrC$, consider the subset 
\begin{equation} \label{eq:bbU:vv:define}
     \bbU_{\vv} := \left\{\mU \in \bbK_{r, \mu} : h(\mU) = \vv \right\},
\end{equation}
our next step is to construct an exterior $(\delta/2)$-covering set under $\|\cdot\|_{2, \infty}$ for $\bbU_{\vv}$. 
For every $i \in [n]$, consider a $(\delta/2)$-covering set $\scrC_{v_i}$ of $v_i\bbS^{r-1}$ under the $\ell_2$ norm.
By Corollary 4.2.13 in \cite{vershynin2018HDP},
\begin{equation}\label{eq:scrC-vi-card}
    |\scrC_{v_i}| \leq \left(\frac{6v_i}{\delta}\right)^{r} \quad \text{ for all } i \in [n]. 
\end{equation}
Consider the set 
\begin{equation}\label{eq:scrC-vv-define}
    \scrC_{\vv} := \left\{\mTheta \in \R^{n\times r}: \mTheta = (\vtheta_1, \vtheta_2, \ldots, \vtheta_n)^\top, \vtheta_i \in \scrC_{v_i} \mbox{ for } i \in [n] \right\}.
\end{equation}
We claim that $\scrC_{\vv}$ is an exterior $(\delta/2)$-covering set for $\bbU_{\vv}$ under $\|\cdot\|_{2, \infty}$.  
To see this, note that for any $\mU \in \bbU_{\vv}$ and any $i \in [n]$, since $\mU^\top_{i,\cdot} \in v_i\bbS^{r-1}$ and $\scrC_{v_i}$ is a $(\delta/2)$-covering of $v_i\bbS^{r-1}$ under the $\ell_2$ norm,
there exists a $\vtheta_i \in \scrC_{v_i}$ such that
\begin{equation*}
    \left\|\vtheta_i - \mU^\top_{i,\cdot}\right\|_{2} \leq \frac{\delta}{2}, \quad \text{ for all } i \in [n].
\end{equation*}
Let $\mTheta = (\vtheta_1, \vtheta_2, \ldots, \vtheta_n)^\top \in \scrC_{\vv}$.
It follows that
\begin{equation*}
    \|\mTheta - \mU\|_{2,\infty} = \max_{i\in[n]} \leq \left\|\vtheta_i - \mU^\top_{i,\cdot}\right\|_{2} \leq \frac{\delta}{2}.
\end{equation*}
Since $\vv \in \scrC$, from Equation~\eqref{eq:indeterminate}, we have
\begin{equation} \label{eq:vv-l0}
    \|\vv\|_0 \leq rs
\end{equation}
as any solution $\vz$ to Equation~\eqref{eq:indeterminate} has at most $rs$ nonzero entries. By Equations~\eqref{eq:indeterminate} and \eqref{eq:scrC-define}, we also have 
\begin{equation} \label{eq:vv-linfty}
    \|\vv\|_{\infty} \leq \frac{\sqrt{s+1}}{\sqrt{s}} \leq 2. 
\end{equation}
By Equations~\eqref{eq:scrC-vi-card} and \eqref{eq:scrC-vv-define}, 
\begin{equation*}
\begin{aligned}
    |\scrC_{\vv}|
    &\leq \prod_{i=1}^n \left(\frac{6v_i}{\delta}\right)^{r}
    = \prod_{i:v_i > 0} \left(\frac{6v_i}{\delta}\right)^{r}
    \leq \left(\frac{12}{\delta}\right)^{sr^2}.
\end{aligned} \end{equation*}
where the last inequality follows from Equations~\eqref{eq:vv-l0} and~\eqref{eq:vv-linfty}. By the choice of $s = \lceil 4 / \delta^2\rceil$, we have
\begin{equation}\label{eq:scrCv-upper}
    |\scrC_{\vv}| \leq \left(\frac{12}{\delta} \right)^{r^2\left\lceil 4 / \delta^2\right\rceil}.
\end{equation}

\textbf{Step 3. Construct an exterior $\delta$-covering set for $\bbK_{r,\mu}$ under $\|\cdot\|_{2,\infty}$.}

Consider the set 
\begin{equation} \label{eq:def:srcCbbK}
    \scrC_{\bbK} := \bigcup_{\vv \in \scrC} \scrC_{\vv},
\end{equation}
we claim that this set forms an exterior $\delta$-covering set for $\bbK_{r,\mu}$ under $\|\cdot\|_{2,\infty}$.

For any $\mU \in \bbK_{r,\mu}$, since $\scrC$ is an exterior $(\delta/2)$-covering of $\bbT$ under $\| \cdot \|_{\infty}$, we can find $\vv \in \scrC$ such that $\|h(\mU) - \vv\|_{\infty} \leq \delta/2$.
Consider a matrix $\widetilde{\mU} \in \R^{n\times r}$ 
with rows given by 
\begin{equation*}
    \widetilde{\mU}_{i, \cdot} = \begin{cases}
        v_i \mU_{i,\cdot} / h_i(\mU) & \text{ if } h_i(\mU) > 0,\\
        \vtheta_i^\top & \text{ if } h_i(\mU) = 0,
    \end{cases}
\end{equation*}
for all $i \in [n]$, where $\vtheta_i$ is an arbitrary element of $\scrC_{v_i}$.
Recall the definition of $\bbU_{\vv}$ from Equation~\eqref{eq:bbU:vv:define}. By construction, $\widetilde{\mU} \in \bbU_{\vv}$, which implies that there exists $\mTheta \in \scrC_{\vv} \subseteq \scrC_{\bbK}$ such that 
\begin{equation*}
    \left\|\mTheta - \widetilde{\mU}\right\|_{2,\infty} \leq \frac{\delta}{2},
\end{equation*}
where we use the fact that $\scrC_{\vv}$ defined in Equation~\eqref{eq:scrC-vv-define} is an exterior $(\delta/2)$-covering of $\bbU_{\vv}$.
For $i \in [n]$ such that $h_i(\mU) > 0$, we have
\begin{equation*}
    \left\|\widetilde{\mU}_{i, \cdot} - \mU_{i, \cdot}\right\|_{2} = \left|\frac{v_i}{h_i(\mU)} - 1\right| \|\mU_{i, \cdot}\|_2 = \left|v_i - h_i(\mU)\right| \leq \frac{\delta}{2}
\end{equation*}
and for $i \in [n]$ such that $h_i(\mU) = 0$, we have 
\begin{equation*}
    \left\|\widetilde{\mU}_{i, \cdot} - \mU_{i, \cdot}\right\|_{2} = \left\|\vtheta_i\right\|_2 = v_i = \left|v_i - h_i(\mU)\right| \leq \frac{\delta}{2}.
\end{equation*}
Combining the above two displays, it follows that 
\begin{equation*} 
    \left\|\widetilde{\mU} - \mU\right\|_{2, \infty}
    = \max_{i \in [n]} \left\|\widetilde{\mU}_{i, \cdot} - \mU_{i, \cdot}\right\|_{2} \leq \frac{\delta}{2}.
\end{equation*}
Thus, we have found $\mTheta \in \scrC_{\bbK}$ such that
\begin{equation*}
    \|\mTheta - \mU\|_{2, \infty}
    \leq \|\mTheta - \widetilde{\mU}\|_{2, \infty} + \|\widetilde{\mU} - \mU\|_{2, \infty} \leq \delta,
\end{equation*}
and it follows that $\scrC_{\bbK}$ is an exterior $\delta$-covering set for $\bbK_{r,\mu}$ under $\| \cdot \|_{2,\infty}$.

\textbf{Step 4. An upper bound on $\calM(\bbK_{r,\mu}, \|\cdot\|_{2,\infty}, \delta)$. } 

Recalling the definition of $\scrC_{\bbK}$ from Equation~\eqref{eq:def:srcCbbK}, Equations~\eqref{eq:scrC:upper} and~\eqref{eq:scrCv-upper} imply that
\begin{equation*}
    |\scrC_{\bbK}| \leq \sum_{\vv \in \scrC} |\scrC_{\vv}| \leq \left(\frac{e\left(n+r\left\lceil 4 / \delta^2\right\rceil\right)}{r\left\lceil 4 / \delta^2\right\rceil}\right)^{r\left\lceil 4 / \delta^2\right\rceil} \cdot \left(\frac{12}{\delta}\right)^{r^2\left\lceil 4 / \delta^2\right\rceil}.
\end{equation*}
Using this bound and the fact that $\scrC_{\bbK}$ is an exterior $\delta$-covering set for $\bbK_{r,\mu}$ under $\|\cdot\|_{2,\infty}$, Exercise 4.2.9 in \cite{vershynin2018HDP} implies that
\begin{equation*} \begin{aligned}
    \calN(\bbK_{r,\mu}, \|\cdot\|_{2,\infty}, 2\delta)
    &\leq |\scrC_{\bbK}|\leq \left(\frac{e\left(n+r\left\lceil 4 / \delta^2\right\rceil\right)}{r\left\lceil 4 / \delta^2\right\rceil}\right)^{r\left\lceil 4 / \delta^2\right\rceil}
        \left(\frac{12}{\delta}\right)^{r^2\left\lceil 4 / \delta^2\right\rceil}\\
    &\leq \left(\frac{en \delta^2}{4r}+e\right)^{\frac{4r}{\delta^2}+r} \left(\frac{12}{\delta}\right)^{\frac{4r^2}{\delta^2}+r^2}.
\end{aligned} \end{equation*}
Combining this with with Equation~\eqref{eq:dtti-tti-covering} and Lemma 4.2.8 in \cite{vershynin2018HDP}, we conclude that 
\begin{equation}\label{eq:exact-upper}
    \calM\left(\bbK_{r, \mu},\dtti, \delta\right)
    \leq \calN\left(\bbK_{r, \mu},\dtti, \delta/2\right)
    \leq \left(\frac{e n \delta^2}{64 r}+e\right)^{\frac{64 r}{\delta^2}+r}
        \left(\frac{48}{\delta}\right)^{\frac{64 r^2}{\delta^2} + r^2}. 
\end{equation}
Taking the logarithm on both sides of Equation~\eqref{eq:exact-upper}, we have
\begin{equation*}
\begin{aligned}
    \log \calM\left(\bbK_{r, \mu},\dtti, \delta\right) &\leq r\left(\frac{64+\delta^2}{\delta^2}\right) \log\left(\frac{e n \delta^2}{64 r}+e\right) + r^2\left(\frac{64+\delta^2}{\delta^2}\right) \log\left(\frac{48}{\delta}\right) \\
    &\leq \frac{65 r}{\delta^2} \log\left(\frac{e\mu}{64} + e\right) + \frac{65 r^2}{\delta^2} \log\left(\frac{48}{\delta}\right)
\end{aligned}
\end{equation*}
where the last inequality follows from $\delta < \sqrt{\mu r / n} \leq 1$. Under the assumption that $\delta > \sqrt{4/n}$ and $\mu r \leq n$, the right hand side of the above display can be further written as 
\begin{equation*}
\begin{aligned}
    \log \calM\left(\bbK_{r, \mu},\dtti, \delta\right) &\leq \frac{65 r}{\delta^2} \log \left(\frac{en}{64} + e\right) + \frac{65 r^2}{\delta^2} \log \left(24 \sqrt{n}\right), 
\end{aligned}
\end{equation*}
yields Equation~\eqref{eq:metric-upper} for all $n$ sufficiently large, completing the proof.
\end{proof}

Recall the definition of $\bbT$ in Equation~\eqref{eq:def:bbT} and $\scrC$ defined in Equation~\eqref{eq:scrC-define}, we have Lemma \ref{lem:vh-norm-bound}.
\begin{lemma}\label{lem:vh-norm-bound} 
Let the set $\bbT \subseteq \sqrt{r} \bbS^{n-1}$ be as defined in Equation~\eqref{eq:def:bbT}, and let $\vh$ be an arbitrary element in $\bbT$. Then there exists $\vv \in \scrC$ such that 
\begin{equation} \label{eq:vh-vv-delta}
    \|\vh - \vv\|_{\infty} \leq \frac{1}{\sqrt{s}} \leq \frac{\delta}{2},
\end{equation}
where $\delta$ is given in Lemma \ref{lem:metric-entropy-upper} and $s = \left\lceil 4 / \delta^2\right\rceil$. 
\end{lemma}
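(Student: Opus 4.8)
\textbf{Proof proposal for Lemma~\ref{lem:vh-norm-bound}.}

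The plan is to explicitly produce the nonnegative integer vector $\vz$ whose normalized square-roots form the element $\vv \in \scrC$ approximating the given $\vh \in \bbT$. First, for each $i \in [n]$, set $z_i = \lfloor s h_i^2 \rfloor$. Since $\|\vh\|_{\infty} \le 1$, we have $s h_i^2 \le s$, so $z_i \le s \le s+1$, which will verify the $\ell_\infty$ constraint in Equation~\eqref{eq:indeterminate}. We also have $\sum_i z_i \le s \sum_i h_i^2 = s \cdot r = rs$, because $\|\vh\|_2^2 = r$ by definition of $\bbT$. The deficiency $D := rs - \sum_i z_i$ is a nonnegative integer, and since each $s h_i^2 - z_i \in [0,1)$, we have $D < n$; more usefully, $D \le \lceil s \cdot r \rceil - \lfloor s \cdot \text{(stuff)} \rfloor$ is bounded, but for the argument the key is just that $D \ge 0$ is an integer. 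I then need to ``add back'' $D$ units distributed among the coordinates to obtain an exact solution of Equation~\eqref{eq:indeterminate} without violating the $\|\vz\|_\infty \le s+1$ constraint.

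Concretely, after the flooring step, I would increment $D$ of the coordinates $z_i$ by $1$ each, choosing coordinates for which $z_i \le s$ currently (so that after incrementing, $z_i \le s+1$); such coordinates always exist because $\sum_i z_i < rs \le ns$ forces at least one $z_i \le s$, and more carefully, since we need to add only $D$ units and $D$ is not too large, there is enough room. Actually the cleanest route: after flooring, increment the coordinates $i$ in increasing order of the current value $z_i$ (equivalently, prefer coordinates with $z_i < s$), one unit at a time, $D$ times; since we started with $\sum z_i = rs - D$ and all $z_i \le s$, and we only ever bump a coordinate that is $\le s$ up to $\le s+1$, the constraint $\|\vz\|_\infty \le s+1$ is preserved throughout. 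Let $\vz$ be the resulting vector; then $\vz$ solves Equation~\eqref{eq:indeterminate}, so $\vv := s^{-1/2}(\sqrt{z_1},\dots,\sqrt{z_n})^\top \in \scrC$.

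For the error bound, fix any $i$. If $z_i = \lfloor s h_i^2 \rfloor$ was not incremented, then $|s h_i^2 - z_i| < 1$; if it was incremented once, then $|s h_i^2 - z_i| \le 1$ as well (since $z_i - s h_i^2 \le 1 - (s h_i^2 - \lfloor s h_i^2\rfloor) \le 1$). In all cases $|s h_i^2 - z_i| \le 1$, i.e. $|h_i^2 - z_i/s| \le 1/s$. Now
\begin{equation*}
    \left| h_i - \sqrt{z_i/s} \right| = \frac{|h_i^2 - z_i/s|}{h_i + \sqrt{z_i/s}} \le \frac{|h_i^2 - z_i/s|}{\sqrt{|h_i^2 - z_i/s|}} = \sqrt{|h_i^2 - z_i/s|} \le \frac{1}{\sqrt{s}},
\end{equation*}
where the first inequality uses $h_i + \sqrt{z_i/s} \ge \max\{h_i, \sqrt{z_i/s}\} \ge \sqrt{|h_i^2 - z_i/s|}$ (valid since $a+b \ge \sqrt{|a^2-b^2|}$ for $a,b \ge 0$). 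Since $v_i = \sqrt{z_i/s}$, this gives $\|\vh - \vv\|_\infty \le 1/\sqrt{s}$. Finally $s = \lceil 4/\delta^2 \rceil \ge 4/\delta^2$ gives $1/\sqrt{s} \le \delta/2$, establishing Equation~\eqref{eq:vh-vv-delta}.

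The main obstacle is the bookkeeping in the second paragraph: one must confirm that the $D$ increments can be carried out while respecting $\|\vz\|_\infty \le s+1$. This is where a little care is needed — one should argue that at no stage of the process are all remaining coordinates already at value $s+1$, which follows because the running sum never exceeds $rs < (s+1)n$ as long as $r \le n$ (indeed $r \le n-1$ here since $2r \le n$ is available, or simply $r < n$), so there is always a coordinate strictly below $s+1$ to increment. Everything else is the routine $\sqrt{a}-\sqrt{b}$ estimate shown above.
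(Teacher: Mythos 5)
Your proposal is correct and follows essentially the same strategy as the paper: round $s h_i^2$ to nearby integers so that the rounded values sum exactly to $rs$ with per-coordinate error at most $1$, then convert to an $\ell_\infty$ bound on $\vh - \vv$ via $|\sqrt{a}-\sqrt{b}| \le \sqrt{|a-b|}$. The only difference is bookkeeping — you floor every coordinate and then distribute the integer deficit $D = \sum_i \{s h_i^2\} < n$ as single increments over $D$ distinct coordinates (each of which starts at $\lfloor s h_i^2\rfloor \le s$, so the $\|\vz\|_\infty \le s+1$ constraint is automatic), whereas the paper runs a greedy floor-or-ceiling recursion keeping the partial sums within $1/s$ and then fixes at most one coordinate at the end; your version is, if anything, slightly cleaner.
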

\begin{proof} 
The inequality $1/\sqrt{s} \leq \delta/2$ in Equation~\eqref{eq:vh-vv-delta} follows directly from $s = \left\lceil 4 / \delta^2\right\rceil$. 
To construct a $\vv \in \scrC$ satisfying Equation~\eqref{eq:vh-vv-delta}, we first construct a $\vzeta \in \R^n$ and then make a slight modification to $\vzeta$ to obtain $\vv$.

To construct $\vzeta$, we set $\zeta^2_1$ to be either $\left\lfloor h_1^2 s\right\rfloor/s$ or $\left\lceil h_1^2 s\right\rceil/s$, and follow the recursive procedure to obtain an entrywise positive $\vzeta \geq 0$:
\begin{equation}\label{eq:v-construction}
    \zeta_i^2= \begin{cases}\left\lfloor h_i^2 s\right\rfloor / s & \mbox { if } \sum_{j=1}^{i-1} h_j^2-\sum_{j=1}^{i-1} \zeta_j^2<0, \\
    \left\lceil h_i^2 s\right\rceil / s & \mbox { if } \sum_{j=1}^{i-1} h_j^2-\sum_{j=1}^{i-1} \zeta_j^2 \geq 0\end{cases}
\end{equation}
for $2 \leq i \leq n$.
The construction given in Equation~\eqref{eq:v-construction} guarantees that
\begin{equation} \label{eq:hzeta-diff}
\|\vh - \vzeta\|_{\infty} \leq \max_{i\in [n]} \left\{ h_i - \sqrt{\left\lfloor h_i^2 s\right\rfloor / s},  \sqrt{\left\lceil h_i^2 s\right\rceil / s} - h_i \right\} \leq \frac{1}{\sqrt{s}},
\end{equation}
where the last inequality follows from the fact that $\sqrt{a} - \sqrt{b} \leq \sqrt{a - b}$ for any $a \geq b > 0$. 
Now we show that following the procedure in Equation~\eqref{eq:v-construction}, we have 
\begin{equation} \label{eq:vh-norm-bound}
    \left| \sum_{i=1}^k \zeta_i^2 - \sum_{i=1}^k h_i^2 \right| \leq \frac{1}{s},
\end{equation}
for all $k \in [n]$.
We prove the above bound by induction on $k \in [n]$. 
When $k = 1$, we trivially have 
\begin{equation*}
\left|\zeta_1^2-h_1^2\right| \leq \frac{1}{s}.
\end{equation*}
For the inductive step, suppose that for $k > 1$, we have
\begin{equation*}
    \left|\sum_{i=1}^{k-1} h_i^2-\sum_{i=1}^{k-1} \zeta_i^2\right| \leq \frac{1}{s}.
\end{equation*}
By definition in Equation~\eqref{eq:v-construction}, if 
\begin{equation} \label{eq:vcons:positive}
    0 \leq \sum_{i=1}^{k-1} h_i^2-\sum_{i=1}^{k-1} \zeta_i^2 \leq \frac{1}{s},
\end{equation}
then  
\begin{equation*}
    -\frac{1}{s} \leq \sum_{i=1}^{k-1} h_i^2-\sum_{i=1}^{k-1} \zeta_i^2 + h_k^2 - \frac{\left\lceil h_k^2 s\right\rceil}{s} \leq \frac{1}{s}.
\end{equation*}
Thus, $\zeta_k^2 = \left\lceil h_k^2 s\right\rceil / s$ satisfies
\begin{equation*}
    \left|\sum_{i=1}^{k} h_i^2-\sum_{i=1}^{k} \zeta_i^2\right| \leq \frac{1}{s}.
\end{equation*}
If, contrary to Equation~\eqref{eq:vcons:positive}, we have
\begin{equation*}
    -\frac{1}{s} \leq \sum_{i=1}^{k-1} h_i^2-\sum_{i=1}^{k-1} \zeta_i^2 < 0,
\end{equation*}
then
\begin{equation*}
    -\frac{1}{s} \leq \sum_{i=1}^{k-1} h_i^2-\sum_{i=1}^{k-1} \zeta_i^2 + h_k^2 - \frac{\left\lfloor h_k^2 s\right\rfloor}{s} \leq \frac{1}{s},
\end{equation*}
and thus $\zeta_k^2 = \left\lfloor h_k^2 s\right\rfloor / s$ again satisfies
\begin{equation*}
    \left|\sum_{i=1}^{k} h_i^2-\sum_{i=1}^{k} \zeta_i^2\right| \leq \frac{1}{s}.
\end{equation*}
Therefore, Equation~\eqref{eq:vh-norm-bound} holds for all $k \in [n]$. 
Taking $k=n$ in Equation~\eqref{eq:vh-norm-bound} and by the fact that $\|\vh\|_2 = r$, we obtain that
\begin{equation} \label{eq:zeta-close-to-r}
    \left|s\|\vzeta\|_2^2 - sr\right| \leq 1.
\end{equation}
Following Equation~\eqref{eq:v-construction}, we also have that $s\zeta_i^2$ is an integer for every $i \in [n]$. Combined with Equation~\eqref{eq:zeta-close-to-r}, we have a stronger statement that
\begin{equation*}
    (s\|\vzeta\|_2^2 - sr) \in \{-1, 0, 1\}. 
\end{equation*}
If $(s\|\vzeta\|_2^2 - sr) = 0$, setting $\vv = \vzeta$ already guarantees that $\|\vv\|_2^2 = r$ and $\|\vv - \vh\|_{\infty} \leq 1/\sqrt{s}$. 
Otherwise, if $(s\|\vzeta\|_2^2 - sr) = -1$, then by Equation~\eqref{eq:v-construction}, there must be an $i_0 \in [n]$ such that
\begin{equation*}
    s\zeta_{i_0}^2 = \lfloor h_{i_0}^2 s\rfloor <  h_{i_0}^2 s < \lceil h_{i_0}^2 s\rceil = \lfloor h_{i_0}^2 s\rfloor + 1.
\end{equation*}
Setting
\begin{equation*}
    v_i = \begin{cases}
        \sqrt{\zeta_{i_0}^2 + 1/s} = \sqrt{\lceil h_{i_0}^2 s\rceil/s}, & \mbox{ if } i = i_0\\
        \zeta_{i}, & \mbox{ otherwise }
    \end{cases}
\end{equation*}
guarantees that $\|\vv\|_2^2 = r$ and 
\begin{equation*}
    \|\vv - \vh\|_{\infty} = \max\left\{\max_{i\neq i_0} |\zeta_i - h_i|, \left|\sqrt{\lceil h_{i_0}^2 s\rceil/s} - h_{i_0}\right| \right\}\leq \frac{1}{\sqrt{s}},
\end{equation*}
where the last inequality follows from Equation~\eqref{eq:hzeta-diff}. 
Similarly, if $(s\|\vzeta\|_2^2 - sr) = 1$, then we can alter one element of $\vzeta$ to obtain $\vv$, such that 
$\|\vv\|_2^2 = r$ and $\|\vv - \vh\|_{\infty} \leq 1/\sqrt{s}$. 

Since $s {v_i}^2$ are integers for all $i \in [n]$,
$s \|\vv\|_2^2 = s r$, and the fact that 
\begin{equation*}
    s\|\vv\|_{\infty}^2 \leq \max_{i \in [n]} \lceil h_{i}^2 s\rceil \leq s+1,
\end{equation*}
where the last inequality follows from $\|\vh\|_{\infty} \leq 1$ for all $\vh \in \bbT$,
we see that $\vv \in \scrC$.
\end{proof}

\section{Proof of Theorem~\ref{thm:minimax}}

In this section, we use the Yang-Barron method \cite{yang1999information} along with a Fano lower bound (see Proposition 15.12 and Lemma 15.21 in \cite{wainwright2019high}) to prove the minimax lower bound for eigenspace estimation stated in Theorem~\ref{thm:minimax}.
We follow the notation used in \cite{wainwright2019high}.
Given a class of distributions $\calP$, we let $\theta$ denote a functional on the space $\calP$, that is, a mapping from a distribution $\bbP$ to a parameter $\theta(\bbP)$ taking values in some space $\Omega$.
We let $\rho: \Omega \times \Omega \rightarrow[0, \infty)$ be a semi-metric.
Proposition \ref{prop:fano} states the Fano lower bound (Proposition 15.12 in \cite{wainwright2019high}).

\begin{proposition}[Fano lower bound]\label{prop:fano}
    Let $\left\{\theta_1, \theta_2, \dots, \theta_M\right\} \subseteq \Omega$ be a $2 \delta$-separated set under the $\rho$ semi-metric, and suppose that $J$ is uniformly distributed over the index set $[M]$, and $(Z \mid J=j) \sim \bbP_{\theta_j}$. Then for any increasing function $\Phi:[0, \infty) \rightarrow[0, \infty)$, the minimax risk is lower bounded as
\begin{equation*}
\inf _{\thetahat} \sup _{\bbP \in \calP} \E_{\bbP}~\Phi\!\left( \rho\left(\thetahat, \theta(\bbP) \right) \right) \geq \Phi(\delta)\left(1-\frac{I(Z ; J)+\log 2}{\log M}\right),
\end{equation*}
where the infimum is over all estimators $\thetahat$ and $I(Z ; J)$ is the mutual information between $Z$ and $J$.
\end{proposition}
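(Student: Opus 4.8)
The plan is to follow the standard reduction from estimation to multiple-hypothesis testing and then invoke Fano's inequality. First I would reduce the minimax estimation risk to a testing problem. Given an arbitrary estimator $\thetahat$, define the induced test $\psi(Z) \in \arg\min_{j \in [M]} \rho(\thetahat(Z), \theta_j)$, with ties broken by any fixed rule. Since $\{\theta_1, \dots, \theta_M\}$ is $2\delta$-separated under $\rho$, whenever $\rho(\thetahat(Z), \theta_J) < \delta$ the triangle inequality forces $\psi(Z) = J$; equivalently, $\{\psi(Z) \neq J\} \subseteq \{\rho(\thetahat(Z), \theta_J) \geq \delta\}$.

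Next I would pass from the worst-case risk to a testing error probability. The supremum over $\calP$ dominates the maximum over the finite subfamily $\{\bbP_{\theta_j}\}_{j\in[M]}$, which in turn dominates the uniform average, so
\begin{equation*}
\sup_{\bbP \in \calP} \E_\bbP \Phi\big(\rho(\thetahat, \theta(\bbP))\big) \;\geq\; \frac{1}{M}\sum_{j=1}^M \E_{\theta_j}\Phi\big(\rho(\thetahat(Z), \theta_j)\big) \;=\; \E\big[\Phi(\rho(\thetahat(Z), \theta_J))\big],
\end{equation*}
where $J$ is uniform on $[M]$ and $(Z\mid J=j)\sim\bbP_{\theta_j}$. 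Because $\Phi$ is increasing and nonnegative, $\Phi(\rho) \geq \Phi(\delta)\,\indicator\{\rho \geq \delta\}$, so the right-hand side is at least $\Phi(\delta)\,\Pr(\rho(\thetahat(Z), \theta_J) \geq \delta) \geq \Phi(\delta)\,\Pr(\psi(Z) \neq J)$, using the inclusion from the first step.

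Finally I would lower bound the error probability $P_e := \Pr(\psi(Z) \neq J)$ via Fano's inequality. Since $J \to Z \to \psi(Z)$ is a Markov chain and $\psi(Z)$ is a function of $Z$, $H(J\mid Z) = H(J \mid Z, \psi(Z)) \leq H(J \mid \psi(Z)) \leq h_b(P_e) + P_e\log(M-1) \leq \log 2 + P_e \log M$, where $h_b$ is the binary entropy. As $J$ is uniform, $H(J \mid Z) = \log M - I(Z;J)$, and rearranging gives $P_e \geq 1 - \frac{I(Z;J) + \log 2}{\log M}$. Chaining the three steps and taking the infimum over $\thetahat$ yields the claimed bound. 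The only point demanding care — and the nearest thing to an obstacle — is the first step: one must confirm that the closest-hypothesis test is well defined and measurable with a fixed tie-breaking convention, and that $2\delta$-separation is exactly the condition under which a $\delta$-accurate estimate identifies the correct index. Everything after that is routine information theory.
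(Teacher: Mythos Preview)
Your argument is correct and is exactly the standard proof of the Fano lower bound: reduce estimation to testing via the nearest-hypothesis rule, bound the supremum by a uniform average, use monotonicity of $\Phi$ to pass to an indicator, and apply Fano's inequality to the Markov chain $J\to Z\to\psi(Z)$. The paper does not actually prove Proposition~\ref{prop:fano}; it simply quotes it as Proposition~15.12 of \cite{wainwright2019high} and uses it as a black box in the proof of Theorem~\ref{thm:minimax}. Your proof is precisely the argument given in that reference, so there is nothing to compare.
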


The Yang-Barron method gives an upper bound for the mutual information $I(Z; J)$.
\begin{lemma}[Yang-Barron method \cite{yang1999information}]\label{lem:yang-barron}
     Let $\calN_{\KL}(\varepsilon ; \calP)$ denote the $\varepsilon$-covering number of $\calP$ in the square-root KL-divergence.
     Then the mutual information is upper bounded as
    \begin{equation*}
    I(Z ; J) \leq \inf _{\varepsilon>0} \log \calN_{\KL}(\varepsilon ; \calP) + \varepsilon^2.
    \end{equation*}
\end{lemma}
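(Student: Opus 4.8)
The plan is to apply the Fano-type minimax bound of Proposition~\ref{prop:fano} with comparison function $\Phi(t)=t$ and semi-metric $\rho=\dtti$, controlling the mutual information term by the Yang--Barron bound of Lemma~\ref{lem:yang-barron}, following Proposition 15.12 of \cite{wainwright2019high}. The reduction is: fix $\mLambdastar=\lambdastar\mI_r$ throughout, let $\{\mUstar_1,\dots,\mUstar_M\}\subseteq\bbK_{r,\mu}$ be a $2\delta$-separated subset under $\dtti$ (for $\delta$ to be chosen), and let $\bbP_j$ be the law of $\mY=\lambdastar\mUstar_j(\mUstar_j)^\top+\mW$ with $\mW$ having i.i.d.\ $N(0,\sigma^2)$ entries on and above the diagonal. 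Since Gaussian noise satisfies Assumption~\ref{assump:W-Gauss}, $\{\bbP_j\}\subseteq\calP:=\{\bbP_{\mUstar}:\mUstar\in\bbK_{r,\mu}\}$ is a legitimate sub-family, so Proposition~\ref{prop:fano} gives
\[
\inf_{\mUhat}\sup_{(\mLambdastar,\mUstar)\in\Omega(\lambdastar,\mu,r)}\E_{\mLambdastar,\mUstar}\dtti(\mUhat,\mUstar)\;\ge\;\delta\Bigl(1-\tfrac{I(\mY;J)+\log 2}{\log M}\Bigr),
\]
and it remains to lower bound $\log M$, upper bound $I(\mY;J)$, and choose $\delta$.

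For $\log M$: provided $2\delta$ lies in the admissible window \eqref{eq:delta-assumption}, Lemma~\ref{lem:packing-lower} yields $\log M=\log\calM(\bbK_{r,\mu},\dtti,2\delta)\gtrsim r^2/\delta^2$. For $I(\mY;J)$: a single observation under $\bbP_{\mU_1}$ versus $\bbP_{\mU_2}$ is a mean-shifted Gaussian, so $\KL(\bbP_{\mU_1}\Vert\bbP_{\mU_2})=\frac{1}{2\sigma^2}\sum_{i\le j}(\lambdastar)^2\bigl((\mU_1\mU_1^\top-\mU_2\mU_2^\top)_{ij}\bigr)^2\le\frac{(\lambdastar)^2}{2\sigma^2}\|\mU_1\mU_1^\top-\mU_2\mU_2^\top\|_{\rmF}^2\le\frac{r(\lambdastar)^2}{\sigma^2}$, the last step because a difference of two rank-$r$ orthogonal projectors has squared Frobenius norm at most $2r$. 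Thus every member of $\calP$ lies within square-root-KL distance $\lambdastar\sqrt{r}/\sigma$ of any fixed member, i.e.\ $\calN_{\KL}(\lambdastar\sqrt r/\sigma;\calP)=1$; plugging this $\epsilon$ into Lemma~\ref{lem:yang-barron} gives $I(\mY;J)\le 0+(\lambdastar\sqrt r/\sigma)^2=r(\lambdastar)^2/\sigma^2$. (Alternatively one bounds $\calN_{\KL}(\epsilon;\calP)$ for all $\epsilon$ by a covering number of $\bbG_{n,r}$ under $d_{S_2}$ via Lemma~\ref{lem:grassmann-covering} and optimizes over $\epsilon$; it is this route that brings the Grassmann covering constant $C_0$ into the theorem's hypothesis.)

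It remains to choose $\delta=\kappa\,\bigl(\tfrac{\sigma\sqrt r}{\lambdastar}\wedge\sqrt{\tfrac{\mu r}{n\log(12n/\mu)}}\bigr)$ for a suitably small universal constant $\kappa$. Then: (a) $2\delta$ clears the upper end of \eqref{eq:delta-assumption} by the second term in the minimum, and clears the lower end $\delta^2\gtrsim r/n$ because $\sqrt{\mu r/(n\log(12n/\mu))}\gtrsim\sqrt{r/n}$ under $\mu\ge12\log(12n)$ and because the cap $\lambdastar\le(6\sqrt{C_0})^{-1}\sigma\sqrt n$ forces $\sigma\sqrt r/\lambdastar\gtrsim\sqrt{r/n}$ with enough slack; (b) since $I(\mY;J)+\log2\le r(\lambdastar)^2/\sigma^2+\log 2$ while $\log M\gtrsim r^2/\delta^2\gtrsim r(\lambdastar)^2/(\kappa^2\sigma^2)$ (using the first term of the minimum), choosing $\kappa$ small makes $I(\mY;J)+\log2\le\tfrac12\log M$, so the Fano bracket is $\ge\tfrac12$. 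Feeding this back yields the minimax lower bound $\gtrsim\delta$, which is the claimed rate after absorbing $\log(12n/\mu)$ into $\log(n/\mu)$ up to a constant.

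The main obstacle is not any individual estimate but the joint calibration of universal constants: the single value of $\delta$ must simultaneously (i) lie in the window \eqref{eq:delta-assumption} at scale $2\delta$, (ii) satisfy $2(I(\mY;J)+\log2)\le\log M$, and (iii) equal, up to a constant, the minimum of the two target rates. Reconciling (i)--(iii) is exactly what pins down the numerical form $(6\sqrt{C_0})^{-1}\sigma\sqrt n$ of the cap on $\lambdastar$ and forces inheriting the conditions $\mu\ge12\log(12n)$ and $n/\mu\ge\max\{4,r\}$ from Lemma~\ref{lem:packing-lower}. A minor separate point is the degenerate regime of very small $\lambdastar$, where $\sigma\sqrt r/\lambdastar$ is enormous; there the minimum selects the second rate, $I(\mY;J)\le r(\lambdastar)^2/\sigma^2$ is negligible, and the Fano bracket is trivially $\ge1/2$.
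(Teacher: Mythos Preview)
Your proposal does not address the stated lemma at all. Lemma~\ref{lem:yang-barron} is the Yang--Barron mutual-information bound itself, a classical result the paper merely cites from \cite{yang1999information} (and from Chapter~15 of \cite{wainwright2019high}) without proof. What you have written is instead a proof sketch of Theorem~\ref{thm:minimax}, which \emph{uses} Lemma~\ref{lem:yang-barron} as a black box.

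If your intent was Theorem~\ref{thm:minimax}, your outline is broadly correct and follows the same Fano/Yang--Barron architecture as the paper, but with one substantive difference in how you bound $I(\mY;J)$. You take the trivial one-point cover, observing that $\sqrt{\KL}\le\lambdastar\sqrt{r}/\sigma$ uniformly over $\calP$ so that $\calN_{\KL}(\lambdastar\sqrt{r}/\sigma;\calP)=1$, yielding $I(\mY;J)\le r(\lambdastar)^2/\sigma^2$. The paper instead bounds $\calN_{\KL}(\varepsilon;\calP)$ for general $\varepsilon$ via the Stiefel/Grassmann covering bound of Lemma~\ref{lem:stiefel-covering}, obtaining $\log\calN_{\KL}(\varepsilon;\calP)\le r(n-r)\log(C_0\lambdastar\sqrt{2r}/(\sigma\varepsilon))$, and then sets $\varepsilon=C_0\lambdastar\sqrt{2r}/\sigma$ so the log vanishes and $\varepsilon^2=2C_0^2 r(\lambdastar)^2/\sigma^2$. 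This is exactly where the constant $C_0$ enters the hypothesis $\lambdastar\le(6\sqrt{C_0})^{-1}\sigma\sqrt n$; you correctly flag this in your parenthetical, but your primary argument via the trivial cover would not by itself explain the $C_0$ in the theorem statement. Beyond that, the paper carries out the constant-matching you describe in (i)--(iii) explicitly, splitting into the two cases $\lambdastar\gtrless\sigma\sqrt{n\log(12n/\mu)/(3C_0\mu)}$ and verifying Equation~\eqref{eq:delta-assumption} in each, rather than appealing to a single small $\kappa$.
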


\begin{proof}[Proof of Theorem \ref{thm:minimax}]
To each $(\mLambdastar,\mUstar) \in \Omega( \lambdastar, \mu, r)$, we associate a probability distribution $\Pr_{\mLambdastar, \mUstar}$ on $\R^{n \times n}$ with density given by
\begin{equation*}
f_{\mLambdastar, \mUstar}( \mW )
=
\prod_{1\leq i\leq j\leq n} 
g\left( \frac{ (\mW - \mUstar \mLambdastar \mUstar^\top)_{ij} }{ \sigma } \right), 
\end{equation*}
where $g(\cdot)$ denotes the density of a standard normal.
We define the class of distributions
\begin{equation*}
    \calP = \left\{ \Pr_{\mLambdastar, \mUstar} : (\mLambdastar, \mUstar) \in \Omega(\lambdastar, \mu, r) \right\}.
\end{equation*}

For $\vmu_1, \vmu_2 \in \R^n$ and a pair of normal distributions $N(\vmu_1, \sigma^2 \mI_n)$ and $N(\vmu_2, \sigma^2 \mI_n)$, their KL-divergence is given by (see Example 15.13 in \cite{wainwright2019high})
\begin{equation*} \begin{aligned}
    \KL\!\Big( N(\vmu_1, \sigma^2 \mI_n) \Big\| N(\vmu_2, \sigma^2 \mI_n)\Big) = \frac{1}{2\sigma^2} \|\vmu_2 - \vmu_1\|_2^2.
\end{aligned} \end{equation*}
It follows that for any $\Pr_{\mLambdastar, \mU^\star_1}\Pr_{\mLambdastar, \mU^\star_2} \in \calP$, 
\begin{equation*}
    \KL\Big(\Pr_{\mLambdastar, {\mU}^\star_1} \Big\| \Pr_{\mLambdastar, {\mU}^\star_2} \Big)
    \leq \frac{ {\lambdastar}^2 }{2\sigma^2} \left\|{\mU}^\star_1\mU^{\star \top}_1 - \mU^\star_2\mU^{\star \top}_2\right\|_F^2
\end{equation*}
and thus, taking square roots,
\begin{equation}\label{eq:sqrt-KL-bound}
    \sqrt{\KL(\Pr_{\mLambdastar, \mU^\star_1} \| \Pr_{\mLambdastar, \mU^\star_1})} \leq \frac{\sqrt{2}\lambdastar}{2\sigma} \left\|\mU^\star_1\mU^{\star\top}_1 - \mU^\star_2\mU^{\star \top}_2\right\|_F \leq \frac{\lambdastar}{\sigma} ~\dF\left(\mU^\star_1, \mU^\star_2\right), 
\end{equation}
where the second inequality holds from Lemma 2.6 in \cite{chen2021spectral}.

To apply the Yang-Barron method, we follow a two-step procedure (see Chapter 15 of \cite{wainwright2019high}):
\begin{enumerate}
    \item Pick the smallest $\varepsilon$ such that $\varepsilon^2 \geq \log \calN_{\KL}(\calP, \varepsilon)$.
    \item Choose the largest $\delta$ that we can find a $\delta$-packing that satisfies the lower bound
    \begin{equation*}
        \log \calM(\bbK_{r, \mu}, \dtti, \delta) \geq 4 \varepsilon^2 + 2\log 2.
    \end{equation*}
\end{enumerate}
Then it follows from Lemma \ref{lem:yang-barron} and Proposition \ref{prop:fano} that the minimax risk is lower bounded by $\delta / 2$.

For the first step, noting that by Equation~\eqref{eq:sqrt-KL-bound}, a $(\sigma\varepsilon / \lambdastar)$-covering set for $\bbV_{n,r}$ under $\dF$ yields an $\varepsilon$-covering set for the $\sqrt{\KL}$- divergence, we have 
\begin{equation*}
    \calN_{\KL}(\calP, \varepsilon) \leq \calN(\bbV_{n,r}, \dF, \varepsilon / \lambdastar) \leq \left(\frac{C_0 \lambdastar \sqrt{2 r}}{\sigma\varepsilon}\right)^{r(n-r)},
\end{equation*}
where the last inequality follows from Lemma \ref{lem:stiefel-covering}. 
Thus, in order to have $\varepsilon^2 \ge \log \calN_{\KL}(\calP, \varepsilon)$, it suffices to have 
\begin{equation*} 
    \varepsilon^2 \geq r(n-r) \log \left(\frac{C_0 \lambdastar \sqrt{2 r}}{ \sigma\varepsilon}\right) 
\end{equation*}
which holds if we set $\varepsilon = C_0\lambdastar\sqrt{2r}/\sigma$.
With this choice of $\varepsilon$, we then follow the second step of the Yang-Barron method and pick a $\delta$ to satisfy
\begin{equation} \label{eq:target-lower}
    \log \calM\left(\bbK_{\mu, r}, \dtti, \delta\right) \geq \frac{8C_0r {\lambdastar}^2}{\sigma^2} +2 \log 2.
\end{equation}
By Equation~\eqref{eq:packing-lower-exact} in the proof of Lemma \ref{lem:packing-lower}, we have
\begin{equation} \label{eq:current-lower}
    \log \calM\left(\bbK_{r, \mu}, d_{2, \infty}, \delta\right) \geq \frac{c_0^2 r^2}{32e^2 \delta^2} - \log 2
\end{equation}
for $\delta$ satisfying Eqaution~\eqref{eq:delta-assumption}. 
Combining Equations~\eqref{eq:target-lower} and \eqref{eq:current-lower}, our goal is to find a $\delta$, such that 
\begin{equation}\label{eq:goal-delta}
    \frac{c_0^2 r^2}{32e^2 \delta^2} \geq \frac{8C_0r {\lambdastar}^2}{\sigma^2} +3 \log 2.
\end{equation}
We pick $\delta$ to be
\begin{equation} \label{eq:delta-choice}
    \delta = \frac{c_0 \sigma\sqrt{r}}{12e\lambdastar\sqrt{2C_0}} \wedge \frac{c_0}{4e} \sqrt{\frac{ \mu r}{6n\log (12 n / \mu)}}.
\end{equation}
One can verify that $\delta$ satisfies Equation~\eqref{eq:delta-assumption} under the assumption $\lambdastar \leq (6\sqrt{C_0})^{-1} \sigma \sqrt{n}$. To see that the $\delta$ given by Equation~\eqref{eq:delta-choice} satisfies Equation~\eqref{eq:goal-delta}, we separate our discussion into two cases, one for large $\lambdastar$, and the other for small $\lambdastar$. When 
\begin{equation} \label{eq:lambdastar-large-case}
\lambdastar \geq \sigma \sqrt{\frac{n \log (12n/\mu)}{3C_0 \mu}}   
\end{equation}
by Equation~\eqref{eq:delta-choice} we have
\begin{equation} \label{eq:delta-choice-large}
    \delta = \frac{c_0 \sigma\sqrt{r}}{12e\lambdastar\sqrt{2C_0}}
\end{equation} 
and therefore, Equation~\eqref{eq:goal-delta} follows from
\begin{equation*}
    \frac{c_0^2 r^2}{32e^2 \delta^2} = \frac{9C_0 r {\lambdastar}^2}{\sigma^2} \geq \frac{8C_0 r {\lambdastar}^2}{\sigma^2} + 3\log 2,
\end{equation*}
where the first equality holds from Equation~\eqref{eq:delta-choice-large} and the last inequality follows from Equation~\eqref{eq:lambdastar-large-case} for all $n$ sufficiently large. 

On the other hand, when 
\begin{equation}\label{eq:lambdastar-small-case}
    \lambdastar < \sigma \sqrt{\frac{n \log (12n/\mu)}{3C_0 \mu}},  
\end{equation}
by Equation~\eqref{eq:delta-choice} we have
\begin{equation} \label{eq:delta-choice-small}
    \delta = \frac{c_0}{4e} \sqrt{\frac{ \mu r}{6n\log (12 n / \mu)}},
\end{equation} 
and therefore, Equation~\eqref{eq:goal-delta} follows from
\begin{equation*}
    \frac{c_0^2 r^2}{32e^2 \delta^2} = \frac{3rn\log(12n/\mu)}{\mu} \geq \frac{8rn\log(12n/\mu)}{\mu} + 3\log 2 \geq \frac{8C_0 r {\lambdastar}^2}{\sigma^2} + 3\log 2,
\end{equation*}
where the first equality follows from Equation~\eqref{eq:delta-choice-small}, the next inequality holds for all $n$ sufficiently large, and the last inequality holds from Equation~\eqref{eq:lambdastar-small-case}. 
Thus, we conclude that the choice of $\delta$ given in Equation~\eqref{eq:delta-choice} satisfies Equation~\eqref{eq:goal-delta}, completing the second step of the Yang-Barron method.

Finally, combining Proposition~\ref{prop:fano} and Lemma~\ref{lem:yang-barron}, we conclude that for a universal constant $c > 0$,
\begin{equation*}
    \inf_{\mUhat} \sup_{(\mLambdastar, \mUstar) \in \Omega(\lambdastar, \mu, r)} \E_{\mLambdastar, \mUstar} ~\dtti\left(\mUhat, \mUstar\right) \geq \frac{\delta}{2} \geq c \left(\frac{\sigma\sqrt{r}}{\lambdastar} \wedge \sqrt{\frac{ \mu r}{n\log ( n / \mu)}}\right),
\end{equation*}
completing the proof.
\end{proof}

\section{Additional experiments}

Here we collect additional experiments to complement our simulations in Section~\ref{sec:numeric}.

\subsection{Additional numerical results on rank-one eigenvector estimation}
We provide additional details for the experiments previously discussed in Section \ref{subsec:rank-one-sim}. Recall that we observe
\begin{equation*}
    \mY = \mMstar + \mW = \lambdastar \vustar {\vustar}^\top + \mW,
\end{equation*}
and wish to recover $\vustar \in \bbS^{n-1}$. We refer the reader to Section \ref{subsec:rank-one-sim} for the details of how $\lambdastar$, $\vustar$ and $\mW$ are generated and how we run Algorithm \ref{alg:rank:one:simple}.

In Section \ref{subsec:rank-one-sim}  we mentioned that under Laplacian noise, as shown in the third column of Figure \ref{fig:rank-one:eigvec}, the estimator $\vuhat$ given by Algorithm \ref{alg:rank:one:simple} seems to have a slight dependence on the coherence parameter $\mu$.  
We give a close examination of the estimation error of the largest entry of $\vustar$ in Figure~\ref{fig:rank-one-eigvec-lap}, with shaded bands indicating $95\%$ bootstrap confidence intervals. Figure~\ref{fig:rank-one-eigvec-lap} shows that the estimation error of the largest entry of $\vustar$ for $\vuhat$ is seen to be much smaller than $10^{-2}$, while in the third column of Figures~\ref{fig:rank-one:eigvec}, the estimation error $d_{\infty}(\vuhat, \vustar)$ is well above $10^{-2}$.
This indicates that the dependence on $\mu$ observed in the right-hand subplot of Figure~\ref{fig:rank-one:eigvec} comes not from estimating the largest entry of $\vustar$, but rather from estimating the other entries.
In our experiment, the other entries are all nearly incoherent (that is, they have a magnitude at most $O(\sqrt{\log n / n})$), whence their estimation errors are expected to have no dependence on $\mu$ asymptotically.
Thus, the slight dependence on $\mu$ exhibited Figure~\ref{fig:rank-one:eigvec} is most likely to be a small order dependence compared to the error rate stated in Theorem~\ref{thm:positive:rank-one:upper}, and should not affect the asymptotic error rate. 

\begin{figure}
    \centering
    \includegraphics[width=\linewidth]{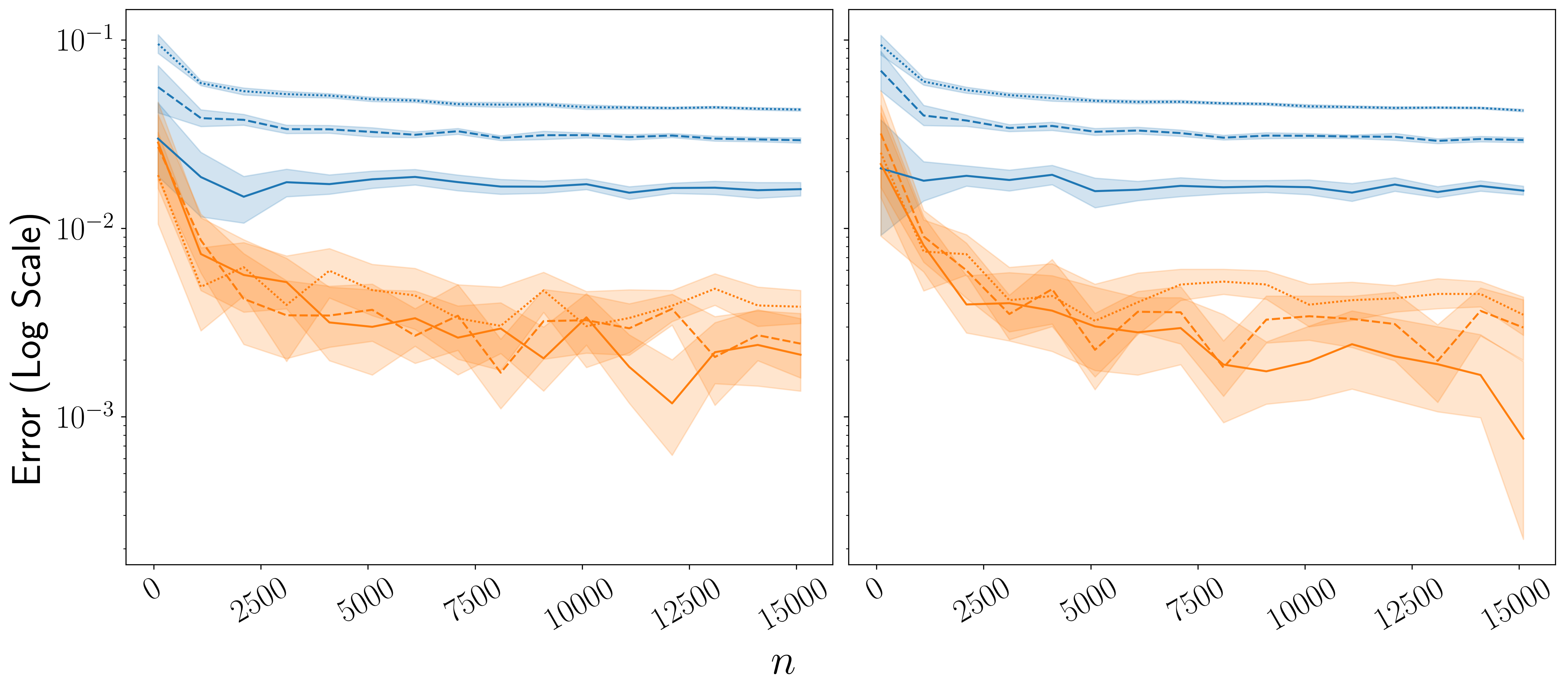}
    \caption{Numerical error in recovering the largest entry of $\vustar$ as a function of matrix dimension $n$, by the leading eigenvector (blue line) or the estimator given in Algorithm~\ref{alg:rank:one:simple} (orange line)
    for three different choices of $\|\vustar\|_{\infty}$: $0.8$ (dotted lines), $0.55$ (dashed lines) and $0.3$ (solid lines).
    The plot on the left corresponds to $\vustar$ generated via the Bernoulli scheme, while the plot on the right corresponds to the Haar scheme.
    }
    \label{fig:rank-one-eigvec-lap}
\end{figure}

\subsection{Additional numerical results on rank-$r$ eigenvector estimation}
We provide additional details for the experiments previously discussed in Section \ref{subsec:rank-r-sim}. Recall that we observe
\begin{equation*}
    \mY = \mMstar + \mW = \mUstar \mLambdastar {\mUstar}^\top + \mW,
\end{equation*}
and wish to recover $\mUstar \in \R^{n\times r}$.
In addition to the rank-$2$ setting discussed in Section \ref{subsec:rank-r-sim}, we include experimental results for the case when $\mMstar$ has rank $3$.
We also provide estimation error under $\dtti$ for our estimate $\mUhat$ in Algorithm~\ref{alg:rank:r:simple} and the spectral estimate $\mU$, to complement our results under $d_\infty$ reported in Section~\ref{subsec:rank-r-sim}.
The experiments follow the same setup outlined Section ~\ref{subsec:rank-r-sim}, and we refer the readers there for details regarding running Algorithm \ref{alg:rank:r:simple} the generation of $\mLambdastar$, $\mUstar$ and $\mW$. 

We first provide more details as to how we obtain the estimation error under $\dtti$.
Normally, since 
\begin{equation*}
    \dtti(\mUhat, \mUstar) = \min_{\mGamma \in \bbO_r} \left\|\mUhat \mGamma - \mUstar\right\|_{2,\infty},
\end{equation*}
obtaining the $\dtti$ estimation error requires finding an orthogonal matrix that minimizes a nonsmooth function, which is hard to achieve.
In our simulation, however, since we have sufficiently large eigengaps between the eigenvalues, we know how each column of $\mUhat$ corresponds to the columns of $\mUstar$.
Thus, we merely need to resolve the ambiguity in the sign of each column of $\mUhat$, rather than searching over all $\mGamma \in \bbO_r$.
By assigning each column of $\mUhat$ the sign of the corresponding column in $\mUstar$, we resolve this ambiguity and can obtain the $\dtti$ estimation error.

Figure~\ref{fig:rank-r-dtti} compares the empirical accuracy, measured by $\dtti$, of estimating $\mUstar$ via the $r$ leading eigenvectors $\mU$ of $\mY$ (blue lines) and via $\mUhat$ produced by Algorithm~\ref{alg:rank:r:simple} (orange lines). Shaded bands are generated from point-wise $95\%$ confidence intervals using bootstrap approximation.
Similar to the rank-one setting, Algorithm \ref{alg:rank:r:simple} recovers $\mUstar$ with a much smaller estimation error under $\dtti$ compared to the na\"{i}ve spectral estimate, especially when the coherence $\mu$ is large.
The figure also shows that Algorithm~\ref{alg:rank:r:simple} performs well under different noise distributions. 

Figure~\ref{fig:rank-3:eigvec} corresponds to $\mMstar$ having rank-$3$.
It compares the empirical accuracy, measured under $d_\infty$,  of estimating each $\vustar_k$, for $k=1,2,3$ via the leading eigenvectors $\vu_k$ of $\mY$ (blue/purple lines) and via $\vuhat_k$ given by Algorithm~\ref{alg:rank:r:simple} (orange/red lines). Shaded bands are generated from point-wise $95\%$ confidence intervals using bootstrap approximation.
As in the first plot of Figure~\ref{fig:rank-2:eigvec}, under Gaussian noise, the estimation error of $\vuhat_k$ shows little to no visible dependence on $\mu$, and is much smaller compared to the leading eigenvectors of $\mY$. Under Rademacher noise, as in the second plot of Figure \ref{fig:rank-2:eigvec}, the dependence on $\mu$ again appears slightly reversed from that of the spectral estimator. For Laplacian noise, as in the third plot of Figure \ref{fig:rank-2:eigvec}, there again seems to be a slight dependence on $\mu$.
For the same reason discussed in Sections~\ref{subsec:rank-one-sim} and~\ref{subsec:rank-r-sim}, we expect such dependence to be of smaller order than the rate in Theorem \ref{thm:positive:rank-one:upper} and should not affect the asymptotic error rate. 

\begin{figure}
    \centering
    \includegraphics[width=\linewidth]{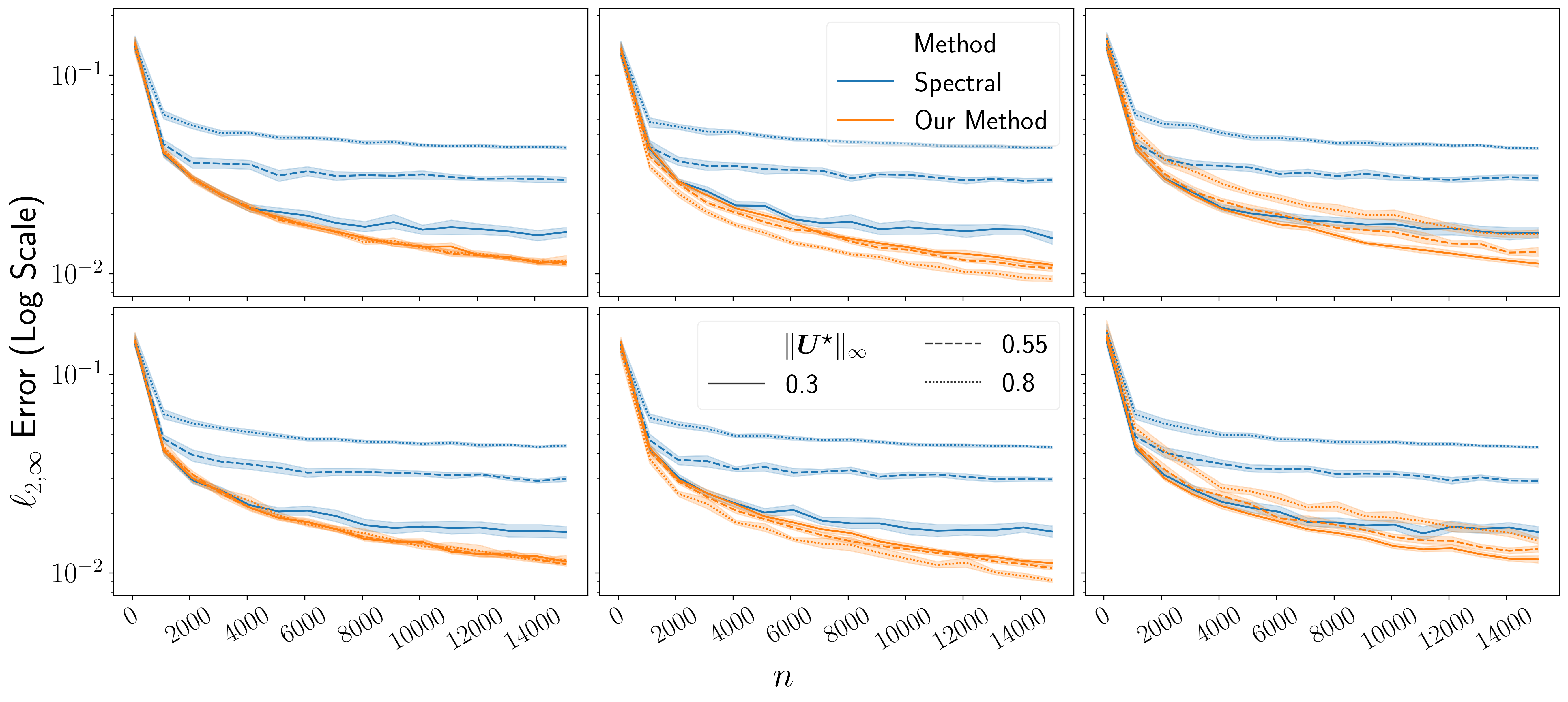}
    \caption{Error as measured in $d_{2,\infty}$ as a function of matrix dimension $n$, by spectral estimate (blue) and the estimator in Algorithm~\ref{alg:rank:r:simple} (orange) for three different choices of $\|\mUstar\|_{\infty}$: $0.8$ (dotted lines), $0.55$ (dashed lines) and $0.3$ (solid lines).
    Columns correspond to $\mW$ being Gaussian (left), Rademacher (center) and Laplacian (right).
    The rows correspond to the signal matrix having rank-$2$ (top) and rank-$3$ (bottom).
    }
    \label{fig:rank-r-dtti}
\end{figure}

\begin{figure}
    \centering
    \includegraphics[width=\linewidth]{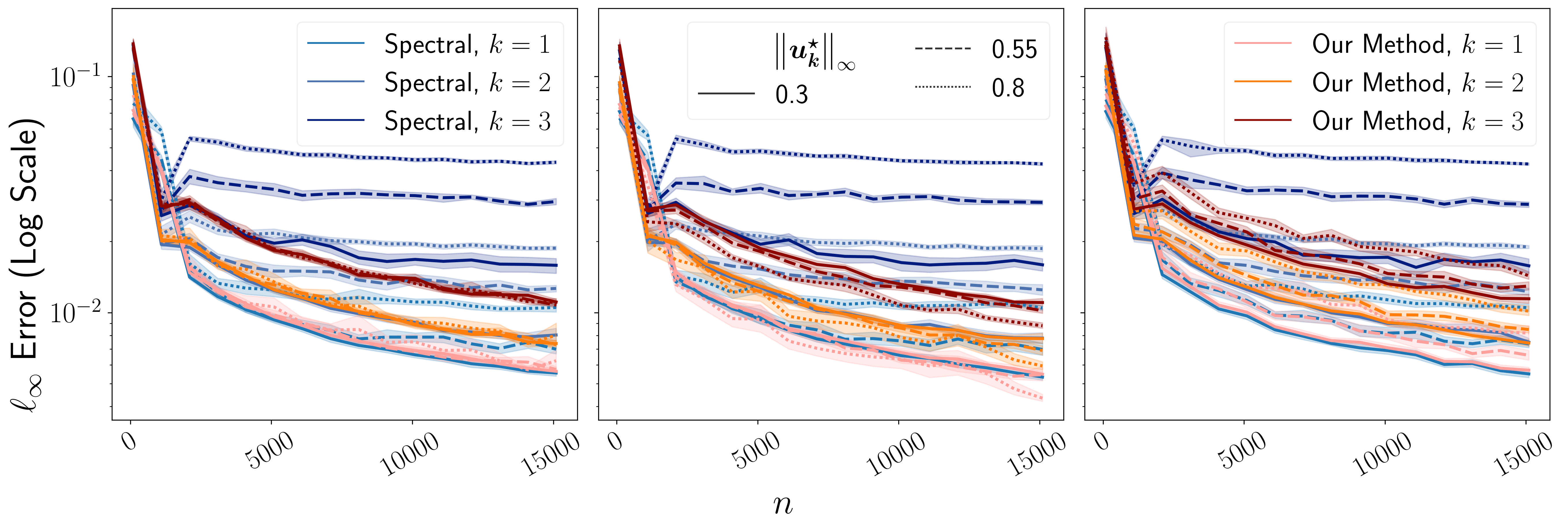}
    \caption{Error measured in $d_{\infty}$ as a function of matrix dimension $n$, for the three leading signal eigenvectors $\vustar_k, k=1,2,3$ (line width) by the spectral estimate (blue/purple) or the estimator given in Algorithm \ref{alg:rank:r:simple} (orange/red)
    for three different choices of $\|\vustar\|_{\infty}$: $0.8$ (dotted lines), $0.55$ (dashed lines) and $0.3$ (solid lines).
    The plots correspond to $\mW$ being Gaussian (left), Rademacher (center) and Laplacian (right).
    }
    \label{fig:rank-3:eigvec}
\end{figure}

\subsection{Comparison with AMP-based eigenvector estimation}

As mentioned in the main text, to the best of our knowledge, we are the first paper to consider the problem of non-spectral entrywise eigenvector estimation.
The nearest obvious method to serve as a comparison point, if we were to insist upon one, would likely be one based on approximate message passing (AMP; see \cite{feng2022unifying}for an overview).
AMP methods make no explicit coherence assumptions, but the underlying mechanism essentially requires incoherence: inherent to AMP-based eigenvector recovery methods is that the eigenvector is modeled as having its entries drawn i.i.d.~according to a common distribution.
Specifically, AMP methods typically make a mean field assumption whereby the empirical distribution of the entries of $\vustar$ converges in $\ell_2$ to some distribution $\pi$.
Since AMP-based methods are tailored to $\ell_2$-recovery, they are suboptimal for entrywise recovery problems: small $\ell_2$ error does not necessary imply small entrywise error.

The unsuitability of typical AMP methods notwithstanding, we include here a comparison against our Algorithm~\ref{alg:rank:one:simple} for the sake of completeness.
The experimental setup mirrors that of Figure~\ref{fig:rank-one:eigvec}, but now includes estimation error for an AMP-based method, as well.
We generate $\vustar$ according to the following procedure: set a random entry of $\vustar$ to be $a \in \{3n^{1/3}/\sqrt{n}, 3n^{1/4}/\sqrt{n}, 3n^{1/5}/\sqrt{n}\}$, then generate the remaining entries by drawing uniformly from $\{\pm 1\}^{n-1}$ and normalizing these to have $\ell_2$ norm $\sqrt{1-a^2}$.
This way, $\|\vustar\|_{\infty} = a$ and the coherence is $\mu = a^2 n$.
We choose this setting to ensure that the limiting prior distribution satisfies the assumptions required by AMP, and thus we can apply the update procedures using Equations (22) (34) and the example after Equation (35) in \cite{feng2022unifying}.

Having generated $\mY = \mMstar + \mW$, we estimate $\vustar$ using the spectral estimate $\vu$, the AMP method and our method as described in Section~\ref{sec:numeric} and measure the estimation error under $d_\infty$.
We report the mean of 30 independent trials for each combination of conditions (i.e., each combination of problem size $n$ and magnitude $a$).
We vary the matrix size $n$ from $3000$ to $22000$ in increments of $1000$.

Figure~\ref{fig:rank-1:amp} compares the entrywise estimation error of the AMP method (green), the leading eigenvector of $\mY$ (blue) and our Algorithm~\ref{alg:rank:one:simple} (orange). 
Across settings, Algorithm~\ref{alg:rank:one:simple} recovers $\vustar$ with a much smaller error under $d_\infty$ compared to the other two estimates, and shows an error rate with no visible dependence on the coherence.
Both the spectral method and the AMP-based method exhibit different estimation error rates as the coherence increases.
In particular, the AMP method performs much worse due to the fact that it is a mean field approximation method and not well-suited for our setting.
Adapting AMP-based methods to target entrywise recovery rather than $\ell_2$ recovery, in hopes of rectifying the poor performance exhibited in Figure~\ref{fig:rank-1:amp}, is a promising direction for future work.

\begin{figure}
    \centering
    \includegraphics[width=0.9\linewidth]{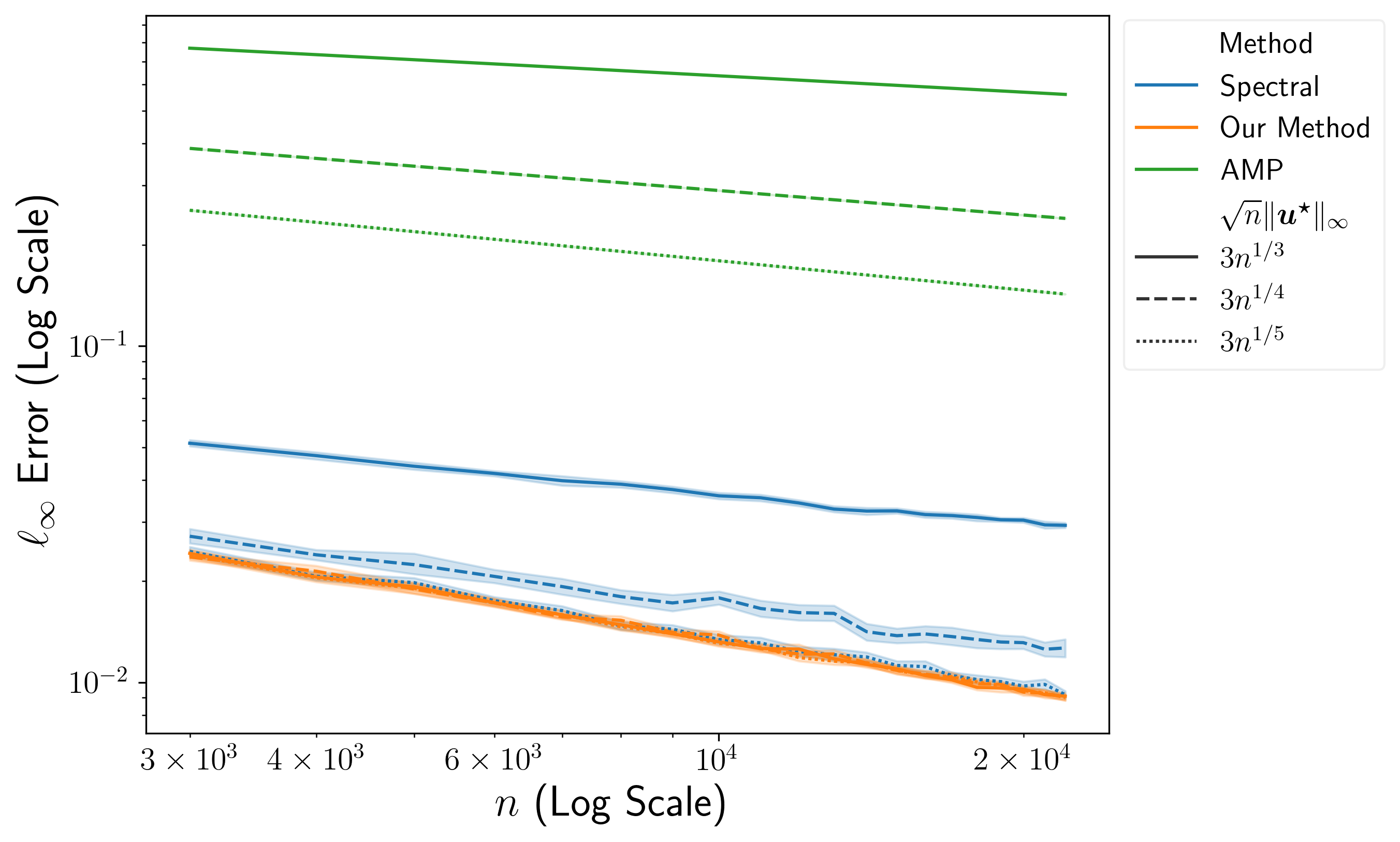}
    \caption{ Estimation error under $d_{\infty}$ as a function of size $n$, by approximate message passing (AMP; green), the leading eigenvector (blue) and Algorithm 1 (orange) for $\sqrt{n}\|\vustar\|_{\infty}$ equal to $3n^{1/3}$ (solid lines), $3n^{1/4}$ (dashed lines) or $3n^{1/5}$ (dotted lines) under Gaussian noise.
    Each data point is the mean of 30 independent trials.
    }
    \label{fig:rank-1:amp}
\end{figure}

\clearpage 

\section*{NeurIPS Paper Checklist}

\begin{enumerate}

\item {\bf Claims}
    \item[] Question: Do the main claims made in the abstract and introduction accurately reflect the paper's contributions and scope?
    \item[] Answer: \answerYes{} 
    \item[] Justification: the abstract and introduction give overviews of our two main results: 1) a new method for estimating the eigenvectors in signal-plus-noise matrix models and 2) a new lower bound for eigenvector estimation in the small-eigenvalue regime. These results are expanded upon (including a thorough discussion of assumptions and limitations) in Sections~\ref{sec:rank-1},~\ref{sec:rank-r}~\ref{sec:metric-entropy}~and~\ref{sec:discussion}.
    Our experiments in Section~\ref{sec:numeric} illustrate which of our assumptions can likely be relaxed or removed entirely.
    \item[] Guidelines:
    \begin{itemize}
        \item The answer NA means that the abstract and introduction do not include the claims made in the paper.
        \item The abstract and/or introduction should clearly state the claims made, including the contributions made in the paper and important assumptions and limitations. A No or NA answer to this question will not be perceived well by the reviewers. 
        \item The claims made should match theoretical and experimental results, and reflect how much the results can be expected to generalize to other settings. 
        \item It is fine to include aspirational goals as motivation as long as it is clear that these goals are not attained by the paper. 
    \end{itemize}

\item {\bf Limitations}
    \item[] Question: Does the paper discuss the limitations of the work performed by the authors?
    \item[] Answer: \answerYes{} 
    \item[] Justification: Our main assumptions, the reasons for needing them, and their implications are discussed in Sections~\ref{sec:intro} through~\ref{sec:metric-entropy}.
    In particular, see Remarks~\ref{rem:lambdastar},~\ref{rem:u:gap} and~\ref{rem:small-lambda}.
    Our experiments in Section~\ref{sec:numeric} explore which of our assumptions might be relaxed or removed.
    A broad, albeit short, overview of limitations and areas for future work is also given in Section~\ref{sec:discussion}, titled ``Discussion, limitations and conclusion''.
    Computational costs and efficiency are addressed just before Algorithm~\ref{alg:rank:one:simple}.
    
    \item[] Guidelines:
    \begin{itemize}
        \item The answer NA means that the paper has no limitation while the answer No means that the paper has limitations, but those are not discussed in the paper. 
        \item The authors are encouraged to create a separate "Limitations" section in their paper.
        \item The paper should point out any strong assumptions and how robust the results are to violations of these assumptions (e.g., independence assumptions, noiseless settings, model well-specification, asymptotic approximations only holding locally). The authors should reflect on how these assumptions might be violated in practice and what the implications would be.
        \item The authors should reflect on the scope of the claims made, e.g., if the approach was only tested on a few datasets or with a few runs. In general, empirical results often depend on implicit assumptions, which should be articulated.
        \item The authors should reflect on the factors that influence the performance of the approach. For example, a facial recognition algorithm may perform poorly when image resolution is low or images are taken in low lighting. Or a speech-to-text system might not be used reliably to provide closed captions for online lectures because it fails to handle technical jargon.
        \item The authors should discuss the computational efficiency of the proposed algorithms and how they scale with dataset size.
        \item If applicable, the authors should discuss possible limitations of their approach to address problems of privacy and fairness.
        \item While the authors might fear that complete honesty about limitations might be used by reviewers as grounds for rejection, a worse outcome might be that reviewers discover limitations that aren't acknowledged in the paper. The authors should use their best judgment and recognize that individual actions in favor of transparency play an important role in developing norms that preserve the integrity of the community. Reviewers will be specifically instructed to not penalize honesty concerning limitations.
    \end{itemize}

\item {\bf Theory Assumptions and Proofs}
    \item[] Question: For each theoretical result, does the paper provide the full set of assumptions and a complete (and correct) proof?
    \item[] Answer: \answerYes{} 
    \item[] Justification: Our main technical assumptions, Assumptions~\ref{assump:W-Gauss},~\ref{assump:u:gap},~\ref{assump:lambdastar} and~\ref{assump:lambdahat} are prominently laid out and explained in Sections~\ref{sec:intro} and~\ref{sec:rank-1}.
    The necessity of these assumptions and the potential to relax or remove them are discussed in those sections, as well as in Section~\ref{sec:discussion}. 
    All theorems and lemmas are stated with their necessary additional assumptions (e.g., growth rates and relations between parameters), and all are numbered, cross-referenced and have working hyperlinks.
    Full proofs are given in the supplementary materials.
    We have provided thorough background and intuition surrounding these proofs, though proof sketches have largely been removed due to space constraints.
    We are committed to including proof sketches in a camera-ready version using the allotted extra page, should the reviewers request them.
    
    \item[] Guidelines:
    \begin{itemize}
        \item The answer NA means that the paper does not include theoretical results. 
        \item All the theorems, formulas, and proofs in the paper should be numbered and cross-referenced.
        \item All assumptions should be clearly stated or referenced in the statement of any theorems.
        \item The proofs can either appear in the main paper or the supplemental material, but if they appear in the supplemental material, the authors are encouraged to provide a short proof sketch to provide intuition. 
        \item Inversely, any informal proof provided in the core of the paper should be complemented by formal proofs provided in appendix or supplemental material.
        \item Theorems and Lemmas that the proof relies upon should be properly referenced. 
    \end{itemize}

    \item {\bf Experimental Result Reproducibility}
    \item[] Question: Does the paper fully disclose all the information needed to reproduce the main experimental results of the paper to the extent that it affects the main claims and/or conclusions of the paper (regardless of whether the code and data are provided or not)?
    \item[] Answer: \answerYes{} 
    \item[] Justification: Algorithms are described in the text, including details regarding how hyperparameters were selected and minor changes made to Algorithms~\ref{alg:rank:one:simple} and~\ref{alg:rank:r:simple} ``as written'' as opposed to ``as run on the cluster''.
    
    \item[] Guidelines:
    \begin{itemize}
        \item The answer NA means that the paper does not include experiments.
        \item If the paper includes experiments, a No answer to this question will not be perceived well by the reviewers: Making the paper reproducible is important, regardless of whether the code and data are provided or not.
        \item If the contribution is a dataset and/or model, the authors should describe the steps taken to make their results reproducible or verifiable. 
        \item Depending on the contribution, reproducibility can be accomplished in various ways. For example, if the contribution is a novel architecture, describing the architecture fully might suffice, or if the contribution is a specific model and empirical evaluation, it may be necessary to either make it possible for others to replicate the model with the same dataset, or provide access to the model. In general. releasing code and data is often one good way to accomplish this, but reproducibility can also be provided via detailed instructions for how to replicate the results, access to a hosted model (e.g., in the case of a large language model), releasing of a model checkpoint, or other means that are appropriate to the research performed.
        \item While NeurIPS does not require releasing code, the conference does require all submissions to provide some reasonable avenue for reproducibility, which may depend on the nature of the contribution. For example
        \begin{enumerate}
            \item If the contribution is primarily a new algorithm, the paper should make it clear how to reproduce that algorithm.
            \item If the contribution is primarily a new model architecture, the paper should describe the architecture clearly and fully.
            \item If the contribution is a new model (e.g., a large language model), then there should either be a way to access this model for reproducing the results or a way to reproduce the model (e.g., with an open-source dataset or instructions for how to construct the dataset).
            \item We recognize that reproducibility may be tricky in some cases, in which case authors are welcome to describe the particular way they provide for reproducibility. In the case of closed-source models, it may be that access to the model is limited in some way (e.g., to registered users), but it should be possible for other researchers to have some path to reproducing or verifying the results.
        \end{enumerate}
    \end{itemize}

\item {\bf Open access to data and code}
    \item[] Question: Does the paper provide open access to the data and code, with sufficient instructions to faithfully reproduce the main experimental results, as described in supplemental material?
    \item[] Answer: \answerYes{} 
    \item[] Justification: in addition to the algorithmic descriptions in Sections~\ref{sec:rank-1} and~\ref{sec:rank-r} and the details in Section~\ref{sec:numeric}, we have included code for running all reported experiments in our supplemental materials.
    \item[] Guidelines:
    \begin{itemize}
        \item The answer NA means that paper does not include experiments requiring code.
        \item Please see the NeurIPS code and data submission guidelines (\url{https://nips.cc/public/guides/CodeSubmissionPolicy}) for more details.
        \item While we encourage the release of code and data, we understand that this might not be possible, so “No” is an acceptable answer. Papers cannot be rejected simply for not including code, unless this is central to the contribution (e.g., for a new open-source benchmark).
        \item The instructions should contain the exact command and environment needed to run to reproduce the results. See the NeurIPS code and data submission guidelines (\url{https://nips.cc/public/guides/CodeSubmissionPolicy}) for more details.
        \item The authors should provide instructions on data access and preparation, including how to access the raw data, preprocessed data, intermediate data, and generated data, etc.
        \item The authors should provide scripts to reproduce all experimental results for the new proposed method and baselines. If only a subset of experiments are reproducible, they should state which ones are omitted from the script and why.
        \item At submission time, to preserve anonymity, the authors should release anonymized versions (if applicable).
        \item Providing as much information as possible in supplemental material (appended to the paper) is recommended, but including URLs to data and code is permitted.
    \end{itemize}

\item {\bf Experimental Setting/Details}
    \item[] Question: Does the paper specify all the training and test details (e.g., data splits, hyperparameters, how they were chosen, type of optimizer, etc.) necessary to understand the results?
    \item[] Answer: \answerYes{} 
    \item[] Justification: the main such concern in this paper surrounds the selection of the parameters $\alpha_0$ and $\beta$ in Algorithm~\ref{alg:rank:one:simple}. Selection of these parameters is discussed at length in Remark~\ref{rem:u:gap} and in our description of the experiments in Section~\ref{sec:numeric}.
    We have also provided code that reflects the discussion in the paper.
    \item[] Guidelines: 
    \begin{itemize}
        \item The answer NA means that the paper does not include experiments.
        \item The experimental setting should be presented in the core of the paper to a level of detail that is necessary to appreciate the results and make sense of them.
        \item The full details can be provided either with the code, in appendix, or as supplemental material.
    \end{itemize}

\item {\bf Experiment Statistical Significance}
    \item[] Question: Does the paper report error bars suitably and correctly defined or other appropriate information about the statistical significance of the experiments?
    \item[] Answer: \answerYes{} 
    \item[] Justification: all experiment plots include error bars denoting 95\% bootstrap confidence intervals.
    This is discussed in Section~\ref{sec:numeric}.
    It is clearly stated in the text that the randomness in the error bars comes from independent repetitions of the same experimental setup.
    
    \item[] Guidelines:
    \begin{itemize}
        \item The answer NA means that the paper does not include experiments.
        \item The authors should answer "Yes" if the results are accompanied by error bars, confidence intervals, or statistical significance tests, at least for the experiments that support the main claims of the paper.
        \item The factors of variability that the error bars are capturing should be clearly stated (for example, train/test split, initialization, random drawing of some parameter, or overall run with given experimental conditions).
        \item The method for calculating the error bars should be explained (closed form formula, call to a library function, bootstrap, etc.)
        \item The assumptions made should be given (e.g., Normally distributed errors).
        \item It should be clear whether the error bar is the standard deviation or the standard error of the mean.
        \item It is OK to report 1-sigma error bars, but one should state it. The authors should preferably report a 2-sigma error bar than state that they have a 96\% CI, if the hypothesis of Normality of errors is not verified.
        \item For asymmetric distributions, the authors should be careful not to show in tables or figures symmetric error bars that would yield results that are out of range (e.g. negative error rates).
        \item If error bars are reported in tables or plots, The authors should explain in the text how they were calculated and reference the corresponding figures or tables in the text.
    \end{itemize}

\item {\bf Experiments Compute Resources}
    \item[] Question: For each experiment, does the paper provide sufficient information on the computer resources (type of compute workers, memory, time of execution) needed to reproduce the experiments?
    \item[] Answer: \answerYes{} 
    \item[] Justification: This is addressed at the beginning of Section~\ref{sec:numeric}.
    
    \item[] Guidelines:
    \begin{itemize}
        \item The answer NA means that the paper does not include experiments.
        \item The paper should indicate the type of compute workers CPU or GPU, internal cluster, or cloud provider, including relevant memory and storage.
        \item The paper should provide the amount of compute required for each of the individual experimental runs as well as estimate the total compute. 
        \item The paper should disclose whether the full research project required more compute than the experiments reported in the paper (e.g., preliminary or failed experiments that didn't make it into the paper). 
    \end{itemize}
    
\item {\bf Code Of Ethics}
    \item[] Question: Does the research conducted in the paper conform, in every respect, with the NeurIPS Code of Ethics \url{https://neurips.cc/public/EthicsGuidelines}?
    \item[] Answer: \answerYes{} 
    \item[] Justification: This paper concerns a theoretical question surrounding minimax estimation rates. It involves neither human subjects nor sensitive data. Societal impacts, either positive or negative, are of course possible (e.g., statistical methods and models are frequently abused or used toward malicious ends), but our work creates no risks that warrant mitigation, to the best of our knowledge. All experimental code uses only open source software tools.
    \item[] Guidelines:
    \begin{itemize}
        \item The answer NA means that the authors have not reviewed the NeurIPS Code of Ethics.
        \item If the authors answer No, they should explain the special circumstances that require a deviation from the Code of Ethics.
        \item The authors should make sure to preserve anonymity (e.g., if there is a special consideration due to laws or regulations in their jurisdiction).
    \end{itemize}

\item {\bf Broader Impacts}
    \item[] Question: Does the paper discuss both potential positive societal impacts and negative societal impacts of the work performed?
    \item[] Answer: \answerYes{} 
    \item[] Justification: This paper concerns a theoretical/methodological question regarding estimation of eigenvectors from noisy matrices.
    Given the theoretical nature of this work, it is hard to foresee direct social impacts, either positive or negative, from this work.
    Positive impacts of eigenvector estimation methods (e.g., PCA, matrix denoising, etc) are ubiquitous (see, for example, medical image processing, to name just one application area).
    These positive impacts are alluded to in the introduction.
    Of course, the abuse or misuse of statistical methods and models has frequently resulted in negative societal impacts and inequitable outcomes for underrepresented groups.
    We acknowledge these possible negative outcomes, albeit briefly, in Section~\ref{sec:discussion}.
     
    \item[] Guidelines:
    \begin{itemize}
        \item The answer NA means that there is no societal impact of the work performed.
        \item If the authors answer NA or No, they should explain why their work has no societal impact or why the paper does not address societal impact.
        \item Examples of negative societal impacts include potential malicious or unintended uses (e.g., disinformation, generating fake profiles, surveillance), fairness considerations (e.g., deployment of technologies that could make decisions that unfairly impact specific groups), privacy considerations, and security considerations.
        \item The conference expects that many papers will be foundational research and not tied to particular applications, let alone deployments. However, if there is a direct path to any negative applications, the authors should point it out. For example, it is legitimate to point out that an improvement in the quality of generative models could be used to generate deepfakes for disinformation. On the other hand, it is not needed to point out that a generic algorithm for optimizing neural networks could enable people to train models that generate Deepfakes faster.
        \item The authors should consider possible harms that could arise when the technology is being used as intended and functioning correctly, harms that could arise when the technology is being used as intended but gives incorrect results, and harms following from (intentional or unintentional) misuse of the technology.
        \item If there are negative societal impacts, the authors could also discuss possible mitigation strategies (e.g., gated release of models, providing defenses in addition to attacks, mechanisms for monitoring misuse, mechanisms to monitor how a system learns from feedback over time, improving the efficiency and accessibility of ML).
    \end{itemize}
    
\item {\bf Safeguards}
    \item[] Question: Does the paper describe safeguards that have been put in place for responsible release of data or models that have a high risk for misuse (e.g., pretrained language models, image generators, or scraped datasets)?
    \item[] Answer: \answerNA{} 
    \item[] Justification: This paper concerns a theoretical question surrounding minimax estimation rates and includes no models or data with risk of misuse, to the best of our knowledge.
    \item[] Guidelines:
    \begin{itemize}
        \item The answer NA means that the paper poses no such risks.
        \item Released models that have a high risk for misuse or dual-use should be released with necessary safeguards to allow for controlled use of the model, for example by requiring that users adhere to usage guidelines or restrictions to access the model or implementing safety filters. 
        \item Datasets that have been scraped from the Internet could pose safety risks. The authors should describe how they avoided releasing unsafe images.
        \item We recognize that providing effective safeguards is challenging, and many papers do not require this, but we encourage authors to take this into account and make a best faith effort.
    \end{itemize}

\item {\bf Licenses for existing assets}
    \item[] Question: Are the creators or original owners of assets (e.g., code, data, models), used in the paper, properly credited and are the license and terms of use explicitly mentioned and properly respected?
    \item[] Answer: \answerNA{} 
    \item[] Justification: the experimental code, included in the supplementary materials, uses only open source software tools.
    \item[] Guidelines:
    \begin{itemize}
        \item The answer NA means that the paper does not use existing assets.
        \item The authors should cite the original paper that produced the code package or dataset.
        \item The authors should state which version of the asset is used and, if possible, include a URL.
        \item The name of the license (e.g., CC-BY 4.0) should be included for each asset.
        \item For scraped data from a particular source (e.g., website), the copyright and terms of service of that source should be provided.
        \item If assets are released, the license, copyright information, and terms of use in the package should be provided. For popular datasets, \url{paperswithcode.com/datasets} has curated licenses for some datasets. Their licensing guide can help determine the license of a dataset.
        \item For existing datasets that are re-packaged, both the original license and the license of the derived asset (if it has changed) should be provided.
        \item If this information is not available online, the authors are encouraged to reach out to the asset's creators.
    \end{itemize}

\item {\bf New Assets}
    \item[] Question: Are new assets introduced in the paper well documented and is the documentation provided alongside the assets?
    \item[] Answer: \answerNA{} 
    \item[] Justification: the experimental code, included in the supplementary materials, uses only open source software tools.
    \item[] Guidelines:
    \begin{itemize}
        \item The answer NA means that the paper does not release new assets.
        \item Researchers should communicate the details of the dataset/code/model as part of their submissions via structured templates. This includes details about training, license, limitations, etc. 
        \item The paper should discuss whether and how consent was obtained from people whose asset is used.
        \item At submission time, remember to anonymize your assets (if applicable). You can either create an anonymized URL or include an anonymized zip file.
    \end{itemize}

\item {\bf Crowdsourcing and Research with Human Subjects}
    \item[] Question: For crowdsourcing experiments and research with human subjects, does the paper include the full text of instructions given to participants and screenshots, if applicable, as well as details about compensation (if any)? 
    \item[] Answer: \answerNA{} 
    \item[] Justification: This paper involves neither crowdsourcing nor human subject research.
    \item[] Guidelines:
    \begin{itemize}
        \item The answer NA means that the paper does not involve crowdsourcing nor research with human subjects.
        \item Including this information in the supplemental material is fine, but if the main contribution of the paper involves human subjects, then as much detail as possible should be included in the main paper. 
        \item According to the NeurIPS Code of Ethics, workers involved in data collection, curation, or other labor should be paid at least the minimum wage in the country of the data collector. 
    \end{itemize}

\item {\bf Institutional Review Board (IRB) Approvals or Equivalent for Research with Human Subjects}
    \item[] Question: Does the paper describe potential risks incurred by study participants, whether such risks were disclosed to the subjects, and whether Institutional Review Board (IRB) approvals (or an equivalent approval/review based on the requirements of your country or institution) were obtained?
    \item[] Answer: \answerNA{} 
    \item[] Justification: This paper involves neither crowdsourcing nor human subject research.
    \item[] Guidelines:
    \begin{itemize}
        \item The answer NA means that the paper does not involve crowdsourcing nor research with human subjects.
        \item Depending on the country in which research is conducted, IRB approval (or equivalent) may be required for any human subjects research. If you obtained IRB approval, you should clearly state this in the paper. 
        \item We recognize that the procedures for this may vary significantly between institutions and locations, and we expect authors to adhere to the NeurIPS Code of Ethics and the guidelines for their institution. 
        \item For initial submissions, do not include any information that would break anonymity (if applicable), such as the institution conducting the review.
    \end{itemize}

\end{enumerate}

\end{document}